\newtheorem{theorem}{Theorem}[section]
\newtheorem{corollary}[theorem]{Corollary}
\newtheorem{proposition}[theorem]{Proposition}
\newtheorem{assumption}[theorem]{Assumption}
\theoremstyle{definition}
\newtheorem{definition}[theorem]{Definition}
\theoremstyle{definition}
\theoremstyle{definition}
\newtheorem{example}[theorem]{Example}
\newcommand{\contraction}[5][1ex]{%
  \mathchoice
    {\contraction@\displaystyle{#2}{#3}{#4}{#5}{#1}}%
    {\contraction@\textstyle{#2}{#3}{#4}{#5}{#1}}%
    {\contraction@\scriptstyle{#2}{#3}{#4}{#5}{#1}}%
    {\contraction@\scriptscriptstyle{#2}{#3}{#4}{#5}{#1}}}%
\newcommand{\contraction@}[6]{%
  \setbox0=\hbox{$#1#2$}%
  \setbox2=\hbox{$#1#3$}%
  \setbox4=\hbox{$#1#4$}%
  \setbox6=\hbox{$#1#5$}%
  \dimen0=\wd2%
  \advance\dimen0 by \wd6%
  \divide\dimen0 by 2%
  \advance\dimen0 by \wd4%
  \vbox{%
    \hbox to 0pt{%
      \kern \wd0%
      \kern 0.5\wd2%
      \contraction@@{\dimen0}{#6}%
      \hss}%
    \vskip 0.5ex
    \vskip\ht2}}
\newcommand{\contraction@@}[3][0.05em]{%
  \hbox{%
    \vrule width #1 height 0pt depth #3%
    \vrule width #2 height 0pt depth #1%
    \vrule width #1 height 0pt depth #3%
    \relax}}
\begin{document}
\title{Fixed Point Loci of Moduli Spaces of Sheaves on Toric Varieties}
\author{Martijn Kool\thanks{Department of Mathematics, Imperial College, 180 Queens Gate, London, SW7 2AZ, United Kingdom, {\tt{m.kool@imperial.ac.uk}}.}}
\maketitle
\begin{abstract}
Extending work of Klyachko and Perling, we develop a combinatorial description of pure equivariant sheaves of any dimension on an arbitrary nonsingular toric variety $X$. Using geometric invariant theory (GIT), this allows us to construct explicit moduli spaces of pure equivariant sheaves on $X$ corepresenting natural moduli functors (similar to work of Payne in the case of equivariant vector bundles). The action of the algebraic torus on $X$ lifts to the moduli space of all Gieseker stable sheaves on $X$ and we express its fixed point locus explicitly in terms of moduli spaces of pure equivariant sheaves on $X$. One of the problems arising is to find an equivariant line bundle on the side of the GIT problem, which precisely recovers Gieseker stability. In the case of torsion free equivariant sheaves, we can always construct such equivariant line bundles. As a by-product, we get a combinatorial description of the fixed point locus of the moduli space of $\mu$-stable reflexive sheaves on $X$. As an application, we show in a sequel \cite{Koo} how these methods can be used to compute generating functions of Euler characteristics of moduli spaces of $\mu$-stable torsion free sheaves on nonsingular complete toric surfaces.
\end{abstract}

\section{Introduction}

Vakil has shown that the moduli space of Gieseker stable sheaves satisfies Murphy's Law, meaning every singularity type of finite type over $\mathbb{Z}$ appears on the moduli space \cite{Vak}. Hence the moduli space $\mathcal{M}_{P}^{s}$ of Gieseker stable sheaves with Hilbert polynomial $P$ on a projective variety $X$ with ample line bundle $\mathcal{O}_{X}(1)$ can become very complicated. Now assume $X$ is a nonsingular projective toric variety with torus $T$. We can lift the action of $T$ on $X$ to an action of $T$ on the moduli space $\mathcal{M}_{P}^{s}$. One of the goals of this paper is to find a combinatorial description of the fixed point locus $\left( \mathcal{M}_{P}^{s} \right)^{T}$ using techniques of toric geometry. 

Klyachko has given a combinatorial description of equivariant vector bundles and, more generally, reflexive equivariant and torsion free equivariant sheaves on a nonsingular toric variety \cite{Kly1}, \cite{Kly2}, \cite{Kly3}, \cite{Kly4}. This description gives a relatively easy way to compute Chern characters and sheaf cohomology of such sheaves. Klyachko's work has been reconsidered and extended by Knutson and Sharpe in \cite{KS1}, \cite{KS2}. They sketch how his combinatorial description can be used to construct moduli spaces of equivariant vector bundles and reflexive equivariant sheaves. Perling has given a general description of equivariant quasi-coherent sheaves on toric varieties in \cite{Per1}, \cite{Per2}. He gives a detailed study of the moduli space of rank 2 equivariant vector bundles on nonsingular toric surfaces in \cite{Per3}. A systematic construction of the moduli spaces of equivariant vector bundles on toric varieties has been given by Payne \cite{Pay}. He considers families of equivariant vector bundles on toric varieties and shows the moduli space of rank 3 equivariant vector bundles on toric varieties satisfies Murphy's Law. 

In the current paper, we will present a combinatorial description of pure equivariant sheaves on nonsingular toric varieties (Theorem \ref{ch. 1, sect. 2, thm. 2}), generalising the known combinatorial description of torsion free equivariant sheaves due to Klyachko \cite{Kly4}. Using this combinatorial description, we construct coarse moduli spaces of pure equivariant sheaves on nonsingular projective toric varieties (Theorem \ref{ch. 1, sect. 3, thm. 3}), corepresenting natural moduli functors. In order to achieve this, we develop an explicit description of families of pure equivariant sheaves on nonsingular projective toric varieties (Theorem \ref{ch. 1, sect. 3, thm. 1}), analogous to Payne's description in the case of families of equivariant vector bundles \cite{Pay}. The moduli spaces of pure equivariant sheaves on nonsingular projective toric varieties are constructed using GIT. It is important to note that these moduli spaces are explicit and combinatorial in nature, which makes them suitable for computations. We are interested in the case where GIT stability coincides with Gieseker stability, which is the natural notion of stability for coherent sheaves. Consequently, we would like the existence of an equivariant line bundle in the GIT problem, which precisely recovers Gieseker stability. In the case of reflexive equivariant sheaves and $\mu$-stability, some aspects of this issue are discussed in \cite{KS1}, \cite{KS2} and \cite{Kly4}. We construct ample equivariant line bundles matching GIT and Gieseker stability for torsion free equivariant sheaves in general (Theorem \ref{ch. 1, sect. 3, thm. 4}). Subsequently, we consider the moduli space $\mathcal{M}_{P}^{s}$ of all Gieseker stable sheaves with (arbitrary) fixed Hilbert polynomial $P$ on a nonsingular projective toric variety $X$ with torus $T$ and ample line bundle $\mathcal{O}_{X}(1)$. We lift the action of the torus $T$ to $\mathcal{M}_{P}^{s}$, describe the closed points of the fixed point locus $\left( \mathcal{M}_{P}^{s} \right)^{T}$ and study the difference between invariant and equivariant simple sheaves. We study deformation theoretic aspects of equivariant sheaves and describe the fixed point locus $\left( \mathcal{M}_{P}^{s} \right)^{T}$ explicitly, as a scheme, in terms of moduli spaces of pure equivariant sheaves on $X$. 
\begin{theorem} [Corollary \ref{ch. 1, sect. 4, cor. 1}] \label{ch. 1, sect. 1, thm. 1}
Let $X$ be a nonsingular projective toric variety. Let $\mathcal{O}_{X}(1)$ be an ample line bundle on $X$ and let $P$ be a choice of Hilbert polynomial of degree $\mathrm{dim}(X)$. Then there is a canonical isomorphism
\begin{equation} \nonumber
\left( \mathcal{M}_{P}^{s} \right)^{T} \cong \coprod_{\vec{\chi} \in \left( \mathcal{X}_{P}^{0} \right)^{fr}} \mathcal{M}_{\vec{\chi}}^{0,s}. 
\end{equation}
\end{theorem}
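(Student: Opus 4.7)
The plan is to produce the canonical isomorphism in three layers: closed points, stratification by equivariant type, and promotion to a scheme-theoretic isomorphism via universal families.

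First I would analyse closed points of $(\mathcal{M}_{P}^{s})^{T}$. A closed point of the fixed locus is an isomorphism class $[\mathcal{F}]$ of Gieseker stable sheaves with $t^{*}\mathcal{F} \cong \mathcal{F}$ for every $t \in T$. Since $P$ has degree $\dim(X)$, such an $\mathcal{F}$ is torsion free and pure of dimension $\dim(X)$, and Gieseker stability gives simplicity. The standard cocycle/gluing argument (already used in \cite{Pay} for equivariant vector bundles and in \cite{Kly4} for torsion free equivariant sheaves) lifts the family of isomorphisms $\{t^{*}\mathcal{F} \xrightarrow{\sim} \mathcal{F}\}_{t \in T}$ to an honest $T$-equivariant structure on $\mathcal{F}$, unique up to tensoring by a character of $T$. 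Applying Theorem \ref{ch. 1, sect. 2, thm. 2} to the equivariant $\mathcal{F}$ produces combinatorial data whose normalised class is the desired element $\vec{\chi} \in (\mathcal{X}_{P}^{0})^{gf}$.

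Next I would identify the $\vec{\chi}$-stratum of $(\mathcal{M}_{P}^{s})^{T}$ with $\mathcal{M}_{\vec{\chi}}^{0,s}$ from Theorem \ref{ch. 1, sect. 3, thm. 3}. The ample equivariant line bundle constructed in Theorem \ref{ch. 1, sect. 3, thm. 4} makes GIT (semi)stability on the combinatorial side coincide with Gieseker (semi)stability on the sheaf side, so the closed points of $\mathcal{M}_{\vec{\chi}}^{0,s}$ are exactly the isomorphism classes of Gieseker stable pure equivariant sheaves of combinatorial type $\vec{\chi}$. Combining this with the previous paragraph and taking a coproduct over $\vec{\chi}$ yields a bijection on closed points. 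To upgrade this to a scheme isomorphism I would work functorially: Theorem \ref{ch. 1, sect. 3, thm. 1} identifies, for any test scheme $S$ with trivial $T$-action, a flat family of pure sheaves on $X \times S$ that classifies to the fixed locus with a family of pure equivariant sheaves of a fixed type $\vec{\chi}$. Corepresentability of the equivariant moduli functors (Theorem \ref{ch. 1, sect. 3, thm. 3}) together with the universal property of the torus fixed subscheme then supplies a canonical morphism between both sides, which by the bijection on closed points plus infinitesimal matching is an isomorphism.

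The main obstacle will be the last step: matching the two deformation theories. Concretely, one must verify that the $T$-fixed part of the tangent-obstruction theory of $\mathcal{M}_{P}^{s}$ at $[\mathcal{F}]$ (controlled by $\mathrm{Ext}^{1}(\mathcal{F},\mathcal{F})^{T}$ and a $T$-fixed obstruction in $\mathrm{Ext}^{2}(\mathcal{F},\mathcal{F})$) agrees with the tangent-obstruction theory of $\mathcal{M}_{\vec{\chi}}^{0,s}$ at the corresponding equivariant point (controlled by equivariant $\mathrm{Ext}$'s), so that Zariski tangent spaces and obstruction maps agree on both sides and the isomorphism extends over all infinitesimal neighbourhoods. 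A related bookkeeping subtlety, essential for canonicity rather than mere existence, is to handle the character ambiguity in the equivariant structure consistently across the disjoint union so that the choice of normalisation yielding the index set $(\mathcal{X}_{P}^{0})^{gf}$ is functorial in families.
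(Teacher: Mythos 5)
Your proposal follows essentially the same route as the paper: the closed-point bijection via lifting invariance of a simple stable sheaf to an equivariant structure, the character ambiguity resolved by gauge-fixing the index set $\left( \mathcal{X}_{P}^{0} \right)^{gf}$, the match of GIT and Gieseker stability from Theorem \ref{ch. 1, sect. 3, thm. 4}, and the upgrade to a scheme isomorphism by comparing deformation theories over local artinian algebras, where the key point is the identification $T\textrm{-}\mathrm{Ext}^{i}(\mathcal{F},\mathcal{F}) \cong \mathrm{Ext}^{i}(\mathcal{F},\mathcal{F})^{T}$ (which the paper obtains from the equivariant local-to-global spectral sequence and the vanishing of higher rational cohomology of $T$). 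The obstacle you single out at the end is exactly the step the paper handles by induction on the length of the artinian base together with the torsor/obstruction formalism, so the plan is sound and matches the paper's argument.
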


\noindent Here the right hand side of the equation is a finite disjoint union of moduli spaces of torsion free equivariant sheaves on $X$. It is important to note that the moduli spaces on the right hand side are explicit and combinatorial in nature and their construction is very different from the construction of $\mathcal{M}_{P}^{s}$, which makes use of Quot schemes and requires boundedness results \cite[Ch.~1--4]{HL}. The theorem gives us a combinatorial description of $\left( \mathcal{M}_{P}^{s} \right)^{T}$ as a scheme. Explicit knowledge of $\left( \mathcal{M}_{P}^{s} \right)^{T}$ is useful for computing invariants associated to $\mathcal{M}_{P}^{s}$, e.g.~the Euler characteristic of $\mathcal{M}_{P}^{s}$, using localisation techniques. We exploit these ideas in a sequel \cite{Koo} in the case $X$ is a nonsingular complete toric surface to obtain expressions for generating functions of Euler characteristics of moduli spaces of $\mu$-stable torsion free sheaves on $X$. These computations can be used to study wall-crossing phenomena, i.e.~study the dependence of these generating functions on choice of ample line bundle $\mathcal{O}_{X}(1)$ on $X$. We will mention some of these results in this paper without further details. Most of the formulation and proof of the above theorem holds similarly for $P$ of any degree. The only complication arising in the general case is to find an equivariant line bundle in the GIT problem, which precisely reproduces Gieseker stability. Currently, we can only achieve this in full generality for $P$ of degree $\mathrm{dim}(X)$, i.e.~for torsion free sheaves, though we will develop the rest of the theory for arbitrary $P$ (Theorem \ref{ch. 1, sect. 4, thm. 1}). As a by-product, we will construct moduli spaces of $\mu$-stable reflexive equivariant sheaves on nonsingular projective toric varieties (Theorem \ref{ch. 1, sect. 4, thm. 2}) and express the fixed point loci of moduli spaces of $\mu$-stable reflexive sheaves on nonsingular projective toric varieties in terms of them (Theorem \ref{ch. 1, sect. 4, thm. 3}). In the case of reflexive equivariant sheaves, we will construct particularly simple ample equivariant line bundles in the GIT problem, which precisely recover $\mu$-stability. 

\noindent \emph{Acknowledgments}. I would like to thank Tom Bridgeland, Frances Kirwan, Yinan Song, Bal\'azs Szendr\H{o}i and Richard Thomas for invaluable discussions and suggestions. I would also like to thank Daniel Huybrechts and Sven Meinhardt for very useful discussions during my visit to Bonn. I would especially like to thank my supervisor Dominic Joyce for his continuous support. This paper is part of my D.Phil.~project funded by an EPSRC Studentship, which is part of EPSRC Grant EP/D077990/1. My stay at the Mathematical Institute of the University of Bonn was funded by the BIGS Exchange Programme for Ph.D.~Students and supervised by Daniel Huybrechts.

\section{Pure Equivariant Sheaves on Toric Varieties}

In this section, we will give a combinatorial description of pure equivariant sheaves on nonsingular toric varieties. After recalling the notion of an equivariant and a pure sheaf, we will give the combinatorial description in the affine case. Subsequently, we will pass to the general case. Our main tool will be Perling's notion of $\sigma$-families. In order to avoid cumbersome notation, we will first treat the case of irreducible support in detail and discuss the general case at the end. 

We recall the notion of a $G$-equivariant sheaf. 
\begin{definition} \label{ch. 1, sect. 2, def. 1} 
Let $G$ be an affine algebraic group acting regularly on a scheme\footnote{In this paper, all schemes will be schemes over $k$ an algebraically closed field of characteristic 0, unless stated otherwise.} $X$ of finite type over $k$. Denote the group action by $\sigma : G \times X \longrightarrow X$, denote projection to the second factor by $p_{2} : G \times X \longrightarrow X$ and denote multiplication on $G$ by $\mu : G \times G \longrightarrow G$. Moreover, denote projection to the last two factors by $p_{23} : G \times G \times X \longrightarrow G \times X$. Let $\mathcal{E}$ be a sheaf of $\mathcal{O}_{X}$-modules on $X$. A \emph{$G$-equivariant structure} on $\mathcal{E}$ is an isomorphism $\Phi : \sigma^{*}\mathcal{E} \longrightarrow p_{2}^{*}\mathcal{E}$ such that
\begin{equation}
(\mu \times 1_{X})^{*}\Phi = p_{23}^{*}\Phi \circ (1_{G} \times \sigma)^{*}\Phi. \nonumber
\end{equation}
This equation is called \emph{the cocycle condition}. A sheaf of $\mathcal{O}_{X}$-modules endowed with a $G$-equivariant structure is called a \emph{$G$-equivariant sheaf}. A \emph{$G$-equivariant morphism} from a $G$-equivariant sheaf $(\mathcal{E}, \Phi)$ to a $G$-equivariant sheaf $(\mathcal{F},\Psi)$ is a morphism $\theta : \mathcal{E} \longrightarrow \mathcal{F}$ of sheaves of $\mathcal{O}_{X}$-modules such that $p_{2}^{*}\theta \circ \Phi = \Psi \circ \sigma^{*}\theta$. We denote the $k$-vector space of $G$-equivariant morphisms from $(\mathcal{E}, \Phi)$ to $(\mathcal{F}, \Psi)$ by $G\textrm{-}\mathrm{Hom}(\mathcal{E},\mathcal{F})$. \hfill $\oslash$
\end{definition}

\noindent Using the above definition, we can form the $k$-linear additive category of $G$-equivariant sheaves which we will denote by $\mathrm{Mod}^{G}(X)$. Similarly, one can construct the categories of $G$-equivariant (quasi-)coherent sheaves $\mathrm{Qco}^{G}(X)$ and $\mathrm{Coh}^{G}(X)$. These are abelian categories and $\mathrm{Qco}^{G}(X)$ has enough injectives \cite[Ch.~V]{Toh}. 

Now let $X$ be a toric variety and $G = T$ is the algebraic torus\footnote{When dealing with toric geometry, we use the notation of the standard reference \cite{Ful}.}. Denote the fan by $\Delta$, the character group by $M = X(T)$ and the group of one-parameter subgroups by $N$ (so $M = N^{\vee}$ and we have a natural pairing between the two lattices $\langle \cdot, \cdot \rangle : M \times N \longrightarrow \mathbb{Z}$). The elements $\sigma$ of $\Delta$ are in bijective correspondence with the invariant affine open subsets $U_{\sigma}$ of $X$. In particular, for a strongly convex rational polyhedral cone $\sigma \in \Delta$ (which lies in the lattice $N$) we have $U_{\sigma} = \mathrm{Spec}(k[S_{\sigma}])$, where $k[S_{\sigma}]$ is the semigroup algebra associated to the semigroup $S_{\sigma}$ defined by
\begin{align} 
\begin{split}
&S_{\sigma} = \sigma^{\vee} \cap M, \\ 
&\sigma^{\vee} = \left\{ u \in M \otimes_{\mathbb{Z}} \mathbb{R} \ | \ \langle u,v \rangle \geq 0 \ \mathrm{for \ all} \ v \in \sigma \right\}. \nonumber
\end{split} 
\end{align}
We will denote the element of $k[M]$ corresponding to $m \in M$ by $\chi(m)$ and write the group operation on $k[M]$ multiplicatively, so $\chi(m)\chi(m')=\chi(m+m')$. We obtain the following $M$-graded $k$-algebras
\begin{equation} \label{ch. 1, eqn1}
\Gamma(U_{\sigma}, \mathcal{O}_{X}) = \bigoplus_{m \in S_{\sigma}} k \chi(m) \subset \bigoplus_{m \in M} k \chi(m) = \Gamma(T, \mathcal{O}_{X}).
\end{equation}
There is a regular action of $T$ on $\Gamma(U_{\sigma},\mathcal{O}_{X})$. For $t \in T$ a closed point and $f : U_{\sigma} \longrightarrow k$ a regular function, one defines
\begin{equation}
(t \cdot f )(x) = f(t \cdot x). \nonumber
\end{equation}
The regular action of $T$ on $U_{\sigma}$ induces a decomposition into weight spaces (Complete Reducibility Theorem \cite[Thm.~2.30]{Per1}). This decomposition coincides precisely with the decomposition in equation (\ref{ch. 1, eqn1}). More generally, if $(\mathcal{E}, \Phi)$ is an equivariant quasi-coherent sheaf on $X$, there is a natural regular action of $T$ on $\Gamma(U_{\sigma},\mathcal{E})$ \cite[Subsect.~2.2.2, Ch.~4]{Per1}. This action can be described as follows. For any closed point $t \in T$, let $i_{t} : X \hookrightarrow T \times X$ be the inclusion induced by $t \hookrightarrow T$ and define $\Phi_{t} = i_{t}^{*}\Phi : t^{*}\mathcal{E} \longrightarrow  \mathcal{E}$. From the cocycle condition, we obtain $\Phi_{st} = \Phi_{t} \circ t^{*}\Phi_{s}$ for all closed points $s,t \in T$ (see Definition \ref{ch. 1, sect. 2, def. 1}). Also, for $f \in \Gamma(U_{\sigma},\mathcal{E})$ we have a canonically lifted section $t^{*}f \in \Gamma(U_{\sigma},t^{*}\mathcal{E})$, which allows us to define
\begin{equation}
t \cdot f = \Phi_{t}(t^{*}f). \nonumber
\end{equation}
Again, we get a decomposition into weight spaces \cite[Thm.~2.30]{Per1}
\begin{equation}
\Gamma(U_{\sigma}, \mathcal{E}) = \bigoplus_{m \in M} \Gamma(U_{\sigma},\mathcal{E})_{m}. \nonumber
\end{equation}
In particular, for $\mathcal{E} = \mathcal{O}_{X}$, we obtain $\Gamma(U_{\sigma},\mathcal{O}_{X})_{m} = k \chi(m)$ if $m \in S_{\sigma}$ and $\Gamma(U_{\sigma},\mathcal{O}_{X})_{m} = 0$ otherwise. It is not difficult to deduce from the previous discussion that the functor $\Gamma(U_{\sigma}, -)$ induces an equivalence between the category of equivariant quasi-coherent (resp.~coherent) sheaves on $U_{\sigma}$ and the category of $M$-graded (resp.~finitely generated $M$-graded) $S_{\sigma}$-modules \cite[Prop.~2.31]{Per1}. 

Before we proceed and use the previous notions to give Perling's characterisation of equivariant quasi-coherent sheaves on affine toric varieties in terms of $\sigma$-families, we remind the reader of the notion of a pure sheaf.
\begin{definition} \label{pure}
Let $\mathcal{E} \neq 0$ be a coherent sheaf on a scheme $X$ of finite type over $k$. The sheaf $\mathcal{E}$ is said to be \emph{pure} of dimension $d$ if $\mathrm{dim}(\mathcal{F}) = d$ for any coherent subsheaf $0 \neq \mathcal{F} \subset \mathcal{E}$. Here the \emph{dimension} of a coherent sheaf $\mathcal{F}$ is defined to be the dimension of the support $\mathrm{Supp}(\mathcal{F})$ of the coherent sheaf $\mathcal{F}$. In the case $X$ is in addition integral, we also refer to a pure sheaf on $X$ of dimension $\mathrm{dim}(X)$ as a \emph{torsion free} sheaf on $X$. \hfill $\oslash$
\end{definition}

\noindent For future purposes, we state the following easy results.
\begin{proposition} \label{ch. 1, sect. 2, prop. 1}
Let $X$ be a scheme of finite type over $k$, $\{U_{i}\}$ an open cover of $X$ and $\mathcal{E} \neq 0$ a coherent sheaf on $X$. Then $\mathcal{E}$ is pure of dimension $d$ if and only if for each $i$ the restriction $\mathcal{E}|_{U_{i}}$ is zero or pure of dimension $d$.
\end{proposition}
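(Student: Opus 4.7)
The plan is to reduce purity to a condition on associated points of the sheaf, which is manifestly local and therefore easy to compare on $X$ and on its open cover. Recall the standard characterisation \cite[Def.~1.1.2]{HL}: a nonzero coherent sheaf $\mathcal{F}$ on a Noetherian scheme is pure of dimension $d$ if and only if every associated point $x$ of $\mathcal{F}$ satisfies $\dim \overline{\{x\}} = d$. I would begin the proof by simply invoking this equivalence; it follows from the paper's definition because any embedded associated point of codimension $>0$ (in the sense of dimension $<d$) produces a nonzero subsheaf of dimension $<d$, and conversely a subsheaf of minimal dimension $<d$ has its generic points among the associated points of $\mathcal{F}$.

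Next I would record two elementary local facts. First, being an associated point is local: $x\in U\subset X$ (with $U$ open) is associated to $\mathcal{F}$ if and only if it is associated to $\mathcal{F}|_{U}$, since associated points are detected in the stalk $\mathcal{F}_{x}=(\mathcal{F}|_{U})_{x}$. Second, if $x\in U_{i}$ then $\overline{\{x\}}\cap U_{i}$ is a nonempty open in the irreducible set $\overline{\{x\}}$, so $\dim_{X}\overline{\{x\}}=\dim_{U_{i}}\overline{\{x\}}$. With these in hand both implications are immediate.

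For the forward direction, suppose $\mathcal{E}$ is pure of dimension $d$ and fix an index $i$ with $\mathcal{E}|_{U_{i}}\neq 0$. Every associated point $x$ of $\mathcal{E}|_{U_{i}}$ is an associated point of $\mathcal{E}$ lying in $U_{i}$, hence $\dim \overline{\{x\}}=d$ in $X$ and therefore also in $U_{i}$ by the second fact. By the characterisation, $\mathcal{E}|_{U_{i}}$ is pure of dimension $d$. For the reverse direction, assume $\mathcal{E}\neq 0$ and each $\mathcal{E}|_{U_{i}}$ is either zero or pure of dimension $d$. Any associated point $x$ of $\mathcal{E}$ lies in some $U_{i}$, which then has $\mathcal{E}|_{U_{i}}\neq 0$, and $x$ is an associated point of $\mathcal{E}|_{U_{i}}$; by purity on $U_{i}$ we obtain $\dim\overline{\{x\}}=d$. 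Since $\mathcal{E}\neq 0$, the characterisation gives that $\mathcal{E}$ is pure of dimension $d$.

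The main obstacle is not any of the bookkeeping above but the initial passage to the associated-point criterion: one needs to know that the paper's subsheaf-based definition of purity agrees with the associated-point characterisation on a Noetherian scheme. If one prefers to avoid this appeal, the same argument can be carried out directly by extending a hypothetical low-dimensional subsheaf of $\mathcal{E}|_{U_{i}}$ to a coherent subsheaf of $\mathcal{E}$ via the standard Noetherian extension result (\cite[Ex.~II.5.15]{HL}-style), then cutting down to its maximal subsheaf of dimension $<d$ to contradict purity on $X$; but the associated-point route is cleaner and is the one I would present.
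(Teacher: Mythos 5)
Your proof is correct, but it takes a different route from the paper's. The paper argues directly: the ``if'' direction is dismissed as trivial, and for the ``only if'' direction a hypothetical subsheaf $0 \neq \mathcal{F} \subset \mathcal{E}|_{U_{i}}$ of dimension $<d$ is globalised by passing to $Z = \overline{\mathrm{Supp}(\mathcal{F})}$ and taking the subsheaf $\mathcal{E}_{Z} \subset \mathcal{E}$ of sections supported on $Z$, which is nonzero and of dimension $<d$, contradicting purity. You instead reduce purity to the associated-point criterion, after which both implications become transparent because associated points and the dimensions of their closures are computed locally. The two approaches are essentially equivalent in content: the hard direction of the associated-point characterisation (pure implies no low-dimensional associated points) is proved by exactly the kind of supported-sections or extension construction the paper uses, a point you correctly flag at the end. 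What your route buys is a uniform, symmetric treatment of both implications and a statement of the locality in its cleanest form; what the paper's route buys is a short self-contained argument that never leaves the subsheaf-based definition. One small caution: you should make sure the equivalence you invoke is actually available in the form you cite (in \cite{HL} it appears as a remark accompanying the definition rather than as the definition itself), or else fold in the one-line $\mathcal{E}_{Z}$ construction so the argument stands on its own; you also implicitly use that a nonempty open subset of an irreducible closed subset of a finite-type $k$-scheme has the same dimension, which is worth stating since the same fact is silently used in the paper's proof as well.
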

\begin{proof}
The ``if'' part is trivial. Assume $\mathcal{E} \neq 0$ is pure of dimension $d$ but there is a coherent subsheaf $0 \neq \mathcal{F} \subset \mathcal{E}|_{U_{i}}$ having dimension $<d$ for some $i$. Let $Z = \overline{\mathrm{Supp}(\mathcal{F})}$ (where the bar denotes closure in $X$) and consider the coherent subsheaf $\mathcal{E}_{Z} \subset \mathcal{E}$ defined by $\mathcal{E}_{Z}(U) = \mathrm{ker}(\mathcal{E}(U) \longrightarrow \mathcal{E}(U \setminus Z))$ for all open subsets $U \subset X$. This sheaf is nonzero because $0 \neq \mathcal{F} \subset \mathcal{E}_{Z}|_{U_{i}}$ yet $\mathrm{Supp}(\mathcal{E}_{Z}) \subset Z$ so $\mathrm{dim}(\mathcal{E}_{Z}) < d$, contradicting purity.
\end{proof}
\begin{proposition} \label{lemma}
Let $X$, $Y$ be schemes of finite type over $k$ and let $X$ be reduced. Denote by $p_{2} : X \times Y \longrightarrow Y$ projection to the second component. Let $\mathcal{E}$ be a coherent sheaf on $X \times Y$, $\mathcal{F}$ a coherent sheaf on $Y$ and $\Phi, \Psi : \mathcal{E} \longrightarrow p_{2}^{*}\mathcal{F}$ morphisms. For any closed point $x \in X$, let $i_{x} : Y \hookrightarrow X \times Y$ be the inclusion induced by $x \hookrightarrow X$. If $i_{x}^{*} \Phi = i_{x}^{*}\Psi$ for all closed points $x \in X$, then $\Phi = \Psi$.
\end{proposition}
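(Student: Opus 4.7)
The natural first move is to replace $\Phi$ and $\Psi$ by their difference $\Theta := \Phi - \Psi$; the statement is then that $i_x^{*}\Theta = 0$ for every closed point $x \in X$ forces $\Theta = 0$. Since this is local on $X \times Y$, I would reduce to the affine case by choosing an affine cover of $X \times Y$ by opens of the form $\mathrm{Spec}(A) \times \mathrm{Spec}(B)$, where $A$ is a reduced finitely generated $k$-algebra (reducedness of $X$ descends to affine opens). In these coordinates, $\Theta$ becomes an $A \otimes_{k} B$-linear map $\Theta : M \to A \otimes_{k} N$, where $M$ corresponds to $\mathcal{E}$ and $N$ to $\mathcal{F}$, and for the maximal ideal $\mathfrak{m}_{x} \subset A$ of a closed point $x$ (with $A/\mathfrak{m}_{x} = k$ by Nullstellensatz) the vanishing $i_{x}^{*}\Theta = 0$ translates into the containment $\Theta(M) \subseteq \mathfrak{m}_{x} \cdot (A \otimes_{k} N)$.

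The core algebraic step is then to intersect these containments over all closed points $x$ and show the intersection is zero. I would first identify $\mathfrak{m}_{x} \cdot (A \otimes_{k} N)$ with $\mathfrak{m}_{x} \otimes_{k} N$ inside $A \otimes_{k} N$ (this works because $\otimes_{k}$ is exact and $\mathfrak{m}_{x}$ is an ideal), and then use that $N$ is free as a $k$-module: picking a $k$-basis $\{ e_{i} \}$ of $N$ gives a decomposition $A \otimes_{k} N \cong \bigoplus_{i} A e_{i}$ that restricts to $\mathfrak{m}_{x} \otimes_{k} N \cong \bigoplus_{i} \mathfrak{m}_{x} e_{i}$ for every $x$. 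Since all these submodules share this common direct-sum decomposition, intersection passes through componentwise to yield
\begin{equation} \nonumber
\bigcap_{x \in X \text{ closed}} \mathfrak{m}_{x} \cdot (A \otimes_{k} N) \; = \; \bigoplus_{i} \Bigl( \bigcap_{x} \mathfrak{m}_{x} \Bigr) e_{i}.
\end{equation}

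The proof finishes by invoking the Nullstellensatz: since $k$ is algebraically closed and $A$ is a reduced finitely generated $k$-algebra, $\bigcap_{x} \mathfrak{m}_{x}$ is the nilradical of $A$, which is zero. Therefore $\Theta(M) = 0$, and the affine result patches to give $\Theta = 0$ on all of $X \times Y$. The only point I expect to require genuine care is the identification $\mathfrak{m}_{x} \cdot (A \otimes_{k} N) = \mathfrak{m}_{x} \otimes_{k} N$ and the observation that intersection commutes with the direct-sum decomposition coming from a $k$-basis of $N$; both hinge on $k$ being a field (so that $N$ is $k$-free) and on $X$ being reduced (so that Nullstellensatz actually gives $\bigcap_{x}\mathfrak{m}_{x} = 0$), rather than on any hypothesis of flatness for $\mathcal{E}$ or $\mathcal{F}$.
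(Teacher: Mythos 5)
Your proposal is correct and follows essentially the same route as the paper's proof: reduce to affine opens, translate the hypothesis into $\Theta(M)\subseteq \mathfrak{m}_{x}\cdot(A\otimes_{k}N)$ for every maximal ideal $\mathfrak{m}_{x}$, and conclude from $\bigcap_{x}\mathfrak{m}_{x}=\sqrt{(0)}=(0)$ for a reduced finitely generated $k$-algebra. The paper phrases the last step elementwise (for each $\xi=\Phi(e)-\Psi(e)$) and leaves the identification of the kernel with $\mathfrak{m}_{x}\otimes_{k}N$ implicit, which you spell out via a $k$-basis of $N$; this is a presentational difference only.
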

\begin{proof}
Using open affine covers, it is enough to prove the case $X = \mathrm{Spec}(R)$, $Y = \mathrm{Spec}(S)$, where $R,S$ are finitely generated $k$-algebras and $R$ has no nilpotent elements. Consider the finitely generated $R \otimes_{k}S$-module $E = \Gamma(X,\mathcal{E})$ and the finitely generated $S$-module $F = \Gamma(Y,\mathcal{F})$. Let $\Phi, \Psi : E \longrightarrow F \otimes_{k} R$ be the induced morphisms \cite[Prop.~II.5.2]{Har1}. Let $e \in E$ and let $\xi = \Phi(e) - \Psi(e)$. We need to prove $\xi = 0$. But we know that for any maximal ideal $\mathfrak{m} \subset R$, the induced morphism 
\begin{equation*}
F \otimes_{k}R \longrightarrow F \otimes_{k}R/\mathfrak{m} \cong F,
\end{equation*}  
maps $\xi$ to zero \cite[Prop.~II.5.2]{Har1}. Since $R$ has no nilpotent elements, the intersection of all its maximal ideals is zero $\bigcap_{\mathfrak{m} \subset R} \mathfrak{m} = \sqrt{(0)} = (0)$ (\cite[Prop.~1.8]{AM}, \cite[Thm.~4.19]{Eis}), hence $\xi=0$. 
\end{proof}
\begin{proposition} \label{ch. 1, sect. 2, prop. 2}
Let $X$ be a scheme of finite type over $k$, $G$ an affine algebraic group acting regularly on $X$ and $\mathcal{E} \neq 0$ a $G$-equivariant coherent sheaf on $X$. Then $\mathcal{E}$ is pure of dimension $d$ if and only if all its nontrivial $G$-equivariant coherent subsheaves have dimension $d$.
\end{proposition}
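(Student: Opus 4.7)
The ``only if'' direction is trivial: a nonzero $G$-equivariant coherent subsheaf is in particular a nonzero coherent subsheaf, and purity of $\mathcal{E}$ then forces it to have dimension $d$. For the converse I would argue the contrapositive. Taking $\mathcal{F} = \mathcal{E}$ in the hypothesis gives $\dim(\mathcal{E}) = d$, so if $\mathcal{E}$ is not pure of dimension $d$ there is a nonzero coherent subsheaf of strictly smaller dimension. The candidate for a nonzero $G$-equivariant subsheaf of dimension $<d$ is then $T := T_{d-1}(\mathcal{E})$, the maximal coherent subsheaf of $\mathcal{E}$ of dimension $\leq d-1$; it exists by the Noetherian property of $\mathcal{E}$ together with the fact that the sum of two subsheaves of dimension $\leq d-1$ again has dimension $\leq d-1$ (supports adding as unions), and it is nonzero by the previous sentence.

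The key step is to show that $T$ inherits a $G$-equivariant structure from $\Phi$. At the level of closed points this is essentially automatic from maximality: for each closed $g \in G$, the isomorphism $\Phi_{g} := i_{g}^{*}\Phi : g^{*}\mathcal{E} \to \mathcal{E}$ maps $g^{*}T$ isomorphically onto a coherent subsheaf of $\mathcal{E}$ of the same dimension $\leq d-1$ (the translation $g : X \to X$ is an automorphism and preserves dimensions of supports), so $\Phi_{g}(g^{*}T) \subset T$ by maximality, with equality after applying the same reasoning to $g^{-1}$.

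To globalise this pointwise invariance into an actual equivariant structure, I would invoke Proposition \ref{lemma} (applicable since $G$ is reduced, being smooth by Cartier's theorem in characteristic $0$). Since $\sigma = p_{2} \circ \alpha$ with $\alpha(g,x) = (g, g \cdot x)$ an automorphism of $G \times X$, the morphism $\sigma$ is flat and $\sigma^{*}T \hookrightarrow \sigma^{*}\mathcal{E}$ remains an inclusion. The composite
\begin{equation*}
\Theta : \sigma^{*}T \hookrightarrow \sigma^{*}\mathcal{E} \xrightarrow{\Phi} p_{2}^{*}\mathcal{E} \twoheadrightarrow p_{2}^{*}(\mathcal{E}/T)
\end{equation*}
satisfies $i_{g}^{*}\Theta = 0$ for every closed $g \in G$ by the previous paragraph, so Proposition \ref{lemma} forces $\Theta = 0$ and hence $\Phi(\sigma^{*}T) \subset p_{2}^{*}T$. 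The coherent cokernel $\mathcal{Q} := p_{2}^{*}T / \Phi(\sigma^{*}T)$ then has $i_{g}^{*}\mathcal{Q} = 0$ for all closed $g$, and at each closed point $(g,x) \in G \times X$ a short Nakayama argument (the ideal $\mathfrak{m}_{g} \cdot \mathcal{O}_{G \times X,(g,x)}$ sits inside the maximal ideal and annihilates the finitely generated stalk $\mathcal{Q}_{(g,x)}$) gives $\mathcal{Q}_{(g,x)} = 0$, so $\mathcal{Q} = 0$ and $\Phi$ restricts to an isomorphism $\sigma^{*}T \xrightarrow{\sim} p_{2}^{*}T$. The cocycle condition for the restriction is inherited from that of $\Phi$, so $T$ is a nonzero $G$-equivariant coherent subsheaf of dimension $<d$, yielding the desired contradiction. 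The main obstacle, as I see it, is precisely this globalisation step: pointwise invariance of $T$ is formal from its uniqueness, but upgrading it to a genuine equivariant structure on $G \times X$ requires the combination of Proposition \ref{lemma} (to descend the inclusion $\Phi(\sigma^{*}T) \subset p_{2}^{*}T$ from closed fibres) with the Nakayama step (to promote this inclusion to equality).
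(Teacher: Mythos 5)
Your proposal is correct and follows essentially the same route as the paper: both reduce to the maximal torsion subsheaf, obtain its invariance at closed points from maximality, and then invoke Proposition \ref{lemma} to upgrade the fibrewise statement to the factorisation $\Phi(\sigma^{*}T) \subset p_{2}^{*}T$. Your additional Nakayama step showing the restricted map is actually an isomorphism (rather than just an inclusion) is a detail the paper leaves implicit, and is a welcome bit of extra care.
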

\begin{proof}
There is a unique filtration
\begin{equation}
0 \subset T_{0}(\mathcal{E}) \subset \cdots \subset T_{d}(\mathcal{E}) = \mathcal{E}, \nonumber
\end{equation}
where $T_{i}(\mathcal{E})$ is the maximal coherent subsheaf of $\mathcal{E}$ of dimension $\leq i$. This filtration is called the torsion filtration of $\mathcal{E}$ \cite[Sect.~1.1]{HL}. We claim each $T_{i}(\mathcal{E})$ is an equivariant coherent subsheaf of $\mathcal{E}$, i.e.~the morphism 
\begin{equation*}
\sigma^{*}(T_{i}(\mathcal{E})) \hookrightarrow \sigma^{*}(\mathcal{E}) \stackrel{\Phi}{\longrightarrow} p_{2}^{*}(\mathcal{E}),
\end{equation*}
factors through $p_{2}^{*}(T_{i}(\mathcal{E}))$. This would imply the proposition. By definition of $T_{i}(\mathcal{E})$, the morphism 
\begin{equation*}
g^{*}(T_{i}(\mathcal{E})) \hookrightarrow g^{*}(\mathcal{E}) \stackrel{i_{g}^{*}\Phi}{\longrightarrow} \mathcal{E},
\end{equation*}
factors through $T_{i}(\mathcal{E})$ for any closed point $g \in G$. The result now follows from Proposition \ref{lemma} applied to the morphisms 
\begin{align*}
&\sigma^{*}(T_{i}(\mathcal{E})) \hookrightarrow \sigma^{*}(\mathcal{E}) \stackrel{\Phi}{\longrightarrow} p_{2}^{*}(\mathcal{E}) \longrightarrow p_{2}^{*}(\mathcal{E} / T_{i}(\mathcal{E})), \\
&\sigma^{*}(T_{i}(\mathcal{E})) \stackrel{0}{\longrightarrow} p_{2}^{*}(\mathcal{E} / T_{i}(\mathcal{E})).
\end{align*}
\end{proof}

Let $U_{\sigma}$ be an affine toric variety defined by a cone $\sigma$ in a lattice $N$. We have already seen that $\Gamma(U_{\sigma},-)$ induces an equivalence between the category of equivariant quasi-coherent sheaves on $U_{\sigma}$ and the category of $M$-graded $k[S_{\sigma}]$-modules. The latter category can be conveniently reformulated using Perling's notion of a $\sigma$-family \cite[Def.~4.2]{Per1}.
\begin{definition}
For $m,m' \in M$, $m \leq_{\sigma} m^{\prime}$ means $m^{\prime} - m \in S_{\sigma}$. A \emph{$\sigma$-family} consists of the following data: a family of $k$-vector spaces $\{E_{m}^{\sigma}\}_{m \in M}$ and $k$-linear maps $\chi_{m,m^{\prime}}^{\sigma} : E_{m}^{\sigma} \longrightarrow E_{m^{\prime}}^{\sigma}$ for all $m \leq_{\sigma} m'$, such that $\chi_{m,m}^{\sigma} = 1$ and $\chi_{m,m^{\prime\prime}}^{\sigma} = \chi_{m^{\prime},m^{\prime\prime}}^{\sigma} \circ \chi_{m,m^{\prime}}^{\sigma}$ for all $m \leq_{\sigma} m^{\prime} \leq_{\sigma} m^{\prime\prime}$. A \emph{morphism of $\sigma$-families} $\hat{\phi}^{\sigma} : \hat{E}^{\sigma} \longrightarrow \hat{F}^{\sigma}$ is a family of $k$-linear maps $\{\phi_{m} : E_{m}^{\sigma} \longrightarrow F_{m}^{\sigma} \}_{m \in M}$, such that $\phi_{m^{\prime}}^{\sigma} \circ (\chi_{E})_{m,m^{\prime}}^{\sigma}= (\chi_{F})_{m,m^{\prime}}^{\sigma} \circ \phi_{m}^{\sigma}$ for all $m \leq_{\sigma} m'$. \hfill $\oslash$
\end{definition}

\noindent Let $(\mathcal{E},\Phi)$ be an equivariant quasi-coherent sheaf on $U_{\sigma}$. Denote the corresponding $M$-graded $k[S_{\sigma}]$-module by $E^{\sigma} = \bigoplus_{m \in M} E_{m}^{\sigma}$. This gives us a $\sigma$-family $\{E_{m}^{\sigma}\}_{m \in M}$ by taking
\begin{equation}
\chi_{m,m'}^{\sigma} : E_{m}^{\sigma} \longrightarrow E_{m'}^{\sigma}, \ \chi_{m,m'}^{\sigma}(e) = \chi(m'-m) \cdot e, \nonumber
\end{equation}
for all $m \leq_{\sigma} m'$. This establishes an equivalence between the category of equivariant quasi-coherent sheaves on $U_{\sigma}$ and the category of $\sigma$-families \cite[Thm.~4.5]{Per1}. 

Recall that an affine toric variety $U_{\sigma}$ defined by a cone $\sigma$ of dimension $s$ in a lattice $N$ of rank $r$ is nonsingular if and only if $\sigma$ is generated by part of a $\mathbb{Z}$-basis for $N$. Assume this is the case, then $U_{\sigma} \cong k^{s} \times (k^{*})^{r-s}$. Let $\sigma(1) = \{\rho_{1}, \ldots, \rho_{s}\}$ be the rays (i.e.~1-dimensional faces) of $\sigma$. Let $n(\rho_{i})$ be the first integral lattice point on the ray $\rho_{i}$. Then $(n(\rho_{1}), \ldots, n(\rho_{s}))$ is part of a $\mathbb{Z}$-basis for $N$. Let $(m(\rho_{1}), \ldots, m(\rho_{s}))$ be the corresponding part of a dual basis for $M$. The cosets $([m(\rho_{1})], \ldots, [m(\rho_{s})])$ form a $\mathbb{Z}$-basis for $M/S_{\sigma}^{\perp}$. Here $S_{\sigma}^{\perp}$ denotes the subgroup $S_{\sigma}^{\perp} = \sigma^{\perp} \cap M$, where $\sigma^{\perp} = \{u \in M \otimes_{\mathbb{Z}} \mathbb{R} \ | \ \langle u,v \rangle = 0 \ \mathrm{for \ all} \ v \in \sigma \}$. We obtain $M / S_{\sigma}^{\perp} \cong \mathbb{Z}^{s}$. Let $\hat{E}^{\sigma}$ be a $\sigma$-family. We can repackage the data in $\hat{E}^{\sigma}$ somewhat more efficiently as follows. First of all, note that for all $m'-m \in S_{\sigma}^{\perp}$, the $k$-linear map $\chi_{m,m'}^{\sigma} : E_{m}^{\sigma} \longrightarrow E_{m'}^{\sigma}$ is an isomorphism, so we might just as well restrict attention to $\sigma$-families having $\chi_{m,m'}^{\sigma} = 1$ (and hence $E_{m}^{\sigma} = E_{m'}^{\sigma}$) for all $m'-m \in S_{\sigma}^{\perp}$. We can then rewrite for any $\lambda_{1}, \ldots, \lambda_{s} \in \mathbb{Z}$
\begin{align} 
\begin{split}
&E^{\sigma}(\lambda_{1}, \ldots, \lambda_{s}) = E_{m}^{\sigma}, \ \mathrm{where} \ m=\sum_{i=1}^{s} \lambda_{i} m(\rho_{i}), \\
&\chi_{1}^{\sigma}(\lambda_{1},\ldots,\lambda_{s}) : E^{\sigma}(\lambda_{1}, \ldots, \lambda_{s}) \longrightarrow E^{\sigma}(\lambda_{1}+1,\lambda_{2},\ldots, \lambda_{s}), \\
&\chi_{1}^{\sigma}(\lambda_{1},\ldots,\lambda_{s}) = \chi_{m,m'}^{\sigma}, \ \mathrm{where} \ m = \sum_{i=1}^{s} \lambda_{i}m(\rho_{i}), \ m' = m(\rho_{1}) + m, \\
&\ldots \nonumber
\end{split}
\end{align}
When we would like to suppress the domain, we also denote these maps somewhat sloppily by $x_{1} \cdot = \chi_{1}^{\sigma}(\lambda_{1}, \ldots, \lambda_{s})$, $\ldots$, $x_{s} \cdot = \chi_{s}^{\sigma}(\lambda_{1}, \ldots, \lambda_{s})$. These $k$-linear maps satisfy $x_{i}x_{j} = x_{j}x_{i}$ for all $i,j = 1, \ldots, s$. The equivalence between the category of equivariant quasi-coherent sheaves on $U_{\sigma}$ and the category of $\sigma$-families restricts to an equivalence between the full subcategories of equivariant coherent sheaves on $U_{\sigma}$ and finite $\sigma$-families (see \cite[Def.~4.10, Prop.~4.11]{Per1}). A finite $\sigma$-family is a $\sigma$-family $\hat{E}^{\sigma}$ such that all $E^{\sigma}(\lambda_{1}, \ldots, \lambda_{s})$ are finite-dimensional $k$-vector spaces, there are $A_{1}, \ldots, A_{s} \in \mathbb{Z}$ such that $E^{\sigma}(\lambda_{1}, \ldots, \lambda_{s}) = 0$ unless $\lambda_{1} \geq A_{1}$, $\ldots$, $\lambda_{s} \geq A_{s}$ and there are only finitely many $(\Lambda_{1}, \ldots, \Lambda_{s}) \in \mathbb{Z}^{s}$ such that  
\begin{align} 
\begin{split}
&E^{\sigma}(\Lambda_{1}, \ldots, \Lambda_{s}) \\
&\neq \mathrm{span}_{k}\left\{ x_{1}^{\Lambda_{1} - \lambda_{1}} \cdots x_{s}^{\Lambda_{s} - \lambda_{s}} e \ \right| \left. \ e \in E^{\sigma}(\lambda_{1}, \ldots, \lambda_{s}) \ \mathrm{with} \ \Lambda_{i} - \lambda_{i} \geq 0, \ \mathrm{not \ all \ } 0 \right\}. \nonumber
\end{split}
\end{align}

\subsection{Combinatorial Descriptions in the Case of Irreducible Support}

Going from affine toric varieties to general toric varieties, Perling introduces the notion of $\Delta$-families \cite[Sect.~4.2]{Per1}, which are basically collections of $\sigma$-families, for all cones $\sigma$ in the fan $\Delta$, satisfying certain compatibility conditions. We will not use this notion. Instead, we will first study pure equivariant sheaves on nonsingular affine toric varieties and then use gluing to go to general toric varieties. In order to avoid heavy notation, we will restrict to the case of irreducible support and defer the general case to the next subsection. Recall that for a toric variety $X$ defined by a fan $\Delta$ in a lattice $N$, there is a bijective correspondence between the elements of $\Delta$ and the invariant closed (irreducible) subvarieties of $X$ \cite[Sect.~3.1]{Ful}. The correspondence associates to a cone $\sigma \in \Delta$ the invariant closed subvariety $V(\sigma) \subset X$, which is defined to be the closure in $X$ of the unique orbit of minimal dimension in $U_{\sigma}$. If $\mathrm{dim}(\sigma)=s$, then $\mathrm{codim}(V(\sigma))=s$.
\begin{proposition} \label{ch. 1, sect. 2, prop. 3}
Let $U_{\sigma}$ be a nonsingular affine toric variety defined by a cone\footnote{From now on, in this setting we will always assume $\mathrm{dim}(\sigma) = r$, so $U_{\sigma} \cong \mathbb{A}^{r}$.} $\sigma$ in a lattice $N$ of rank $r$. Let $\mathcal{E} \neq 0$ be an equivariant coherent sheaf on $U_{\sigma}$ with irreducible support. Then $\mathrm{Supp}(\mathcal{E}) = V(\tau)$, for some $\tau \prec \sigma$. Now fix $\tau \prec \sigma$, let $(\rho_{1}, \ldots, \rho_{r})$ be the rays of $\sigma$ and $(\rho_{1}, \ldots, \rho_{s}) \subset (\rho_{1}, \ldots, \rho_{r})$ the rays of $\tau$. Then $\mathrm{Supp}(\mathcal{E}) = V(\tau)$ if and only if there are integers $B_{1}, \ldots, B_{s}$ such that $E^{\sigma}(\lambda_{1}, \ldots, \lambda_{r}) = 0$ unless $\lambda_{1} \leq B_{1}$, $\ldots$, $\lambda_{s} \leq B_{s}$, but for each $\lambda_{i} \neq \lambda_{1}, \ldots, \lambda_{s}$ there is no such upper bound.
\end{proposition}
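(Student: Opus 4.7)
The plan is to argue in two stages. First, I would show that $\mathrm{Supp}(\mathcal{E})$ is $T$-invariant: the equivariant structure provides, for each closed $t \in T$, an isomorphism $\Phi_{t} : t^{*}\mathcal{E} \longrightarrow \mathcal{E}$, so $\mathrm{Supp}(\mathcal{E})$ is stable under $T$. Since $T$ is connected, the (finitely many) irreducible components of $\mathrm{Supp}(\mathcal{E})$ are each $T$-stable, and the orbit-cone correspondence \cite[Sect.~3.1]{Ful} identifies the invariant closed irreducible subvarieties of $U_{\sigma}$ with the orbit closures $V(\tau)$, $\tau \prec \sigma$. The assumed irreducibility of $\mathrm{Supp}(\mathcal{E})$ then forces $\mathrm{Supp}(\mathcal{E}) = V(\tau)$ for some $\tau \prec \sigma$, proving the first assertion.

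For the characterisation, I would pass to the module side via the equivalence with finite $\sigma$-families. Identify $U_{\sigma} \cong \mathbb{A}^{r} = \mathrm{Spec}\, k[x_{1}, \ldots, x_{r}]$ with $x_{i} = \chi(m(\rho_{i}))$, so that $V(\tau)$ is cut out by $(x_{1}, \ldots, x_{s})$. Then $\hat{E}^{\sigma}$ corresponds to the finitely generated $\mathbb{Z}^{r}$-graded module $E = \bigoplus_{\lambda} E^{\sigma}(\lambda)$ with $x_{i}$ of degree $\mathbf{e}_{i}$. By the Nullstellensatz, $\mathrm{Supp}(\mathcal{E}) \subset V(x_{i})$ is equivalent to $x_{i}^{N} E = 0$ for some $N \geq 1$. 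Combined with irreducibility of the support and the orbit-cone correspondence, $\mathrm{Supp}(\mathcal{E}) = V(\tau)$ will therefore be equivalent to $x_{i}^{N_{i}} E = 0$ for each $i \leq s$ together with $x_{j}^{N} E \neq 0$ for every $j > s$ and every $N \geq 1$.

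The heart of the proof will be the following elementary claim: there exists $N$ with $x_{i}^{N} E = 0$ if and only if there is an upper bound $B_{i}$ with $E^{\sigma}(\lambda) = 0$ whenever $\lambda_{i} > B_{i}$. Pick homogeneous generators $f_{1}, \ldots, f_{\ell}$ of $E$ at degrees $\lambda^{(1)}, \ldots, \lambda^{(\ell)}$, using finiteness of the $\sigma$-family. For the forward direction, any $v \in E^{\sigma}(\mu)$ is a $k$-linear combination of monomials $x_{1}^{b_{1}} \cdots x_{r}^{b_{r}} f_{j}$ with $(b_{1}, \ldots, b_{r}) = \mu - \lambda^{(j)}$; if $\mu_{i} \geq N + \max_{j} \lambda^{(j)}_{i}$ then $b_{i} \geq N$ in every such term, forcing $v = 0$ and giving the bound $B_{i} = N + \max_{j} \lambda^{(j)}_{i} - 1$. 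For the reverse direction, choosing $N > B_{i} - \min_{j} \lambda^{(j)}_{i}$ sends every generator $f_{j}$ into the zero graded piece $E^{\sigma}(\lambda^{(j)} + N \mathbf{e}_{i})$. Applying the claim to $i = 1, \ldots, s$ produces the upper bounds $B_{1}, \ldots, B_{s}$, and applying it to $j > s$ converts non-annihilation by any power of $x_{j}$ into absence of an upper bound on $\lambda_{j}$.

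The main obstacle I anticipate is the bookkeeping: one must keep straight the dictionary between faces of $\sigma$, coordinates on $\mathbb{A}^{r}$, and coordinates of the $\mathbb{Z}^{r}$ grading, and use irreducibility of the support at exactly the right place to prevent $\mathrm{Supp}(\mathcal{E})$ from collapsing into a smaller orbit closure $V(\tau')$ with $\tau' \succ \tau$. Once this dictionary and the elementary claim above are established, the proposition follows by assembling the pieces.
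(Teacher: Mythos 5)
Your proposal is correct and follows essentially the same route as the paper: the support is an invariant irreducible closed subset, hence an orbit closure $V(\tau)$, and the containment $\mathrm{Supp}(\mathcal{E}) \subset V(\tau)$ is converted, via finite generation of $\Gamma(U_{\sigma},\mathcal{E})$, into nilpotence of the $\chi(m(\rho_{i}))$, which by finiteness of the $\sigma$-family is exactly the existence of the upper bounds $B_{i}$. Your ``elementary claim'' with homogeneous generators just spells out the step the paper dismisses as ``easily follows from the fact that the $\sigma$-family is finite.''
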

\begin{proof}
Note that $V(\tau)$ is defined by the prime ideal $\mathcal{I}_{\tau} = \langle \chi(m(\rho_{1})), \ldots, \chi( m (\rho_{s}) ) \rangle$. Define the open subset $U = U_{\sigma} \setminus V(\tau) = D(\chi(m(\rho_{1}))) \cup \cdots \cup D(\chi(m(\rho_{s})))$, where $D(\chi(m(\rho_{i})))$ is the set of all prime ideals not containing $\chi(m(\rho_{i}))$. The open subset $D(\chi(m(\rho_{i}))) = \mathrm{Spec}(k[S_{\sigma}][\chi(-m(\rho_{i}))])$. Clearly \cite[Prop.~II.5.2]{Har1}
\begin{align} 
\begin{split}
\mathrm{Supp}(\mathcal{E}) \subset V(\tau) &\Longleftrightarrow \mathcal{E}|_{D(\chi(m(\rho_{1})))} = \cdots = \mathcal{E}|_{D(\chi(m(\rho_{s})))} = 0 \\
&\Longleftrightarrow \Gamma(U_{\sigma}, \mathcal{E}) \otimes_{k[S_{\sigma}]} k[S_{\sigma}][\chi(-m(\rho_{1}))]  =  0 \\
&\qquad \, \, \, \cdots \\
&\qquad \, \, \, \Gamma(U_{\sigma}, \mathcal{E}) \otimes_{k[S_{\sigma}]} k[S_{\sigma}][\chi(-m(\rho_{s}))] = 0. \nonumber
\end{split}
\end{align}
Since $\Gamma(U_{\sigma}, \mathcal{E})$ is finitely generated, we in fact have
\begin{align} 
\begin{split}
\mathrm{Supp}(\mathcal{E}) \subset V(\tau) \Longleftrightarrow & \exists \kappa_{1}, \ldots, \kappa_{s} \in \mathbb{Z}_{>0} \\ 
&\chi(m(\rho_{1}))^{\kappa_{1} } \Gamma(U_{\sigma}, \mathcal{E}) = \cdots = \chi(m_{\rho_{s}})^{\kappa_{s} } \Gamma(U_{\sigma}, \mathcal{E}) = 0. \nonumber
\end{split}
\end{align}
The proof now easily follows from the fact that the $\sigma$-family corresponding to $\mathcal{E}$ is finite.
\end{proof}
\noindent The following proposition describes pure equivariant sheaves with irreducible support on nonsingular affine toric varieties\footnote{The author would like to thank the referee for indicating the current proof of this proposition, which is quicker than the original.}.
\begin{proposition} \label{ch. 1, sect. 2, prop. 4}
Let $U_{\sigma}$ be a nonsingular affine toric variety defined by a cone $\sigma$ in a lattice $N$ of rank $r$. Let $\tau \prec \sigma$, let $(\rho_{1}, \ldots, \rho_{r})$ be the rays of $\sigma$ and $(\rho_{1}, \ldots, \rho_{s}) \subset (\rho_{1}, \ldots, \rho_{r})$ the rays of $\tau$. Then the category of pure equivariant sheaves $\mathcal{E}$ on $U_{\sigma}$ with support $V(\tau)$ is equivalent to the category of $\sigma$-families $\hat{E}^{\sigma}$ having the following properties:
\begin{enumerate}
	\item [$\mathrm{(i)}$] There are integers $A_{1} \leq B_{1}, \ldots, A_{s} \leq B_{s}, A_{s+1}, \ldots, A_{r}$ such that $E^{\sigma}(\lambda_{1}, \ldots \lambda_{r}) = 0$ unless $A_{1} \leq \lambda_{1} \leq B_{1}$, $\ldots$, $A_{s} \leq \lambda_{s} \leq B_{s}$, $A_{s+1} \leq \lambda_{s+1}$, $\ldots$, $A_{r} \leq \lambda_{r}$. 
	\item[$\mathrm{(ii)}$] For all integers $A_{1} \leq \Lambda_{1} \leq B_{1}$, $\ldots$, $A_{s} \leq \Lambda_{s} \leq B_{s}$, there is a finite dimensional $k$-vector space $E^{\sigma}(\Lambda_{1}, \ldots, \Lambda_{s}, \infty, \ldots, \infty)$ (not all of them zero) satisfying the following properties. All vector spaces  $E^{\sigma}(\Lambda_{1}, \ldots, \Lambda_{s}, \lambda_{s+1}, \ldots, \lambda_{r})$ are subspaces of $E^{\sigma}(\Lambda_{1}, \ldots, \Lambda_{s}, \infty, \ldots, \infty)$ and the maps $x_{s+1}, \ldots, x_{r}$ are inclusions. Moreover, there are integers $\lambda_{s+1}, \ldots, \lambda_{r}$ such that we have $E^{\sigma}(\Lambda_{1}, \ldots, \Lambda_{s}, \lambda_{s+1}, \ldots, \lambda_{r}) = E^{\sigma}(\Lambda_{1}, \ldots, \Lambda_{s}, \infty, \ldots, \infty)$.
\end{enumerate}
\end{proposition}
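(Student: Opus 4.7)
The plan is to leverage the equivalence between equivariant coherent sheaves on $U_\sigma$ and finite $\sigma$-families already recalled, and translate the two sheaf-theoretic conditions (support equal to $V(\tau)$, and purity of dimension $r-s$) into combinatorial conditions on the $\sigma$-family. Functoriality and matching of morphisms are automatic from the existing equivalence, so only the objects need to be checked.

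First I would read off condition (i). The finite $\sigma$-family condition produces integers $A_1, \ldots, A_r$ bounding the grading from below. Applying Proposition \ref{ch. 1, sect. 2, prop. 3} to $\mathrm{Supp}(\mathcal{E}) = V(\tau)$ yields the upper bounds $\lambda_i \leq B_i$ for $i = 1, \ldots, s$, and the proposition also forbids upper bounds in the directions $i > s$ (since the support is exactly $V(\tau)$, not a proper invariant subvariety). Together these are exactly (i).

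The heart of the argument is to recognise that purity is equivalent to injectivity of the maps $x_{s+1}, \ldots, x_r$ at every $(\lambda_1, \ldots, \lambda_r)$. By Proposition \ref{ch. 1, sect. 2, prop. 2}, $\mathcal{E}$ is pure of dimension $r-s$ iff every nonzero equivariant coherent subsheaf has dimension $r-s$, and such subsheaves correspond to sub-$\sigma$-families $\hat{F}^\sigma \subset \hat{E}^\sigma$. If some $x_j$ with $j > s$ has nonzero kernel at a multi-index $\lambda$, the sub-$\sigma$-family generated by that kernel is bounded above in the $\rho_j$-direction, so by Proposition \ref{ch. 1, sect. 2, prop. 3} it corresponds to a subsheaf supported in $V(\tau')$ for the cone $\tau'$ obtained from $\tau$ by adjoining $\rho_j$, giving strictly smaller dimension and contradicting purity. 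Conversely, given injectivity of $x_{s+1}, \ldots, x_r$, any nonzero sub-$\sigma$-family $\hat{F}^\sigma$ inherits the same injectivity, so from any $0 \neq f \in F^\sigma(\mu)$ the elements $x_{s+1}^{k_{s+1}} \cdots x_r^{k_r} f$ are all nonzero; hence there is no upper bound in the transverse directions, and by the support criterion (applied to irreducible components of $\mathrm{Supp}(\mathcal{F})$) we conclude $\mathrm{Supp}(\mathcal{F}) \supset V(\tau)$, whence $\dim \mathcal{F} = r - s$. With injectivity in hand, the chains $\cdots \hookrightarrow E^\sigma(\Lambda_1, \ldots, \Lambda_s, \lambda_{s+1}, \ldots, \lambda_r) \hookrightarrow \cdots$ are increasing, and the definition of a finite $\sigma$-family (only finitely many positions admit strictly new generators) forces each chain to stabilise at a finite-dimensional limit $E^\sigma(\Lambda_1, \ldots, \Lambda_s, \infty, \ldots, \infty)$, yielding (ii). Running this in reverse, a $\sigma$-family satisfying (i) and (ii) is visibly finite with all transverse $x_i$ injective, so the analogous argument returns purity.

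The main obstacle is the ``if'' half of the equivalence between purity and transverse injectivity: turning the combinatorial statement ``no upper bound in any transverse direction'' into the geometric statement $\mathrm{Supp}(\mathcal{F}) = V(\tau)$ requires applying the support criterion of Proposition \ref{ch. 1, sect. 2, prop. 3} not to $\mathcal{E}$ but to an arbitrary equivariant subsheaf $\mathcal{F}$, and handling the possibility that $\mathrm{Supp}(\mathcal{F})$ is reducible by arguing one invariant irreducible component at a time. The other ingredients are essentially bookkeeping on top of the $\sigma$-family formalism.
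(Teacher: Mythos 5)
Your proposal is correct and follows essentially the same route as the paper: condition (i) comes from the support criterion of Proposition \ref{ch. 1, sect. 2, prop. 3}, the forward direction of (ii) is proved by generating a sub-$\sigma$-family from the kernel of a non-injective transverse map and contradicting purity via its too-small support, and the converse propagates injectivity of $x_{s+1},\ldots,x_r$ to any nonzero equivariant subsheaf to rule out supports properly contained in $V(\tau)$, invoking Proposition \ref{ch. 1, sect. 2, prop. 2} to reduce purity to equivariant subsheaves. The only cosmetic difference is that the paper derives the contradiction in the converse by exhibiting an explicit vanishing $\chi(m(\rho_{s+1}))^{\kappa}\cdots\chi(m(\rho_{r}))^{\kappa}\Gamma(U_{\sigma},\mathcal{F})=0$ rather than arguing component by component, but the content is the same.
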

\begin{proof}
Let $\mathcal{E}$ be a pure equivariant sheaf with support $V(\tau)$ and corresponding $\sigma$-family $\hat{E}^{\sigma}$. Then (i) follows from Proposition \ref{ch. 1, sect. 2, prop. 3}. For (ii), it is enough to prove $x_{s+1}, \ldots, x_{r}$ are injective (the rest follows from the fact that $\hat{E}^{\sigma}$ is finite). Let $x_{i}$ not be injective. Let $E = \Gamma(U_{\sigma}, \mathcal{E})$ be the module corresponding to $\mathcal{E}$ and define $0 \neq F \subset E$ to be the submodule of elements annihilated by $x_{i}$. From purity, we deduce $x_{i} \in \mathrm{Ann}(F) \subset (x_{1}, \ldots, x_{s})$ so $i = 1, \ldots, s$. 

Conversely, let $\mathcal{E}$ be an equivariant quasi-coherent sheaf with corresponding $\sigma$-family $\hat{E}^{\sigma}$ as in (i), (ii).  It is easy to see that $\mathcal{E}$ is coherent and $\mathrm{Supp}(\mathcal{E}) \subset V(\tau)$ (see also proof of Proposition \ref{ch. 1, sect. 2, prop. 3}). It is enough to show that any nontrivial equivariant coherent subsheaf $\mathcal{F} \subset \mathcal{E}$ has support $V(\tau)$ by Proposition \ref{ch. 1, sect. 2, prop. 2}. Suppose not, and let $F = \Gamma(U_{\sigma}, \mathcal{F})$ be the module corresponding to $\mathcal{F}$. Then $\mathrm{Ann}(F) \nsubseteq (x_{1}, \ldots, x_{s})$. Hence there is a monomial $x_{s+1}^{\kappa_{1}} \cdots x_{r}^{\kappa_{r}}$ for some $\kappa_{i} > 0$ annihilating some nonzero homogeneous element of $F \subset E$, which contradicts injectivity of $x_{s+1}, \ldots, x_{r}$.  
\end{proof}
In order to generalise the result of the previous proposition to arbitrary nonsingular toric varieties, we need the following proposition for gluing purposes.
\begin{proposition} \label{ch. 1, sect. 2, prop. 5}
Let $U_{\sigma}$ be a nonsingular affine toric variety defined by a cone $\sigma$ in a lattice $N$ of rank $r$. Let $\mathcal{E}$ be a pure equivariant sheaf on $U_{\sigma}$ with support $V(\tau)$ where $\tau \prec \sigma$. Let $(\rho_{1}, \ldots, \rho_{r})$ be the rays of $\sigma$ and $(\rho_{1}, \ldots, \rho_{s}) \subset (\rho_{1}, \ldots, \rho_{r})$ the rays of $\tau$. Let $\nu \prec \sigma$ be a proper face and consider the equivariant coherent sheaf $\mathcal{E}|_{U_{\nu}}$. Then the $\nu$-family corresponding to $\mathcal{E}|_{U_{\nu}}$ is described in terms of the $\sigma$-family corresponding to $\mathcal{E}$ as follows:
\begin{enumerate}
	\item [$\mathrm{(i)}$] Assume $\tau$ is not a face of  $\nu$. Then $\mathcal{E}|_{U_{\nu}} = 0$.
	\item [$\mathrm{(ii)}$] Assume $\tau \prec \nu$. Let $(\rho_{1}, \ldots, \rho_{s}, \rho_{s+1}, \ldots, \rho_{s+t}) \subset (\rho_{1}, \ldots, \rho_{r})$ be the rays of $\nu$. Then for all $\lambda_{1}, \ldots, \lambda_{s+t} \in \mathbb{Z}$ we have
	\begin{align} 
  \begin{split}
	E^{\nu}(\lambda_{1}, \ldots, \lambda_{s+t}) &= E^{\sigma}(\lambda_{1}, \ldots, \lambda_{s+t}, \infty, \ldots, \infty), \\
	\chi^{\nu}_{i}(\lambda_{1}, \ldots, \lambda_{s+t}) &= \chi^{\sigma}_{i}(\lambda_{1}, \ldots, \lambda_{s+t}, \infty, \ldots, \infty), \ \forall i = 1, \ldots, s+t. \nonumber
	\end{split}
	\end{align}
\end{enumerate}
\end{proposition}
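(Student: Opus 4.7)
My plan is to reduce the statement to algebra via the equivalence between equivariant quasi-coherent sheaves on $U_\sigma$ and $M$-graded $k[S_\sigma]$-modules, identify $\Gamma(U_\nu, \mathcal{E}|_{U_\nu})$ as an explicit localization of $E^\sigma$, and then read off its $\nu$-family.

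First I would make the localization concrete. Since $\sigma$ is nonsingular with rays $\rho_1, \ldots, \rho_r$ and $\nu \prec \sigma$, the element $u = \sum_{\rho_j \notin \nu(1)} m(\rho_j) \in S_\sigma$ satisfies $u^\perp \cap \sigma = \nu$, so $k[S_\nu] = k[S_\sigma][\chi(-u)]$. Joint invertibility of the factors yields $k[S_\nu] = k[S_\sigma][\chi(m(\rho_j))^{-1} : \rho_j \notin \nu(1)]$, and hence $\Gamma(U_\nu, \mathcal{E}|_{U_\nu})$ is the localization of $E^\sigma$ at these elements. Its graded piece at $m \in M$ is the filtered colimit of $E^\sigma_{m + \sum_{\rho_j \notin \nu(1)} k_j m(\rho_j)}$ as $k_j \to \infty$, with transition maps the corresponding $x_j$.

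For (i), if $\tau \not\prec \nu$ then some ray $\rho_i$ of $\tau$ (with $i \leq s$) lies in $\sigma(1) \setminus \nu(1)$, so $x_i$ is among the maps being inverted. By Proposition \ref{ch. 1, sect. 2, prop. 3}, $E^\sigma(\lambda_1, \ldots, \lambda_r) = 0$ once $\lambda_i > B_i$; thus in the colimit every element is annihilated by a sufficiently large power of $x_i$, so every graded piece of the localization vanishes and $\mathcal{E}|_{U_\nu} = 0$.

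For (ii), with $\tau \prec \nu$ and $\nu(1) = \{\rho_1, \ldots, \rho_{s+t}\}$, the inverted rays are $\rho_{s+t+1}, \ldots, \rho_r$, all of index $>s$, and Proposition \ref{ch. 1, sect. 2, prop. 4}(ii) guarantees that $x_{s+t+1}, \ldots, x_r$ are injective. Combined with finiteness of $\hat{E}^\sigma$, the filtered colimit defining the $\nu$-graded piece at $m = \sum_{i=1}^{s+t} \lambda_i m(\rho_i)$ is an ascending union of subspaces of the finite-dimensional space $E^\sigma(\lambda_1, \ldots, \lambda_s, \infty, \ldots, \infty)$, so it stabilizes; this stable value is precisely the partial limit $E^\sigma(\lambda_1, \ldots, \lambda_{s+t}, \infty, \ldots, \infty)$ appearing in the statement, and the maps $\chi^\nu_i$ for $i \leq s+t$ are obtained from $\chi^\sigma_i$ by passing to the same colimit. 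The only delicate point is notational: one must verify that this partial "infinity" entry is well-defined as a partial stabilization analogous to Proposition \ref{ch. 1, sect. 2, prop. 4}(ii), which follows by restricting its argument to directions $s+t+1, \ldots, r$.
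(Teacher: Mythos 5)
Your proposal is correct and follows essentially the same route as the paper: both identify $\Gamma(U_\nu,\mathcal{E}|_{U_\nu})$ as the graded localization of $E^\sigma$ at the $\chi(m(\rho_j))$ for $\rho_j\notin\nu(1)$, kill it in case (i) via the nilpotence from Proposition \ref{ch. 1, sect. 2, prop. 3}, and in case (ii) compute each graded piece as the stabilizing colimit along the injective maps $x_{s+t+1},\ldots,x_r$. Your filtered-colimit phrasing is just a cleaner packaging of the paper's explicit rewriting of elements $e\otimes\chi(m'')$.
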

\begin{proof}
There is an integral element $m_{\nu} \in \mathrm{relative \ interior}(\nu^{\perp} \cap \sigma^{\vee})$, such that $S_{\nu} = S_{\sigma} + \mathbb{Z}_{\geq 0}(-m_{\nu})$ (e.g.~\cite[Thm.~3.14]{Per1}). Let $\rho_{i_{1}}, \ldots, \rho_{i_{p}}$ be the rays of $\nu$ and let $\rho_{j_{1}}, \ldots, \rho_{j_{q}}$ be all the other rays (so $p+q = r$). Then 
\begin{equation}
m_{\nu} = \sum_{k = 1}^{q} \gamma_{k} m(\rho_{j_{k}}), \nonumber
\end{equation}
where all $\gamma_{k} > 0$ integers. We obtain \cite[Prop.~II.5.2]{Har1}
\begin{align} \label{ch. 1, eqn2}
\begin{split}
\Gamma(U_{\nu},\mathcal{E}|_{U_{\nu}}) &\cong \Gamma(U_{\sigma}, \mathcal{E}) \otimes_{k[S_{\sigma}]} k[S_{\nu}] \\
&= \Gamma(U_{\sigma}, \mathcal{E}) \otimes_{k[S_{\sigma}]} k[S_{\sigma}][\chi(-m(\rho_{j_{1}}))^{\gamma_{j_{1}}}, \ldots, \chi(-m(\rho_{j_{q}}))^{\gamma_{j_{q}}}].
\end{split}
\end{align}
\emph{Case 1: $\tau$ is not a face of $\nu$.} Trivial because $V(\tau) \cap U_{\nu} = \varnothing$. 

\noindent \emph{Case 2: $\tau \prec \nu$.} In this case, we can number the rays $\rho_{i_{1}}, \ldots, \rho_{i_{p}}$ of $\nu$ as follows $(\rho_{1}, \ldots, \rho_{s}, \rho_{s+1}, \ldots, \rho_{s+t})$. Assume $\mathcal{E}$ is described by a $\sigma$-family $\hat{E}^{\sigma}$ as in Proposition \ref{ch. 1, sect. 2, prop. 4}. Note that $\Gamma(U_{\sigma}, \mathcal{E}) \otimes_{k[S_{\sigma}]} k[S_{\nu}]$ has a natural $M$-grading \cite[Sect.~2.5]{Per1}. In particular, for a fixed $m \in M$, the elements of degree $m$ are finite sums of expressions of the form $e \otimes \chi(m'')$, where $e \in E_{m'}^{\sigma}$, $m' \in M$, $m'' \in S_{\nu}$ such that $m'+m'' = m$. Now fix $m = \sum_{i=1}^{r} \lambda_{i} m(\rho_{i}) \in M$, $m' = \sum_{i=1}^{r} \alpha_{i} m(\rho_{i}) \in M$ and $m'' \in S_{\nu}$, so $m''=\sum_{i=1}^{r} \beta_{i}m(\rho_{i}) - u \sum_{i=s+t+1}^{r} \gamma_{i}m(\rho_{i})$ with $\beta_{1}, \ldots, \beta_{r}, u \geq 0$. Assume $m = m'+m''$ and consider the element $e \otimes \chi(m'')$ with $e \in E_{m'}^{\sigma}$. We can now rewrite $e \otimes \chi(m'') = e' \otimes \chi(m''')$, where
\begin{align} 
\begin{split}
e' &= \chi \left( \sum_{i=1}^{r} \beta_{i} m(\rho_{i}) \right) \cdot e \in E^{\sigma}(\lambda_{1}, \ldots, \lambda_{s+t},\alpha_{s+t+1} + \beta_{s+t+1}, \ldots, \alpha_{r}+\beta_{r} ), \\
\chi(m''') &= \chi\left(-u \sum_{i=s+t+1}^{r} \gamma_{i} m(\rho_{i}) \right). \nonumber
\end{split}
\end{align}
For $v > 0$ large enough 
\begin{align} 
\begin{split}
&e' \otimes \chi(m''') = \chi\left( v \sum_{i=s+t+1}^{r} \gamma_{i} m(\rho_{i}) \right) \cdot e' \otimes  \chi\left(-(u+v) \sum_{i=s+t+1}^{r} \gamma_{i} m(\rho_{i}) \right), \\
&\mathrm{where} \ \chi\left( v \sum_{i=s+t+1}^{r} \gamma_{i} m(\rho_{i}) \right) \cdot e' \in E^{\sigma}(\lambda_{1}, \ldots, \lambda_{s+t}, \infty, \ldots, \infty). \nonumber
\end{split}
\end{align}
From these remarks, one easily deduces the assertion.
\end{proof}

\noindent As a special case of the above proposition we get the following result. If we take $\nu = \tau$, then we obtain that for all integers $\lambda_{1}, \ldots, \lambda_{r}$
\begin{equation}
E^{\sigma}(\lambda_{1}, \ldots, \lambda_{r}) \subset E^{\sigma}(\lambda_{1}, \ldots, \lambda_{s}, \infty, \ldots, \infty) = E^{\tau}(\lambda_{1}, \ldots, \lambda_{s}). \nonumber
\end{equation}
We conclude that all $E^{\sigma}(\lambda_{1}, \ldots, \lambda_{r})$ are subspaces of $E^{\tau}(\lambda_{1}, \ldots, \lambda_{s})$. 

Combining Propositions \ref{ch. 1, sect. 2, prop. 4} and \ref{ch. 1, sect. 2, prop. 5}, we obtain a combinatorial description of pure equivariant sheaves with irreducible support on nonsingular toric varieties.
\begin{theorem} \label{ch. 1, sect. 2, thm. 1}
Let $X$ be a nonsingular toric variety with fan\footnote{From now on, in this setting we will always assume every cone of $\Delta$ is contained in a cone of dimension $r$. Therefore, we can cover $X$ by copies of $\mathbb{A}^{r}$.} $\Delta$ in a lattice $N$ of rank $r$. Let $\tau \in \Delta$ and consider the invariant closed subvariety $V(\tau)$. It is covered by $U_{\sigma}$, where $\sigma \in \Delta$ has dimension $r$ and $\tau \prec \sigma$. Denote these cones by $\sigma_{1}, \ldots, \sigma_{l}$. For each $i = 1, \ldots, l$, let $\left(\rho^{(i)}_{1}, \ldots, \rho^{(i)}_{r} \right)$ be the rays of $\sigma_{i}$ and let $\left(\rho_{1}^{(i)}, \ldots, \rho_{s}^{(i)}\right) \subset \left(\rho^{(i)}_{1}, \ldots, \rho^{(i)}_{r}\right)$ be the rays of $\tau$. The category of pure equivariant sheaves on $X$ with support $V(\tau)$ is equivalent to the category $\mathcal{C}^{\tau}$, which can be described as follows. An object $\hat{E}^{\Delta}$ of $\mathcal{C}^{\tau}$ consists of the following data:
\begin{enumerate}
	\item [$\mathrm{(i)}$] For each $i = 1,\ldots, l$ we have a $\sigma_{i}$-family $\hat{E}^{\sigma_{i}}$ as described in Proposition \ref{ch. 1, sect. 2, prop. 4}.
	\item [$\mathrm{(ii)}$] Let $i,j = 1, \ldots, l$. Let $\left\{\rho^{(i)}_{i_{1}}, \ldots, \rho^{(i)}_{i_{p}}\right\} \subset \left\{\rho^{(i)}_{1}, \ldots, \rho^{(i)}_{r}\right\}$ resp.~$\left\{\rho^{(j)}_{j_{1}}, \ldots, \rho^{(j)}_{j_{p}}\right\} \subset \left\{\rho^{(j)}_{1}, \ldots, \rho^{(j)}_{r}\right\}$ be the rays of $\sigma_{i} \cap \sigma_{j}$ in $\sigma_{i}$ respectively $\sigma_{j}$, labeled in such a way that $\rho^{(i)}_{i_{k}} = \rho^{(j)}_{j_{k}}$ for all $k = 1, \ldots, p$. Now let $\lambda^{(i)}_{1}, \ldots, \lambda^{(i)}_{r} \in \mathbb{Z} \cup \{\infty\}$, $\lambda^{(j)}_{1}, \ldots, \lambda^{(j)}_{r} \in \mathbb{Z} \cup \{\infty\}$ be such that $\lambda^{(i)}_{i_{k}} = \lambda^{(j)}_{j_{k}} \in \mathbb{Z}$ for all $k = 1, \ldots, p$ and $\lambda^{(i)}_{n} = \lambda^{(j)}_{n} = \infty$ otherwise. Then
  \begin{align} 
  \begin{split}
	E^{\sigma_{i}} \left( \sum_{k=1}^{r} \lambda^{(i)}_{k} m\left(\rho_{k}^{(i)}\right) \right) &= E^{\sigma_{j}} \left( \sum_{k=1}^{r} \lambda^{(j)}_{k} m\left(\rho_{k}^{(j)}\right) \right), \\
	\chi_{n}^{\sigma_{i}} \left( \sum_{k=1}^{r} \lambda^{(i)}_{k} m\left(\rho_{k}^{(i)}\right) \right) &= \chi_{n}^{\sigma_{j}} \left( \sum_{k=1}^{r} \lambda^{(j)}_{k} m\left(\rho_{k}^{(j)}\right) \right), \ \forall n = 1, \ldots, r. \nonumber
	\end{split}
	\end{align}
\end{enumerate}
The morphisms of $\mathcal{C}^{\tau}$ are described as follows. If $\hat{E}^{\Delta}$, $\hat{F}^{\Delta}$ are two objects, then a morphism $\hat{\phi}^{\Delta} : \hat{E}^{\Delta} \longrightarrow \hat{F}^{\Delta}$ is a collection of morphisms of $\sigma$-families $\{\hat{\phi}^{\sigma_{i}} : \hat{E}^{\sigma_{i}} \longrightarrow \hat{F}^{\sigma_{i}}\}_{i = 1, \ldots, l}$ such that for all $i,j$ as in $\mathrm{(ii)}$ one has
\begin{equation}
\phi^{\sigma_{i}}\left( \sum_{k=1}^{r} \lambda^{(i)}_{k} m\left(\rho_{k}^{(i)}\right) \right) = \phi^{\sigma_{j}}\left( \sum_{k=1}^{r} \lambda^{(j)}_{k} m\left(\rho_{k}^{(j)}\right) \right). \nonumber
\end{equation}
\end{theorem}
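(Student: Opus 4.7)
The plan is to reduce the statement to the local descriptions of Propositions \ref{ch. 1, sect. 2, prop. 4} and \ref{ch. 1, sect. 2, prop. 5} via a standard gluing argument, together with the locality of purity (Proposition \ref{ch. 1, sect. 2, prop. 1}). Since every cone of $\Delta$ is contained in a top-dimensional cone (by the footnote), and since the invariant closed subvariety $V(\tau)$ is only contained in $U_{\sigma}$ when $\tau \prec \sigma$, we can cover $X$ by the charts $\{U_{\sigma_{i}}\}_{i=1}^{l}$ meeting $V(\tau)$ together with the top-dimensional $U_{\nu}$ with $\tau \not\prec \nu$.

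For the forward direction, given a pure equivariant sheaf $\mathcal{E}$ on $X$ with support $V(\tau)$, I would restrict to each $U_{\sigma_{i}}$. By Proposition \ref{ch. 1, sect. 2, prop. 1} the restriction $\mathcal{E}|_{U_{\sigma_{i}}}$ is either zero or pure of the same dimension, and its support is $V(\tau) \cap U_{\sigma_{i}}$, which is the invariant closed subvariety of $U_{\sigma_{i}}$ associated to $\tau \prec \sigma_{i}$. Proposition \ref{ch. 1, sect. 2, prop. 4} then yields a $\sigma_{i}$-family $\hat{E}^{\sigma_{i}}$ of the type described in (i). For the compatibility condition (ii), apply Proposition \ref{ch. 1, sect. 2, prop. 5}(ii) with $\nu = \sigma_{i} \cap \sigma_{j}$ to both $\mathcal{E}|_{U_{\sigma_{i}}}$ and $\mathcal{E}|_{U_{\sigma_{j}}}$: the sheaf $\mathcal{E}|_{U_{\sigma_{i} \cap \sigma_{j}}}$ is simultaneously described by the $\nu$-families extracted from $\hat{E}^{\sigma_{i}}$ and $\hat{E}^{\sigma_{j}}$, and identifying them gives exactly the equalities of weight spaces and structure maps required by (ii). Morphisms are handled the same way: equivariant morphisms restrict to morphisms of $\sigma_{i}$-families, and Proposition \ref{ch. 1, sect. 2, prop. 5}(ii) enforces the matching condition on overlaps.

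For the reverse direction, given data $(\hat{E}^{\sigma_{i}})_{i=1}^{l}$ as in (i) and (ii), Proposition \ref{ch. 1, sect. 2, prop. 4} produces pure equivariant coherent sheaves $\mathcal{E}_{i}$ on $U_{\sigma_{i}}$ with support $V(\tau) \cap U_{\sigma_{i}}$. The matching condition (ii), combined with Proposition \ref{ch. 1, sect. 2, prop. 5}(ii), identifies the $(\sigma_{i} \cap \sigma_{j})$-families underlying $\mathcal{E}_{i}|_{U_{\sigma_{i} \cap \sigma_{j}}}$ and $\mathcal{E}_{j}|_{U_{\sigma_{i} \cap \sigma_{j}}}$, hence gives canonical equivariant isomorphisms on pairwise overlaps, and the cocycle condition on triple overlaps holds because it is already an equality at the level of weight decompositions (i.e.\ it is checked after applying Proposition \ref{ch. 1, sect. 2, prop. 5}(ii) three times). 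To extend to all of $X$, put the zero sheaf on each top-dimensional $U_{\nu}$ with $\tau \not\prec \nu$; compatibility with $\mathcal{E}_{i}$ on $U_{\nu} \cap U_{\sigma_{i}} = U_{\nu \cap \sigma_{i}}$ follows from Proposition \ref{ch. 1, sect. 2, prop. 5}(i) because $\tau \not\prec \nu \cap \sigma_{i}$. This yields a coherent equivariant sheaf $\mathcal{E}$ on $X$ with support $V(\tau)$, and Propositions \ref{ch. 1, sect. 2, prop. 1} and \ref{ch. 1, sect. 2, prop. 4} together imply that $\mathcal{E}$ is pure. Functoriality in both directions is formal.

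The main obstacle is purely bookkeeping: the rays of $\sigma_{i}$ and $\sigma_{j}$ are indexed independently, so the labeling conventions in (ii) (matching up the common rays and sending the remaining coordinates to $\infty$) must be set up so that Proposition \ref{ch. 1, sect. 2, prop. 5}(ii) can be applied symmetrically from both sides. Once the conventions are fixed, the remainder of the argument is a direct assembly of the two previous propositions with standard sheaf gluing.
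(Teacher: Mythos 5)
Your proposal is correct and follows essentially the same route as the paper: restrict to the top-dimensional charts in the star of $\tau$, apply Proposition \ref{ch. 1, sect. 2, prop. 4} to get condition (i), and apply Proposition \ref{ch. 1, sect. 2, prop. 5} on the overlaps $U_{\sigma_{i} \cap \sigma_{j}}$ to get condition (ii). The only difference is that you spell out the converse (gluing) direction and the trivial cocycle check, which the paper leaves implicit.
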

\begin{proof}
Note that $V(\tau)$ is covered by the star of $\tau$, i.e.~the cones $\sigma \in \Delta$ such that $\tau \prec \sigma$ \cite[Sect.~3.1]{Ful}. Let $\sigma_{1}, \ldots, \sigma_{l} \in \Delta$ be the cones of maximal dimension in the star of $\tau$. Let $\mathcal{E}$ be a pure equivariant sheaf on $X$ with support $V(\tau)$. Then $\mathcal{E}|_{U_{\sigma_{i}}}$ is a pure equivariant sheaf on $U_{\sigma_{i}}$ with support $V(\tau) \cap U_{\sigma_{i}}$ for all $i = 1, \ldots, l$ (using Proposition \ref{ch. 1, sect. 2, prop. 1}). Using Proposition \ref{ch. 1, sect. 2, prop. 4}, we get a $\sigma_{i}$-family $\hat{E}^{\sigma_{i}}$ for all $i = 1, \ldots, l$ (this gives (i) of the theorem). Using Proposition \ref{ch. 1, sect. 2, prop. 5}, we see that these $\sigma$-families have to glue as in (ii) (up to isomorphism).
\end{proof}

\noindent In the above theorem, we will refer to the category $\mathcal{C}^{\tau}$ as the category of pure $\Delta$-families with support $V(\tau)$. If we take $\tau = 0$ to be the apex in this theorem, we obtain the known combinatorial description of torsion free equivariant sheaves on nonsingular toric varieties initially due to Klyachko \cite{Kly4} and also discussed by Knutson and Sharpe \cite[Sect.~4.5]{KS1} and Perling \cite[Subsect.~4.4.2]{Per1}. The theorem generalises this description. In the case $\tau = 0$ is the apex, we will refer to the category $\mathcal{C}^{0}$ as the category of torsion free $\Delta$-families. In the above theorem, denote by $\mathcal{C}^{\tau, fr}$ the full subcategory of $\mathcal{C}^{\tau}$ consisting of those elements having all limiting vector spaces $E^{\sigma_{i}}(\Lambda_{1}, \ldots, \Lambda_{s}, \infty, \ldots, \infty)$ equal to $k^{\oplus r}$ for some $r$. We refer to $\mathcal{C}^{\tau, fr}$ as the category of framed pure $\Delta$-families with support $V(\tau)$. This notion does not make much sense now because $\mathcal{C}^{\tau, fr}$ is equivalent to $\mathcal{C}^{\tau}$, but framing will become relevant when looking at families.

\subsection{Combinatorial Descriptions in the General Case}

The results of the previous subsection generalise in a straightforward way to the case of general --not necessarily irreducible-- support. Since the proofs will require no essentially new ideas, we will just discuss the results. 

Let us first discuss the generalisation of Proposition \ref{ch. 1, sect. 2, prop. 3}. Let $U_{\sigma}$ be a nonsingular affine toric variety defined by a cone $\sigma$ in a lattice $N$ of rank $r$. Let $\mathcal{E} \neq 0$ be an equivariant coherent sheaf on $U_{\sigma}$. Then $\mathrm{Supp}(\mathcal{E}) = V(\tau_{1}) \cup \cdots \cup V(\tau_{a})$ for some faces $\tau_{1}, \ldots, \tau_{a} \prec \sigma$. Now fix faces $\tau_{1}, \ldots, \tau_{a} \prec \sigma$, let $(\rho_{1}, \ldots, \rho_{r})$ be the rays of $\sigma$ and let $\left(\rho_{1}^{(\alpha)}, \ldots, \rho_{s_{\alpha}}^{(\alpha)} \right) \subset (\rho_{1}, \ldots, \rho_{r})$ be the rays of $\tau_{\alpha}$ for all $\alpha = 1, \ldots, a$. Let $\tau_{\alpha} \nprec \tau_{\beta}$ for all $\alpha, \beta = 1, \ldots, a$ with $\alpha \neq \beta$. Then $\mathrm{Supp}(\mathcal{E}) = V(\tau_{1}) \cup \cdots \cup V(\tau_{a})$ if and only if the following property holds: 

\noindent $E^{\sigma}(\lambda_{1}, \ldots, \lambda_{r}) = 0$ unless $(\lambda_{1}, \ldots, \lambda_{r}) \in \mathcal{R}$ where $\mathcal{R} \subset N$ is defined by inequalities as follows: there are integers $A_{1}, \ldots, A_{r}$ and integers $B_{1}^{(\alpha)}, \ldots, B_{s_{\alpha}}^{(\alpha)}$ for each $\alpha = 1, \ldots, a$ such that the region $\mathcal{R}$ is defined by 
\begin{align} 
\begin{split}
&[A_{1} \leq \lambda_{1} \wedge \cdots \wedge A_{r} \leq \lambda_{r})] \\
&\wedge [(\lambda_{1}^{(1)} \leq B_{1}^{(1)} \wedge \cdots \wedge \lambda_{s_{1}}^{(1)} \leq B_{s_{1}}^{(1)}) \vee \cdots \vee (\lambda_{1}^{(a)} \leq B_{1}^{(a)} \wedge \cdots \wedge \lambda_{s_{a}}^{(a)} \leq B_{s_{a}}^{(a)})], \nonumber
\end{split}
\end{align}
moreover, there is no region $\mathcal{R}'$ of such a form with more upper bounds contained in $\mathcal{R}$ with the same property. Here $\lambda_{i}^{(j)}$ corresponds to the coordinate associated to the ray $\rho_{i}^{(j)}$ defined above. 

\noindent Note that if we assume in addition that $\mathcal{E}$ is pure, then all the $V(\tau_{\alpha})$ have the same dimension so $s_{1} = \cdots = s_{a} = s$. If $\mathrm{dim}(\mathcal{E}) = d$, then $\mathrm{Supp}(\mathcal{E}) = V(\tau_{1}) \cup \cdots \cup V(\tau_{a})$, where $\tau_{1}, \ldots, \tau_{a}$ are some faces of $\sigma$ of dimension $s = r - d$. One possible support would be taking $\tau_{1}, \ldots, \tau_{a}$ all faces of $\sigma$ of dimension $s = r - d$. In this case, the region $\mathcal{R}$ will be a disjoint union of the following form
\begin{align} 
\begin{split} \label{ch. 1, eqna}
&\{ [A_{1}, B_{1}] \times \cdots \times [A_{s}, B_{s}] \times (B_{s+1}, \infty) \times \cdots \times (B_{r}, \infty) \} \\
&\sqcup \cdots \\ 
&\sqcup \{ (B_{1}, \infty) \times \cdots \times (B_{r-s}, \infty) \times [A_{r-s+1}, B_{r-s+1}] \times \cdots \times [A_{r}, B_{r}] \} 
\end{split} 
\end{align}
\begin{align}
\begin{split} \label{ch. 1, eqnb}
&\sqcup \{ [A_{1}, B_{1}] \times \cdots \times [A_{s+1}, B_{s+1}] \times (B_{s+2}, \infty) \times \cdots \times (B_{r}, \infty) \} \\ 
&\sqcup \cdots \\ 
&\sqcup \{ (B_{1}, \infty) \times \cdots \times (B_{r-s-1}, \infty) \times [A_{r-s}, B_{r-s}] \times \cdots \times [A_{r}, B_{r}] 
\} 
\end{split} \\
\begin{split}
&\sqcup \cdots \nonumber 
\end{split} \\
\begin{split} \label{ch. 1, eqnc}
&\sqcup \{ [A_{1}, B_{1}] \times \cdots \times [A_{r}, B_{r}] \}, 
\end{split} 
\end{align}
for some integers $A_{1}, \ldots, A_{r}$ and $B_{1}, \ldots, B_{r}$. Here (\ref{ch. 1, eqna}) is a disjoint union of $\binom{r}{s}$ regions with $s$ upper bounds. Denote these regions of $\mathcal{R}$ by $\mathcal{R}_{\mu}^{s}$, where $\mu = 1, \ldots, \binom{r}{s}$. Here (\ref{ch. 1, eqnb}) is a disjoint union of $\binom{r}{s+1}$ regions with $s+1$ upper bounds. Denote these regions of $\mathcal{R}$ by $\mathcal{R}_{\mu}^{s+1}$, where $\mu = 1, \ldots, \binom{r}{s+1}$. Et cetera. Finally, (\ref{ch. 1, eqnc}) is a disjoint union of $\binom{r}{r} = 1$ regions with $r$ upper bounds (i.e.~only $[A_{1}, B_{1}] \times \cdots \times [A_{r}, B_{r}]$). Denote this region of $\mathcal{R}$ by $\mathcal{R}_{\mu}^{r}$. We will use this notation later on. Using the techniques of the previous subsection, one easily proves the following proposition.  
\begin{proposition} \label{ch. 1, sect. 2, prop. 6}
Let $U_{\sigma}$ be a nonsingular affine toric variety defined by a cone $\sigma$ in a lattice $N$ of rank $r$. Let $\tau_{1}, \ldots, \tau_{a} \prec \sigma$ be all faces of dimension $s$. Then the category of pure equivariant sheaves $\mathcal{E}$ on $U_{\sigma}$ with support $V(\tau_{1}) \cup \cdots \cup V(\tau_{a})$ is equivalent to the category of $\sigma$-families $\hat{E}^{\sigma}$ satisfying the following properties:
\begin{enumerate}
	\item [$\mathrm{(i)}$] There are integers $A_{1}, \ldots, A_{r}$, $B_{1}, \ldots, B_{r}$ such that $E^{\sigma}(\lambda_{1}, \ldots, \lambda_{r}) = 0$ unless $(\lambda_{1}, \ldots, \lambda_{r}) \in \mathcal{R}$, where the region $\mathcal{R}$ is as above.
	\item[$\mathrm{(ii)}$] Any region $\mathcal{R}_{\mu}^{i} = [A_{1}, B_{1}] \times \cdots \times [A_{i}, B_{i}] \times (B_{i+1}, \infty) \times \cdots \times (B_{r}, \infty)$ of $\mathcal{R}$ satisfies the following properties\footnote{Without loss of generality, we denote this region in such a way that the $i$ upper bounds occur in the first $i$ intervals. The general case is clear.}. Firstly, for any integers $A_{1} \leq \Lambda_{1} \leq B_{1}$, $\ldots$, $A_{i} \leq \Lambda_{i} \leq B_{i}$ there is a finite-dimensional $k$-vector space $E^{\sigma}(\Lambda_{1},    \ldots, \Lambda_{i}, \infty, \ldots, \infty)$, such that $E^{\sigma}(\Lambda_{1}, \ldots, \Lambda_{i}, \lambda_{i+1}, \ldots, \lambda_{r}) = E^{\sigma}(\Lambda_{1}, \ldots, \Lambda_{i}, \infty, \ldots, \infty)$ for some integers $\lambda_{i+1} > B_{i+1}, \ldots, \lambda_{r} > B_{r}$. Moreover, if $\mathcal{R}_{\mu}^{i}$ is one of the regions $\mathcal{R}_{\mu}^{s}$, not all $E^{\sigma}(\Lambda_{1},    \ldots, \Lambda_{i}, \infty, \ldots, \infty)$ are zero. Secondly, $\chi^{\sigma}_{i+1}(\vec{\lambda}), \ldots, \chi^{\sigma}_{r}(\vec{\lambda})$ are inclusions for all $\vec{\lambda} \in \mathcal{R}_{\mu}^{i}$. Finally, if $j_{1}, \ldots, j_{s+1} \in \{1, \ldots, i\}$ are distinct, then for any $\vec{\lambda} \in \mathcal{R}_{\mu}^{i}$ the following $k$-linear map is injective
\begin{align} 
	\begin{split} \label{ch. 1, eqnnew}
	&E^{\sigma}(\vec{\lambda}) \hookrightarrow E^{\sigma}(\lambda_{1}, \ldots, \lambda_{j_{1}-1},B_{j_{1}}+1,\lambda_{j_{1}+1}, \ldots, \lambda_{r}) \\     &\qquad \qquad \oplus \cdots \\
	&\qquad \qquad \oplus E^{\sigma}(\lambda_{1}, \ldots, \lambda_{j_{s+1}-1},B_{j_{s+1}}+1,\lambda_{j_{s+1}+1}, \ldots, \lambda_{r}), \\
	&\left( \chi_{j_{1}}^{\sigma}(\lambda_{1}, \ldots, \lambda_{j_{1}-1},B_{j_{1}},\lambda_{j_{1}+1}, \ldots, \lambda_{r}) \circ \cdots \right. \\
	&\quad \left. \circ \chi_{j_{1}}^{\sigma}(\lambda_{1}, \ldots, \lambda_{j_{1}-1},\lambda_{j_{1}},\lambda_{j_{1}+1}, \ldots, \lambda_{r}) \right) \\ 
	&\oplus \cdots 
	\end{split} \displaybreak \\
	\begin{split} \nonumber
	&\oplus \left( \chi_{j_{s+1}}^{\sigma}(\lambda_{1}, \ldots, \lambda_{j_{s+1}-1},B_{j_{s+1}},\lambda_{j_{s+1}+1}, \ldots, \lambda_{r}) \circ \cdots      \right. \\
	&\quad \left. \circ \chi_{j_{s+1}}^{\sigma}(\lambda_{1}, \ldots, \lambda_{j_{s+1}-1},\lambda_{j_{s+1}},\lambda_{j_{s+1}+1}, \ldots, \lambda_{r})        \right). 
          \end{split}
\end{align}
\end{enumerate}
\end{proposition}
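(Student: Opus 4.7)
The plan is to mimic the proof of Proposition \ref{ch. 1, sect. 2, prop. 4}, but working region-by-region and using the generalisation of Proposition \ref{ch. 1, sect. 2, prop. 3} to unions $V(\tau_1) \cup \cdots \cup V(\tau_a)$ of irreducible components of equal dimension. Condition (i) is essentially the content of that generalisation: translating the support condition into the vanishing ideal $\mathcal{I}_{\tau_1} \cap \cdots \cap \mathcal{I}_{\tau_a}$ and using that $\Gamma(U_\sigma, \mathcal{E})$ is finitely generated, one recovers the region $\mathcal{R}$ as a disjoint union of boxes $\mathcal{R}_\mu^i$ for $s \leq i \leq r$. The existence of the limiting vector spaces $E^\sigma(\Lambda_1, \ldots, \Lambda_i, \infty, \ldots, \infty)$ and the fact that they are attained on each $\mathcal{R}_\mu^i$ follows from the finiteness of the $\sigma$-family, exactly as in Proposition \ref{ch. 1, sect. 2, prop. 4}.

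For the forward direction (pure $\Rightarrow$ (ii)), I would argue by contradiction via Proposition \ref{ch. 1, sect. 2, prop. 2}. The injectivity of each individual $\chi_j^\sigma$ for $j \in \{i+1, \ldots, r\}$ at points of $\mathcal{R}_\mu^i$ is established by the same kernel-propagation argument as in Proposition \ref{ch. 1, sect. 2, prop. 4}: a nontrivial kernel would produce an equivariant coherent subsheaf whose associated $\sigma$-subfamily sits in a subregion of $\mathcal{R}$ with an additional upper bound, hence whose support misses one of the top-dimensional components $V(\tau_\alpha)$, contradicting purity. For the new direct-sum condition (\ref{ch. 1, eqnnew}), given $0 \neq e \in E^\sigma(\vec{\lambda})$ in the kernel, I would let $\hat{F}^\sigma$ be the equivariant subfamily generated by $e$ under iterated multiplication by $x_1, \ldots, x_r$; by construction $F^\sigma(\vec{\mu})$ vanishes whenever $\mu_{j_k} > B_{j_k}$ for some $k$, so the corresponding subsheaf is supported away from every $V(\tau_\alpha)$ among whose unbounded ray-directions all of $\rho_{j_1}, \ldots, \rho_{j_{s+1}}$ appear, hence has dimension strictly less than $d = r-s$.

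For the backward direction, given a $\sigma$-family satisfying (i) and (ii), the associated sheaf $\mathcal{E}$ is coherent with support in $V(\tau_1) \cup \cdots \cup V(\tau_a)$ by (i). Purity is checked via Proposition \ref{ch. 1, sect. 2, prop. 2}: one must show that any nonzero equivariant coherent subsheaf $\mathcal{F} \subset \mathcal{E}$ has support of dimension exactly $d$. If instead $\mathcal{F}$ were supported on a proper closed subvariety of the given support, its $\sigma$-subfamily $\hat{F}^\sigma$ would acquire at least $s+1$ extra upper bounds in some $\mathcal{R}_\mu^i$; the injectivity of the diagonal map (\ref{ch. 1, eqnnew}) applied in that region then forces a generator of $\hat{F}^\sigma$ to be zero, and iterating across the adjacent regions yields $\hat{F}^\sigma = 0$, a contradiction. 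The main obstacle I anticipate is the combinatorial bookkeeping in this last step: the decomposition of $\mathcal{R}$ into the many pieces $\mathcal{R}_\mu^i$ is intricate, and one must verify that the propagation of a kernel element under the $x_j$'s defines a genuinely equivariant subfamily compatible across region boundaries, so that the injectivity conditions (\ref{ch. 1, eqnnew}) combine correctly to exclude every possible lower-dimensional equivariant subsheaf.
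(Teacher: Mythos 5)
Your proposal is correct and takes exactly the route the paper intends: the paper omits the proof of Proposition \ref{ch. 1, sect. 2, prop. 6} entirely, saying only that it follows ``using the techniques of the previous subsection,'' and your argument --- the generalised support characterisation for condition (i), the kernel-propagation construction producing from any failure of injectivity (including of the new map (\ref{ch. 1, eqnnew})) an equivariant subsheaf with at least $s+1$ upper bounds and hence dimension $<d$, contradicting Proposition \ref{ch. 1, sect. 2, prop. 2}, and the reverse propagation of a nonzero element through the regions $\mathcal{R}^{i}_{\mu}$ down to some $\mathcal{R}^{s}_{\mu}$ for the converse --- is precisely the elaboration of the proofs of Propositions \ref{ch. 1, sect. 2, prop. 3} and \ref{ch. 1, sect. 2, prop. 4} that the paper has in mind. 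The only loose ends are cosmetic: the subsheaves you construct are lower-dimensional rather than literally ``supported away from every $V(\tau_{\alpha})$,'' and in the converse one should also invoke the ``not all zero'' clause for the regions $\mathcal{R}^{s}_{\mu}$ to see that the support is the full union rather than a sub-union.
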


\noindent Note that in this proposition, the only essentially new type of condition compared to Proposition \ref{ch. 1, sect. 2, prop. 4} is condition (\ref{ch. 1, eqnnew}). We obtain the following theorem.
\begin{theorem} \label{ch. 1, sect. 2, thm. 2}
Let $X$ be a nonsingular toric variety with fan $\Delta$ in a lattice $N$ of rank $r$. Let $\sigma_{1}, \ldots, \sigma_{l}$ be all cones of $\Delta$ of dimension $r$. Denote the rays of $\sigma_{i}$ by $\left(\rho^{(i)}_{1}, \ldots, \rho^{(i)}_{r} \right)$ for all $i = 1, \ldots, l$. Let $\tau_{1}, \ldots, \tau_{a}$ be all cones of $\Delta$ of dimension $s$. The category of pure equivariant sheaves on $X$ with support $V(\tau_{1}) \cup \cdots \cup V(\tau_{a})$ is equivalent to the category $\mathcal{C}^{\tau_{1}, \ldots, \tau_{a}}$, which can be described as follows. An object $\hat{E}^{\Delta}$ of $\mathcal{C}^{\tau_{1}, \ldots, \tau_{a}}$ consists of the following data:
\begin{enumerate}
	\item [$\mathrm{(i)}$] For each $i = 1,\ldots, l$ we have a $\sigma_{i}$-family $\hat{E}^{\sigma_{i}}$ as described in Proposition \ref{ch. 1, sect. 2, prop. 6}.
  \item [$\mathrm{(ii)}$] Let $i,j = 1, \ldots, l$. Let $\left\{\rho^{(i)}_{i_{1}}, \ldots, \rho^{(i)}_{i_{p}}\right\} \subset \left\{\rho^{(i)}_{1}, \ldots, \rho^{(i)}_{r}\right\}$ resp.~$\left\{\rho^{(j)}_{j_{1}}, \ldots, \rho^{(j)}_{j_{p}}\right\} \subset \left\{\rho^{(j)}_{1}, \ldots, \rho^{(j)}_{r}\right\}$ be the rays of $\sigma_{i} \cap \sigma_{j}$ in $\sigma_{i}$ respectively $\sigma_{j}$, labeled in such a way that $\rho^{(i)}_{i_{k}} = \rho^{(j)}_{j_{k}}$ for all $k = 1, \ldots, p$. Now let $\lambda^{(i)}_{1}, \ldots, \lambda^{(i)}_{r} \in \mathbb{Z} \cup \{\infty\}$, $\lambda^{(j)}_{1}, \ldots, \lambda^{(j)}_{r} \in \mathbb{Z} \cup \{\infty\}$ be such that $\lambda^{(i)}_{i_{k}} = \lambda^{(j)}_{j_{k}} \in \mathbb{Z}$ for all $k = 1, \ldots, p$ and $\lambda^{(i)}_{n} = \lambda^{(j)}_{n} = \infty$ otherwise. Then
  \begin{align} 
  \begin{split}
	E^{\sigma_{i}} \left( \sum_{k=1}^{r} \lambda^{(i)}_{k} m\left(\rho_{k}^{(i)}\right) \right) &= E^{\sigma_{j}} \left( \sum_{k=1}^{r} \lambda^{(j)}_{k} m\left(\rho_{k}^{(j)}\right) \right), \\
	\chi_{n}^{\sigma_{i}} \left( \sum_{k=1}^{r} \lambda^{(i)}_{k} m\left(\rho_{k}^{(i)}\right) \right) &= \chi_{n}^{\sigma_{j}} \left( \sum_{k=1}^{r} \lambda^{(j)}_{k} m\left(\rho_{k}^{(j)}\right) \right), \ \forall n = 1, \ldots, r. \nonumber
	\end{split}
	\end{align}
\end{enumerate}
The morphisms of $\mathcal{C}^{\tau_{1}, \ldots, \tau_{a}}$ are described as follows. If $\hat{E}^{\Delta}$, $\hat{F}^{\Delta}$ are two objects, then a morphism $\hat{\phi}^{\Delta} : \hat{E}^{\Delta} \longrightarrow \hat{F}^{\Delta}$ is a collection of morphisms of $\sigma$-families $\{\hat{\phi}^{\sigma_{i}} : \hat{E}^{\sigma_{i}} \longrightarrow \hat{F}^{\sigma_{i}}\}_{i = 1, \ldots, l}$ such that for all $i,j$ as in $\mathrm{(ii)}$ one has
\begin{equation}
\phi^{\sigma_{i}}\left( \sum_{k=1}^{r} \lambda^{(i)}_{k} m\left(\rho_{k}^{(i)}\right) \right) = \phi^{\sigma_{j}}\left( \sum_{k=1}^{r} \lambda^{(j)}_{k} m\left(\rho_{k}^{(j)}\right) \right). \nonumber
\end{equation}
\end{theorem}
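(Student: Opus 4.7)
The plan is to mirror the proof of Theorem \ref{ch. 1, sect. 2, thm. 1}, with the local step replaced by Proposition \ref{ch. 1, sect. 2, prop. 6} and with an analogue of Proposition \ref{ch. 1, sect. 2, prop. 5} adapted to reducible support providing the gluing. Given a pure equivariant sheaf $\mathcal{E}$ on $X$ with $\mathrm{Supp}(\mathcal{E}) = V(\tau_{1}) \cup \cdots \cup V(\tau_{a})$, I would first apply Proposition \ref{ch. 1, sect. 2, prop. 1} to conclude that each restriction $\mathcal{E}|_{U_{\sigma_{i}}}$ is either zero or pure of dimension $d = r - s$, with support equal to the union of those $V(\tau_{\alpha})$ with $\tau_{\alpha} \prec \sigma_{i}$. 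Proposition \ref{ch. 1, sect. 2, prop. 6} then supplies a $\sigma_{i}$-family $\hat{E}^{\sigma_{i}}$ satisfying the combinatorial conditions of (i).

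For the gluing condition (ii), I would first establish the analogue of Proposition \ref{ch. 1, sect. 2, prop. 5} in this more general setting: if $\nu \prec \sigma$ is a face, then the $\nu$-family associated to $\mathcal{E}|_{U_{\nu}}$ is obtained from the $\sigma$-family of $\mathcal{E}$ by taking limits $\lambda_{k} \to \infty$ in the coordinate directions corresponding to rays of $\sigma$ not lying in $\nu$. The argument is formally identical to that of Proposition \ref{ch. 1, sect. 2, prop. 5}: one writes $k[S_{\nu}] = k[S_{\sigma}][\chi(-m_{\nu})]$ for some $m_{\nu}$ in the relative interior of $\nu^{\perp} \cap \sigma^{\vee}$, tensors the module $\Gamma(U_{\sigma},\mathcal{E})$ with this localisation, and tracks how each weight piece stabilises into a limiting vector space $E^{\sigma}(\ldots,\infty,\ldots,\infty)$. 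Applied to $\nu = \sigma_{i} \cap \sigma_{j}$ and using $U_{\sigma_{i}} \cap U_{\sigma_{j}} = U_{\sigma_{i} \cap \sigma_{j}}$, this yields (ii).

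Conversely, given data $\hat{E}^{\Delta}$ satisfying (i) and (ii), I would assemble the $M$-graded $k[S_{\sigma_{i}}]$-modules into equivariant coherent sheaves $\mathcal{E}_{i}$ on $U_{\sigma_{i}}$ and glue them via (ii), which supplies the required isomorphisms of $M$-graded $k[S_{\sigma_{i} \cap \sigma_{j}}]$-modules on overlaps (and the cocycle condition is automatic from the uniqueness of the limiting vector spaces). The final and most delicate point is purity of $\mathcal{E}$. By Proposition \ref{ch. 1, sect. 2, prop. 2} it suffices to rule out nontrivial equivariant coherent subsheaves of lower-dimensional or incomplete support; by Proposition \ref{ch. 1, sect. 2, prop. 1} this can be done on each $U_{\sigma_{i}}$, so we are back to a local problem.

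The main obstacle is precisely this local purity check, and in particular the role of the diagonal injectivity condition (\ref{ch. 1, eqnnew}) from Proposition \ref{ch. 1, sect. 2, prop. 6}. In the irreducible-support case handled in Proposition \ref{ch. 1, sect. 2, prop. 4}, plain injectivity of the $\chi_{i}^{\sigma}$ in the unbounded directions was enough to force any equivariant coherent subsheaf back to full support. In the reducible case one must additionally exclude subsheaves whose support omits some of the components $V(\tau_{\alpha})$: such a subsheaf would appear as a $\sigma$-subfamily supported in the union of only some of the regions $\mathcal{R}_{\mu}^{s}$, and the tuple of maps in (\ref{ch. 1, eqnnew}) is exactly the obstruction that prevents a nonzero vector in $E^{\sigma}(\vec\lambda)$ from having all its propagations into the ``missing'' $s+1$ coordinate directions vanish simultaneously. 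Verifying that (\ref{ch. 1, eqnnew}) is both necessary (from purity) and sufficient (to recover purity from the combinatorial data) is the heart of the argument; once this is in place, the gluing part of the proof is a direct transcription of the argument for Theorem \ref{ch. 1, sect. 2, thm. 1}.
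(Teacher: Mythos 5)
Your proposal is correct and follows exactly the route the paper intends: the paper gives no written proof of Theorem \ref{ch. 1, sect. 2, thm. 2}, stating only that the arguments of the irreducible case carry over with no essentially new ideas, and your plan (restriction via Proposition \ref{ch. 1, sect. 2, prop. 1}, the local description of Proposition \ref{ch. 1, sect. 2, prop. 6}, and a reducible-support analogue of Proposition \ref{ch. 1, sect. 2, prop. 5} for the gluing) is precisely that transcription. One small caution: purity does not require excluding equivariant subsheaves whose support merely omits some of the components $V(\tau_{\alpha})$ --- such subsheaves still have dimension $d$ and are harmless --- but only subsheaves of strictly smaller dimension, which is exactly what the injectivity condition (\ref{ch. 1, eqnnew}) rules out.
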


Although we only described the ``maximally reducible'' case in Proposition \ref{ch. 1, sect. 2, prop. 6} and Theorem \ref{ch. 1, sect. 2, thm. 2}, the reader will have no difficulty writing down the case of arbitrary reducible support. We refrain from doing this since the notation will become too cumbersome, whereas the ideas are the same.

\section{Moduli Spaces of Equivariant Sheaves on Toric Varieties}

In this section, we discuss how the combinatorial description of pure equivariant sheaves on nonsingular toric varieties of Theorems \ref{ch. 1, sect. 2, thm. 1} and \ref{ch. 1, sect. 2, thm. 2} can be used to define a moduli problem and a coarse moduli space of such sheaves using GIT. We will start by defining the relevant moduli functors and studying families. Subsequently, we will perform GIT quotients and show we have obtained coarse moduli spaces. Again, for notational convenience, we will first treat the case of irreducible support and discuss the general case only briefly afterwards. The GIT construction gives rise to various notions of GIT stability depending on a choice of equivariant line bundle. In order to recover geometric results, we need an equivariant line bundle which precisely recovers Gieseker stability. We will construct such (ample) equivariant line bundles for torsion free equivariant sheaves in general. As a by-product, for reflexive equivariant sheaves, we can always construct particularly simple ample equivariant line bundles matching GIT stability and $\mu$-stability (see also subsection 4.4).

\subsection{Moduli Functors}

We start by defining some topological data.
\begin{definition} \label{ch. 1, sect. 3, def. 1}
Let $X$ be a nonsingular toric variety and use notation as in Theorem \ref{ch. 1, sect. 2, thm. 1}. Recall that $\sigma_{1}, \ldots, \sigma_{l}$ are the cones of maximal dimension having $\tau$ as a face. Let $\mathcal{E}$ be a pure equivariant sheaf on $X$ with support $V(\tau)$. The \emph{characteristic function} $\vec{\chi}_{\mathcal{E}}$ of $\mathcal{E}$ is defined to be the map
\begin{align} 
\begin{split}
&\vec{\chi}_{\mathcal{E}} : M \longrightarrow \mathbb{Z}^{l}, \\
&\vec{\chi}_{\mathcal{E}}(m) = (\chi_{\mathcal{E}}^{\sigma_{1}}(m), \ldots, \chi_{\mathcal{E}}^{\sigma_{l}}(m)) = (\mathrm{dim}_{k}(E_{m}^{\sigma_{1}}), \ldots, \mathrm{dim}_{k}(E_{m}^{\sigma_{l}})). \nonumber
\end{split}
\end{align}
We denote the set of all characteristic functions of pure equivariant sheaves on $X$ with support $V(\tau)$ by $\mathcal{X}^{\tau}$. \hfill $\oslash$
\end{definition}

Assume $X$ is a nonsingular projective toric variety. Let $\mathcal{O}_{X}(1)$ be an ample line bundle on $X$, so we can speak of Gieseker (semi)stable sheaves on $X$ \cite[Def.~1.2.4]{HL}. Let $\vec{\chi} \in \mathcal{X}^{\tau}$. We will be interested in moduli problems of Gieseker (semi)stable pure equivariant sheaves on $X$ with support $V(\tau)$ and characteristic function $\vec{\chi}$. This means we need to define moduli functors, i.e.~we need an appropriate notion of a family. Let $Sch/k$ be the category of $k$-schemes of finite type. Let $S$ be a $k$-scheme of finite type and, for any $x \in S$, define the natural morphism $\iota_{x} : \mathrm{Spec}(k(x)) \longrightarrow S$, where $k(x)$ is the residue field of $x$. We define an equivariant $S$-flat family to be an equivariant coherent sheaf $\mathcal{F}$ on $X \times S$ ($S$ with trivial torus action), which is flat w.r.t.~the projection $p_{S} : X \times S \longrightarrow S$. Such a family $\mathcal{F}$ is said to be Gieseker semistable with support $V(\tau)$ and characteristic function $\vec{\chi}$, if $\mathcal{F}_{x} = (1_{X} \times \iota_{x})^{*}\mathcal{F}$ is Gieseker semistable with support $V(\tau) \times \mathrm{Spec}(k(x))$ and characteristic function $\vec{\chi}$ for all $x \in S$. Two such families $\mathcal{F}_{1}, \mathcal{F}_{2}$ are said to be equivalent if there is a line bundle $L \in \mathrm{Pic}(S)$ and an equivariant isomorphism $\mathcal{F}_{1} \cong \mathcal{F}_{2} \otimes p_{S}^{*}L$, where $L$ is being considered as an equivariant sheaf on $S$ with trivial equivariant structure. Denote the set of Gieseker semistable equivariant $S$-flat families with support $V(\tau)$ and characteristic function $\vec{\chi}$ modulo equivalence by $\underline{\mathcal{M}}_{\vec{\chi}}^{\tau, ss}(S)$. We obtain a moduli functor
\begin{align} 
\begin{split}
\underline{\mathcal{M}}_{\vec{\chi}}^{\tau, ss} : (Sch/k)^{o} &\longrightarrow Sets, \\
S &\mapsto \underline{\mathcal{M}}_{\vec{\chi}}^{\tau, ss}(S), \\
(f : S' \longrightarrow S) &\mapsto \underline{\mathcal{M}}_{\vec{\chi}}^{\tau, ss}(f) = f^{*} : \underline{\mathcal{M}}_{\vec{\chi}}^{\tau, ss}(S) \longrightarrow \underline{\mathcal{M}}_{\vec{\chi}}^{\tau, ss}(S'). \nonumber
\end{split}
\end{align}   
Similarly, we obtain a moduli problem and a moduli functor $\underline{\mathcal{M}}_{\vec{\chi}}^{\tau, s}$ in the geometrically Gieseker stable case. Also note that we could have defined alternative moduli functors $\underline{\mathcal{M}}_{\vec{\chi}}^{\prime \tau, ss}$, $\underline{\mathcal{M}}_{\vec{\chi}}^{\prime \tau, s}$ by using just equivariant isomorphism as the equivalence relation instead of the one above. We start with the following proposition.
\begin{proposition} \label{ch. 1, sect. 3, prop. 1}
Let $X$ be a nonsingular toric variety. Let $S$ be a connected $k$-scheme of finite type and let $\mathcal{F}$ be an equivariant $S$-flat family. Then the characteristic functions of the fibres $\vec{\chi}_{\mathcal{F}_{x}}$ are constant on $S$. 
\end{proposition}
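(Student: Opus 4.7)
The plan is to reduce the statement to a standard fact about finitely generated flat modules. Since $S$ is connected, it suffices to show that for each maximal cone $\sigma = \sigma_{i}$ and each $m \in M$ the function $s \mapsto \chi_{\mathcal{F}_{s}}^{\sigma}(m)$ is locally constant on $S$. Working locally, I would assume $S = \mathrm{Spec}(A)$ is affine. Because the torus $T$ acts on $U_{\sigma} \times S$ only through the first factor (the action on $S$ being trivial), the equivariant structure on $\mathcal{F}$ induces a regular $T$-action on the finitely generated $(k[S_{\sigma}] \otimes_{k} A)$-module $E = \Gamma(U_{\sigma} \times S, \mathcal{F})$. By the Complete Reducibility Theorem \cite[Thm.~2.30]{Per1} this yields an $M$-grading
\[
E = \bigoplus_{m \in M} E_{m},
\]
in which each weight space $E_{m}$ is an $A$-submodule.

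The next step is to show each $E_{m}$ is finitely generated over $A$. Pick a finite set of generators of $E$ over $k[S_{\sigma}] \otimes_{k} A$ and split each into its $T$-isotypic components; this produces finitely many homogeneous generators $e_{1}, \ldots, e_{N}$ of weights $\mu_{1}, \ldots, \mu_{N}$. Then $E_{m}$ is spanned as an $A$-module by the finite set $\{\chi(m - \mu_{j}) \cdot e_{j} : m - \mu_{j} \in S_{\sigma}\}$ and is therefore finitely generated. Flatness of $\mathcal{F}$ over $S$ gives that $E$ is $A$-flat, and since flatness descends to direct summands each $E_{m}$ is $A$-flat. A finitely generated flat module over a Noetherian ring is projective, hence $E_{m}$ is locally free of locally constant rank on $\mathrm{Spec}(A)$.

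To finish, I would check that the weight decomposition commutes with base change. For any $s \in S$, the canonical isomorphism $E \otimes_{A} k(s) \cong \Gamma(U_{\sigma} \times \mathrm{Spec}(k(s)), \mathcal{F}_{s})$ is $T$-equivariant, so it restricts on weight-$m$ pieces to $E_{m} \otimes_{A} k(s) \cong E^{\sigma}_{m}(\mathcal{F}_{s})$. Thus $\chi_{\mathcal{F}_{s}}^{\sigma}(m) = \mathrm{rk}_{A}(E_{m})$ is locally constant on $\mathrm{Spec}(A)$, and hence on $S$; combined with connectedness this gives that every component of $\vec{\chi}_{\mathcal{F}_{s}}$ is constant on $S$. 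The only nontrivial point is the finite generation of the individual weight spaces $E_{m}$ over $A$; once that is in place, the combination of $A$-flatness, projectivity, and connectedness makes the rest routine.
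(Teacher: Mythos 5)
Your proof is correct and follows essentially the same route as the paper's: restrict to an affine open $V=\mathrm{Spec}(A)\subset S$, use the weight decomposition of $\Gamma(U_{\sigma}\times V,\mathcal{F})$ into finitely generated $A$-modules, deduce from $S$-flatness that each weight space is locally free of finite rank, and identify that rank with $\chi^{\sigma}_{\mathcal{F}_{s}}(m)$ by base change. The only difference is that you spell out the finite generation of the individual weight spaces and the compatibility with base change, which the paper asserts without detail.
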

\begin{proof}
Let $\sigma$ be a cone of the fan $\Delta$. Let $V = \mathrm{Spec}(A) \subset S$ be an affine open subset\footnote{Note that from now on, for $R$ a commutative ring, we often sloppily write $R$ instead of $\mathrm{Spec}(R)$, when no confusion is likely to arise.}. It is enough to prove that for all $m \in M$ 
\begin{equation}
\chi_{\mathcal{F}_{x}}^{\sigma}(m) = \mathrm{dim}_{k(x)} \ \Gamma(U_{\sigma} \times k(x), \mathcal{F}_{x}|_{U_{\sigma} \times k(x)})_{m}, \nonumber
\end{equation}
is constant for all $x \in V$. Note that the equivariant coherent sheaf $\mathcal{F}|_{U_{\sigma} \times V}$ corresponds to a finitely generated $M$-graded $k[S_{\sigma}] \otimes_{k} A$-module
\begin{equation}
\Gamma(U_{\sigma} \times V,\mathcal{F}) = \bigoplus_{m \in M} F_{m}^{\sigma}, \nonumber
\end{equation}
where all $F_{m}^{\sigma}$ are in fact finitely generated $A$-modules, so they correspond to coherent sheaves $\mathcal{F}_{m}^{\sigma}$ on $V$. Since $\mathcal{F}$ is $S$-flat, each $\mathcal{F}_{m}^{\sigma}$ is a locally free sheaf of some finite rank $r(m)$ \cite[Prop.~III.9.2]{Har1}. Fix $x \in V$ and consider the natural morphism $A \longrightarrow k(x)$, then
\begin{equation}
\Gamma(U_{\sigma} \times k(x), \mathcal{F}_{x}|_{U_{\sigma} \times k(x)}) \cong \bigoplus_{m \in M} F_{m}^{\sigma} \otimes_{A} k(x) \cong \bigoplus_{m \in M} k(x)^{\oplus r(m)}. \nonumber
\end{equation} 
Consequently, $\chi_{\mathcal{F}_{x}}^{\sigma}(m) = r(m)$ for all $m \in M$.
\end{proof}

\subsection{Families}

The question now arises to what extent the various moduli functors defined in the previous subsection are corepresentable. In order to answer this question, we will give a combinatorial description of a family. Payne has studied a similar problem in \cite{Pay} for equivariant vector bundles on toric varieties. 

We start with some straight-forward generalisations of the theory in section 2.
\begin{definition} \label{ch. 1, sect. 3, def. 2}
Let $U_{\sigma}$ be an affine toric variety defined by a cone $\sigma$ in a lattice $N$. Let $S$ be a $k$-scheme of finite type. A \emph{$\sigma$-family over $S$} consists of the following data: a family of quasi-coherent sheaves $\{\mathcal{F}_{m}^{\sigma}\}_{m \in M}$ on $S$ and morphisms $\chi_{m,m^{\prime}}^{\sigma} : \mathcal{F}_{m}^{\sigma} \longrightarrow \mathcal{F}_{m^{\prime}}^{\sigma}$ for all $m \leq_{\sigma} m'$, such that $\chi_{m,m}^{\sigma} = 1$ and $\chi_{m,m^{\prime\prime}}^{\sigma} = \chi_{m^{\prime},m^{\prime\prime}}^{\sigma} \circ \chi_{m,m^{\prime}}^{\sigma}$ for all $m \leq_{\sigma} m^{\prime} \leq_{\sigma} m^{\prime\prime}$. A \emph{morphism} $\hat{\phi}^{\sigma} : \hat{\mathcal{F}}^{\sigma} \longrightarrow \hat{\mathcal{G}}^{\sigma}$ \emph{of $\sigma$-families over $S$} is a family of morphisms $\{\phi_{m} : \mathcal{F}_{m}^{\sigma} \longrightarrow \mathcal{G}_{m}^{\sigma} \}_{m \in M}$, such that $\phi_{m^{\prime}}^{\sigma} \circ (\chi_{\mathcal{F}})_{m,m^{\prime}}^{\sigma}= (\chi_{\mathcal{G}})_{m,m^{\prime}}^{\sigma} \circ \phi_{m}^{\sigma}$ for all $m \leq_{\sigma} m'$. \hfill $\oslash$
\end{definition}
\begin{proposition} \label{ch. 1, sect. 3, prop. 2}
Let $U_{\sigma}$ be a nonsingular affine toric variety defined by a cone $\sigma$ and let $S$ be a $k$-scheme of finite type. The category of equivariant quasi-coherent sheaves on $U_{\sigma} \times S$ is equivalent to the category of $\sigma$-families over $S$. 
\end{proposition}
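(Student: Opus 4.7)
The plan is to generalize the absolute case (the equivalence between equivariant quasi-coherent sheaves on $U_{\sigma}$ and $\sigma$-families of $k$-vector spaces, which the paper already established via \cite[Prop.~2.31, Thm.~4.5]{Per1}) in a relative way over $S$, using the fact that $T$ acts trivially on the $S$-factor of $U_{\sigma} \times S$. The starting observation is that the $T$-action on $U_{\sigma} \times S$ preserves the projection $p_{S} : U_{\sigma} \times S \longrightarrow S$, so the $T$-action on any equivariant quasi-coherent sheaf $\mathcal{F}$ on $U_{\sigma} \times S$ descends to a $T$-action on the pushforward $(p_{S})_{*}\mathcal{F}$, which is a quasi-coherent sheaf on $S$ with trivial $T$-action.

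I would first reduce to the affine case: cover $S$ by affine open subsets $V = \mathrm{Spec}(A)$, so $U_{\sigma} \times V = \mathrm{Spec}(k[S_{\sigma}] \otimes_{k} A)$ is affine. An equivariant quasi-coherent sheaf on $U_{\sigma} \times V$ is then the same datum as a $k[S_{\sigma}] \otimes_{k}A$-module $F$ together with a regular $T$-action compatible with the $T$-action on $k[S_{\sigma}] \otimes_{k} A$ (where $T$ acts through the first factor). By the Complete Reducibility Theorem \cite[Thm.~2.30]{Per1}, any regular $T$-action on $F$ decomposes $F$ into weight spaces $F = \bigoplus_{m \in M} F_{m}^{\sigma}$, and each $F_{m}^{\sigma}$ is an $A$-submodule because $A$ acts with trivial weight. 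The compatibility with the $k[S_{\sigma}]$-action means exactly that multiplication by $\chi(m'-m)$ for $m \leq_{\sigma} m'$ induces $A$-linear maps $\chi_{m,m'}^{\sigma} : F_{m}^{\sigma} \longrightarrow F_{m'}^{\sigma}$ satisfying the cocycle identities $\chi_{m,m}^{\sigma} = 1$ and $\chi_{m',m''}^{\sigma} \circ \chi_{m,m'}^{\sigma} = \chi_{m,m''}^{\sigma}$. This is precisely a $\sigma$-family over $V$ in the sense of Definition \ref{ch. 1, sect. 3, def. 2}, with each $\mathcal{F}_{m}^{\sigma}$ equal to the quasi-coherent sheaf associated to $F_{m}^{\sigma}$. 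The inverse construction takes a $\sigma$-family over $V$ and forms $\bigoplus_{m \in M} F_{m}^{\sigma}$ with the $k[S_{\sigma}] \otimes_{k}A$-module structure prescribed by the transition maps; this is manifestly $M$-graded and hence carries the required equivariant structure. Morphisms correspond under this bijection in the obvious way, yielding an equivalence of categories over each affine open.

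To globalize, I would observe that the construction $\mathcal{F} \mapsto \{\mathcal{F}_{m}^{\sigma}\}$ behaves well under restriction to open subsets of $S$: the weight decomposition commutes with restriction because the $T$-action is preserved, and the transition maps are compatible. Consequently, for a general $S$ one covers by affine opens $\{V_{i}\}$, applies the affine case to obtain $\sigma$-families over each $V_{i}$, and glues them using the canonical identifications on overlaps $V_{i} \cap V_{j}$. Conversely, a $\sigma$-family over $S$ consists of quasi-coherent sheaves on $S$ with morphisms, and restricting to any affine open gives the local picture above; the resulting equivariant quasi-coherent sheaves on $U_{\sigma} \times V_{i}$ glue to an equivariant quasi-coherent sheaf on $U_{\sigma} \times S$ by standard descent. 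There is no real obstacle here since both sides are essentially sheaves of $M$-graded modules and gluing is automatic; the only point that requires care is checking that the $T$-equivariant gluing isomorphisms on overlaps correspond exactly to gluing isomorphisms of $\sigma$-families, and this is immediate because the $T$-action on $U_{\sigma} \times S$ is trivial in the $S$-direction. The main conceptual content, and the step I would emphasize, is therefore the affine case, where the weight-space decomposition upgrades from $k$-vector spaces to $A$-modules with no extra work because $T$ acts trivially on $A$.
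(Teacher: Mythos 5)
Your argument is correct and follows the same route as the paper's proof: reduce to affine $S=\mathrm{Spec}(A)$, apply the Complete Reducibility Theorem to decompose $\Gamma(U_{\sigma}\times S,\mathcal{F})$ into weight spaces which are now $A$-modules rather than $k$-vector spaces, and then glue over an affine cover of $S$. Your write-up simply spells out in more detail the steps the paper leaves as remarks.
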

\begin{proof}
Let $S = \mathrm{Spec}(A)$ be affine and $(\mathcal{F}, \Phi)$ an equivariant coherent sheaf. We have a regular action of $T$ on $F^{\sigma} = \Gamma(U_{\sigma} \times S, \mathcal{F})$, inducing a decomposition into weight spaces $F^{\sigma} = \bigoplus_{m \in M} F_{m}^{\sigma}$ (Complete Reducibility Theorem, \cite[Thm.~2.30]{Per1}). This time however, the $F_{m}^{\sigma}$ are $A$-modules instead of $k$-vector spaces. This gives the desired equivalence of categories. It is easy to see that the same holds for arbitrary $S$ by gluing.  
\end{proof} 

\noindent In the context of the previous proposition, a $\sigma$-family $\hat{\mathcal{F}}^{\sigma}$ over $S$ is called finite if all $\mathcal{F}^{\sigma}_{m}$ are coherent sheaves on $S$, there are integers $A_{1}, \ldots, A_{r}$ such that $\mathcal{F}^{\sigma}(\lambda_{1}, \ldots, \lambda_{r}) = 0$ unless $A_{1} \leq \lambda_{1}$, $\ldots$, $A_{r} \leq \lambda_{r}$ and there are only finitely many $m \in M$ such that the morphism
\begin{equation}
\bigoplus_{m' <_{\sigma} m} \mathcal{F}_{m'}^{\sigma} \longrightarrow \mathcal{F}_{m}^{\sigma}, \nonumber
\end{equation}
is not surjective.
\begin{proposition} \label{ch. 1, sect. 3, prop. 3}
Let $U_{\sigma}$ be a nonsingular affine toric variety defined by a cone $\sigma$ and let $S$ be a $k$-scheme of finite type. The category of equivariant $S$-flat families is equivalent to the category of finite $\sigma$-families $\hat{\mathcal{F}}^{\sigma}$ over $S$ with all $\mathcal{F}_{m}^{\sigma}$ locally free sheaves on $S$ of finite rank. 
\end{proposition}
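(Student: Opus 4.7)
The plan is to build on Proposition \ref{ch. 1, sect. 3, prop. 2}, which already gives an equivalence between equivariant quasi-coherent sheaves on $U_{\sigma} \times S$ and $\sigma$-families over $S$. What remains is to translate the two additional conditions --- coherence of $\mathcal{F}$ and flatness with respect to $p_{S} : U_{\sigma} \times S \to S$ --- into the combinatorial conditions on the associated $\sigma$-family. Since both conditions are local on $S$ and the weight space decomposition behaves well under restriction to open affines, I would reduce to the case $S = \mathrm{Spec}(A)$ affine. Under Proposition \ref{ch. 1, sect. 3, prop. 2} an equivariant quasi-coherent sheaf then corresponds to an $M$-graded $k[S_{\sigma}] \otimes_{k} A$-module $F^{\sigma} = \bigoplus_{m \in M} F_{m}^{\sigma}$, and a key point used throughout is that the direct sum decomposition is one of $A$-modules: since $T$ acts trivially on $S$, the $A$-action on $F^{\sigma}$ commutes with the $T$-action and hence preserves each weight space.

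First I would translate coherence into the combinatorial ``finite $\sigma$-family'' conditions. An equivariant coherent sheaf corresponds to $F^{\sigma}$ being finitely generated as a $k[S_{\sigma}] \otimes_{k} A$-module. By the Complete Reducibility Theorem one can choose homogeneous generators $e_{1}, \ldots, e_{p}$ with $e_{i} \in F_{m_{i}}^{\sigma}$. Since every $m$ with $F_{m}^{\sigma} \neq 0$ must satisfy $m \geq_{\sigma} m_{i}$ for some $i$, the integers $A_{1}, \ldots, A_{r}$ of the definition are extracted by taking, coordinate-wise, the minimum of the $m_{i}$ expressed in the dual basis. For any $m$ not among the $m_{i}$, every element of $F_{m}^{\sigma}$ is an $A$-linear combination of products $\chi(m - m_{i}) \cdot e_{i}$, which factor through $\chi_{m_{i}, m}^{\sigma} : F_{m_{i}}^{\sigma} \to F_{m}^{\sigma}$, and therefore $\bigoplus_{m' <_{\sigma} m} F_{m'}^{\sigma} \to F_{m}^{\sigma}$ is surjective outside a finite set; each $F_{m}^{\sigma}$ is finitely generated over $A$ as an $A$-span of finitely many such products. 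Conversely, conditions (i), (ii), (iii) of finiteness together recover a finite set of $A$-generators (chosen at the finitely many non-surjective $m$ and packaged piece-by-piece), noting that strong convexity of $\sigma$ combined with the lower bounds $A_{i}$ makes the order $<_{\sigma}$ well-founded on the support, so the inductive reconstruction terminates.

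Next I would translate $S$-flatness into the condition that each $\mathcal{F}_{m}^{\sigma}$ be locally free of finite rank. Because $F^{\sigma} = \bigoplus_{m} F_{m}^{\sigma}$ as $A$-modules, flatness of $F^{\sigma}$ over $A$ is equivalent to flatness of every summand $F_{m}^{\sigma}$ (flatness passes to and is preserved by direct sums). Combined with the coherence statement already established --- each $F_{m}^{\sigma}$ is finitely generated over the Noetherian ring $A$ --- the standard equivalence ``finitely generated plus flat equals locally free of finite rank'' over a Noetherian base \cite[Prop.~III.9.2]{Har1} gives the desired characterization in both directions. Functoriality is inherited directly from Proposition \ref{ch. 1, sect. 3, prop. 2}, so morphisms match automatically.

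The main obstacle is the compatibility statement underlying the whole argument: that the torus weight space decomposition of $F^{\sigma}$ really is an $A$-module direct sum and that the $k[S_{\sigma}] \otimes_{k} A$-module structure is determined entirely by the $\sigma$-family data plus this $A$-module structure on the weight spaces. This is essentially built into the proof of Proposition \ref{ch. 1, sect. 3, prop. 2}, but it needs to be invoked carefully when passing from global flatness to flatness of weight spaces; once this is granted, the remainder of the argument reduces to standard facts about finitely generated flat modules over Noetherian rings and the support/generator dictionary worked out in the previous paragraph.
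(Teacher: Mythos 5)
Your proposal is correct and takes essentially the same route as the paper: the paper simply observes that the equivalence of Proposition \ref{ch. 1, sect. 3, prop. 2} restricts to coherent sheaves versus finite $\sigma$-families, and that $S$-flatness translates into each weight space being locally free of finite rank via the argument (and the reference to \cite[Prop.~III.9.2]{Har1}) already given in the proof of Proposition \ref{ch. 1, sect. 3, prop. 1}. You have merely filled in the details the paper leaves as "not difficult to derive" --- the homogeneous generators, the well-foundedness of $<_{\sigma}$ on the support, and the summand-wise flatness --- all of which are sound.
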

\begin{proof}
It is not difficult to derive that the equivalence of Proposition \ref{ch. 1, sect. 3, prop. 2} restricts to an equivalence between the category of equivariant coherent sheaves on $X \times S$ and the category of finite $\sigma$-families over $S$. In the proof of Proposition \ref{ch. 1, sect. 3, prop. 1}, we saw that $S$-flatness gives rise to locally free of finite rank.
\end{proof}

\noindent In the context of the previous proposition, let $\hat{\mathcal{F}}^{\sigma}$ be a $\sigma$-family over $S$ corresponding to an equivariant $S$-flat family $\mathcal{F}$. For each connected component $C \subset S$, the characteristic function $\chi_{\mathcal{F}_{x}} : M \longrightarrow \mathbb{Z}$, for any $x \in C$, gives us the ranks of the $\mathcal{F}_{m}^{\sigma}$ on $C$.  
\begin{proposition} \label{ch. 1, sect. 3, prop. 4}
Let $U_{\sigma}$ be a nonsingular affine toric variety defined by a cone $\sigma$ in a lattice $N$ of rank $r$. Let $\tau \prec \sigma$ and let $(\rho_{1}, \ldots, \rho_{s}) \subset (\rho_{1}, \ldots, \rho_{r})$ be the rays of $\tau$ respectively $\sigma$. Let $S$ be a $k$-scheme of finite type and $\chi \in \mathcal{X}^{\tau}$. Let $\mathcal{F}$ be an equivariant $S$-flat family such that $\chi_{\mathcal{F}_{x}} = \chi$ for all $x \in S$ and let $\hat{\mathcal{F}}^{\sigma}$ be the corresponding $\sigma$-family over $S$. Then the fibres $\mathcal{F}_{x}$ are pure equivariant with support $V(\tau) \times k(x)$ if and only if the following properties are satisfied:
\begin{enumerate}
	\item [$\mathrm{(i)}$] There are integers $A_{1} \leq B_{1}, \ldots, A_{s} \leq B_{s}, A_{s+1}, \ldots, A_{r}$ such that $\mathcal{F}^{\sigma}(\lambda_{1}, \ldots, \lambda_{r}) = 0$ unless $A_{1} \leq \lambda_{1} \leq B_{1}$, $\ldots$, $A_{s} \leq \lambda_{s} \leq B_{s}$, $A_{s+1} \leq \lambda_{s+1}$, $\ldots$, $A_{r} \leq \lambda_{r}$ and for $\lambda_{i} \neq \lambda_{1}, \ldots, \lambda_{s}$ there is no such upper bound.
	\item [$\mathrm{(ii)}$] For any $x \in S$, $\mathcal{F}_{x}$ has a corresponding $\sigma$-family $\hat{F_{x}}^{\sigma}$ as in Proposition \ref{ch. 1, sect. 2, prop. 4} (over ground field $k(x)$) with bounding integers $A_{1} \leq B_{1}, \ldots, A_{s} \leq B_{s}, A_{s+1}, \ldots, A_{r}$.
\end{enumerate}
\end{proposition}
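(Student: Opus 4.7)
The plan is to reduce everything to the fibrewise characterization of purity given in Proposition \ref{ch. 1, sect. 2, prop. 4} and to exploit the local freeness from Proposition \ref{ch. 1, sect. 3, prop. 3} in order to propagate uniform bounds across $S$. By Propositions \ref{ch. 1, sect. 3, prop. 2} and \ref{ch. 1, sect. 3, prop. 3}, the family $\mathcal{F}$ corresponds to a finite $\sigma$-family $\hat{\mathcal{F}}^{\sigma}$ over $S$ in which every $\mathcal{F}^{\sigma}_{m}$ is a locally free sheaf. The hypothesis $\chi_{\mathcal{F}_{s}} = \chi$ on all of $S$ then forces the rank of $\mathcal{F}^{\sigma}_{m}$ to equal $\chi(m)$, so in particular $\mathcal{F}^{\sigma}_{m} = 0$ if and only if $\chi(m) = 0$. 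For any $s \in S$, the fibre $\mathcal{F}_{s}$ corresponds to the $\sigma$-family over $k(s)$ obtained by tensoring $\hat{\mathcal{F}}^{\sigma}$ with $k(s)$: its vector spaces are $\mathcal{F}^{\sigma}_{m} \otimes_{\mathcal{O}_{S}} k(s)$ and its structure maps are the corresponding base changes.

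For the forward direction, suppose each $\mathcal{F}_{s}$ is pure equivariant with support $V(\tau) \times k(s)$. Applying Proposition \ref{ch. 1, sect. 2, prop. 4} to one fixed fibre produces integers $A_{1} \leq B_{1}, \ldots, A_{s} \leq B_{s}, A_{s+1}, \ldots, A_{r}$ controlling the vanishing region of that fibre's $\sigma$-family. Since the vanishing region is entirely determined by $\chi \in \mathcal{X}^{\tau}$, and $\chi$ does not depend on $s$, the bounds may be chosen uniformly. Because $\mathcal{F}^{\sigma}_{m}$ is locally free of rank $\chi(m)$, its vanishing as a sheaf on $S$ is equivalent to its fibrewise vanishing, so the same bounds control $\hat{\mathcal{F}}^{\sigma}$ itself; this yields (i). Condition (ii) then simply records, fibrewise, the remaining content of Proposition \ref{ch. 1, sect. 2, prop. 4} with these common bounds.

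For the converse, assume (i) and (ii). Tensoring (i) with $k(s)$ shows that the $\sigma$-family of each fibre $\mathcal{F}_{s}$ vanishes outside the prescribed region, while (ii) is exactly the other condition appearing in Proposition \ref{ch. 1, sect. 2, prop. 4}. That proposition then delivers purity of $\mathcal{F}_{s}$ with support $V(\tau) \times k(s)$, as required.

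The only genuine subtlety is ensuring uniformity of the bounding integers across $S$, and this is precisely the role of the local freeness from Proposition \ref{ch. 1, sect. 3, prop. 3}: without it, the vanishing locus of $\hat{\mathcal{F}}^{\sigma}$ on $S$ need not coincide with the pointwise vanishing locus on fibres. The injectivity and limiting-dimension conditions in Proposition \ref{ch. 1, sect. 2, prop. 4}(ii) are already fibrewise in nature, so they require no additional argument.
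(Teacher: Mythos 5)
Your proof is correct and follows essentially the same route as the paper: identify the fibre $\sigma$-families with the base changes $\mathcal{F}^{\sigma}_{m}\otimes k(s)$, observe that $\chi(m)=\mathrm{rk}(\mathcal{F}^{\sigma}_{m})$ makes the vanishing region uniform over $S$, and reduce to Proposition \ref{ch. 1, sect. 2, prop. 4}. The paper's own proof is just a terser version of exactly this argument, so nothing further is needed.
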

\begin{proof}
Note first of all that the entire theory of subsection 2.1 and 2.2 works over any ground field of characteristic 0, so we can replace $k$ by $k(x)$ in all the results ($x \in S$). Note that if $F_{m}^{\sigma}$ is the $A$-module corresponding to $\mathcal{F}_{m}^{\sigma}|_{V}$ for $V = \mathrm{Spec}(A) \subset S$ an affine open subset, then $(F_{x})^{\sigma}_{m} \cong F_{m}^{\sigma} \otimes_{A} k(x)$ for all $x \in V$. In particular, $\chi(m) = \mathrm{rk}(\mathcal{F}_{m}^{\sigma}) = \mathrm{dim}_{k(x)}( (F_{x})_{m}^{\sigma})$ for all $m \in M, x \in S$. The result easily follows from Proposition \ref{ch. 1, sect. 2, prop. 4}.
\end{proof}

Before we proceed, we need a technical result.
\begin{proposition} \label{ch. 1, sect. 3, prop. T}
Let $S$ be a $k$-scheme of finite type and let $\phi : \mathcal{E} \longrightarrow \mathcal{F}$ be a morphism of locally free sheaves of finite rank on $S$. Let $\iota_{x} : \mathrm{Spec}(k(x)) \longrightarrow S$ be the natural morphism for all $x \in S$. Then $\phi$ is injective and $\mathrm{coker}(\phi)$ is $S$-flat if and only if $\iota^{*}_{x}\phi$ is injective for all $x \in S$. 
\end{proposition}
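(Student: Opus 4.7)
The plan is to pass to affine opens and stalks, then invoke the local criterion for flatness. Since $S$ is Noetherian of finite type over $k$ and both conclusions are local, I first reduce to the case $S = \mathrm{Spec}(A)$ with $A$ a Noetherian local ring with maximal ideal $\mathfrak{m}$ and residue field $k(s) = A/\mathfrak{m}$, and $\phi : A^{m} \longrightarrow A^{n}$ a homomorphism of free modules of finite rank. Flatness being verifiable at closed points alone (for coherent sheaves on a Noetherian scheme), it is enough to check everything at the closed point of $\mathrm{Spec}(A)$.

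The forward direction $(\Rightarrow)$ is immediate: the short exact sequence $0 \to \mathcal{E} \to \mathcal{F} \to \mathrm{coker}(\phi) \to 0$ together with flatness of $\mathrm{coker}(\phi)$ gives $\mathrm{Tor}_{1}(\mathrm{coker}(\phi), k(s)) = 0$, so $\iota_{s}^{*}$ preserves left exactness, yielding injectivity of $\iota_{s}^{*}\phi$.

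For the converse, set $K = \ker(\phi)$, $I = \mathrm{im}(\phi)$, $C = \mathrm{coker}(\phi)$. The first step is to show $K = 0$. Tensoring the right-exact sequence $K \to \mathcal{E} \to I \to 0$ with $k(s)$, and using that the composition $\mathcal{E} \otimes k(s) \twoheadrightarrow I \otimes k(s) \hookrightarrow \mathcal{F} \otimes k(s)$ equals the (by hypothesis injective) map $\iota_{s}^{*}\phi$, one deduces that $\mathcal{E} \otimes k(s) \to I \otimes k(s)$ is an isomorphism, whence $K \otimes k(s) = 0$. Since $\mathcal{E}$ is locally free of finite rank on the Noetherian scheme $S$, the subsheaf $K$ is coherent, and stalk-local Nakayama gives $K = 0$. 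Hence $\phi$ is injective and one has an honest short exact sequence $0 \to \mathcal{E} \to \mathcal{F} \to C \to 0$.

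The second step is to derive flatness of $C$. The long exact Tor sequence reads
\begin{equation*}
0 = \mathrm{Tor}_{1}(\mathcal{F}, k(s)) \longrightarrow \mathrm{Tor}_{1}(C, k(s)) \longrightarrow \mathcal{E} \otimes k(s) \stackrel{\iota_{s}^{*}\phi}{\longrightarrow} \mathcal{F} \otimes k(s),
\end{equation*}
where the leftmost vanishing uses local freeness of $\mathcal{F}$. The injectivity of $\iota_{s}^{*}\phi$ then forces $\mathrm{Tor}_{1}(C, k(s)) = 0$ at every closed point $s \in S$. Since $C$ is coherent, the local criterion for flatness (vanishing of $\mathrm{Tor}_{1}$ against the residue field characterises flatness for finitely generated modules over a Noetherian local ring) gives that $C$ is $S$-flat.

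The only conceptual subtlety is the ordering: one must kill $K$ via Nakayama before attempting the flatness argument, so that a genuine short exact sequence is available on which to run the Tor computation. Beyond this, the argument is a routine combination of Nakayama's lemma and the local criterion for flatness, and does not require any deeper input.
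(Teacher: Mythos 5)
Your reduction to a Noetherian local ring and your forward direction are fine, but the converse has a genuine gap at precisely the step you single out as the ``only conceptual subtlety,'' and the ordering you advocate is backwards. Tensoring $K \to \mathcal{E} \to I \to 0$ with $k(s)$ and observing that $\mathcal{E}\otimes k(s) \to I \otimes k(s)$ is an isomorphism only tells you that the induced map $K \otimes k(s) \to \mathcal{E}\otimes k(s)$ is the \emph{zero map}; since $-\otimes k(s)$ is merely right exact, this does not give $K \otimes k(s) = 0$, so Nakayama cannot be applied. (Note that $K\otimes k(s)=0$ is, via Nakayama, equivalent to the conclusion $K=0$ you are after, so this unjustified inference carries the entire weight of the argument.) The correct order is the opposite one: run the Tor computation first, on the sequence $0 \to I \to \mathcal{F} \to C \to 0$, which is exact with no hypothesis on $K$. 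Injectivity of $\iota_{s}^{*}\phi$ together with surjectivity of $\mathcal{E}\otimes k(s) \to I\otimes k(s)$ forces $I \otimes k(s) \to \mathcal{F}\otimes k(s)$ to be injective, whence $\mathrm{Tor}_{1}(C, k(s)) = 0$ and $C$ is free by the local criterion; then $I$ is flat, so $\mathrm{Tor}_{1}(I, k(s)) = 0$, so $0 \to K\otimes k(s) \to \mathcal{E}\otimes k(s) \to I \otimes k(s) \to 0$ is exact, and only at this point does the isomorphism in the middle give $K\otimes k(s) = 0$ and hence $K = 0$.

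For comparison, the paper's proof avoids Tor altogether: it lifts a $k$-basis of $C/\mathfrak{m}C$ to a map $\psi : R^{\oplus c} \to R^{\oplus b}$ with $c = b-a$, notes that $\phi + \psi : R^{\oplus a}\oplus R^{\oplus c} \to R^{\oplus b}$ is surjective by Nakayama and hence an isomorphism (a surjective endomorphism of a finitely generated module is bijective), and reads off injectivity of $\phi$ and freeness of the cokernel from the resulting commutative diagram. Equivalently, injectivity of $\overline{\phi}$ means some $a\times a$ minor of the matrix of $\phi$ is a unit in the local ring, so $\phi$ is split injective with free cokernel. Either repair yields the proposition; as written, your argument does not.
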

\begin{proof}
By taking an open affine cover over which both locally free sheaves trivialise, it is easy to see we are reduced to proving the following: 

\noindent \emph{Claim.} Let $(R, \mathfrak{m})$ be a local ring. Let $k = R / \mathfrak{m}$ be the residue field and let $\phi : R^{\oplus a} \longrightarrow R^{\oplus b}$ be an $R$-module homomorphism. Then $\phi$ is injective and $\mathrm{coker}(\phi)$ is free of finite rank if and only if the induced map $\overline{\phi} : k^{\oplus a} \longrightarrow k^{\oplus b}$ is injective. 

\noindent \emph{Proof of Claim: $\Leftarrow$.} Let $M = R^{\oplus b} / \mathrm{im}(\phi)$, then we have an exact sequence $R^{\oplus a} \stackrel{\phi}{\longrightarrow} R^{\oplus b} \longrightarrow M \longrightarrow 0$. Applying $- \otimes_{R} k$ and using the assumption, we obtain a short exact sequence $0 \longrightarrow k^{\oplus a} \stackrel{\overline{\phi}}{\longrightarrow} k^{\oplus b} \longrightarrow M/\mathfrak{m}M \longrightarrow 0$. Here $M/\mathfrak{m}M$ is a $c = b-a$ dimensional $k$-vector space. Take $c$ basis elements of $M / \mathfrak{m}M$, then their representatives in $M$ generate $M$ as an $R$-module (Nakayama's Lemma). Take preimages $x_{1}, \ldots, x_{c}$ in $R^{\oplus b}$. Denote the standard generators of $R^{\oplus c}$ by $e_{1}, \ldots, e_{c}$ and define $\psi : R^{\oplus c} \longrightarrow R^{\oplus b}, \ e_{i} \mapsto x_{i}$. We get a diagram
\begin{displaymath}
\xymatrix
{
0 \ar[r] & R^{\oplus a} \ar[r]^>>>>>{\iota} \ar[d]_{1} & R^{\oplus a} \oplus R^{\oplus c} \ar[r]^>>>>>{\pi} \ar[d]_{\phi + \psi} & R^{\oplus c} \ar[r] \ar[d] & 0 \\
0 \ar[r] & R^{\oplus a} \ar[r]^{\phi} & R^{\oplus b} \ar[r] & M \ar[r] & 0.  
}
\end{displaymath}
Here the top sequence is split exact and in the lower sequence we still have to verify $\phi$ is injective. Furthermore, all squares commute and $\phi + \psi$ is an isomorphism, because $\phi + \psi$ is surjective \cite[Cor.~4.4]{Eis}. Therefore all vertical maps are isomorphisms. The statement now follows. 

\noindent \emph{Proof of Claim: $\Rightarrow$.} We have a short exact sequence $0 \longrightarrow R^{\oplus a} \stackrel{\phi}{\longrightarrow} R^{\oplus b} \longrightarrow M \longrightarrow 0$, where $M = R^{\oplus b} / \mathrm{im}(\phi) \cong R^{\oplus c}$ for some $c$. The long exact sequence of $\mathrm{Tor}_{i}(k,-)$ reads
\begin{equation}
\cdots \longrightarrow \mathrm{Tor}_{1}(k,M) \longrightarrow k^{\oplus a} \longrightarrow k^{\oplus b} \longrightarrow M / \mathfrak{m}M \longrightarrow 0. \nonumber
\end{equation} 
But clearly $\mathrm{Tor}_{1}(k,M) = 0$, since $M \cong R^{\oplus c}$ for some $c$. 
\end{proof}

\noindent We can now derive a combinatorial description of the type of families we are interested in for the affine case. 
\begin{proposition} \label{ch. 1, sect. 3, prop. 5}
Let $U_{\sigma}$ be a nonsingular affine toric variety defined by a cone $\sigma$ in a lattice $N$ of rank $r$. Let $\tau \prec \sigma$ and let $(\rho_{1}, \ldots, \rho_{s}) \subset (\rho_{1}, \ldots, \rho_{r})$ be the rays of $\tau$ respectively $\sigma$. Let $S$ be a $k$-scheme of finite type and $\chi \in \mathcal{X}^{\tau}$. The category of equivariant $S$-flat families $\mathcal{F}$ with fibres $\mathcal{F}_{x}$ pure equivariant with support $V(\tau) \times k(x)$ and characteristic function $\chi$ is equivalent to the category of $\sigma$-families $\hat{\mathcal{F}}^{\sigma}$ over $S$ having the following properties:
\begin{enumerate}
	\item [$\mathrm{(i)}$] There are integers $A_{1} \leq B_{1}, \ldots, A_{s} \leq B_{s}, A_{s+1}, \ldots, A_{r}$ such that $\mathcal{F}^{\sigma}(\lambda_{1}, \ldots, \lambda_{r}) = 0$ unless $A_{1} \leq \lambda_{1} \leq B_{1}$, $\ldots$, $A_{s} \leq \lambda_{s} \leq B_{s}$, $A_{s+1} \leq \lambda_{s+1}$, $\ldots$, $A_{r} \leq \lambda_{r}$.
	\item [$\mathrm{(ii)}$] For all integers $A_{1} \leq \Lambda_{1} \leq B_{1}$, $\ldots$, $A_{s} \leq \Lambda_{s} \leq B_{s}$, there is a locally free sheaf $\mathcal{F}^{\sigma}(\Lambda_{1}, \ldots, \Lambda_{s}, \infty, \ldots, \infty)$ on $S$ of finite rank (not all zero) having the following properties. All $\mathcal{F}^{\sigma}(\Lambda_{1}, \ldots, \Lambda_{s}, \lambda_{s+1}, \ldots, \lambda_{r})$ are quasi-coherent subsheaves of $\mathcal{F}^{\sigma}(\Lambda_{1}, \ldots, \Lambda_{s}, \infty, \ldots, \infty)$, the maps $x_{s+1}, \ldots, x_{r}$ are inclusions with $S$-flat cokernels and there are integers $\lambda_{s+1}, \ldots, \lambda_{r}$ such that $\mathcal{F}^{\sigma}(\Lambda_{1}, \ldots, \Lambda_{s}, \lambda_{s+1}, \ldots, \lambda_{r})=\mathcal{F}^{\sigma}(\Lambda_{1}, \ldots, \Lambda_{s}, \infty, \ldots, \infty)$.
	\item [$\mathrm{(iii)}$] For any $m \in M$, we have $\chi(m) = \mathrm{rk}(\mathcal{F}^{\sigma}_{m})$.
\end{enumerate}
\end{proposition}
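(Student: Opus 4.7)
The plan is to build the proposition directly on top of Propositions \ref{ch. 1, sect. 3, prop. 2}, \ref{ch. 1, sect. 3, prop. 3}, \ref{ch. 1, sect. 3, prop. 4} and \ref{ch. 1, sect. 3, prop. T}. Proposition \ref{ch. 1, sect. 3, prop. 2} already provides the equivalence between equivariant quasi-coherent sheaves on $U_{\sigma} \times S$ and $\sigma$-families over $S$, and Proposition \ref{ch. 1, sect. 3, prop. 3} restricts this to the equivalence between equivariant $S$-flat families and finite $\sigma$-families whose components are locally free of finite rank on $S$. Thus the task reduces to translating the two extra demands (fibrewise purity with support $V(\tau) \times k(s)$ and fixed characteristic function $\chi$) into conditions on the $\sigma$-family. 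Condition (iii) will be immediate from the rank computation $\chi(m) = \mathrm{rk}(\mathcal{F}_{m}^{\sigma})$ observed after Proposition \ref{ch. 1, sect. 3, prop. 3}, while (i) and the pointwise part of (ii) will come from Proposition \ref{ch. 1, sect. 3, prop. 4} applied at every closed point. The non-trivial new ingredient is the global statement in (ii) that the maps $x_{s+1}, \ldots, x_{r}$ are inclusions with $S$-flat cokernel; this is where Proposition \ref{ch. 1, sect. 3, prop. T} enters.

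In the forward direction, suppose $\mathcal{F}$ is an equivariant $S$-flat family with every fibre $\mathcal{F}_{s}$ pure equivariant of support $V(\tau) \times k(s)$ and characteristic function $\chi$. Proposition \ref{ch. 1, sect. 3, prop. 4} then yields the bounding integers $A_{1}, \ldots, A_{r}$ and $B_{1}, \ldots, B_{s}$ of (i), and for each admissible $(\Lambda_{1}, \ldots, \Lambda_{s})$ the fibrewise description of Proposition \ref{ch. 1, sect. 2, prop. 4} says that $\mathcal{F}_{s}^{\sigma}(\Lambda_{1}, \ldots, \Lambda_{s}, \lambda_{s+1}, \ldots, \lambda_{r})$ eventually stabilises. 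Because the ranks are prescribed by $\chi$ and constant over $S$ (Proposition \ref{ch. 1, sect. 3, prop. 1}), the sheaves $\mathcal{F}^{\sigma}(\Lambda_{1}, \ldots, \Lambda_{s}, \lambda_{s+1}, \ldots, \lambda_{r})$ themselves stabilise past some uniform cut-off, defining the locally free limit sheaf $\mathcal{F}^{\sigma}(\Lambda_{1}, \ldots, \Lambda_{s}, \infty, \ldots, \infty)$. The fibrewise injectivity of $x_{s+1}, \ldots, x_{r}$ from Proposition \ref{ch. 1, sect. 2, prop. 4} promotes, via Proposition \ref{ch. 1, sect. 3, prop. T}, to the global statement that these maps are injective with $S$-flat cokernels. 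This gives (ii).

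For the converse, starting from a $\sigma$-family over $S$ satisfying (i)--(iii), one first checks that every $\mathcal{F}_{m}^{\sigma}$ is locally free of finite rank on $S$: by (ii) the limiting sheaves are locally free, and all intermediate $\mathcal{F}^{\sigma}(\Lambda_{1}, \ldots, \Lambda_{s}, \lambda_{s+1}, \ldots, \lambda_{r})$ are obtained from these by a finite chain of inclusions with $S$-flat cokernels (property (ii) plus the stabilisation). Since the quotient sheaves appearing at each step are coherent and $S$-flat, standard exact sequence arguments upgrade the intermediate subsheaves to locally free sheaves of finite rank. Proposition \ref{ch. 1, sect. 3, prop. 3} then produces an equivariant $S$-flat family $\mathcal{F}$. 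It remains to verify the fibrewise purity and support: applying $- \otimes_{A} k(s)$ preserves the vanishing pattern in (i), and by the easy direction of Proposition \ref{ch. 1, sect. 3, prop. T} the induced fibrewise maps $x_{s+1}, \ldots, x_{r}$ remain injective; Proposition \ref{ch. 1, sect. 2, prop. 4} (read from right to left and applied over ground field $k(s)$) then identifies each $\mathcal{F}_{s}$ as a pure equivariant sheaf with support $V(\tau) \times k(s)$, and (iii) fixes the characteristic function to be $\chi$.

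The main technical obstacle is precisely the passage between the fibrewise injectivity of $x_{s+1}, \ldots, x_{r}$ (which is what Proposition \ref{ch. 1, sect. 2, prop. 4} delivers at each point) and the global sheaf-theoretic formulation demanded in (ii) (inclusion with $S$-flat cokernel). Once Proposition \ref{ch. 1, sect. 3, prop. T} is in place, this obstacle dissolves and the remaining work is bookkeeping between the fibrewise and global structures already governed by Propositions \ref{ch. 1, sect. 3, prop. 3} and \ref{ch. 1, sect. 3, prop. 4}; no new toric input is required beyond what has been set up in the previous subsection.
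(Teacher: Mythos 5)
Your proposal is correct and follows essentially the same route as the paper: the paper's proof likewise reduces everything to Propositions \ref{ch. 1, sect. 3, prop. 3}, \ref{ch. 1, sect. 3, prop. 4} and \ref{ch. 1, sect. 3, prop. T}, with the local freeness of the intermediate sheaves in the converse direction handled by the standard exact-sequence fact you invoke (the paper cites \cite[Prop.~III.9.1A(e)]{Har1} for this). Your write-up is simply a more detailed unpacking of the same argument, correctly identifying Proposition \ref{ch. 1, sect. 3, prop. T} as the one genuinely new ingredient needed to pass between fibrewise injectivity and the global condition in (ii).
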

\begin{proof}
Note that if we have a $\sigma$-family $\hat{\mathcal{F}}^{\sigma}$ as in (i), (ii), (iii), then all $\mathcal{F}_{m}^{\sigma}$ are locally free of finite rank \cite[Prop.~III.9.1A(e)]{Har1}. The statement immediately follows from Propositions \ref{ch. 1, sect. 3, prop. 3}, \ref{ch. 1, sect. 3, prop. 4} and \ref{ch. 1, sect. 3, prop. T}.
\end{proof}

The general combinatorial description of the kind of families we are interested in easily follows.
\begin{theorem} \label{ch. 1, sect. 3, thm. 1}
Let $X$ be a nonsingular toric variety with fan $\Delta$ in a lattice $N$ of rank $r$. Let $\tau \in \Delta$, then $V(\tau)$ is covered by those $U_{\sigma}$ where $\sigma \in \Delta$ has dimension $r$ and $\tau \prec \sigma$. Denote these cones by $\sigma_{1}, \ldots, \sigma_{l}$. For each $i = 1, \ldots, l$, let $\left(\rho^{(i)}_{1}, \ldots, \rho^{(i)}_{s} \right) \subset \left(\rho^{(i)}_{1}, \ldots, \rho^{(i)}_{r}\right)$ be the rays of $\tau$ respectively $\sigma_{i}$. Let $S$ be a $k$-scheme of finite type and $\vec{\chi} \in \mathcal{X}^{\tau}$. The category of equivariant $S$-flat families $\mathcal{F}$ with fibres $\mathcal{F}_{x}$ pure equivariant sheaves with support $V(\tau) \times k(x)$ and characteristic function $\vec{\chi}$ is equivalent to the category $\mathcal{C}^{\tau}_{\vec{\chi}}(S)$, which can be described as follows. An object $\hat{\mathcal{F}}^{\Delta}$ of $\mathcal{C}^{\tau}_{\vec{\chi}}(S)$ consists of the following data: 
\begin{enumerate}
	\item [$\mathrm{(i)}$] For each $i = 1,\ldots, l$ we have a $\sigma_{i}$-family $\hat{\mathcal{F}}^{\sigma_{i}}$ over $S$ as described in Proposition \ref{ch. 1, sect. 3, prop. 5}.
	\item [$\mathrm{(ii)}$] Let $i,j = 1, \ldots, l$. Let $\left\{\rho^{(i)}_{i_{1}}, \ldots, \rho^{(i)}_{i_{p}}\right\} \subset \left\{\rho^{(i)}_{1}, \ldots, \rho^{(i)}_{r}\right\}$ resp.~$\left\{\rho^{(j)}_{j_{1}}, \ldots, \rho^{(j)}_{j_{p}}\right\} \subset \left\{\rho^{(j)}_{1}, \ldots, \rho^{(j)}_{r}\right\}$ be the rays of $\sigma_{i} \cap \sigma_{j}$ in $\sigma_{i}$ respectively $\sigma_{j}$, labeled in such a way that $\rho^{(i)}_{i_{k}} = \rho^{(j)}_{j_{k}}$ for all $k = 1, \ldots, p$. Now let $\lambda^{(i)}_{1}, \ldots, \lambda^{(i)}_{r} \in \mathbb{Z} \cup \{\infty\}$, $\lambda^{(j)}_{1}, \ldots, \lambda^{(j)}_{r} \in \mathbb{Z} \cup \{\infty\}$ be such that $\lambda^{(i)}_{i_{k}} = \lambda^{(j)}_{j_{k}} \in \mathbb{Z}$ for all $k = 1, \ldots, p$ and $\lambda^{(i)}_{n} = \lambda^{(j)}_{n} = \infty$ otherwise. Then
  \begin{align} 
 \begin{split}
	&\mathcal{F}^{\sigma_{i}} \left( \sum_{k=1}^{r} \lambda^{(i)}_{k} m\left(\rho_{k}^{(i)}\right) \right) = \mathcal{F}^{\sigma_{j}} \left( \sum_{k=1}^{r} \lambda^{(j)}_{k} m\left(\rho_{k}^{(j)}\right) \right), \\
	&\chi_{n}^{\sigma_{i}} \left( \sum_{k=1}^{r} \lambda^{(i)}_{k} m\left(\rho_{k}^{(i)}\right) \right) = \chi_{n}^{\sigma_{j}} \left( \sum_{k=1}^{r} \lambda^{(j)}_{k} m\left(\rho_{k}^{(j)}\right) \right), \ \forall n = 1, \ldots, r. \nonumber
	\end{split}
	\end{align}
\end{enumerate}
The morphisms of $\mathcal{C}^{\tau}_{\vec{\chi}}(S)$ are described as follows. If $\hat{\mathcal{F}}^{\Delta}$, $\hat{\mathcal{G}}^{\Delta}$ are two objects, then a morphism $\hat{\phi}^{\Delta} : \hat{\mathcal{F}}^{\Delta} \longrightarrow \hat{\mathcal{G}}^{\Delta}$ is a collection of morphisms of $\sigma$-families $\{\hat{\phi}^{\sigma_{i}} : \hat{\mathcal{F}}^{\sigma_{i}} \longrightarrow \hat{\mathcal{G}}^{\sigma_{i}}\}_{i = 1, \ldots, l}$ over $S$ such that for all $i,j$ as in $\mathrm{(ii)}$ one has
\begin{equation}
\phi^{\sigma_{i}}\left( \sum_{k=1}^{r} \lambda^{(i)}_{k} m\left(\rho_{k}^{(i)}\right) \right) = \phi^{\sigma_{j}}\left( \sum_{k=1}^{r} \lambda^{(j)}_{k} m\left(\rho_{k}^{(j)}\right) \right). \nonumber
\end{equation}
\end{theorem}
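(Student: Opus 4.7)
The plan is to globalize Proposition \ref{ch. 1, sect. 3, prop. 5} by working locally on each chart $U_{\sigma_i} \times S$ and then gluing, following the same strategy as in the proof of Theorem \ref{ch. 1, sect. 2, thm. 1}. First I would note that if $(\mathcal{F},\Phi)$ is an equivariant $S$-flat family whose fibres $\mathcal{F}_s$ are pure equivariant with support $V(\tau) \times k(s)$, then $\mathrm{Supp}(\mathcal{F}) \subseteq V(\tau) \times S$; in particular $\mathcal{F}|_{U_{\sigma'} \times S} = 0$ for any maximal cone $\sigma' \in \Delta$ that does not have $\tau$ as a face. This reduces the problem to describing $\mathcal{F}$ on the charts $U_{\sigma_i} \times S$ for $i = 1, \ldots, l$, together with agreement on overlaps.

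Next I would apply Proposition \ref{ch. 1, sect. 3, prop. 5} to each $\mathcal{F}|_{U_{\sigma_i} \times S}$ to obtain a $\sigma_i$-family $\hat{\mathcal{F}}^{\sigma_i}$ over $S$ of the type described in (i) of the theorem. To obtain the gluing data (ii), I would prove a family analogue of Proposition \ref{ch. 1, sect. 2, prop. 5}, which I view as the main technical step. Concretely, for any intersection $U_{\sigma_i} \cap U_{\sigma_j} = U_{\sigma_i \cap \sigma_j}$, I would use the semigroup identity $S_{\sigma_i \cap \sigma_j} = S_{\sigma_i} + \mathbb{Z}_{\geq 0}(-m)$ from \cite[Thm.~3.14]{Per1} (applied iteratively over the rays $\rho_k^{(i)}$ of $\sigma_i$ not in $\sigma_j$) combined with an $A$-module localisation argument identical to the one in the proof of Proposition \ref{ch. 1, sect. 2, prop. 5}, but now over coordinate rings $A$ of affine opens of $S$ rather than over the ground field $k$. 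The outcome is that the $(\sigma_i \cap \sigma_j)$-family over $S$ corresponding to $\mathcal{F}|_{U_{\sigma_i \cap \sigma_j} \times S}$ is obtained from $\hat{\mathcal{F}}^{\sigma_i}$ by sending the coordinates of rays of $\sigma_i$ not lying in $\sigma_j$ to $\infty$, i.e.~by taking colimits along the chains of inclusions $x_k$. This is precisely the content of (ii).

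Conversely, given a combinatorial object $\hat{\mathcal{F}}^{\Delta} \in \mathcal{C}^{\tau}_{\vec{\chi}}(S)$, Proposition \ref{ch. 1, sect. 3, prop. 5} produces equivariant $S$-flat families $\mathcal{F}_i$ on each $U_{\sigma_i} \times S$. The compatibility condition (ii) translates, via the same family version of Proposition \ref{ch. 1, sect. 2, prop. 5} used in reverse, into the statement that the restrictions of $\mathcal{F}_i$ and $\mathcal{F}_j$ to $U_{\sigma_i \cap \sigma_j} \times S$ agree as equivariant $S$-flat families. The local families therefore glue to a unique global equivariant $S$-flat family on $X \times S$, extending by zero across the remaining maximal charts $U_{\sigma'} \times S$ with $\tau \not\prec \sigma'$. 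Morphisms are treated in the same manner: a morphism $\hat{\phi}^{\Delta}$ of $\Delta$-families over $S$ satisfying the compatibility in (ii) produces local morphisms of equivariant sheaves on each $U_{\sigma_i} \times S$ which glue to a global equivariant morphism, and the functoriality is immediate from Proposition \ref{ch. 1, sect. 3, prop. 5}.

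The main obstacle, as indicated, is the family version of Proposition \ref{ch. 1, sect. 2, prop. 5}. Here one must verify that the ``$\infty$'' objects, which are now locally free sheaves on $S$ rather than $k$-vector spaces, behave correctly under the localisation of $k[S_{\sigma_i}] \otimes_k A$ at the characters $\chi(m(\rho_k^{(i)}))$; specifically, that the resulting $(\sigma_i \cap \sigma_j)$-family over $S$ is given by the colimit of the inclusions $x_k$ with $S$-flat cokernels (guaranteed by Proposition \ref{ch. 1, sect. 3, prop. 5}(ii) and Proposition \ref{ch. 1, sect. 3, prop. T}). Once this is established, the rest of the argument is a direct translation of the proof of Theorem \ref{ch. 1, sect. 2, thm. 1} to the relative setting over $S$.
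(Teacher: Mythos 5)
Your proposal is correct and follows exactly the paper's own route: the paper's proof is a one-line appeal to Proposition \ref{ch. 1, sect. 3, prop. 5} combined with ``an obvious analogue of Proposition \ref{ch. 1, sect. 2, prop. 5}'', which is precisely the family version of that proposition you identify as the main technical step and carry out via localisation over the affine coordinate rings of $S$. Your write-up simply supplies the details the paper leaves implicit.
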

\begin{proof}
This theorem follows from combining Proposition \ref{ch. 1, sect. 3, prop. 5} and an obvious analogue of Proposition \ref{ch. 1, sect. 2, prop. 5}. 
\end{proof}

\noindent Note that in the context of the above theorem, we can define the following moduli functor
\begin{align} 
\begin{split}
\mathfrak{C}^{\tau}_{\vec{\chi}} : (Sch/k)^{o} &\longrightarrow Sets, \\
S &\mapsto \mathfrak{C}^{\tau}_{\vec{\chi}}(S) = \mathcal{C}^{\tau}_{\vec{\chi}}(S), \\
(f : S' \longrightarrow S) &\mapsto \mathfrak{C}^{\tau}_{\vec{\chi}}(f)=f^{*} : \mathcal{C}^{\tau}_{\vec{\chi}}(S) \longrightarrow \mathcal{C}^{\tau}_{\vec{\chi}}(S'). \nonumber
\end{split}
\end{align}
For later purposes, we need to define another moduli functor. If $S$ is a $k$-scheme of finite type, then we define $\mathcal{C}^{\tau, fr}_{\vec{\chi}}(S)$ to be the full\footnote{Note that we do insist on the full subcategory, meaning we keep the notion of morphism from the category $\mathcal{C}^{\tau}_{\vec{\chi}}(S)$. This allows us to mod out by isomorphisms and relate to GIT later (see next subsection).} subcategory of $\mathcal{C}^{\tau}_{\vec{\chi}}(S)$ consisting of those objects $\hat{\mathcal{F}}^{\Delta}$ with each limiting sheaf $\mathcal{F}^{\sigma_{i}}(\Lambda_{1}, \ldots, \Lambda_{s}, \infty, \ldots, \infty)$ equal to a sheaf of the form $\mathcal{O}_{S}^{\oplus n(\Lambda_{1}, \ldots, \Lambda_{s})}$ for some $n(\Lambda_{1}, \ldots, \Lambda_{s}) \in \mathbb{Z}_{\geq 0}$. We would like to think of the objects of this full subcategory as framed objects. This gives rise to a moduli functor $\mathfrak{C}^{\tau, fr}_{\vec{\chi}} : (Sch/k)^{o} \longrightarrow Sets$.

\subsection{GIT Quotients}

Our goal is to find $k$-schemes of finite type corepresenting the moduli functors $\underline{\mathcal{M}}_{\vec{\chi}}^{\tau, ss}$ and  $\underline{\mathcal{M}}_{\vec{\chi}}^{\tau, s}$. We will achieve this using GIT. 

Let $X$ be a nonsingular toric variety, use notation as in Theorem \ref{ch. 1, sect. 2, thm. 1} and fix $\vec{\chi} \in \mathcal{X}^{\tau}$. The integers $A_{1}^{(i)} \leq B_{1}^{(i)}, \ldots, A_{s}^{(i)} \leq B_{s}^{(i)}, A_{s+1}^{(i)}, \ldots, A_{r}^{(i)}$ for $i = 1, \ldots, l$ in Theorem \ref{ch. 1, sect. 2, thm. 1} of any pure equivariant sheaf $\mathcal{E}$ on $X$ with support $V(\tau)$ and characteristic function $\vec{\chi}$ are uniquely determined by $\vec{\chi}$ (if we choose them in a maximal respectively minimal way). Note that we have $A_{k}^{(1)} = \cdots = A_{k}^{(l)} =: A_{k}$ and $B_{k}^{(1)} = \cdots = B_{k}^{(l)} =: B_{k}$ for all $k = 1, \ldots, s$, because of the gluing conditions in Theorem \ref{ch. 1, sect. 2, thm. 1}. For all $A_{1} \leq \Lambda_{1} \leq B_{1}$, $\ldots$, $A_{s} \leq \Lambda_{s} \leq B_{s}$ we define 
\begin{equation}
n(\Lambda_{1}, \ldots, \Lambda_{s}):= \lim_{\lambda_{s+1}, \ldots, \lambda_{r} \rightarrow \infty} \chi^{\sigma_{i}}\left(\sum_{k=1}^{s} \Lambda_{k} m\left( \rho_{k}^{(i)} \right) + \sum_{k=s+1}^{r} \lambda_{k} m\left( \rho_{k}^{(i)} \right) \right), \nonumber
\end{equation}
which is independent of $i = 1, \ldots, l$, because of the gluing conditions in Theorem \ref{ch. 1, sect. 2, thm. 1}. For all other values of $\Lambda_{1}, \ldots, \Lambda_{s} \in \mathbb{Z}$, we define $n(\Lambda_{1}, \ldots, \Lambda_{s}) = 0$. In general, denote by $\mathrm{Gr}(m,n)$ the Grassmannian of $m$-dimensional subspaces of $k^{n}$ and by $\mathrm{Mat}(m,n)$ the affine space of $m \times n$ matrices with coefficients in $k$. Define the following ambient nonsingular quasi-projective variety 
\begin{align} \label{ch. 1, eqn3}
\begin{split}
\mathcal{A} = &\prod_{\scriptsize{\begin{array}{c} A_{1} \leq \Lambda_{1} \leq B_{1} \\ \cdots \\ A_{s} \leq \Lambda_{s} \leq B_{s} \end{array}}} \prod_{i=1}^{l} \prod_{m \in M} \mathrm{Gr} \left( \chi^{\sigma_{i}}(m), n(\Lambda_{1}, \ldots, \Lambda_{s}) \right) \\
&\times \prod_{\scriptsize{\begin{array}{c} A_{1} \leq \Lambda_{1} \leq B_{1} \\ \cdots \\ A_{s} \leq \Lambda_{s} \leq B_{s} \end{array}}} \mathrm{Mat}(n(\Lambda_{1}, \ldots, \Lambda_{s}),n(\Lambda_{1}+1,\Lambda_{2} \ldots, \Lambda_{s})) \times \cdots  
\end{split} \displaybreak \\
\begin{split} \nonumber
&\times \prod_{\scriptsize{\begin{array}{c} A_{1} \leq \Lambda_{1} \leq B_{1} \\ \cdots \\ A_{s} \leq \Lambda_{s} \leq B_{s} \end{array}}} \mathrm{Mat}(n(\Lambda_{1}, \ldots, \Lambda_{s}),n(\Lambda_{1}, \ldots, \Lambda_{s-1}, \Lambda_{s}+1)).
\end{split}
\end{align}
There is a natural closed subscheme $\mathcal{N}^{\tau}_{\vec{\chi}}$ of $\mathcal{A}$ with closed points precisely the framed pure $\Delta$-families with support $V(\tau)$ and characteristic function $\vec{\chi}$. This closed subscheme is cut out by requiring the various subspaces of any $k^{n(\Lambda_{1}, \ldots, \Lambda_{s})}$ to form a multi-filtration and by requiring the matrices between the limiting vector spaces $k^{n(\Lambda_{1}, \ldots, \Lambda_{s})}$ to commute and be compatible with the multi-filtrations (see Theorem \ref{ch. 1, sect. 2, thm. 1}). Using the standard atlases of Grassmannians, it is not difficult to see that these conditions cut out a closed subscheme. Define the reductive algebraic group
\begin{equation}
G = \Big\{ M \in \prod_{\scriptsize{\begin{array}{c} A_{1} \leq \Lambda_{1} \leq B_{1} \\ \cdots \\ A_{s} \leq \Lambda_{s} \leq B_{s} \end{array}}} \mathrm{GL}(n(\Lambda_{1}, \ldots, \Lambda_{s}),k) \ | \ \det(M) = 1 \Big\}. \label{ch. 1, eqnG}
\end{equation}
There is a natural regular action of $G$ on $\mathcal{A}$ leaving $\mathcal{N}^{\tau}_{\vec{\chi}}$ invariant. Two closed points of $\mathcal{N}^{\tau}_{\vec{\chi}}$ correspond to isomorphic elements if and only if they are in the same $G$-orbit. For any choice of $G$-equivariant line bundle $\mathcal{L} \in \mathrm{Pic}^{G}(\mathcal{N}_{\vec{\chi}})$, we get the notion of GIT (semi)stable elements of $\mathcal{N}_{\vec{\chi}}$ with respect to $\mathcal{L}$ \cite[Sect.~1.4]{MFK}. We denote the $G$-invariant open subset of GIT semistable respectively stable elements by $\mathcal{N}_{\vec{\chi}}^{\tau, ss}$ respectively $\mathcal{N}_{\vec{\chi}}^{\tau, s}$. We call a pure equivariant sheaf $\mathcal{E}$ on $X$ with support $V(\tau)$ and characteristic function $\vec{\chi}$ GIT semistable, respectively GIT stable, if its corresponding framed pure $\Delta$-family $\hat{E}^{\Delta}$ is GIT semistable, respectively GIT stable. Using \cite[Thm.~1.10]{MFK}, we obtain that there exists a categorical quotient $\pi : \mathcal{N}_{\vec{\chi}}^{\tau, ss} \longrightarrow \mathcal{N}_{\vec{\chi}}^{\tau, ss} /\!\!/ G$, where $\mathcal{N}_{\vec{\chi}}^{\tau, ss} /\!\!/ G$ is a quasi-projective scheme of finite type over $k$ which we denote by $\mathcal{M}_{\vec{\chi}}^{\tau, ss}$. Moreover, there exists an open subset $U \subset \mathcal{M}_{\vec{\chi}}^{\tau, ss}$, such that $\pi^{-1}(U) = \mathcal{N}_{\vec{\chi}}^{\tau, s}$ and $\varpi = \pi|_{\mathcal{N}_{\vec{\chi}}^{\tau, s}} : \mathcal{N}_{\vec{\chi}}^{\tau, s} \longrightarrow U = \mathcal{N}_{\vec{\chi}}^{\tau, s}/G$ is a geometric quotient. We define $\mathcal{M}_{\vec{\chi}}^{\tau, s} = \mathcal{N}_{\vec{\chi}}^{\tau, s} / G$. The fibres of closed points of $\varpi$ are precisely the $G$-orbits of closed points of $\mathcal{N}_{\vec{\chi}}^{\tau, s}$, or equivalently, the equivariant isomorphism classes of GIT stable pure equivariant sheaves on $X$ with support $V(\tau)$ and characteristic function $\vec{\chi}$. It seems natural to think of $\mathcal{M}_{\vec{\chi}}^{\tau, ss}$ and $\mathcal{M}_{\vec{\chi}}^{\tau, s}$ as moduli spaces. Before making this more precise, we would like to make the problem more geometric. Assume in addition $X$ is projective and fix an ample line bundle $\mathcal{O}_{X}(1)$ on $X$. The natural notion of stability for coherent sheaves on $X$ is Gieseker stability, which depends on the choice of $\mathcal{O}_{X}(1)$ \cite[Sect.~1.2]{HL}. 
\begin{assumption} \label{ch. 1, sect. 3, ass. 1}
Let $X$ be a nonsingular projective toric variety defined by a fan $\Delta$. Let $\mathcal{O}_{X}(1)$ be an ample line bundle on $X$ and $\tau \in \Delta$. Then for any $\vec{\chi} \in \mathcal{X}^{\tau}$, let $\mathcal{L}^{\tau}_{\vec{\chi}} \in \mathrm{Pic}^{G}(\mathcal{N}^{\tau}_{\vec{\chi}})$ be an equivariant line bundle such that any pure equivariant sheaf $\mathcal{E}$ on $X$ with support $V(\tau)$ and characteristic function $\vec{\chi}$ is GIT semistable respectively GIT stable w.r.t.~$\mathcal{L}^{\tau}_{\vec{\chi}}$ if and only if $\mathcal{E}$ is Gieseker semistable respectively Gieseker stable.
\end{assumption}

\noindent We will refer to equivariant line bundles as in this assumption as equivariant line bundles matching Gieseker and GIT stability. So far, the author cannot prove the existence of such equivariant line bundles in full generality. However, in subsection 3.5, we will construct such (ample) equivariant line bundles for the case $\tau = 0$, i.e.~torsion free equivariant sheaves (Theorem \ref{ch. 1, sect. 3, thm. 4}). As a by-product, for reflexive equivariant sheaves, we can always construct particularly simple ample equivariant line bundles matching GIT stability and $\mu$-stability (subsection 4.4). For pure equivariant sheaves of lower dimension, the existence of equivariant line bundles matching Gieseker and GIT stability can be proved in specific examples. Note that in the classical construction of moduli spaces of Gieseker (semi)stable sheaves, one also needs to match GIT stability of the underlying GIT problem with Gieseker stability (see \cite[Thm.~4.3.3]{HL}). 

We can now prove the following results regarding representability and corepresentability.
\begin{proposition} \label{ch. 1, sect. 3, prop. 6}
Let $X$ be a nonsingular projective toric variety defined by a fan $\Delta$. Let $\mathcal{O}_{X}(1)$ be an ample line bundle on $X$, $\tau \in \Delta$ and $\vec{\chi} \in \mathcal{X}^{\tau}$. Then $\mathfrak{C}^{\tau, fr}_{\vec{\chi}}$ is represented by $\mathcal{N}^{\tau}_{\vec{\chi}}$. Assume we have an equivariant line bundle $\mathcal{L}_{\vec{\chi}}^{\tau}$ matching Gieseker and GIT stability. Let $\mathfrak{C}^{\tau, ss, fr}_{\vec{\chi}}$ respectively $\mathfrak{C}^{\tau, s, fr}_{\vec{\chi}}$ be the moduli subfunctors of $\mathfrak{C}^{\tau, fr}_{\vec{\chi}}$ with Gieseker semistable respectively geometrically Gieseker stable fibres. Then $\mathfrak{C}^{\tau, ss, fr}_{\vec{\chi}}$ is represented\footnote{In this setting, it is understood we use a choice of $\mathcal{L}_{\vec{\chi}}^{\tau}$ as in Assumption \ref{ch. 1, sect. 3, ass. 1} to define our notion of GIT stability and hence $\mathcal{N}^{\tau, ss}_{\vec{\chi}}$, $\mathcal{N}^{\tau, s}_{\vec{\chi}}$, $\mathcal{M}^{\tau, ss}_{\vec{\chi}}$, $\mathcal{M}^{\tau, s}_{\vec{\chi}}$.} by $\mathcal{N}^{\tau, ss}_{\vec{\chi}}$ and $\mathfrak{C}^{\tau, s, fr}_{\vec{\chi}}$ is represented by $\mathcal{N}^{\tau, s}_{\vec{\chi}}$.  
\end{proposition}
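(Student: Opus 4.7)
The plan is to unwind the moduli functor $\mathfrak{C}^{\tau,fr}_{\vec{\chi}}$ using Theorem \ref{ch. 1, sect. 3, thm. 1}, and then to match this data with the universal properties of the Grassmannians and affine matrix spaces appearing in the product (\ref{ch. 1, eqn3}). Finally, for the (semi)stable statements, I would use Assumption \ref{ch. 1, sect. 3, ass. 1} together with the fact that GIT (semi)stability is an open condition defined fibrewise.

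First I would use the universal properties: a morphism $S \longrightarrow \mathrm{Gr}(m,n)$ corresponds functorially to a rank-$m$ locally free subsheaf of $\mathcal{O}_S^{\oplus n}$ which is locally a direct summand, and a morphism $S \longrightarrow \mathrm{Mat}(p,q)$ corresponds to an $\mathcal{O}_S$-linear homomorphism $\mathcal{O}_S^{\oplus p} \longrightarrow \mathcal{O}_S^{\oplus q}$. Combining these factor by factor of the product (\ref{ch. 1, eqn3}), a morphism $S \longrightarrow \mathcal{A}$ is the same data as, for each $(\Lambda_1,\dots,\Lambda_s)$, $i=1,\dots,l$, $m \in M$, a rank-$\chi^{\sigma_i}(m)$ locally free subsheaf of $\mathcal{O}_S^{\oplus n(\Lambda_1,\dots,\Lambda_s)}$ locally a direct summand, together with the matrices $\chi^{\sigma_i}_{j}$ between the limiting trivial sheaves. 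The scheme $\mathcal{N}^{\tau}_{\vec{\chi}}$ is cut out in $\mathcal{A}$ by closed conditions which, on closed points, say that the subspaces form a multi-filtration, that the matrices pairwise commute, and that the matrices respect the multi-filtration in the sense of Theorem \ref{ch. 1, sect. 2, thm. 1}. I would verify using standard affine charts of Grassmannians that these closed conditions express precisely the analogous scheme-theoretic conditions for morphisms of sheaves on $S$: this is essentially because equality of two morphisms between quasi-coherent sheaves on $S$ is a closed condition, and by Proposition \ref{lemma} it suffices to check it on closed points.

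Comparing with Theorem \ref{ch. 1, sect. 3, thm. 1} and Proposition \ref{ch. 1, sect. 3, prop. 5}, what remains is to show that the Grassmannian condition (locally direct summand) corresponds to the $S$-flat cokernel condition appearing in Proposition \ref{ch. 1, sect. 3, prop. 5}(ii). This is exactly Proposition \ref{ch. 1, sect. 3, prop. T}: a map of locally free sheaves of finite rank is injective with $S$-flat cokernel if and only if it is fibrewise injective, which is equivalent to locally being a direct summand. Hence a morphism $S \longrightarrow \mathcal{N}^{\tau}_{\vec{\chi}}$ is the same as an object of $\mathcal{C}^{\tau}_{\vec{\chi}}(S)$ whose limiting sheaves are of the form $\mathcal{O}_S^{\oplus n(\Lambda_1,\dots,\Lambda_s)}$, i.e.\ an object of the full subcategory defining $\mathfrak{C}^{\tau,fr}_{\vec{\chi}}(S)$. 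The functoriality in $S$ is clear from the constructions on both sides, proving representability by $\mathcal{N}^{\tau}_{\vec{\chi}}$.

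For the (semi)stable assertions, I would argue as follows. Because $\mathcal{N}^{\tau,ss}_{\vec{\chi}}$ and $\mathcal{N}^{\tau,s}_{\vec{\chi}}$ are $G$-invariant open subschemes of $\mathcal{N}^{\tau}_{\vec{\chi}}$, a morphism $f : S \longrightarrow \mathcal{N}^{\tau}_{\vec{\chi}}$ factors through one of them if and only if $f(s)$ lies in that open for every point $s \in S$. By Assumption \ref{ch. 1, sect. 3, ass. 1}, $f(s) \in \mathcal{N}^{\tau,ss}_{\vec{\chi}}$ (resp.\ $\mathcal{N}^{\tau,s}_{\vec{\chi}}$) if and only if the corresponding framed pure $\Delta$-family over $k(s)$ represents a Gieseker semistable (resp.\ geometrically stable) pure equivariant sheaf on $X$ with support $V(\tau)$ and characteristic function $\vec{\chi}$. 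Under the representability already established, this is exactly the defining condition for $\mathfrak{C}^{\tau,ss,fr}_{\vec{\chi}}(S)$ (resp.\ $\mathfrak{C}^{\tau,s,fr}_{\vec{\chi}}(S)$), so $\mathcal{N}^{\tau,ss}_{\vec{\chi}}$ and $\mathcal{N}^{\tau,s}_{\vec{\chi}}$ represent the corresponding subfunctors.

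The main technical obstacle is the bookkeeping in the middle step: verifying that the scheme-theoretic equations cutting out $\mathcal{N}^{\tau}_{\vec{\chi}}$ inside the large product $\mathcal{A}$ exactly encode the categorical data of Theorem \ref{ch. 1, sect. 3, thm. 1}. The simplification is that once one knows Proposition \ref{ch. 1, sect. 3, prop. T}, all conditions (inclusions of subsheaves, commutativity of matrices, compatibility with filtrations, and the gluing identifications between different $\sigma_i$-families) become ``equality of two morphisms'' or ``containment of a morphism in a subsheaf'', both of which are closed conditions that are detected fibrewise by Proposition \ref{lemma}.
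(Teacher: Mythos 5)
Your proposal is correct and follows essentially the same route as the paper: the paper likewise reduces to the universal properties of $\mathrm{Gr}(m,n)$ and $\mathrm{Mat}(m,n)$ (phrased there via explicit universal families pulled back from the factors of $\mathcal{A}$ and restricted along $\mathcal{N}^{\tau}_{\vec{\chi}} \hookrightarrow \mathcal{A}$), uses Proposition \ref{ch. 1, sect. 3, prop. T} to translate the Grassmannian condition into the $S$-flat cokernel condition of Proposition \ref{ch. 1, sect. 3, prop. 5}, and disposes of the (semi)stable subfunctors by observing that $\mathcal{N}^{\tau, s}_{\vec{\chi}} \subset \mathcal{N}^{\tau, ss}_{\vec{\chi}} \subset \mathcal{N}^{\tau}_{\vec{\chi}}$ are open subschemes defined by properties of the fibres. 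One small caution: Proposition \ref{lemma} requires a reduced base, so for a non-reduced test scheme $S$ you cannot verify the defining identities merely on closed points of $S$; this is harmless here because the equations cutting out $\mathcal{N}^{\tau}_{\vec{\chi}}$ in $\mathcal{A}$ and the conditions of Theorem \ref{ch. 1, sect. 3, thm. 1} are the same scheme-theoretic identities of morphisms of locally free sheaves, so no reduction to closed points is needed.
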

\begin{proof}
Recall that for $V$ a $k$-vector space of dimension $n$ and $0 \leq m \leq n$, one has a moduli functor of Grassmannians $\mathcal{G}\it{r}(m,V) : (Sch/k)^{o} \longrightarrow Sets$ (e.g.~\cite[Exm.~2.2.2]{HL}), where $\mathcal{G}\it{r}(m,V)(S)$ consists of quasi-coherent subsheaves $\mathcal{E} \subset V \otimes \mathcal{O}_{S}$ with $S$-flat cokernel of rank $n-m$ and $\mathcal{G}\it{r}(m,V)(f) = f^{*}$ is pull-back. Let $\mathcal{U}$ be the sheaf of sections of the tautological bundle $U \longrightarrow \mathrm{Gr}(m,V)$, then it is not difficult to see that $\mathcal{U}$ is a universal family. Consequently, $\mathcal{G}\it{r}(m,V)$ is represented by $\mathrm{Gr}(m,V)$. Likewise, for $m,n$ arbitrary nonnegative integers, one has a moduli functor of matrices $\mathcal{M}\it{at}(m,n) : (Sch/k)^{o} \longrightarrow Sets$, where $\mathcal{M}\it{at}(m,n)(S)$ consists of all morphisms $\phi : \mathcal{O}_{S}^{\oplus n} \longrightarrow \mathcal{O}_{S}^{\oplus m}$ and $\mathcal{M}\it{at}(m,n)(f) = f^{*}$ is pull-back. Let $(x_{ij})$ be a matrix of coordinates on $\mathrm{Mat}(m,n)$. Then $(x_{ij})$ induces a morphism $\xi : \mathcal{O}_{\mathrm{Mat}(m,n)}^{\oplus n} \longrightarrow \mathcal{O}_{\mathrm{Mat}(m,n)}^{\oplus m}$. Again, it is easy to see that $\xi$ is a universal family. Consequently, $\mathcal{M}\it{at}(m,n)$ is represented by $\mathrm{Mat}(m,n)$. Now consider $\mathcal{N}_{\vec{\chi}}^{\tau}$ as a closed subscheme of $\mathcal{A}$, where $\mathcal{A}$ is defined in equation (\ref{ch. 1, eqn3}). Since $\mathcal{A}$ is a product of various components $\mathrm{Gr}(\chi^{\sigma_{i}}(m), n(\Lambda_{1}, \ldots, \Lambda_{s}))$ and $\mathrm{Mat}(n(\Lambda_{1}, \ldots, \Lambda_{s}), n(\Lambda_{1}, \ldots, \Lambda_{i}+1 , \ldots, \Lambda_{s}))$, it is easy to see that the variety $\mathcal{A}$ represents the moduli functor formed from taking the Cartesian product of the moduli functors corresponding to these factors. The corresponding universal object is just a tuple with entries the various universal objects of the components $\mathrm{Gr}(\chi^{\sigma_{i}}(m), n(\Lambda_{1}, \ldots, \Lambda_{s}))$ and $\mathrm{Mat}(n(\Lambda_{1}, \ldots, \Lambda_{s}), n(\Lambda_{1}, \ldots, \Lambda_{i}+1 , \ldots, \Lambda_{s}))$ pulled-back to $\mathcal{A}$. 
Pulling this universal family back along the closed immersion $\mathcal{N}_{\vec{\chi}}^{\tau} \hookrightarrow \mathcal{A}$ gives a universal family of $\mathfrak{C}_{\vec{\chi}}^{\tau, fr}$. Consequently, $\mathfrak{C}_{\vec{\chi}}^{\tau, fr}$ is represented by $\mathcal{N}_{\vec{\chi}}^{\tau}$. Since $\mathcal{N}_{\vec{\chi}}^{\tau, s} \subset \mathcal{N}_{\vec{\chi}}^{\tau, ss} \subset \mathcal{N}_{\vec{\chi}}^{\tau}$ are open subschemes defined by properties on the fibres, the rest is easy.
\end{proof}
\begin{theorem} \label{ch. 1, sect. 3, thm. 2}
Let $X$ be a nonsingular projective toric variety defined by a fan $\Delta$. Let $\mathcal{O}_{X}(1)$ be an ample line bundle on $X$, $\tau \in \Delta$ and $\vec{\chi} \in \mathcal{X}^{\tau}$. Assume we have an equivariant line bundle matching Gieseker and GIT stability. Then $\underline{\mathcal{M}}_{\vec{\chi}}^{\tau, ss}$ is corepresented by the quasi-projective $k$-scheme of finite type $\mathcal{M}^{\tau, ss}_{\vec{\chi}}$. Moreover, there is an open subset $\mathcal{M}^{\tau, s}_{\vec{\chi}} \subset \mathcal{M}^{\tau, ss}_{\vec{\chi}}$ such that $\underline{\mathcal{M}}_{\vec{\chi}}^{\tau, s}$ is corepresented by $\mathcal{M}_{\vec{\chi}}^{\tau, s}$ and $\mathcal{M}^{\tau, s}_{\vec{\chi}}$ is a coarse moduli space. 
\end{theorem}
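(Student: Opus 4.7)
My plan is to bootstrap from Proposition \ref{ch. 1, sect. 3, prop. 6}, which already represents the framed moduli functor $\mathfrak{C}^{\tau, ss, fr}_{\vec{\chi}}$ by $\mathcal{N}^{\tau, ss}_{\vec{\chi}}$. The group $G$ defined in \eqref{ch. 1, eqnG} acts on $\mathcal{N}^{\tau, ss}_{\vec{\chi}}$ precisely by changing the framings, so I would use the universal property of the GIT (categorical, resp.~geometric) quotient to promote Proposition \ref{ch. 1, sect. 3, prop. 6} into corepresentability of $\underline{\mathcal{M}}^{\tau, ss}_{\vec{\chi}}$ by $\mathcal{M}^{\tau, ss}_{\vec{\chi}}$ (resp.~$\underline{\mathcal{M}}^{\tau, s}_{\vec{\chi}}$ by $\mathcal{M}^{\tau, s}_{\vec{\chi}}$).

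First I would construct a natural transformation $\eta : \underline{\mathcal{M}}^{\tau, ss}_{\vec{\chi}} \longrightarrow \mathrm{Hom}(-,\mathcal{M}^{\tau, ss}_{\vec{\chi}})$. Given $[\mathcal{F}] \in \underline{\mathcal{M}}^{\tau, ss}_{\vec{\chi}}(S)$, Theorem \ref{ch. 1, sect. 3, thm. 1} yields a $\Delta$-family whose limiting sheaves $\mathcal{F}^{\sigma_{i}}(\Lambda_{1}, \ldots, \Lambda_{s}, \infty, \ldots, \infty)$ are locally free of finite rank on $S$. Since only finitely many of them are nonzero, I can pick a Zariski open cover $\{S_{\alpha}\}$ of $S$ simultaneously trivialising all of them. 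Over each $S_{\alpha}$ the choice of trivialisations turns $\mathcal{F}|_{X \times S_{\alpha}}$ into a framed object of $\mathfrak{C}^{\tau, ss, fr}_{\vec{\chi}}(S_{\alpha})$ and therefore, by Proposition \ref{ch. 1, sect. 3, prop. 6}, into a morphism $\phi_{\alpha} : S_{\alpha} \longrightarrow \mathcal{N}^{\tau, ss}_{\vec{\chi}}$. On overlaps $S_{\alpha} \cap S_{\beta}$ the two trivialisations differ by $G$-valued transition functions (modulo the determinantal subtlety discussed below), so the composites $\pi \circ \phi_{\alpha}$ agree on overlaps and glue to a well-defined morphism $\eta([\mathcal{F}]) : S \longrightarrow \mathcal{M}^{\tau, ss}_{\vec{\chi}}$. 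Refining the cover to trivialise an auxiliary line bundle $L \in \mathrm{Pic}(S)$ shows that twisting $\mathcal{F}$ by $p_{S}^{*}L$ leaves $\eta([\mathcal{F}])$ unchanged, so $\eta$ descends to the equivalence classes.

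For the universal property, let $N$ be any $k$-scheme with a natural transformation $\theta : \underline{\mathcal{M}}^{\tau, ss}_{\vec{\chi}} \longrightarrow \mathrm{Hom}(-,N)$. Applying $\theta_{\mathcal{N}^{\tau, ss}_{\vec{\chi}}}$ to (the equivalence class of) the universal framed family produced by Proposition \ref{ch. 1, sect. 3, prop. 6} gives a morphism $\Phi : \mathcal{N}^{\tau, ss}_{\vec{\chi}} \longrightarrow N$. Two framings differing by the $G$-action give equivariantly isomorphic families, hence equivalent ones, so $\Phi$ is $G$-invariant, and by the categorical-quotient property of $\pi$ it factors uniquely through $\mathcal{M}^{\tau, ss}_{\vec{\chi}}$. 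One checks that this factorisation is the unique morphism compatible with $\eta$, proving corepresentability. Restricting the entire argument to $\mathcal{N}^{\tau, s}_{\vec{\chi}}$ and using that $\varpi$ is a geometric quotient gives the corepresentability of $\underline{\mathcal{M}}^{\tau, s}_{\vec{\chi}}$ by $\mathcal{M}^{\tau, s}_{\vec{\chi}}$; the extra coarse-moduli condition on closed points then reduces to the observation that $G$-orbits in $\mathcal{N}^{\tau, s}_{\vec{\chi}}$ are exactly the equivariant isomorphism classes of Gieseker stable objects, and that $\mathrm{Pic}(\mathrm{Spec}(k)) = 0$ collapses the twist-by-line-bundle equivalence to equivariant isomorphism.

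The main obstacle I foresee is the determinantal bookkeeping in the gluing step: the naive transitions between local framings land in $\prod \mathrm{GL}(n(\Lambda_{1},\ldots,\Lambda_{s}),k)$ rather than in the subgroup $G$ cut out by $\det(M)=1$. The standard remedy is to absorb the residual $\mathbb{G}_{m}$ into a twist by a line bundle on $S$, so that the equivalence relation built into $\underline{\mathcal{M}}^{\tau, ss}_{\vec{\chi}}$ (quotienting by $\mathrm{Pic}(S)$) exactly cancels this scalar ambiguity and the remaining transitions take values in $G$. Carrying this out cleanly, together with a consistent choice of trivialisations on a sufficiently fine common refinement, is the technically delicate point; once it is in place the rest of the argument is a formal consequence of the universal properties of GIT quotients.
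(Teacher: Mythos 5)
Your proposal is correct and follows essentially the same route as the paper: the paper likewise bootstraps from Proposition \ref{ch. 1, sect. 3, prop. 6}, identifies $\mathcal{M}^{\tau,ss}_{\vec{\chi}}$ as corepresenting the quotient functor $\mathfrak{C}^{\tau,ss,fr}_{\vec{\chi}}/\mathcal{G} \cong \mathrm{Hom}(-,\mathcal{N}^{\tau,ss}_{\vec{\chi}})/\mathrm{Hom}(-,G)$ via the categorical-quotient property, and then passes to the unframed functors $\underline{\mathcal{M}}^{\prime\tau,ss}_{\vec{\chi}}$ and $\underline{\mathcal{M}}^{\tau,ss}_{\vec{\chi}}$ by exactly your local-trivialisation-and-gluing argument. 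The determinantal subtlety you flag is harmless: the overall diagonal $\mathbb{G}_{m}$ acts trivially on $\mathcal{N}^{\tau}_{\vec{\chi}}$, so $\pi$ is invariant under the full product of general linear groups and arbitrary $\mathrm{GL}$-valued transition functions already glue after composing with $\pi$.
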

\begin{proof}
Define the moduli functor 
\begin{align} 
\begin{split}
&\mathcal{G} : (Sch/k)^{o} \longrightarrow Sets, \\
&\mathcal{G}(S) = \Big\{ \Phi \in \prod_{\scriptsize{\begin{array}{c} A_{1} \leq \Lambda_{1} \leq B_{1} \\ \cdots \\ A_{s} \leq \Lambda_{s} \leq B_{s} \end{array}}} \mathrm{Aut}\left(\mathcal{O}_{S}^{\oplus n(\Lambda_{1}, \ldots, \Lambda_{s})} \right) \ | \ \det(\Phi) = 1 \Big\}, \\
&\mathcal{G}(f) = f^{*}. \nonumber
\end{split}
\end{align}
It is easy to see that $\mathcal{G}$ is naturally represented by $G$ defined in equation (\ref{ch. 1, eqnG}). For any $S \in Sch/k$ we have a natural action of $\mathcal{G}(S)$ on $\mathfrak{C}_{\vec{\chi}}^{\tau, ss, fr}(S)$ and a natural action of $\mathrm{Hom}(S,G)$ on $\mathrm{Hom}(S,\mathcal{N}_{\vec{\chi}}^{\tau, ss})$. Since we have canonical isomorphisms $\mathcal{G} \cong \mathrm{Hom}(-,G)$ and $\mathfrak{C}_{\vec{\chi}}^{\tau, ss, fr} \cong \mathrm{Hom}(-,\mathcal{N}_{\vec{\chi}}^{\tau, ss})$ (Proposition \ref{ch. 1, sect. 3, prop. 6}), we get an isomorphism of functors
\begin{equation}
\mathfrak{C}_{\vec{\chi}}^{\tau, ss, fr} / \mathcal{G} \cong \mathrm{Hom}(-,\mathcal{N}_{\vec{\chi}}^{\tau, ss}) / \mathrm{Hom}(-,G). \nonumber
\end{equation}
Since $\mathcal{M}_{\vec{\chi}}^{\tau, ss} = \mathcal{N}_{\vec{\chi}}^{\tau, ss} /\!\!/G$ is a categorical quotient \cite[Def.~4.2.1]{HL}, we conclude that $\mathcal{M}_{\vec{\chi}}^{\tau, ss}$ corepresents $\mathrm{Hom}(-,\mathcal{N}_{\vec{\chi}}^{\tau, ss}) / \mathrm{Hom}(-,G)$ and therefore $\mathfrak{C}_{\vec{\chi}}^{\tau, ss, fr} / \mathcal{G}$. We also have canonical natural transformations\footnote{Here the symbol $\cong$ in the quotients refers to taking isomorphism classes in the corresponding categories.}
\begin{equation}
\mathfrak{C}_{\vec{\chi}}^{\tau, ss, fr} / \mathcal{G} = \left(\mathfrak{C}_{\vec{\chi}}^{\tau, ss, fr} / \cong \right) \Longrightarrow \left( \mathfrak{C}_{\vec{\chi}}^{\tau, ss} / \cong \right) \stackrel{\cong}{\Longrightarrow} \underline{\mathcal{M}}_{\vec{\chi}}^{\prime \tau, ss} \Longrightarrow \underline{\mathcal{M}}_{\vec{\chi}}^{\tau, ss}, \nonumber
\end{equation}
where the first natural transformation is injective over all $S \in Sch/k$ and we use Theorem \ref{ch. 1, sect. 3, thm. 1} to obtain the isomorphism $\left( \mathfrak{C}_{\vec{\chi}}^{\tau, ss} / \cong \right) \cong \underline{\mathcal{M}}_{\vec{\chi}}^{\prime \tau, ss}$. The moduli functors $\underline{\mathcal{M}}_{\vec{\chi}}^{\prime \tau, ss}$, $\underline{\mathcal{M}}_{\vec{\chi}}^{\tau, ss}$ have been introduced in subsection 3.1. 

We will show that $\mathcal{M}_{\vec{\chi}}^{\tau, ss}$ also corepresents $\left( \mathfrak{C}_{\vec{\chi}}^{\tau, ss} / \cong \right) \cong \underline{\mathcal{M}}_{\vec{\chi}}^{\prime \tau, ss}$ and $\underline{\mathcal{M}}_{\vec{\chi}}^{\tau, ss}$. This can be done by using open affine covers on which locally free sheaves respectively equivariant invertible sheaves trivialise. More precisely, we know $\mathcal{M}_{\vec{\chi}}^{\tau, ss}$ corepresents $\left( \mathfrak{C}_{\vec{\chi}}^{\tau,ss,fr} / \cong \right)$ 
\begin{equation}
\Phi : \left( \mathfrak{C}_{\vec{\chi}}^{\tau,ss,fr} / \cong \right) \longrightarrow \mathrm{Hom}(-,\mathcal{M}_{\vec{\chi}}^{\tau,ss}). \nonumber
\end{equation}
For a fixed $k$-scheme $S$ of finite type, define a morphism
\begin{equation}
\tilde{\Phi}_{S} : \left( \mathfrak{C}_{\vec{\chi}}^{\tau,ss} / \cong \right)(S) \longrightarrow \mathrm{Hom}(S,\mathcal{M}_{\vec{\chi}}^{\tau,ss}), \nonumber  
\end{equation}
as follows. Let $[\hat{\mathcal{F}}^{\Delta}] \in \left( \mathfrak{C}_{\vec{\chi}}^{\tau,ss} / \cong \right)(S)$ and let $\{\iota_{\alpha} : U_{\alpha} \hookrightarrow S\}_{\alpha \in I}$ be an open affine cover of $S$ on which the limiting locally free sheaves of $\hat{\mathcal{F}}^{\Delta}$ (i.e.~the $\mathcal{F}^{\sigma_{i}}(\Lambda_{1}, \ldots, \Lambda_{s}, \infty, \ldots, \infty)$) trivialise. Then $[\iota_{\alpha}^{*} \hat{\mathcal{F}}^{\Delta}] \in \left( \mathfrak{C}_{\vec{\chi}}^{\tau,ss,fr} / \cong \right)(U_{\alpha})$ and therefore we get a morphism $F_{\alpha} = \Phi_{U_{\alpha}}([\iota_{\alpha}^{*}\hat{\mathcal{F}}^{\Delta}]) : U_{\alpha} \longrightarrow \mathcal{M}_{\vec{\chi}}^{\tau,ss}$ for all $\alpha \in I$. From the fact that $\Phi$ is a natural transformation, it is easy to see that $\{F_{\alpha}\}_{\alpha \in I}$ glues to a morphism $F : S \longrightarrow \mathcal{M}_{\vec{\chi}}^{\tau,ss}$ independent of the choice of open affine cover. This defines $\tilde{\Phi}_{S}([\hat{\mathcal{F}}^{\Delta}])$. One readily verifies this defines a natural transformation $\tilde{\Phi}$ fitting in the commutative diagram
\begin{displaymath}
\xymatrix
{
\left( \mathfrak{C}_{\vec{\chi}}^{\tau,ss,fr} / \cong \right) \ar@{=>}[r] \ar@{=>}[d]_{\Phi} & \left( \mathfrak{C}_{\vec{\chi}}^{\tau,ss} / \cong \right) \ar@{=>}[dl]^{\tilde{\Phi}} \\
\mathrm{Hom}(-,\mathcal{M}_{\vec{\chi}}^{\tau,ss}). &
}
\end{displaymath} 
The fact that $\mathcal{M}_{\vec{\chi}}^{\tau,ss}$ corepresents $\left( \mathfrak{C}_{\vec{\chi}}^{\tau,ss,fr} / \cong \right)$ implies that it corepresents $\left( \mathfrak{C}_{\vec{\chi}}^{\tau,ss} / \cong \right) \cong \underline{\mathcal{M}}_{\vec{\chi}}^{\prime \tau,ss}$ too. Similarly, but easier, one proves $\mathcal{M}_{\vec{\chi}}^{\tau,ss}$ corepresents $\underline{\mathcal{M}}_{\vec{\chi}}^{\tau,ss}$. 

The proof up to now also holds in the case ``Gieseker stable''. By saying $\mathcal{M}_{\vec{\chi}}^{\tau,s}$ is a coarse moduli space, we mean $\mathcal{M}_{\vec{\chi}}^{\tau,s}$ corepresents $\underline{\mathcal{M}}_{\vec{\chi}}^{\tau,s}$ and $\underline{\mathcal{M}}_{\vec{\chi}}^{\tau,s}(k) \longrightarrow \mathrm{Hom}(k,\mathcal{M}_{\vec{\chi}}^{\tau,s})$ is bijective for any algebraically closed field $k$ of characteristic $0$. This is clearly the case since the closed points of $\mathcal{M}_{\vec{\chi}}^{\tau,s}$ are precisely the equivariant isomorphism classes of Gieseker stable equivariant sheaves on $X$ with support $V(\tau)$ and characteristic function $\vec{\chi}$. 
\end{proof}

We end this subsection by discussing how the theory developed in section 3 so far generalises to the case of possibly reducible support. Again, no essentially new ideas will occur, only the notation will become more cumbersome. Let $X$ be a nonsingular toric variety defined by a fan $\Delta$. Let $\tau_{1}, \ldots, \tau_{a} \in \Delta$ be some cones of some dimension $s$. Let $\sigma_{1}, \ldots, \sigma_{l} \in \Delta$ be all cones of maximal dimension having a cone $\tau_{\alpha}$ as a face. In subsection 2.2, we discussed how to describe pure equivariant sheaves on $X$ with support $V(\tau_{1}) \cup \cdots \cup V(\tau_{a})$. We define characteristic functions of such sheaves as in Definition \ref{ch. 1, sect. 3, def. 1}, we denote the set of all such characteristic functions by $\mathcal{X}^{\tau_{1}, \ldots, \tau_{a}}$ and we define the moduli functors $\underline{\mathcal{M}}_{\vec{\chi}}^{\tau_{1}, \ldots, \tau_{a},ss}$, $\underline{\mathcal{M}}_{\vec{\chi}}^{\tau_{1}, \ldots, \tau_{a},s}$ as in subsection 3.1. The obvious analogue of Theorem \ref{ch. 1, sect. 3, thm. 1} holds. The only new condition discussed in subsection 2.2 is condition (\ref{ch. 1, eqnnew}) in Proposition \ref{ch. 1, sect. 2, prop. 6}. This is an open condition on matrix coefficients so it can be easily incorporated. We can define a $k$-scheme of finite type $\mathcal{N}_{\vec{\chi}}^{\tau_{1}, \ldots, \tau_{a}}$ and a reductive algebraic group $G$ acting regularly on it as earlier in this subsection. Performing the GIT quotients w.r.t.~an equivariant line bundle $\mathcal{L} \in \mathrm{Pic}(\mathcal{N}_{\vec{\chi}}^{\tau_{1}, \ldots, \tau_{a}})$ gives rise to a categorical quotient $\mathcal{M}_{\vec{\chi}}^{\tau_{1}, \ldots, \tau_{a},ss}$ and a geometric quotient $\mathcal{M}_{\vec{\chi}}^{\tau_{1}, \ldots, \tau_{a},s}$ (both are quasi-projective schemes of finite type over $k$). It is straightforward to prove the following result. 
\begin{theorem} \label{ch. 1, sect. 3, thm. 3}
Let $X$ be a nonsingular projective toric variety defined by a fan $\Delta$. Let $\mathcal{O}_{X}(1)$ be an ample line bundle on $X$, $\tau_{1}, \ldots, \tau_{a} \in \Delta$ some cones of dimension $s$ and $\vec{\chi} \in \mathcal{X}^{\tau_{1}, \ldots, \tau_{a}}$. Assume we have an equivariant line bundle matching Gieseker and GIT stability. Then $\underline{\mathcal{M}}_{\vec{\chi}}^{\tau_{1}, \ldots, \tau_{a}, ss}$ is corepresented by the quasi-projective $k$-scheme of finite type $\mathcal{M}^{\tau_{1}, \ldots, \tau_{a}, ss}_{\vec{\chi}}$. Moreover, there is an open subset $\mathcal{M}^{\tau_{1}, \ldots, \tau_{a}, s}_{\vec{\chi}} \subset \mathcal{M}^{\tau_{1}, \ldots, \tau_{a}, ss}_{\vec{\chi}}$ such that $\underline{\mathcal{M}}_{\vec{\chi}}^{\tau_{1}, \ldots, \tau_{a}, s}$ is corepresented by $\mathcal{M}_{\vec{\chi}}^{\tau_{1}, \ldots, \tau_{a}, s}$ and $\mathcal{M}^{\tau_{1}, \ldots, \tau_{a}, s}_{\vec{\chi}}$ is a coarse moduli space. 
\end{theorem}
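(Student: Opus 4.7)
The plan is to follow the proof of Theorem \ref{ch. 1, sect. 3, thm. 2} verbatim, upgrading each ingredient from the irreducible-support case to the general support $V(\tau_1) \cup \cdots \cup V(\tau_a)$. First I would establish the analogue of Theorem \ref{ch. 1, sect. 3, thm. 1}: an equivalence between equivariant $S$-flat families with fibres pure of the prescribed support and characteristic function $\vec{\chi}$, and a category $\mathcal{C}_{\vec{\chi}}^{\tau_1, \ldots, \tau_a}(S)$ of $\Delta$-families over $S$ obtained by gluing local $\sigma_i$-families. The local description is the family-version of Proposition \ref{ch. 1, sect. 2, prop. 6} in which equalities of vector spaces are upgraded to inclusions with $S$-flat cokernel, using Proposition \ref{ch. 1, sect. 3, prop. T} exactly as in Proposition \ref{ch. 1, sect. 3, prop. 5}; the injectivity condition (\ref{ch. 1, eqnnew}) becomes injectivity with $S$-flat cokernel of the corresponding map of locally free sheaves on $S$, again via Proposition \ref{ch. 1, sect. 3, prop. T}. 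Gluing on overlaps follows the obvious analogue of Proposition \ref{ch. 1, sect. 2, prop. 5}.

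Second, I would construct the parameter space analogously to equation (\ref{ch. 1, eqn3}). The ambient nonsingular quasi-projective variety $\mathcal{A}$ is a product of Grassmannians $\mathrm{Gr}(\chi^{\sigma_i}(m), n_{\mu, i}(\Lambda))$ and matrix varieties, one factor for each non-trivial graded piece and each limiting vector space indexed by the regions $\mathcal{R}_{\mu}^{i}$ of Proposition \ref{ch. 1, sect. 2, prop. 6}. The multi-filtration, commutativity and gluing conditions cut out a closed subscheme of $\mathcal{A}$, while the injectivity condition (\ref{ch. 1, eqnnew}) is an open condition on matrix coefficients (non-vanishing of the appropriate minors of the block map). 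Intersecting the closed subscheme with this open locus yields a locally closed subscheme $\mathcal{N}_{\vec{\chi}}^{\tau_1, \ldots, \tau_a} \subset \mathcal{A}$ whose closed points are precisely the framed pure $\Delta$-families with the prescribed support and characteristic function. The reductive group $G$ defined in analogy with equation (\ref{ch. 1, eqnG}) acts regularly on $\mathcal{N}_{\vec{\chi}}^{\tau_1, \ldots, \tau_a}$, orbits corresponding to equivariant isomorphism classes.

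Third, I would run the representability step of Proposition \ref{ch. 1, sect. 3, prop. 6} unchanged: pulling back the tautological families on the Grassmannian and matrix factors gives a universal framed family on $\mathcal{N}_{\vec{\chi}}^{\tau_1, \ldots, \tau_a}$, representing $\mathfrak{C}_{\vec{\chi}}^{\tau_1, \ldots, \tau_a, fr}$. Together with the hypothesis that $\mathcal{L}$ matches Gieseker and GIT stability and the fact that (semi)stability is an open condition on fibres, this identifies the GIT-(semi)stable loci with the corresponding framed open sub-moduli functors. Mumford's theorem \cite[Thm.~1.10]{MFK} then supplies the categorical quotient $\mathcal{M}_{\vec{\chi}}^{\tau_1, \ldots, \tau_a, ss}$ and the geometric quotient $\mathcal{M}_{\vec{\chi}}^{\tau_1, \ldots, \tau_a, s}$ as quasi-projective schemes of finite type over $k$.

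Finally, to pass from the framed quotient to corepresentability of $\underline{\mathcal{M}}_{\vec{\chi}}^{\tau_1, \ldots, \tau_a, ss}$, I would repeat the diagram chase at the end of the proof of Theorem \ref{ch. 1, sect. 3, thm. 2}: $\mathcal{M}^{ss}$ corepresents $\mathfrak{C}^{ss, fr}/\mathcal{G}$ by the categorical quotient property, and one then factors through the chain $\left(\mathfrak{C}^{ss}/\cong\right) \stackrel{\cong}{\Rightarrow} \underline{\mathcal{M}}^{\prime, ss} \Rightarrow \underline{\mathcal{M}}^{ss}$ by trivialising first the limiting locally free sheaves and then the twisting line bundle on an open affine cover of $S$, exactly as in the construction of $\tilde{\Phi}$ there. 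The coarse moduli property in the stable case is immediate, since closed points of $\mathcal{M}_{\vec{\chi}}^{\tau_1, \ldots, \tau_a, s}$ are precisely the equivariant isomorphism classes of Gieseker stable sheaves with the given data. The only genuinely new ingredient compared with Theorem \ref{ch. 1, sect. 3, thm. 2} is the injectivity condition (\ref{ch. 1, eqnnew}); because it is $G$-invariant and open, it slots into the GIT framework without further work, so I expect no real obstacle beyond careful bookkeeping of the combinatorial indices.
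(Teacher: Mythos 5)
Your proposal is correct and follows essentially the same route as the paper, which itself only sketches this generalisation by noting that the sole new ingredient relative to Theorem \ref{ch. 1, sect. 3, thm. 2} is the injectivity condition (\ref{ch. 1, eqnnew}), handled exactly as you do: as an open condition on matrix coefficients cutting $\mathcal{N}_{\vec{\chi}}^{\tau_{1}, \ldots, \tau_{a}}$ out as a locally closed (still quasi-projective, hence GIT-amenable) subscheme, with the family version obtained via Proposition \ref{ch. 1, sect. 3, prop. T}. The remaining steps (universal families on the Grassmannian and matrix factors, the categorical/geometric quotients, and the descent from framed to unframed moduli functors by trivialising on an affine cover) are carried over verbatim from Proposition \ref{ch. 1, sect. 3, prop. 6} and Theorem \ref{ch. 1, sect. 3, thm. 2}, as you indicate.
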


\noindent It is important to note that the moduli spaces of Theorems \ref{ch. 1, sect. 3, thm. 2} and \ref{ch. 1, sect. 3, thm. 3} are explicit and combinatorial in nature and their construction is very different from the construction of general moduli spaces of Gieseker (semi)stable sheaves, which makes use of Quot schemes and requires boundedness results \cite[Ch.~1--4]{HL}.

\subsection{Chern Characters of Equivariant Sheaves on Toric Varieties}

The Hilbert polynomial of a pure equivariant sheaf on a nonsingular projective toric variety with ample line bundle is entirely determined by the characteristic function of that sheaf. We will prove this by a short general argument in the following proposition.
\begin{proposition} \label{ch. 1, sect. 3, prop. 7}
Let $X$ be a nonsingular projective toric variety defined by a fan $\Delta$. Let $\mathcal{O}_{X}(1)$ be an ample line bundle on $X$, let $\tau_{1}, \ldots, \tau_{a} \in \Delta$ be some cones of dimension $s$ and let $\vec{\chi} \in \mathcal{X}^{\tau_{1}, \ldots, \tau_{a}}$. Then the Hilbert polynomial of any pure equivariant sheaf on $X$ with characteristic function $\vec{\chi}$ is the same. We refer to this polynomial as the Hilbert polynomial associated to $\vec{\chi}$. We denote the collection of all characteristic functions in $\mathcal{X}^{\tau_{1}, \ldots, \tau_{a}}$ having the same associated Hilbert polynomial $P$ by $\mathcal{X}_{P}^{\tau_{1}, \ldots, \tau_{a}}$.
\end{proposition}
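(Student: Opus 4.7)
The plan is to compute the Hilbert polynomial $P(m) = \chi(\mathcal{E}(m))$ via \v{C}ech cohomology on the standard equivariant affine cover of $X$, decompose everything into $T$-weight spaces, and verify that the resulting finite complex of finite-dimensional weight pieces has dimensions determined entirely by $\vec{\chi}$.

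First, I would fix a $T$-equivariant structure on $\mathcal{O}_{X}(1)$; this exists on a nonsingular toric variety because the forgetful map $\mathrm{Pic}^{T}(X) \to \mathrm{Pic}(X)$ is surjective by standard toric theory (see \cite[Sect.~3.4]{Ful}). Then $\mathcal{E}(m)$ is equivariant coherent for every $m \in \mathbb{Z}$. Let $\sigma_{1}, \ldots, \sigma_{l}$ be the cones of $\Delta$ of maximal dimension and consider the $T$-invariant affine open cover $\{U_{\sigma_{i}}\}$ of $X$; every intersection $U_{\sigma_{i_{1}}} \cap \cdots \cap U_{\sigma_{i_{k}}} = U_{\sigma_{i_{1}} \cap \cdots \cap \sigma_{i_{k}}}$ is again invariant and affine. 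The \v{C}ech complex $C^{\bullet}(\mathcal{E}(m))$ computes $H^{*}(X,\mathcal{E}(m))$, and each term decomposes as a direct sum of $M$-weight spaces with equivariant differentials, giving
\begin{equation*}
P(m) = \chi(\mathcal{E}(m)) = \sum_{w \in M} \chi(C^{\bullet}(\mathcal{E}(m))_{w}),
\end{equation*}
where only finitely many summands are nonzero because total cohomology is finite-dimensional.

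Second, I would check that every weight-space dimension appearing in $C^{\bullet}(\mathcal{E}(m))$ is determined by $\vec{\chi}$. On each maximal $U_{\sigma_{i}}$, the equivariant line bundle $\mathcal{O}_{X}(m)|_{U_{\sigma_{i}}}$ is equivariantly isomorphic to $\mathcal{O}_{U_{\sigma_{i}}}$ twisted by some character $m \cdot m_{i} \in M$, so
\begin{equation*}
\dim_{k} \Gamma(U_{\sigma_{i}},\mathcal{E}(m))_{w} = \chi_{\mathcal{E}}^{\sigma_{i}}(w - m \cdot m_{i}),
\end{equation*}
which depends only on $\vec{\chi}$. For any proper face $\nu = \sigma_{i_{1}} \cap \cdots \cap \sigma_{i_{k}}$, Proposition \ref{ch. 1, sect. 2, prop. 5} (applied to each $\sigma_{i_{j}}$ and its face $\nu$) expresses the weight spaces of $\mathcal{E}|_{U_{\nu}}$ as limits of the weight spaces of the $\sigma_{i_{j}}$-family along the rays of $\sigma_{i_{j}}$ not in $\nu$; these stabilise for sufficiently large indices, so their dimensions are again particular values $\chi_{\mathcal{E}}^{\sigma_{i_{j}}}(\cdot)$ read off from $\vec{\chi}$. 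The same remark applies in the general (possibly reducible) support case using the analogous gluing statement for Theorem \ref{ch. 1, sect. 2, thm. 2}.

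Consequently every term of the weight-$w$ piece $C^{\bullet}(\mathcal{E}(m))_{w}$ is a finite-dimensional $k$-vector space whose dimension depends only on $\vec{\chi}$, on $w$, and on the fixed equivariant structure on $\mathcal{O}_{X}(1)$. Since the Euler characteristic of a finite complex of finite-dimensional vector spaces depends only on the dimensions of the terms, $\chi(C^{\bullet}(\mathcal{E}(m))_{w})$ and hence $P(m)$ are determined by $\vec{\chi}$. The only genuinely delicate point, which is really bookkeeping rather than mathematics, is justifying that the limits in Proposition \ref{ch. 1, sect. 2, prop. 5} (and its generalisation) control every higher intersection uniformly and that the global cancellation yields a finite sum over $w$; this is immediate once one notes that the cohomology of $\mathcal{E}(m)$ is finite-dimensional and each weight space contribution vanishes outside a bounded region in $M$ determined by the bounding integers $A_{i}, B_{i}$ attached to $\vec{\chi}$.
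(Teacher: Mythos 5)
Your proposal is correct and is essentially the paper's own argument written out in full: the paper's proof likewise reduces to the observation that $\chi(\mathcal{E}\otimes\mathcal{O}_{X}(t))$ is an Euler characteristic insensitive to the module structure (the maps $\chi^{\sigma_i}_{n}(\vec{\lambda})$), hence depends only on the weight-space dimensions encoded in $\vec{\chi}$, which is exactly what your equivariant \v{C}ech computation makes precise. (The paper also notes, after Proposition \ref{ch. 1, sect. 3, prop. K}, an alternative derivation via Klyachko's Formula and Hirzebruch--Riemann--Roch, but that is not its primary proof.)
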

\begin{proof}
Assume the fan $\Delta$ lies in a lattice $N$ of rank $r$. Let $\mathcal{E}$ be a pure equivariant sheaf on $X$ with characteristic function $\vec{\chi}$. The Hilbert polynomial of $\mathcal{E}$ is the unique polynomial $P_{\mathcal{E}}(t) \in \mathbb{Q}[t]$ satisfying 
\begin{equation}
P_{\mathcal{E}}(t) = \chi(\mathcal{E} \otimes \mathcal{O}_{X}(t)) = \sum_{i=0}^{r}(-1)^{i} \mathrm{dim}(H^{i}(X,\mathcal{E} \otimes \mathcal{O}_{X}(t))), \nonumber
\end{equation}
for all $t \in \mathbb{Z}$. Clearly, for a fixed $t \in \mathbb{Z}$, $\chi(\mathcal{E} \otimes \mathcal{O}_{X}(t))$ only depends on the equivariant isomorphism class $[\mathcal{E}]$ hence only on the isomorphism class $[\hat{E}^{\Delta}]$, where $\hat{E}^{\Delta}$ is the pure $\Delta$-family corresponding to $\mathcal{E}$. Note that $\chi(\mathcal{E} \otimes \mathcal{O}_{X}(t))$ does not vary if we vary the module structure of $\mathcal{E}$ \cite[Prop.~III.2.6]{Har1}. The module structure of $\mathcal{E}$ is encoded in the $k$-linear maps $\chi_{1}^{\sigma_{i}}(\vec{\lambda}), \ldots, \chi_{r}^{\sigma_{i}}(\vec{\lambda})$, where $i = 1, \ldots, l$ and $\vec{\lambda} \in \mathbb{Z}^{r} \cong M$. Here $\sigma_{1}, \ldots, \sigma_{l}$ are all cones of maximal dimension having a $\tau_{\alpha}$ ($\alpha = 1, \ldots, a$) as a face. Therefore, $\chi(\mathcal{E} \otimes \mathcal{O}_{X}(t))$ can only depend on the dimensions of the weight spaces of $\hat{E}^{\Delta}$, i.e.~only on $\vec{\chi}$.
\end{proof}

The fact that the Hilbert polynomial of a pure equivariant sheaf on a nonsingular projective toric variety with ample line bundle is entirely determined by the characteristic function of that sheaf can be made more specific by using a formula due to Klyachko. Klyachko gives an explicit formula for the Chern character of a torsion free equivariant sheaf on a nonsingular projective toric variety \cite[Sect.~1.2, 1.3]{Kly4}. We will now discuss Klyachko's Formula. The reader has to be aware of the fact that we follow Perling's convention of ascending directions for the maps of $\sigma$-families, as opposed to Klyachko's convention of descending directions. This results in some minus signs compared to Klyachko's formulae.
\begin{definition} \label{ch. 1, sect. 3, def. 3}
Let $\{E(\lambda_{1}, \ldots, \lambda_{r})\}_{(\lambda_{1}, \ldots, \lambda_{r}) \in \mathbb{Z}^{r}}$ be a collection of finite-dimensional $k$-vector spaces. For each $i = 1, \ldots, r$, we define a $\mathbb{Z}$-linear operator $\Delta_{i}$ on the free abelian group generated by the vector spaces $\{E(\lambda_{1}, \ldots, \lambda_{r})\}_{(\lambda_{1}, \ldots, \lambda_{r}) \in \mathbb{Z}^{r}}$ determined by
\begin{equation*}
\Delta_{i}E(\lambda_{1}, \ldots, \lambda_{r}) = E(\lambda_{1}, \ldots, \lambda_{r}) - E(\lambda_{1}, \ldots, \lambda_{i-1}, \lambda_{i}-1,\lambda_{i+1}, \ldots, \lambda_{r}),
\end{equation*}
for any $\lambda_{1}, \ldots, \lambda_{r} \in \mathbb{Z}$. This allows us to define $[E](\lambda_{1}, \ldots, \lambda_{r}) = \Delta_{1} \cdots \Delta_{r}E(\lambda_{1}, \ldots, \lambda_{r})$ for any $\lambda_{1}, \ldots, \lambda_{r} \in \mathbb{Z}$. One can then define dimension $\mathrm{dim}$ as a $\mathbb{Z}$-linear operator on the free abelian group generated by the vector spaces $\{E(\lambda_{1}, \ldots, \lambda_{r})\}_{(\lambda_{1}, \ldots, \lambda_{r}) \in \mathbb{Z}^{r}}$ in the obvious way. It now makes sense to consider $\mathrm{dim}([E](\lambda_{1}, \ldots, \lambda_{r}))$ for any $\lambda_{1}, \ldots, \lambda_{r} \in \mathbb{Z}$. For example
\begin{align*}
\mathrm{dim}([E](\lambda)) &= \mathrm{dim}(E(\lambda)) - \mathrm{dim}(E(\lambda-1)), \\ 
\mathrm{dim}([E](\lambda_{1},\lambda_{2})) &= \mathrm{dim}(E(\lambda_{1},\lambda_{2})) - \mathrm{dim}(E(\lambda_{1}-1,\lambda_{2})) - \mathrm{dim}(E(\lambda_{1},\lambda_{2}-1)) \\
& \ \ \ + \mathrm{dim}(E(\lambda_{1}-1,\lambda_{2}-1)),
\end{align*} 
for any $\lambda, \lambda_{1}, \lambda_{2} \in \mathbb{Z}$. \hfill $\oslash$
\end{definition}
\begin{proposition}[Klyachko's Formula] \label{ch. 1, sect. 3, prop. K}
Let $X$ be a nonsingular projective toric variety with fan $\Delta$ in a lattice $N$ of rank $r$. Let $\sigma_{1}, \ldots, \sigma_{l} \in \Delta$ be the cones of dimension $r$ and for each $i = 1, \ldots, l$, let $\left(\rho^{(i)}_{1}, \ldots, \rho^{(i)}_{r} \right)$ be the rays of $\sigma_{i}$. 
Then for any equivariant coherent sheaf $\mathcal{E}$ on $X$ 
and corresponding $\Delta$-family $\hat{E}^{\Delta}$, we have
\begin{equation*}
\mathrm{ch}(\mathcal{E}) = \sum_{\sigma \in \Delta, \ \vec{\lambda} \in \mathbb{Z}^{\mathrm{dim}(\sigma)}} (-1)^{\mathrm{codim}(\sigma)} \mathrm{dim}([E^{\sigma}](\vec{\lambda})) \ \mathrm{exp}\left( - \sum_{\rho \in \sigma(1)} \langle \vec{\lambda}, n(\rho)\rangle V(\rho) \right).
\end{equation*} 
\end{proposition}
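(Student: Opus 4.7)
The plan is to deduce the formula from Klyachko's original argument \cite{Kly4} (which treats the torsion-free case) by using the combinatorial description of Theorem \ref{ch. 1, sect. 2, thm. 2} to reduce the pure case to the same Hilbert-series computation. Since $\mathrm{ch}$ is additive on short exact sequences, it suffices to establish the corresponding identity in the rationalised equivariant Grothendieck group $K_{0}^{T}(X)_{\mathbb{Q}}$ and then apply $\mathrm{ch}$. My approach is to work locally on the invariant affine cover $\{U_{\sigma}\}$ indexed by the maximal cones of $\Delta$, and then to use inclusion--exclusion on multiple intersections $U_{\sigma_{1}}\cap\cdots\cap U_{\sigma_{k}}=U_{\sigma_{1}\cap\cdots\cap\sigma_{k}}$ (which are again invariant affine toric subvarieties) to assemble a global identity indexed by all cones of $\Delta$.

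For the local step, fix a maximal cone $\sigma$ of dimension $r$, so that $U_{\sigma}\cong\mathbb{A}^{r}$. The $\sigma$-family $\hat{E}^{\sigma}$ of $\mathcal{E}|_{U_{\sigma}}$ has $M$-graded Hilbert series $H^{\sigma}_{\mathcal{E}}(\vec{t})=\sum_{\vec{\lambda}}\dim_{k}(E^{\sigma}(\vec{\lambda}))\,\vec{t}^{\vec{\lambda}}$, and a direct computation with the difference operators of Definition \ref{ch. 1, sect. 3, def. 3} yields
\begin{equation*}
\prod_{i=1}^{r}(1-t_{i})\,H^{\sigma}_{\mathcal{E}}(\vec{t})\;=\;\sum_{\vec{\lambda}\in\mathbb{Z}^{r}}\dim_{k}([E^{\sigma}](\vec{\lambda}))\,\vec{t}^{\vec{\lambda}},
\end{equation*}
which is a Laurent polynomial thanks to the finiteness conditions in Propositions \ref{ch. 1, sect. 2, prop. 4} and \ref{ch. 1, sect. 2, prop. 6}. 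This expresses $[\mathcal{E}|_{U_{\sigma}}]$ in $K_{0}^{T}(U_{\sigma})_{\mathbb{Q}}$ as the finite combination $\sum_{\vec{\lambda}}\dim_{k}([E^{\sigma}](\vec{\lambda}))\,[\chi(-\vec{\lambda})\otimes\mathcal{O}_{U_{\sigma}}]$, where $\chi(-\vec{\lambda})$ denotes $\mathcal{O}_{U_{\sigma}}$ carrying the twisting character $-\vec{\lambda}\in M$. Assembling these local classes by a Mayer--Vietoris-style alternating sum over the invariant cover produces exactly the sign $(-1)^{\mathrm{codim}(\sigma)}$ and the sum over all $\sigma\in\Delta$ in the statement; the gluing conditions of Theorem \ref{ch. 1, sect. 2, thm. 2}(ii) guarantee that the $\sigma$-family data on lower-dimensional faces agrees with the restrictions from the maximal cones.

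To finish, apply $\mathrm{ch}$ to both sides of the K-theoretic identity. On a nonsingular toric variety one has $\mathrm{ch}(\mathcal{O}_{X}(D))=\exp(D)$ for any $T$-invariant Cartier divisor $D=\sum_{\rho}a_{\rho}V(\rho)$, and the character shift by $-\vec{\lambda}$ corresponds on $U_{\sigma}$ to the principal equivariant divisor $-\sum_{\rho\in\sigma(1)}\langle\vec{\lambda},n(\rho)\rangle V(\rho)$, which inserts the required exponential factor. The main obstacle will be justifying the K-theoretic alternating-sum step in the pure (non-torsion-free) setting: several of the restrictions $\mathcal{E}|_{U_{\sigma}}$ can vanish by Proposition \ref{ch. 1, sect. 2, prop. 5}(i), and the $\sigma$-family data on lower-dimensional cones must be reinterpreted as living over $M/\sigma^{\perp}$, matching the indexing $\vec{\lambda}\in\mathbb{Z}^{\dim(\sigma)}$ in the formula. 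The combinatorial conditions of Proposition \ref{ch. 1, sect. 2, prop. 6} were arranged precisely so that these vanishings conspire with the inclusion--exclusion signs to yield the stated answer; carrying out this cancellation explicitly cone-by-cone is the technical heart of the argument.
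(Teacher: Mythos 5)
Your local step is fine: on $U_{\sigma}\cong\mathbb{A}^{r}$ the identity $\prod_{i}(1-t_{i})H^{\sigma}_{\mathcal{E}}(\vec{t})=\sum_{\vec{\lambda}}\dim_{k}([E^{\sigma}](\vec{\lambda}))\,\vec{t}^{\vec{\lambda}}$ is correct, the right-hand side is a Laurent polynomial because $E^{\sigma}$ is a finitely generated $M$-graded $k[S_{\sigma}]$-module, and the translation of the character twist into the divisor $-\sum_{\rho\in\sigma(1)}\langle\vec{\lambda},n(\rho)\rangle V(\rho)$ is the right one. The gap is the global assembly, which you yourself flag as ``the technical heart'' and then do not carry out. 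As stated it does not work: there is no pushforward of $K$-theory classes of coherent sheaves along the open immersions $U_{\sigma}\hookrightarrow X$ (the sheaves $j_{\sigma*}(\mathcal{E}|_{U_{\sigma}})$ are in general only quasi-coherent, already for $\mathcal{O}_{\mathbb{A}^{1}}$ pushed into $\mathbb{P}^{1}$), so the \v{C}ech complex of the invariant cover is not a complex of coherent sheaves and the classes $[\mathcal{E}|_{U_{\sigma}}]\in K_{0}^{T}(U_{\sigma})$ cannot be ``alternating-summed'' in $K_{0}^{T}(X)$. Note also that the formula is a sum over \emph{all} cones weighted by $(-1)^{\mathrm{codim}(\sigma)}$, not a sum over the nerve of the cover by maximal cones; converting one into the other requires a combinatorial identity of the type of Proposition \ref{ch. 1, sect. 3, prop. C}, which again you would need to supply. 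So the proposal is a plan with its central step missing rather than a proof.

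The paper's own argument is much shorter and avoids all of this: since $X$ is nonsingular and quasi-projective, every equivariant coherent sheaf admits a finite locally free equivariant resolution \cite[Prop.~5.1.28]{CG}. Both sides of Klyachko's Formula are additive in short exact sequences of equivariant coherent sheaves --- the left side because $\mathrm{ch}$ is additive, the right side because taking weight spaces $\Gamma(U_{\sigma},-)_{m}$ is exact on each invariant affine (and passing to the limits $\lambda_{i}\to\infty$ is a localisation, hence also exact), so all the dimensions $\dim([E^{\sigma}](\vec{\lambda}))$ are additive. This reduces the statement to the locally free case, which is Klyachko's original result. If you want to salvage your direct approach, you would have to replace the Mayer--Vietoris heuristic by an actual resolution argument (e.g.\ resolve $\mathcal{E}$ equivariantly and prove the identity for sums of equivariant line bundles, where your Hilbert-series computation globalises honestly), at which point you have essentially rederived the paper's reduction.
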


\noindent In this proposition, $\sigma(1)$ means the collection of rays of $\sigma$. Likewise, we denote the collection of all rays of $\Delta$ by $\Delta(1)$. Any cone $\sigma \in \Delta$ is a face of a cone $\sigma_{i}$ of dimension $r$. Assume $\sigma$ has dimension $t$. Let $\hat{E}^{\sigma}$ denote the $\sigma$-family corresponding to the equivariant coherent sheaf $\mathcal{E}|_{U_{\sigma}}$. 
Let $\left(\rho^{(i)}_{1}, \ldots, \rho^{(i)}_{r} \right)$ be the rays of $\sigma_{i}$, with first integral lattice points $\left(n\left(\rho^{(i)}_{1}\right), \ldots, n\left(\rho^{(i)}_{r}\right) \right)$, and let without loss of generality $\left(\rho^{(i)}_{1}, \ldots, \rho^{(i)}_{t} \right) \subset \left(\rho^{(i)}_{1}, \ldots, \rho^{(i)}_{r} \right)$ be the rays of $\sigma$. Then $E^{\sigma}(\lambda_{1}, \ldots, \lambda_{t}) = E^{\sigma_{i}}(\lambda_{1}, \ldots, \lambda_{t}, \infty, \ldots, \infty)$ for all $\lambda_{1}, \ldots, \lambda_{t} \in \mathbb{Z}$ by Proposition \ref{ch. 1, sect. 2, prop. 5}.  A proof of Klyachko's Formula in the case of equivariant vector bundles can be found in \cite{Kly2} and also \cite{KS1}. Hence it holds for equivariant coherent sheaves in general as can be seen as follows. Any equivariant coherent sheaf $\mathcal{E}$ on $X$ admits a finite locally free equivariant resolution (\cite[Prop.~5.1.28]{CG})
\begin{equation*}
0 \longrightarrow \mathcal{F}_{n} \longrightarrow \cdots \longrightarrow \mathcal{F}_{0} \longrightarrow \mathcal{E} \longrightarrow 0.
\end{equation*}
Each equivariant locally free sheaf $\mathcal{F}_{i}$ has a corresponding $\Delta$-family $\hat{F}^{\Delta}_{i}$ which satisfies Klyachko's Formula. Hence
\begin{align*}
\mathrm{ch}(\mathcal{E}) &= \sum_{i=0}^{n} (-1)^{i} \mathrm{ch}(\mathcal{F}_{i}) \\
&=  \sum_{\sigma \in \Delta, \ \vec{\lambda} \in \mathbb{Z}^{\mathrm{dim}(\sigma)}} (-1)^{\mathrm{codim}(\sigma)}  \mathrm{exp}\left( - \sum_{\rho \in \sigma(1)} \langle \vec{\lambda}, n(\rho)\rangle V(\rho) \right) \sum_{i=0}^{n}  (-1)^{i} \mathrm{dim}([F_{i}^{\sigma}](\vec{\lambda})). 
\end{align*}
In terms of $\Delta$-families, the resolution gives an exact sequence
\begin{equation*}
0 \longrightarrow F_{n}^{\sigma}(\vec{\lambda}) \longrightarrow \cdots \longrightarrow F_{0}^{\sigma}(\vec{\lambda}) \longrightarrow E^{\sigma}(\vec{\lambda}) \longrightarrow 0,
\end{equation*}
for each $\sigma \in \Delta$ and $\vec{\lambda} \in \mathbb{Z}^{\mathrm{dim}(\sigma)}$. Hence
\begin{equation*}
\sum_{i=0}^{n}  (-1)^{i} \mathrm{dim}(F_{i}^{\sigma}(\vec{\lambda})) = \mathrm{dim}(E^{\sigma}(\vec{\lambda})),
\end{equation*}
for each $\sigma \in \Delta$ and $\vec{\lambda} \in \mathbb{Z}^{\mathrm{dim}(\sigma)}$ From this, the formula easily follows.
Note that Proposition \ref{ch. 1, sect. 3, prop. 7} now follows from Klyachko's Formula (Proposition \ref{ch. 1, sect. 3, prop. K}) and the Hirzebruch--Riemann--Roch Theorem \cite[Thm.~A.4.1]{Har1}. 

We end this subsection by proving a combinatorial result we will use in the next subsection. As a nice aside, applying this combinatorial result for $s=1$ to the above proposition, we recover a simple formula for the first Chern class due to Klyachko \cite[Sect.~1.2, 1.3]{Kly4}. 
\begin{proposition} \label{ch. 1, sect. 3, prop. C}
Let $\Delta$ be a simplicial fan in a lattice $N$ of rank $r$ with support $|\Delta| = N \otimes_{\mathbb{Z}} \mathbb{R}$. Let $\tau \in \Delta$ be a cone of dimension $s$. Then
\begin{equation*}
(-1)^{r-s} \sum_{a=0}^{r-s} (-1)^{a} \# \{ \sigma \in \Delta \ | \ \tau \prec \sigma \ \mathrm{and} \ \mathrm{dim}(\sigma) = a+s  \} = 1. 
\end{equation*}
\end{proposition}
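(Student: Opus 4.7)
The plan is to reduce the statement to an Euler-characteristic computation on the star fan of $\tau$. Recall that, since $\Delta$ is simplicial and complete (its support is all of $N_{\mathbb R}$), the \emph{star} $\mathrm{Star}(\tau)$, formed in the quotient lattice $N(\tau) = N/N_\tau$ (where $N_\tau$ is the sublattice generated by $\tau \cap N$) by the cones $\overline\sigma$ for $\sigma \succ \tau$, is again a simplicial fan, and it is complete in $N(\tau)_{\mathbb R}$ of rank $d := r-s$. The map $\sigma \mapsto \overline\sigma$ sends cones of $\Delta$ of dimension $a+s$ containing $\tau$ bijectively onto cones of $\mathrm{Star}(\tau)$ of dimension $a$. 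Writing $f_a$ for the number of $a$-dimensional cones of $\mathrm{Star}(\tau)$, the assertion becomes
\begin{equation*}
(-1)^d \sum_{a=0}^{d} (-1)^{a} f_a = 1,
\end{equation*}
so it suffices to show $\sum_{a=0}^{d} (-1)^a f_a = (-1)^d$ for any complete simplicial fan of dimension $d$.

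Next I would exploit the fact that a complete simplicial fan in $N(\tau)_{\mathbb R}$ induces a simplicial decomposition of the unit sphere $S^{d-1}$: each $a$-dimensional cone with $a\ge 1$ meets $S^{d-1}$ in an $(a-1)$-simplex, so the number of $(a-1)$-simplices in this triangulation is exactly $f_a$. Computing the topological Euler characteristic from the triangulation gives
\begin{equation*}
\chi(S^{d-1}) = \sum_{a=1}^{d} (-1)^{a-1} f_a.
\end{equation*}
On the other hand, $\chi(S^{d-1}) = 1 + (-1)^{d-1}$. Combining these and including the apex contribution $f_0 = 1$, a short rearrangement yields
\begin{equation*}
\sum_{a=0}^{d} (-1)^a f_a = 1 - \bigl(1 + (-1)^{d-1}\bigr) + (-1)^{d-1}\cdot 0 = (-1)^d,
\end{equation*}
which is the required identity. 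Multiplying through by $(-1)^{d}$ gives $1$ on the nose, and the edge cases $d=0$ (where $f_0 = 1$ and the sum collapses) and $d = 1$ (where completeness forces exactly two rays, so $f_0 - f_1 = -1$) fit the same formula.

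The only genuinely non-routine step is verifying that $\mathrm{Star}(\tau)$ is complete, which uses the hypothesis $|\Delta| = N_{\mathbb R}$: the projection $N_{\mathbb R} \to N(\tau)_{\mathbb R}$ is surjective, and the image of $|\Delta|$ under this projection is exactly $|\mathrm{Star}(\tau)|$, so completeness of $\Delta$ implies completeness of $\mathrm{Star}(\tau)$. Everything else is bookkeeping, plus the elementary topological fact $\chi(S^{d-1}) = 1 + (-1)^{d-1}$. Alternatively, if one prefers to avoid topology, the same identity $\sum_{a} (-1)^a f_a = (-1)^d$ for a complete simplicial fan can be obtained purely combinatorially from the Dehn--Sommerville relations, or from the observation that the reduced Euler characteristic of the boundary complex of a simplicial $d$-polytope vanishes for $d$ odd and equals $2$ for $d$ even.
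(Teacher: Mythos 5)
Your proof is correct and follows essentially the same route as the paper: both arguments reduce the alternating sum to the Euler characteristic of a triangulated $(r-s-1)$-sphere, the paper by intersecting the cones containing $\tau$ with a small sphere in the normal space at a relative-interior point of $\tau$, and you by passing to the (equivalent) complete star fan in $N/N_\tau$ and intersecting with the unit sphere. The only point worth tightening is your completeness claim for $\mathrm{Star}(\tau)$ --- that every $\bar v \in N(\tau)_{\mathbb{R}}$ lies in the image of a cone \emph{containing} $\tau$ needs the standard limiting argument with $v+tx$, $x$ in the relative interior of $\tau$, not just surjectivity of the projection --- but this is routine and the paper's small-sphere formulation sidesteps it.
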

\begin{proof}
Choose a basis for $N \otimes_{\mathbb{Z}} \mathbb{R}$ such that the first $s$ basis vectors generate $\tau$ and let $N \otimes_{\mathbb{Z}} \mathbb{R}$ be endowed with the standard inner product. Let $x$ be in the relative interior of $\tau$ and fix $\epsilon > 0$. Define a normal space $N_{x}\tau \cong \mathbb{R}^{r-s}$ to $\tau$ at $x$ and a sphere $S^{r-s-1} \subset N_{x}\tau$ using the standard inner product
\begin{equation*}
N_{x}\tau = \{ x + v \ | \ v \perp \tau \}, \ S^{r-s-1} = \{ x + v \ | \ v \perp \tau, \ |v| = \epsilon \}. 
\end{equation*} 
By definition, the union of all cones of $\Delta$ is $N \otimes_{\mathbb{Z}} \mathbb{R}$. Choosing $\epsilon > 0$ sufficiently small
\begin{equation*}
\{\sigma \cap S^{r-s-1} \ | \ \sigma \in \Delta, \ \tau \prec \sigma, \ \mathrm{dim}(\sigma) > s \},
\end{equation*}
forms a triangulation of $S^{r-s-1}$. Therefore 
\begin{equation*}
\sum_{a=1}^{r-s} (-1)^{a-1} \# \{ \sigma \in \Delta \ | \ \tau \prec \sigma \ \mathrm{and} \ \mathrm{dim}(\sigma) = a+s  \} = e(S^{r-s-1}),
\end{equation*}
where $e(S^{r-s-1})$ is the Euler characteristic of $S^{r-s-1}$ \cite[Sect.~22]{Mun}, which satisfies $e(S^{r-s-1}) = 0$ when $r-s$ is even and $e(S^{r-s-1}) = 2$ when $r-s$ is odd \cite[Thm.~31.8]{Mun}. 
\end{proof}
\begin{corollary} \label{ch. 1, sect. 3, cor. K}
Let $X$ be a nonsingular projective toric variety with fan $\Delta$ in a lattice $N$ of rank $r$. Let $\sigma_{1}, \ldots, \sigma_{l} \in \Delta$ be the cones of dimension $r$ and for each $i = 1, \ldots, l$, let $\left(\rho^{(i)}_{1}, \ldots, \rho^{(i)}_{r} \right)$ be the rays of $\sigma_{i}$. Then for any equivariant coherent sheaf $\mathcal{E}$ on $X$ with corresponding $\Delta$-family $\hat{E}^{\Delta}$, we have
\begin{equation*}
c_{1}(\mathcal{E}) = - \sum_{\rho \in \Delta(1), \ \lambda \in \mathbb{Z}} \lambda \ \mathrm{dim}([E^{\rho}](\lambda)) V(\rho).
\end{equation*}
\end{corollary}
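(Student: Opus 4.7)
The plan is to extract the degree-one part of Klyachko's formula (Proposition \ref{ch. 1, sect. 3, prop. K}) and then collect the contributions ray by ray. Expanding $\exp(-\sum_\rho \langle\vec\lambda,n(\rho)\rangle V(\rho))$ and keeping the linear term gives
\begin{equation*}
c_1(\mathcal{E}) = -\sum_{\sigma \in \Delta}(-1)^{\mathrm{codim}(\sigma)}\sum_{\vec{\lambda}\in\mathbb{Z}^{\dim\sigma}} \mathrm{dim}([E^{\sigma}](\vec{\lambda}))\sum_{\rho\in\sigma(1)}\langle\vec{\lambda},n(\rho)\rangle V(\rho).
\end{equation*}
Fix $\rho\in\Delta(1)$. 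Only $\sigma\succeq\rho$ contribute to the coefficient of $V(\rho)$; for such a $\sigma$ of dimension $t$ with rays $\rho=\rho_1,\rho_2,\ldots,\rho_t$, one has $\langle\vec\lambda,n(\rho)\rangle=\lambda_1$. Writing $\vec\lambda=(\lambda,\mu)$ with $\mu=(\lambda_2,\ldots,\lambda_t)\in\mathbb{Z}^{t-1}$, the coefficient of $V(\rho)$ becomes
\begin{equation*}
c_{\rho} = -\sum_{\sigma\succeq\rho}(-1)^{\mathrm{codim}(\sigma)}\sum_{\lambda\in\mathbb{Z}}\lambda\sum_{\mu\in\mathbb{Z}^{t-1}}\mathrm{dim}([E^{\sigma}](\lambda,\mu)).
\end{equation*}

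The key reduction is the telescoping identity
\begin{equation*}
\sum_{\mu\in\mathbb{Z}^{t-1}}[E^{\sigma}](\lambda,\mu) \;=\; [E^{\rho}](\lambda),
\end{equation*}
valid for every $\sigma\succeq\rho$. To prove this, I factor $[E^{\sigma}]=\Delta_1\Delta_2\cdots\Delta_t\,E^{\sigma}$, pull $\Delta_1$ outside the $\mu$-sum, and telescope one coordinate at a time via $\sum_{\lambda_i\in\mathbb{Z}}\Delta_i f = \lim_{\lambda_i\to+\infty}f - \lim_{\lambda_i\to-\infty}f$. The lower limit is zero because the finite $\sigma$-family $\hat{E}^{\sigma}$ is supported in a region bounded below (condition (i) of Propositions \ref{ch. 1, sect. 2, prop. 4} and \ref{ch. 1, sect. 2, prop. 6}), while the upper limit $\lim_{\lambda_i\to+\infty}E^{\sigma}$ is identified with $E^{\sigma'}$, where $\sigma'\prec\sigma$ is the face obtained by deleting the ray $\rho_i$; this is precisely case (ii) of Proposition \ref{ch. 1, sect. 2, prop. 5}, applied inductively to strip away $\rho_2,\ldots,\rho_t$ one at a time. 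After $t-1$ such reductions we are left with $\Delta_1 E^{\rho}(\lambda) = [E^{\rho}](\lambda)$. (If some intermediate face lies outside the star of every $\tau_\alpha$ then both sides are zero and the identity is trivial, so there is no case split to worry about.)

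Inserting this into the expression for $c_\rho$ and pulling $\mathrm{dim}([E^{\rho}](\lambda))$ outside the sum over $\sigma$ yields
\begin{equation*}
c_{\rho} = -\Bigl(\sum_{\sigma\succeq\rho}(-1)^{\mathrm{codim}(\sigma)}\Bigr)\sum_{\lambda\in\mathbb{Z}}\lambda\,\mathrm{dim}([E^{\rho}](\lambda)).
\end{equation*}
Since $X$ is nonsingular and projective, $\Delta$ is a complete simplicial fan, so Proposition \ref{ch. 1, sect. 3, prop. C} applies with $\tau=\rho$ (so $s=1$). Reindexing by $a=\dim(\sigma)-1$ and using $(-1)^{r-1-a}=(-1)^{r-1}(-1)^a$, that proposition gives $\sum_{\sigma\succeq\rho}(-1)^{\mathrm{codim}(\sigma)}=1$. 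Hence $c_\rho=-\sum_\lambda\lambda\,\mathrm{dim}([E^{\rho}](\lambda))$, and summing over $\rho\in\Delta(1)$ produces the stated formula.

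The main technical obstacle is justifying the telescoping limits uniformly, i.e.\ ensuring that $\lim_{\lambda_i\to+\infty}E^{\sigma}=E^{\sigma'}$ even when the pure $\Delta$-family has the more intricate shape of Proposition \ref{ch. 1, sect. 2, prop. 6} (possibly reducible support). Everything else is bookkeeping combined with the Euler-characteristic identity on the link sphere of $\rho$ provided by Proposition \ref{ch. 1, sect. 3, prop. C}.
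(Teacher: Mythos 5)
Your proof is correct and follows essentially the same route as the paper: extract the linear term of Klyachko's formula, collapse the sums over the extra coordinates via telescoping (using Proposition \ref{ch. 1, sect. 2, prop. 5} to identify the upper limits with the face families), and then apply Proposition \ref{ch. 1, sect. 3, prop. C} with $\tau=\rho$ and $s=1$. The only difference is one of exposition — the paper leaves the telescoping reduction from $\mathrm{dim}([E^{\sigma}](\vec{\lambda}))$ to $\mathrm{dim}([E^{\rho}](\lambda))$ implicit, whereas you spell it out.
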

\begin{proof}
Using Klyachko's Formula (Proposition \ref{ch. 1, sect. 3, prop. K}), we obtain
\begin{align*}
c_{1}(\mathcal{E}) &= - \sum_{\sigma \in \Delta, \ \vec{\lambda} \in \mathbb{Z}^{\mathrm{dim}(\sigma)}} \sum_{\rho \in \sigma(1)} (-1)^{\mathrm{codim}(\sigma)} \mathrm{dim}([E^{\sigma}](\vec{\lambda})) \langle \vec{\lambda}, n(\rho) \rangle V(\rho) \\
&= - \sum_{\rho \in \Delta(1)} \sum_{\lambda \in \mathbb{Z}} \sum_{\rho \prec \sigma \in \Delta} (-1)^{\mathrm{codim}(\sigma)} \lambda \  \mathrm{dim}([E^{\rho}](\lambda)) V(\rho).
\end{align*}
The corollary follows from applying Proposition \ref{ch. 1, sect. 3, prop. C} with $\tau = \rho \in \Delta(1)$ and $s = 1$.
\end{proof}

\subsection{Matching Stability}

In this subsection, we will prove the existence of ample equivariant line bundles matching Gieseker and GIT stability for torsion free equivariant sheaves on nonsingular projective toric varieties (Theorem \ref{ch. 1, sect. 3, thm. 4}). Along the way, we derive a number of important preparatory results as well as some results which are interesting on their own. 

As we have seen in Proposition \ref{ch. 1, sect. 2, prop. 2}, for a $G$-equivariant coherent sheaf, it is enough to test purity just for $G$-equivariant coherent subsheaves. It is natural to ask whether an analogous property holds for Gieseker stability. 
\begin{proposition} \label{ch. 1, sect. 3, prop. 8}
Let $X$ be a projective variety with ample line bundle $\mathcal{O}_{X}(1)$. Let $G$ be an affine algebraic group acting regularly on $X$. Let $\mathcal{E}$ be a pure $G$-equivariant sheaf on $X$. Then $\mathcal{E}$ is Gieseker semistable if and only if $p_{\mathcal{F}} \leq p_{\mathcal{E}}$ for any proper $G$-equivariant coherent subsheaf $\mathcal{F}$. Now assume $G = T$ is an algebraic torus. Then $\mathcal{E}$ is Gieseker stable if and only if $p_{\mathcal{F}} < p_{\mathcal{E}}$ for any proper equivariant coherent subsheaf $\mathcal{F}$. 
\end{proposition}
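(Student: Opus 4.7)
The forward implication in each case is immediate: any $G$-equivariant coherent subsheaf is in particular a coherent subsheaf, so Gieseker (semi)stability of $\mathcal{E}$ restricts at once to give the asserted inequality on equivariant subsheaves. The substance is therefore in the two converses.

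For the semistable converse with general affine algebraic group $G$, I would use the Harder--Narasimhan filtration. Since $\mathcal{E}$ is pure, it admits a unique filtration $0 = \mathcal{E}_0 \subset \mathcal{E}_1 \subset \cdots \subset \mathcal{E}_n = \mathcal{E}$ whose successive quotients are semistable with strictly decreasing reduced Hilbert polynomials $p_{\mathcal{E}_1/\mathcal{E}_0} > \cdots > p_{\mathcal{E}_n/\mathcal{E}_{n-1}}$. For each closed point $g \in G$, pulling back through the equivariant structure produces another filtration with the same characterizing properties, so uniqueness forces each $\mathcal{E}_i$ to be $G$-invariant as a subsheaf of $\mathcal{E}$. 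Applying Proposition \ref{lemma} to the composition $\sigma^{*}\mathcal{E}_i \hookrightarrow \sigma^{*}\mathcal{E} \stackrel{\Phi}{\longrightarrow} p_{2}^{*}\mathcal{E} \longrightarrow p_{2}^{*}(\mathcal{E}/\mathcal{E}_i)$, exactly as in the proof of Proposition \ref{ch. 1, sect. 2, prop. 2}, upgrades invariance to a genuine $G$-equivariant structure on each $\mathcal{E}_i$. If $\mathcal{E}$ were not semistable, then $\mathcal{E}_1 \subsetneq \mathcal{E}$ would be a proper $G$-equivariant subsheaf with $p_{\mathcal{E}_1} > p_{\mathcal{E}}$, contradicting the hypothesis; so $\mathcal{E}$ is semistable.

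For the stable converse with $G = T$ a torus, the previous paragraph already forces $\mathcal{E}$ to be semistable. Assume for contradiction that $\mathcal{E}$ is not stable, so there is a proper coherent subsheaf $\mathcal{F} \subsetneq \mathcal{E}$ with $p_{\mathcal{F}} = p_{\mathcal{E}}$. Since $\mathcal{E}/\mathcal{F} \neq 0$, one has $P_{\mathcal{F}}(n) < P_{\mathcal{E}}(n)$ for $n \gg 0$, hence $P_{\mathcal{F}} \neq P_{\mathcal{E}}$. Consider the Quot scheme $Q = \mathrm{Quot}_{X/k}(\mathcal{E}, P_{\mathcal{E}/\mathcal{F}})$, which is projective over $k$. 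The $T$-equivariant structure on $\mathcal{E}$ induces a regular $T$-action on $Q$, and the closed point $q = [\mathcal{E} \twoheadrightarrow \mathcal{E}/\mathcal{F}] \in Q(k)$ generates a $T$-orbit whose closure $\overline{T \cdot q} \subset Q$ is a non-empty projective $T$-variety. By Borel's fixed point theorem (applicable since $T$ is connected and solvable), $\overline{T \cdot q}$ contains a $T$-fixed closed point $q' = [\mathcal{E} \twoheadrightarrow \mathcal{Q}']$. Its kernel $\mathcal{F}' \subset \mathcal{E}$ is then $T$-invariant and, once again via Proposition \ref{lemma}, inherits a canonical $T$-equivariant structure from $\mathcal{E}$. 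Since $P_{\mathcal{F}'} = P_{\mathcal{F}} \neq P_{\mathcal{E}}$, the subsheaf $\mathcal{F}'$ is proper, while $p_{\mathcal{F}'} = p_{\mathcal{F}} = p_{\mathcal{E}}$, contradicting the strict inequality hypothesis.

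The main obstacle is the stable direction: the Harder--Narasimhan argument alone gives only semistability, and when $\mathcal{E}$ is merely semistable one must manufacture a proper equivariant subsheaf with the same reduced Hilbert polynomial as $\mathcal{E}$. The Quot scheme plus Borel's fixed point theorem strategy is the cleanest route, but it relies essentially on $T$ being connected and solvable (explaining why the statement specializes to tori at this point), and one must verify carefully that the $T$-invariant subsheaf cut out by the fixed point admits a canonical equivariant lift compatible with $\mathcal{E}$ -- a point handled by the same general principle (Proposition \ref{lemma}) which powered the equivariance of the Harder--Narasimhan filtration.
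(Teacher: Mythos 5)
Your proof is correct, and the semistable half coincides with the paper's (equivariance of the Harder--Narasimhan filtration via its uniqueness, upgraded to a genuine equivariant structure by Proposition \ref{lemma}). For the stable half, however, you take a genuinely different route. The paper stays inside sheaf theory: semistability plus the hypothesis forces $\mathcal{E}$ to equal its socle, hence to be polystable, $\mathcal{E} \cong \bigoplus_{i} \mathcal{E}_{i}^{\oplus m_{i}}$; it then shows each stable summand $\mathcal{E}_{i}$ is invariant (Krull--Schmidt plus a connectedness argument on $T_{cl}$), admits an equivariant structure (Proposition \ref{ch. 1, sect. 4, prop. 3}), and finally embeds equivariantly into $\mathcal{E}$ after twisting by a character, by decomposing $\mathrm{Hom}(\mathcal{E}_{i},\mathcal{E})$ into $T$-weight spaces --- thereby producing a proper equivariant subsheaf of the same reduced Hilbert polynomial unless $\mathcal{E}$ is already stable. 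You instead take any destabilizing $\mathcal{F} \subsetneq \mathcal{E}$ with $p_{\mathcal{F}} = p_{\mathcal{E}}$, view it as a closed point of the projective Quot scheme $\mathrm{Quot}_{X/k}(\mathcal{E}, P_{\mathcal{E}/\mathcal{F}})$ with its induced $T$-action, and apply Borel's fixed point theorem to the orbit closure to produce a $T$-invariant subsheaf with the same Hilbert polynomial; Proposition \ref{lemma} then upgrades invariance to equivariance exactly as in Proposition \ref{ch. 1, sect. 2, prop. 2}. Both arguments use connectedness of $T$ essentially (yours additionally its solvability, via Borel). Your route is shorter and avoids the socle/Krull--Schmidt machinery; the paper's route is longer but yields as a by-product the precise decomposition of a polystable equivariant sheaf into character twists of equivariant stable summands, which feeds into the equivariant-versus-invariant analysis of subsection 4.2. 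One small point to record explicitly: a $T$-fixed closed point of the Quot scheme only gives $\Phi_{t}(t^{*}\mathcal{F}') = \mathcal{F}'$ for closed points $t$, so the passage to an actual equivariant structure genuinely requires Proposition \ref{lemma}, as you note.
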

\begin{proof}
The statement on Gieseker semistability is clear by noting that the Harder--Narasimhan filtration of $\mathcal{E}$ is $G$-equivariant. For the definition of the Harder--Narasimhan filtration see \cite[Def.~1.3.2]{HL}. Now assume $G = T$ is an algebraic torus and for any proper equivariant coherent subsheaf $\mathcal{F} \subset \mathcal{E}$ one has $p_{\mathcal{F}} < p_{\mathcal{E}}$. We have to prove $\mathcal{E}$ is Gieseker stable. Since $\mathcal{E}$ is clearly Gieseker semistable, it contains a unique nontrivial maximal Gieseker polystable subsheaf $\mathcal{S}$ with the same reduced Hilbert polynomial as $\mathcal{E}$. The sheaf $\mathcal{S}$ is called the socle of $\mathcal{E}$ \cite[Lem.~1.5.5]{HL}. Because of uniqueness, $\mathcal{S}$ is an equivariant coherent subsheaf hence $\mathcal{E} = \mathcal{S}$. Therefore, there are $n \in \mathbb{Z}_{>0}$ mutually non-isomorphic Gieseker stable sheaves $\mathcal{E}_{1}, \ldots, \mathcal{E}_{n}$, positive integers $m_{1}, \ldots, m_{n}$ and an isomorphism of coherent sheaves $\theta : \bigoplus_{i=1}^{n} \mathcal{E}_{i}^{\oplus m_{i}} \stackrel{\cong}{\longrightarrow} \mathcal{E}$. Clearly, $p_{\mathcal{E}_{1}} = \cdots = p_{\mathcal{E}_{n}} = p_{\mathcal{E}}$. We claim that each $\mathcal{E}_{i}$ is isomorphic to an equivariant coherent subsheaf of $\mathcal{E}$. This would prove the proposition. We proceed in two steps. First we show $\mathcal{E}_{i}$ a priori admits an equivariant structure for each $i =1, \ldots, n$. Subsequently, we use representation theory of the algebraic torus $T$. Denote by $\Phi$ the equivariant structure on $\mathcal{E}$, by $\sigma : T \times X \longrightarrow X$ the regular action of $T$ on $X$, by $p_{2} : T \times X \longrightarrow X$ projection and by $T_{cl}$ the set of closed points of $T$. 

We claim each $\mathcal{E}_{i}$ admits an equivariant structure. By Proposition \ref{ch. 1, sect. 4, prop. 3} of subsection 4.2, it is enough to prove $\mathcal{E}_{i}$ is invariant, i.e.~$\sigma^{*} \mathcal{E}_{i} \cong p_{2}^{*} \mathcal{E}_{i}$. By Propositions \ref{ch. 1, sect. 4, prop. 2}, \ref{tom} of subsection 4.1, this is equivalent to $t^{*}\mathcal{E}_{i} \cong \mathcal{E}_{i}$ for all $t \in T_{cl}$. Note that we use $G = T$ is an algebraic torus. We now prove $t^{*}\mathcal{E}_{i} \cong \mathcal{E}_{i}$ for any $i = 1, \ldots, n$, $t \in T_{cl}$. Since each $\mathcal{E}_{i}$ is indecomposable, the Krull--Schmidt property of the category of coherent sheaves on $X$ \cite[Thm.~2]{Ati} implies for any $i = 1, \ldots, n$ and $t \in T_{cl}$ there is an isomorphism $t^{*}\mathcal{E}_{i} \cong \mathcal{E}_{j}$ for some $j = 1, \ldots, n$. Note that for $i,j = 1, \ldots, n$ we have \cite[Prop.~1.2.7]{HL}
\begin{equation*}
\mathrm{Hom}(\mathcal{E}_{i},\mathcal{E}_{j}) = \left\{\begin{array}{cc} k & \mathrm{if} \ i=j  \\ 0 & \mathrm{otherwise.} \end{array} \right.
\end{equation*}
Fix $i = 1, \ldots, n$ and define $\Gamma_{j} = \{ t \in T_{cl} \ | \ t^{*}\mathcal{E}_{i} \cong \mathcal{E}_{j} \}$ for each $j = 1, \ldots, n$. Each $\Gamma_{j}$ can be written as 
\begin{equation*}
\Gamma_{j} = \{ t \in T_{cl} \ | \ \mathrm{dim}(\mathrm{Hom}(t^{*}\mathcal{E}_{i},\mathcal{E}_{j})) \geq 1 \},
\end{equation*} 
by using the fact that any morphism between Gieseker stable sheaves with the same reduced Hilbert polynomial is zero or an isomorphism \cite[Prop.~1.2.7]{HL}. We deduce each $\Gamma_{j}$ is a closed subset by a semicontinuity argument. But each $\Gamma_{j}$ is also open, because its complement is the disjoint union $\coprod_{k \neq j} \Gamma_{k}$. Connectedness of $T_{cl}$ implies $T_{cl} = \Gamma_{i}$, since $1 \in \Gamma_{i}$. Therefore, we obtain an equivariant structure $\Psi^{(i)}$ on $\mathcal{E}_{i}$ for each $i = 1, \ldots, n$. 

Fix $i = 1, \ldots, n$. Using $\theta$, we obtain $\mathrm{Hom}(\mathcal{E}_{i},\mathcal{E}) \cong k^{\oplus m_{i}}$ and any nonzero element of $\mathrm{Hom}(\mathcal{E}_{i},\mathcal{E})$ is injective. The equivariant structures $\Phi$, $\Psi^{(i)}$ give us a regular representation of $T_{cl}$
\begin{align*}
&T_{cl} \times \mathrm{Hom}(\mathcal{E}_{i},\mathcal{E}) \longrightarrow \mathrm{Hom}(\mathcal{E}_{i},\mathcal{E}), \\ 
&t \cdot f = \Phi_{1}^{-1} \circ \Phi_{t^{-1}} \circ (t^{-1})^{*}(f) \circ \Psi_{t^{-1}}^{(i)-1} \circ \Psi_{1}^{(i)},
\end{align*}
where we define $\Phi_{t} = i_{t}^{*}\Phi$, $\Psi_{t}^{(i)} = i_{t}^{*} \Psi^{(i)}$ using the natural inclusion $i_{t} : X \hookrightarrow T \times X$ induced by $t \hookrightarrow T$ for any $t \in T_{cl}$. Now use that we are dealing with an algebraic torus $T$ to deduce there are 1-dimensional $k$-vector spaces $V_{1}^{(i)} = k \cdot v_{1}^{(i)}$, $\ldots$, $V_{m_{i}}^{(i)} = k \cdot v_{m_{i}}^{(i)}$ and characters $\chi_{1}^{(i)}, \ldots, \chi_{m_{i}}^{(i)} \in X(T)$ such that (Complete Reducibility Theorem \cite[Thm.~2.30]{Per1})
\begin{align*}
\mathrm{Hom}(\mathcal{E}_{i},\mathcal{E}) &= V_{1}^{(i)} \oplus \cdots \oplus V_{m_{i}}^{(i)}, \\ 
t \cdot v_{a}^{(i)} &= \chi_{a}^{(i)}(t) \cdot v_{a}^{(i)}, \ \forall t \in T_{cl} \ \forall a = 1, \ldots, m_{i}.
\end{align*}
Redefine $\tilde{\chi}_{a}^{(i)}(-) = \chi_{a}^{(i)}((-)^{-1}) \in X(T)$, $\tilde{v}_{a}^{(i)} = \Phi_{1} \circ v_{a}^{(i)} \circ \Psi_{1}^{(i)-1} \in \mathrm{Hom}(\mathcal{E}_{i},\mathcal{E}) \setminus 0$ and define $\tilde{\Psi}_{a}^{(i)}$ to be the equivariant structure on $\tilde{v}_{a}^{(i)}(\mathcal{E}_{i})$ induced by $\Psi^{(i)}$ for all $a = 1, \ldots, m_{i}$. We deduce
\begin{equation*}
\mathcal{E} \cong \bigoplus_{i=1}^{n} \left( \tilde{v}_{1}^{(i)}(\mathcal{E}_{i}) \oplus \cdots \oplus \tilde{v}_{m_{i}}^{(i)}(\mathcal{E}_{i}) \right), 
\end{equation*}
and the equivariant structure $\Phi$ induces an equivariant structure on each $\tilde{v}_{a}^{(i)}(\mathcal{E}_{i})$, denoted by $\Phi |_{\tilde{v}_{a}^{(i)}(\mathcal{E}_{i})}$ such that
\begin{equation*}
\Phi |_{\tilde{v}_{a}^{(i)}(\mathcal{E}_{i})} = \mathcal{O}(\tilde{\chi}_{a}^{(i)}) \otimes \tilde{\Psi}_{a}^{(i)}, \ \forall i = 1, \ldots, n \ \forall a = 1, \ldots, m_{i},
\end{equation*}
where $\mathcal{O}(\tilde{\chi}_{a}^{(i)})$ is the equivariant structure induced by the character $\tilde{\chi}_{a}^{(i)}$.
\end{proof}

The following proposition relates $\mu$-stability and GIT stability of torsion free equivariant sheaves on nonsingular projective toric varieties. Although we do not need this proposition for the proof of Theorem \ref{ch. 1, sect. 3, thm. 4}, which matches Gieseker and GIT stability for torsion free equivariant sheaves on nonsingular projective toric varieties in general, the proof is instructive. Moreover, the ample equivariant line bundles $\mathcal{L}_{\vec{\chi}}^{0,\mu}$ constructed in the proof are of a particularly simple form as opposed to the more complicated ample equivariant line bundles $\mathcal{L}_{\vec{\chi}}^{0}$ matching Gieseker and GIT stability of Theorem \ref{ch. 1, sect. 3, thm. 4}. Furthermore, the reasoning in the proof will be used later to construct particularly simple ample equivariant line bundles matching $\mu$-stability and GIT stability for reflexive equivariant sheaves on nonsingular projective toric varieties (subsection 4.4). Recall that a torsion free sheaf $\mathcal{E}$ on a nonsingular projective variety $X$ with ample line bundle $\mathcal{O}_{X}(1)$ is $\mu$-semistable, resp.~$\mu$-stable, if $\mu_{\mathcal{F}} \leq \mu_{\mathcal{E}}$, resp.~$\mu_{\mathcal{F}} < \mu_{\mathcal{E}}$, for any coherent subsheaf $\mathcal{F} \subset \mathcal{E}$ with $0 < \mathrm{rk}(\mathcal{F}) < \mathrm{rk}(\mathcal{E})$ \cite[Def.~1.2.12]{HL}. Denoting the Hilbert polynomial of $\mathcal{E}$ by $P_{\mathcal{E}}(t) = \sum_{i=0}^{n} \frac{\alpha_{i}(\mathcal{E})}{i!} t^{i}$, where $n = \mathrm{dim}(X)$, the rank of $\mathcal{E}$ is defined to be $\mathrm{rk}(\mathcal{E}) = \frac{\alpha_{n}(\mathcal{E})}{\alpha_{n}(\mathcal{O}_{X})}$, the degree of $\mathcal{E}$ is defined to be $\mathrm{deg}(\mathcal{E}) = \alpha_{n-1}(\mathcal{E}) - \alpha_{n-1}(\mathcal{O}_{X}) \cdot \mathrm{rk}(\mathcal{E})$ and the slope of $\mathcal{E}$ is defined to be $\mu_{\mathcal{E}} = \frac{\mathrm{deg}(\mathcal{E})}{\mathrm{rk}(\mathcal{E})}$ \cite[Def.~1.2.2, 1.2.11]{HL}. 
\begin{proposition} \label{ch. 1, sect. 3, prop. match}
Let $X$ be a nonsingular projective toric variety and let $\mathcal{O}_{X}(1)$ be an ample line bundle on $X$. Then for any $\vec{\chi} \in \mathcal{X}^{0}$, there is an ample equivariant line bundle $\mathcal{L}_{\vec{\chi}}^{0,\mu} \in \mathrm{Pic}^{G}(\mathcal{N}_{\vec{\chi}}^{0})$, such that any torsion free equivariant sheaf $\mathcal{E}$ on $X$ with characteristic function $\vec{\chi}$ satisfies
\begin{equation*}
\mathcal{E} \ is \ \mu\textrm{-}stable \Longrightarrow \mathcal{E} \ is \ properly \ GIT \ stable \ w.r.t.~\mathcal{L}_{\vec{\chi}}^{0,\mu} \Longrightarrow \mathcal{E} \ is \ \mu\textrm{-}semistable\footnote{Since we want to apply \cite[Thm.~11.1]{Dol} and the notion of GIT stable points in \cite{Dol} corresponds to the notion of \emph{properly} GIT stable points in \cite{MFK} (compare \cite[Sect.~8.1]{Dol} and \cite[Def.~1.8]{MFK}), we match ``Gieseker stable'' with ``\emph{properly} GIT stable''. Note that the results of section 3 so far continue to hold analogously in this setting.}.
\end{equation*}
\end{proposition}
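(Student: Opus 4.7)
The plan is to build $\mathcal{L}_{\vec{\chi}}^{0,\mu}$ as a tensor product, with positive integer exponents, of determinant (Pl\"ucker) line bundles pulled back from the Grassmannian factors of the ambient variety $\mathcal{A}$ in equation~(\ref{ch. 1, eqn3}), and then to match the Hilbert--Mumford numerical criterion of \cite[Thm.~11.1]{Dol} with the slope inequality. First I would adapt Proposition~\ref{ch. 1, sect. 3, prop. 8} to $\mu$-stability by replacing ``Gieseker (semi)stable'' with ``$\mu$-(semi)stable'' throughout: the key ingredients (uniqueness of the Harder--Narasimhan filtration and of the socle with respect to the slope) are available for $\mu$-stability too, and the proof carries over verbatim. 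Combined with Theorem~\ref{ch. 1, sect. 2, thm. 1}, this reduces testing $\mu$-(semi)stability of a torsion free equivariant sheaf $\mathcal{E}$ with $n := \mathrm{rk}(\mathcal{E})$ to testing the slope inequality against sub-$\Delta$-families; since only saturated subsheaves are relevant for $\mu$-stability, it is enough to test against sub-$\Delta$-families of the form $\hat{F}^\Delta = V \cap \hat{E}^\Delta$ for proper nonzero subspaces $V \subset k^n$.

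Next I would put the slope inequality in combinatorial form. Writing $D_\rho := V(\rho) \cdot \mathcal{O}_X(1)^{\dim X - 1} > 0$ for each ray $\rho$, Corollary~\ref{ch. 1, sect. 3, cor. K} gives $\deg(\mathcal{E}) = - \sum_{\rho,\lambda} \lambda \dim([E^\rho](\lambda)) D_\rho$. Summation by parts (using that $\dim E^\rho(\lambda) = 0$ for $\lambda \ll 0$ and equals $n$ for $\lambda \gg 0$), together with a parallel computation for $\mathcal{F}$, shows that $\mu_{\mathcal{F}} < \mu_{\mathcal{E}}$ is equivalent to
\[
\sum_{\rho \in \Delta(1)} \sum_{\lambda \in \mathbb{Z}} D_\rho \bigl( n \dim F^\rho(\lambda) - n_{\mathcal{F}} \dim E^\rho(\lambda) \bigr) < 0,
\]
where $n_{\mathcal{F}} = \dim V$. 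By Proposition~\ref{ch. 1, sect. 2, prop. 5}, each $E^\rho(\lambda)$ coincides with $E^{\sigma_i}(m)$ for $m$ obtained by pushing the coordinates orthogonal to $\rho$ up to their stabilising values in any maximal cone $\sigma_i$ containing $\rho$, so the left-hand side is a linear combination of $\dim E^{\sigma_i}(m)$ and $\dim(V \cap E^{\sigma_i}(m))$ indexed by pairs $(i,m)$.

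On the GIT side, for $\tau = 0$ the ambient variety is $\mathcal{A} = \prod_{i=1}^l \prod_{m} \mathrm{Gr}(\chi^{\sigma_i}(m), n)$ with diagonal $G = \mathrm{SL}(n,k)$-action, and all products are finite. Let $\mathcal{L}_{i,m}$ be the pullback of the Pl\"ucker line bundle from the $(i,m)$-th factor and set $\mathcal{L}_{\vec{\chi}}^{0,\mu} := \bigotimes_{i,m} \mathcal{L}_{i,m}^{\otimes c_{i,m}}$. For a $1$-PSG $\lambda_V$ with weight $n-n_{\mathcal{F}}$ on a subspace $V \subset k^n$ of dimension $n_{\mathcal{F}}$ and weight $-n_{\mathcal{F}}$ on a chosen complement, the Hilbert--Mumford weight on $\mathcal{L}_{i,m}$ at the point $E^{\sigma_i}(m)$ equals $n \dim(V \cap E^{\sigma_i}(m)) - n_{\mathcal{F}} \dim E^{\sigma_i}(m)$. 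Choosing the $c_{i,m}$ as a nonnegative distribution of the $D_\rho$ over the pairs $(i,m)$ identified in the previous paragraph makes the total Hilbert--Mumford weight equal to a fixed positive multiple of the left-hand side of the slope inequality. A standard refinement argument (splitting an arbitrary weighted multi-filtration into $2$-step filtrations) shows that the $\lambda_V$ suffice to test GIT (semi)stability. Combined with the Hilbert--Mumford criterion, this delivers both implications of the proposition.

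The main obstacle is the combinatorial bookkeeping: each ray $\rho$ is a face of several maximal cones, and each $E^\rho(\lambda)$ is represented in several Grassmannian factors, so $D_\rho$ must be distributed over these options in a manner compatible with the gluing conditions of Theorem~\ref{ch. 1, sect. 2, thm. 1}. Simultaneously one must ensure that every $c_{i,m}$ entering the product is strictly positive (so that $\mathcal{L}_{\vec{\chi}}^{0,\mu}$ is ample) and that the total Hilbert--Mumford weight matches the slope expression \emph{exactly}. A clean even-splitting of each $D_\rho$ over its contributing Grassmannian factors should achieve both goals at once, but verifying this requires some care with the domains of definition of the various Grassmannian factors.
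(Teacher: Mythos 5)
Your overall strategy is the same as the paper's: express the slope via Klyachko's first Chern class formula (Corollary \ref{ch. 1, sect. 3, cor. K}), reduce $\mu$-(semi)stability to a linear inequality in the quantities $\dim(E^{\rho}(\lambda)\cap W)$, and match this with the Hilbert--Mumford weights of \cite[Thm.~11.1]{Dol} on a product of Grassmannians. However, there are two genuine problems.

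First, your opening claim that Proposition \ref{ch. 1, sect. 3, prop. 8} ``carries over verbatim'' with Gieseker replaced by $\mu$ is false for the stability half. The socle argument there relies on the uniqueness of the maximal Gieseker-polystable subsheaf with the same reduced Hilbert polynomial; for $\mu$-stability of a merely torsion free sheaf the analogous $\mu$-Jordan--H\"older graded object is only unique in codimension one, so one cannot conclude that ``$\mu_{\mathcal{F}}<\mu_{\mathcal{E}}$ for all equivariant $\mathcal{F}$'' implies $\mu$-stability. This is exactly why the proposition is stated as a one-way chain rather than an equivalence, and why no implication ``properly GIT stable $\Rightarrow$ $\mu$-stable'' is claimed. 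Your one-way implications survive because they only use the trivial direction of the reduction, but the reduction you assert is stronger than what is true (the paper only recovers the full equivalence for \emph{reflexive} equivariant sheaves, in Proposition \ref{ch. 1, sect. 4, prop. 8}).

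Second, and more seriously, the ``clean even-splitting'' in your final paragraph cannot work: the ambient product of Grassmannians contains factors indexed by interior weight spaces $E^{\sigma_i}(m)$ that are \emph{not} of the form $E^{\rho}(\lambda)$ for any ray $\rho$ (for a non-reflexive torsion free sheaf these are genuinely independent data, so they cannot be dropped from $\mathcal{N}^{0}_{\vec{\chi}}$). These factors receive no contribution from any $D_{\rho}$, so exact matching of the Hilbert--Mumford weight with the slope expression forces their coefficients to be zero, which destroys ampleness of $\mathcal{L}^{0,\mu}_{\vec{\chi}}$ (and \cite[Thm.~11.1]{Dol} requires an ample linearisation); conversely, making them positive destroys exact matching. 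The missing idea is the paper's $R\gg 0$ trick: after clearing denominators, the strict slope inequality for a $\mu$-stable sheaf has a uniform gap of at least $1/M$ over all subspaces $W$, so one may scale the ``matching'' coefficients $\Delta_{j}(k)\deg(D_{j})$ by a large integer $R$, assign coefficient $1$ to every remaining factor, and absorb the resulting perturbation into the gap. Without this (or an equivalent perturbation argument) your construction does not produce an ample equivariant line bundle with the required properties.
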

\begin{proof}
We note that if $\mathcal{E}$ is a torsion free equivariant sheaf on $X$, then $\mathcal{E}$ is $\mu$-semistable if and only if for any equivariant coherent subsheaf $\mathcal{F} \subset \mathcal{E}$ with $0 < \mathrm{rk}(\mathcal{F}) < \mathrm{rk}(\mathcal{E})$ we have $\mu(\mathcal{F}) \leq \mu(\mathcal{E})$. This can be seen by noting that the Harder--Narasimhan filtration of $\mathcal{E}$ is equivariant. As an aside: note that we do \emph{not} prove $\mathcal{E}$ is $\mu$-stable if and only if for any equivariant coherent subsheaf $\mathcal{F} \subset \mathcal{E}$ with $0 < \mathrm{rk}(\mathcal{F}) < \mathrm{rk}(\mathcal{E})$ we have $\mu(\mathcal{F}) < \mu(\mathcal{E})$. We will prove this in the case $\mathcal{E}$ is reflexive in Proposition \ref{ch. 1, sect. 4, prop. 8} of subsection 4.4. For $\mathcal{E}$ torsion free equivariant, the problem is that if $\mu(\mathcal{F}) < \mu(\mathcal{E})$ for any equivariant coherent subsheaf $\mathcal{F} \subset \mathcal{E}$ with $0 < \mathrm{rk}(\mathcal{F}) < \mathrm{rk}(\mathcal{E})$, then indeed $\mathcal{E}$ is $\mu$-semistable and has a $\mu$-Jordan--H\"older filtration \cite[Sect.~1.6]{HL}, but the graded object $gr^{JH}(\mathcal{E})$ is only unique in codimension 1 \cite[Sect.~1.6]{HL}. Consequently, in the case of $\mu$-stability, we cannot mimic the proof of Proposition \ref{ch. 1, sect. 3, prop. 8}, which uses the socle of $\mathcal{E}$ and its uniqueness. 

Let $X$ be defined by a fan $\Delta$ in a lattice $N$ of rank $r$. Let $\mathcal{E}$ be a torsion free equivariant sheaf on $X$ with characteristic function $\vec{\chi}$ and corresponding framed torsion free $\Delta$-family $\hat{E}^{\Delta}$. Assume $\mathcal{E}$ has rank $M$ (we can assume $M \geq 2$ otherwise the proposition is trivial). Then $\hat{E}^{\Delta}$ consists of multi-filtrations $\{E^{\sigma_{i}}(\vec{\lambda})\}_{\vec{\lambda} \in \mathbb{Z}^{r}}$ of $k^{\oplus M}$, for each $i=1, \ldots, l$, such that each multi-filtration reaches $k^{\oplus M}$ (see Theorem \ref{ch. 1, sect. 2, thm. 1} and use the notation of this theorem). Moreover, for each $i = 1, \ldots, l$, there are integers $A_{1}^{(i)}, \ldots, A_{r}^{(i)}$ such that $E^{\sigma_{i}}(\vec{\lambda}) = 0$ unless $A_{1}^{(i)} \leq \lambda_{1}$, $\ldots$, $A_{r}^{(i)} \leq \lambda_{r}$ (let $A_{1}^{(i)}, \ldots, A_{r}^{(i)}$ be maximally chosen with this property). These multi-filtrations satisfy certain gluing conditions (see Theorem \ref{ch. 1, sect. 2, thm. 1}). Let $(\rho_{1}, \ldots, \rho_{N})$ be all rays and let $A_{1}, \ldots, A_{N}$ be the corresponding integers among the $A_{j}^{(i)}$ (this makes sense because of the gluing conditions). Fix $j = 1, \ldots, N$ and let $\sigma_{i}$ be some cone of maximal dimension having $\rho_{j}$ as a ray. Let $\left( \rho_{1}^{(i)}, \ldots, \rho_{r}^{(i)} \right)$ be the rays of $\sigma_{i}$ and let $\rho_{k}^{(i)} = \rho_{j}$. Consider the filtration  
\begin{equation}
\{\beta_{\lambda} \}_{\lambda \in \mathbb{Z}} = \left\{ \lim_{\stackrel{\lambda_{k} \rightarrow \lambda}{\lambda_{1}, \ldots, \lambda_{k-1},\lambda_{k+1} \ldots, \lambda_{r} \rightarrow \infty}} E^{\sigma_{i}}\left( \sum_{\alpha=1}^{r} \lambda_{\alpha} m(\rho_{\alpha}) \right) \right\}_{\lambda \in \mathbb{Z}}, \nonumber
\end{equation}
of $k^{\oplus M}$. Define integers $\Delta_{j}(1), \Delta_{j}(2), \ldots, \Delta_{j}(M-1) \in \mathbb{Z}_{\geq 0}$ and elements $p_{j}(1) \in \mathrm{Gr}(1,M)$, $p_{j}(2) \in \mathrm{Gr}(2,M)$, $\ldots$, $p_{j}(M-1) \in \mathrm{Gr}(M-1,M)$, such that the filtration changes value as follows
\begin{equation}
\beta_{\lambda} = \left\{\begin{array}{cc}  0 & \mathrm{if \ } \lambda < A_{j} \\ p_{j}(1) \in \mathrm{Gr}(1,M) & \mathrm{if \ } A_{j} \leq \lambda < A_{j} + \Delta_{j}(1) \\ p_{j}(2) \in \mathrm{Gr}(2,M) & \mathrm{if \ } A_{j} + \Delta_{j}(1) \leq \lambda < A_{j} + \Delta_{j}(1) + \Delta_{j}(2) \\ \ldots & \ldots \\ k^{\oplus M} & \mathrm{if \ } A_{j} + \Delta_{j}(1) + \Delta_{j}(2) + \cdots + \Delta_{j}(M-1) \leq \lambda. \end{array} \right. \nonumber
\end{equation}
Note that $\Delta_{j}(k) = 0$ is allowed. These definitions are independent of the cone $\sigma_{i}$ chosen, because of the gluing conditions. Denote the toric divisor $V(\rho_{j})$ corresponding to the ray $\rho_{j}$ by $D_{j}$. Using the formula for first Chern class of Corollary \ref{ch. 1, sect. 3, cor. K}, we easily obtain
\begin{equation}
\mathrm{ch}(\mathcal{E}) = M - \sum_{j=1}^{N}\left( MA_{j} + (M-1)\Delta_{j}(1) + (M-2)\Delta_{j}(2) + \cdots + \Delta_{j}(M-1) \right) D_{j} + O(2), \nonumber
\end{equation} 
where $O(2)$ means terms of degree $\geq 2$ in the Chow ring $A(X) \otimes_{\mathbb{Z}} \mathbb{Q}$. 
The Todd class of the tangent bundle $\mathcal{T}_{X}$ of $X$ is easily seen to be \cite[Sect.~5.3]{Ful}
\begin{equation}
\mathrm{td}(\mathcal{T}_{X}) = 1 + \frac{1}{2} \sum_{j=1}^{N} D_{j} + O(2). \nonumber
\end{equation}
Now $c_{1}(\mathcal{O}_{X}(1)) = \sum_{j = 1}^{N} \alpha_{j} D_{j}$ for some integers $\alpha_{j}$, since $D_{1}, \ldots, D_{N}$ generate $A^{1}(X)$ \cite[Sect.~5.2]{Ful}. Consequently,
\begin{equation}
\mathrm{ch}(\mathcal{O}_{X}(t)) = \frac{1}{r!} \left(\sum_{j=1}^{N} \alpha_{j} D_{j}\right)^{r} t^{r} + \frac{1}{(r-1)!} \left(\sum_{j=1}^{N} \alpha_{j} D_{j}\right)^{r-1} t^{r-1} + \cdots. \nonumber 
\end{equation}
From the Hirzebruch--Riemann--Roch Theorem \cite[Thm.~A.4.1]{Har1}, we obtain the two leading terms of the Hilbert polynomial of $\mathcal{E}$
\begin{align}
\begin{split}
P_{\mathcal{E}}(t) = &\frac{M}{r!} \ \mathrm{deg} \left\{ \left(\sum_{j=1}^{N} \alpha_{j} D_{j}\right)^{r} \right\}_{r} t^{r} + \frac{1}{(r-1)!} \ \mathrm{deg} \left\{ \frac{M}{2} \left( \sum_{j=1}^{N} \alpha_{j} D_{j} \right)^{r-1}\left( \sum_{j=1}^{N} D_{j} \right) \right. \\
&\left. - \left( \sum_{j=1}^{N} \alpha_{j} D_{j} \right)^{r-1} \left( \sum_{j=1}^{N}\left[MA_{j} + \sum_{k=1}^{M-1}(M-k)\Delta_{j}(k) \right] D_{j} \right) \right\}_{r} t^{r-1} + \cdots, \nonumber
\end{split}
\end{align}
where $\cdots$ means terms of degree $< r-1$ in $t$, $\{ - \}_{r}$ denotes the component of degree $r$ in $A(X) \otimes_{\mathbb{Z}} \mathbb{Q}$ and $\mathrm{deg} : A^{r}(X) \otimes_{\mathbb{Z}} \mathbb{Q} \longrightarrow \mathbb{Q}$ is the degree map \cite[App.~A]{Har1}. Let $0 \neq W \subset k^{\oplus M}$ be an $m$-dimensional subspace and let $\hat{F}^{\Delta} = \hat{E}^{\Delta} \cap W \subset \hat{E}^{\Delta}$ be the corresponding torsion free $\Delta$-family. Let $\mathcal{F}_{W} \subset \mathcal{E}$ be the corresponding equivariant coherent subsheaf. Analogous to the previous reasoning, one computes the two leading terms of the Hilbert polynomial of $\mathcal{F}_{W}$ to be
\begin{align}
\begin{split}
&P_{\mathcal{F}_{W}}(t) = \frac{m}{r!} \ \mathrm{deg}\left\{ \left(\sum_{j=1}^{N} \alpha_{j} D_{j}\right)^{r} \right\}_{r} t^{r} +\frac{1}{(r-1)!} \ \mathrm{deg} \left\{ \frac{m}{2} \left( \sum_{j=1}^{N} \alpha_{j} D_{j} \right)^{r-1} \left( \sum_{j=1}^{N} D_{j} \right) \right. \\
&\left. - \left(\sum_{j=1}^{N} \alpha_{j} D_{j} \right)^{r-1} \left( \sum_{j=1}^{N} \left[m A_{j} + \sum_{k=1}^{M-1} \left(m -  \mathrm{dim}(p_{j}(k) \cap W) \right)  \Delta_{j}(k) \right] D_{j} \right) \right\}_{r} t^{r-1} + \cdots, \nonumber
\end{split}
\end{align}
where the term on the second line can be straightforwardly derived by using induction on $M$. Before we continue, we make two remarks regarding positivity. Firstly, the leading coefficient of any Hilbert polynomial is positive, so $\mathrm{deg}\left\{ \left(\sum_{j=1}^{N} \alpha_{j} D_{j}\right)^{r} \right\}_{r} > 0$. Secondly, using the definition of degree of a coherent sheaf \cite[Def.~1.2.11]{HL} and the Nakai--Moishezon Criterion \cite[Thm.~A.5.1]{Har1}, we deduce that for any $j = 1, \ldots, N$
\begin{equation}
\mathrm{deg}(D_{j}) := \mathrm{deg}(\mathcal{O}_{X}(D_{j})) = \mathrm{deg} \left\{\left( \sum_{k=1}^{N} \alpha_{k} D_{k} \right)^{r-1} D_{j} \right\}_{r} > 0. \nonumber
\end{equation}
Combining our results so far and using the definition of slope of a coherent sheaf \cite[Def.~1.2.11]{HL}, we obtain that $\mathcal{E}$ is $\mu$-semistable if and only if for any subspace $0 \neq W \subsetneq k^{\oplus M}$ we have 
\begin{equation}
\sum_{j=1}^{N} \sum_{k=1}^{M-1} \Delta_{j}(k) \ \mathrm{deg}(D_{j}) \ \mathrm{dim}(p_{j}(k) \cap W) \leq \frac{\mathrm{dim}(W)}{M} \sum_{j=1}^{N} \sum_{k=1}^{M-1} \Delta_{j}(k) \ \mathrm{deg}(D_{j}) \ \mathrm{dim}(p_{j}(k)). \nonumber
\end{equation}  

We are now ready to prove the proposition. Let $\vec{\chi} \in \mathcal{X}^{0}$ be arbitrary. From $\vec{\chi}$ we read off the integers $A_{j}$, the rank $M$ (we assume $M\geq 2$ otherwise the proposition is trivial) and the non-negative integers $\Delta_{j}(k) \in \mathbb{Z}$. Without loss of generality, we can assume not all $\Delta_{j}(k) = 0$ (otherwise there are no $\mu$-stable torsion free equivariant sheaves on $X$ with characteristic function $\vec{\chi}$ and the proposition is trivial). In subsection 3.3, we defined the closed subscheme\footnote{To be be precise: for those $\Delta_{j}(k)$ which are zero, the corresponding term $\mathrm{Gr}(k,M)$ in the product of $\mathcal{A}'$ should be left out.}
\begin{equation}
\mathcal{N}_{\vec{\chi}}^{0} \subset \mathcal{A}' = \prod_{j=1}^{N} \prod_{k=1}^{M-1} \mathrm{Gr}(k,M) \times \prod_{\alpha=1}^{a} \mathrm{Gr}(n_{\alpha}, M). \nonumber
\end{equation} 
Here $a \in \mathbb{Z}_{\geq 0}$ and $0< n_{1}, \ldots, n_{a} < M$ are some integers. A closed point of $\mathcal{N}_{\vec{\chi}}^{0}$ is of the form $(p_{j}(k);q_{\alpha})_{j = 1, \ldots, N, k = 1, \ldots, M-1, \alpha = 1, \ldots, a}$, where there are certain compatibilities among the $p_{j}(k)$, $q_{\alpha}$ dictated by the shape of $\vec{\chi}$. An equivariant line bundle $\mathcal{L}_{\vec{\chi}}^{0, \mu \prime}$ on $\mathcal{A}'$ (up to equivariant isomorphism) is of the form $(\kappa_{jk}; \kappa_{\alpha})_{j = 1, \ldots, N, k = 1, \ldots, M-1, \alpha = 1, \ldots, a}$, where $\kappa_{jk}, \kappa_{\alpha}$ can be any integers \cite[Lem.~11.1]{Dol}. Such an equivariant line bundle is ample if and only if all $\kappa_{jk}, \kappa_{\alpha} > 0$ \cite[Sect.~11.1]{Dol}. The notion of GIT stability determined by such an ample equivariant line bundle is made explicit in \cite[Thm.~11.1]{Dol}. Suppose $a = 0$. Choose $\kappa_{jk} = \Delta_{j}(k) \ \mathrm{deg}(D_{j})$ for all $j,k$ and $\mathcal{L}_{\vec{\chi}}^{0, \mu} = \mathcal{L}_{\vec{\chi}}^{0, \mu \prime}|_{\mathcal{N}_{\vec{\chi}}^{0}}$. The proposition now follow easily from \cite[Thm.~11.1]{Dol} and \cite[Thm.~1.19]{MFK}. Now assume $a >0$. Let $R$ be a positive integer satisfying $0 < \sum_{\alpha = 1}^{a} \frac{n_{\alpha}}{R} < \frac{1}{M^{2}}$. Choose $\kappa_{jk} = \Delta_{j}(k) \ \mathrm{deg}(D_{j}) R$ and $\kappa_{\alpha} = 1$ for all $j,k,\alpha$. Note that any $\mu$-stable torsion free equivariant sheaf on $X$ with characteristic function $\vec{\chi}$ and corresponding framed torsion free $\Delta$-family defined by $(p_{j}(k);q_{\alpha})_{j = 1, \ldots, N, k = 1, \ldots, M-1, \alpha = 1, \ldots, a}$ satisfies
\begin{equation}
\frac{\mathrm{dim}(W)}{M} \sum_{j=1}^{N} \sum_{k=1}^{M-1} \Delta_{j}(k) \ \mathrm{deg}(D_{j}) \ \mathrm{dim}(p_{j}(k)) - \sum_{j=1}^{N} \sum_{k=1}^{M-1} \Delta_{j}(k) \ \mathrm{deg}(D_{j}) \ \mathrm{dim}(p_{j}(k) \cap W) \geq \frac{1}{M} \nonumber
\end{equation}
for any subspace $0 \neq W \subsetneq k^{\oplus M}$. Using \cite[Thm.~11.1]{Dol} and \cite[Thm.~1.19]{MFK} finishes the proof\footnote{Note that in this proof, we are not allowed to deduce ``$\mathcal{E}$ is properly GIT stable w.r.t.~$\mathcal{L}_{\vec{\chi}}^{0,\mu}$ $\Longrightarrow$ $\mathcal{E}$ is $\mu$-stable''.}.
\end{proof}

Here is our main result of this subsection, which explicitly matches Gieseker and GIT stability for torsion free sheaves in full generality.
\begin{theorem} \label{ch. 1, sect. 3, thm. 4}
Let $X$ be a nonsingular projective toric variety defined by a fan $\Delta$. Let $\mathcal{O}_{X}(1)$ be an ample line bundle on $X$. Then for any $\vec{\chi} \in \mathcal{X}^{0}$, there is an ample equivariant line bundle $\mathcal{L}^{0}_{\vec{\chi}} \in \mathrm{Pic}^{G}(\mathcal{N}^{0}_{\vec{\chi}})$ such that any torsion free equivariant sheaf $\mathcal{E}$ on $X$ with characteristic function $\vec{\chi}$ is GIT semistable resp.~properly GIT stable w.r.t.~$\mathcal{L}^{0}_{\vec{\chi}}$ if and only if $\mathcal{E}$ is Gieseker semistable resp.~Gieseker stable.
\end{theorem}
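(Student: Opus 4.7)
The strategy is to follow the same overall plan as Proposition \ref{ch. 1, sect. 3, prop. match} (which handled $\mu$-stability) but to encode the full Hilbert polynomial into the Hilbert--Mumford weight, evaluated at a single large twist $T$ via Simpson's boundedness trick. By Proposition \ref{ch. 1, sect. 3, prop. 8}, Gieseker (semi)stability of a torsion free equivariant sheaf $\mathcal{E}$ of rank $M$ need only be tested against equivariant coherent subsheaves $\mathcal{F} \subseteq \mathcal{E}$. In the framed torsion free $\Delta$-family picture of Theorem \ref{ch. 1, sect. 2, thm. 1}, these correspond bijectively to linear subspaces $0 \neq W \subseteq k^{\oplus M}$ via $\mathcal{F} = \mathcal{F}_{W}$ with framed data $\{E^{\sigma}(\vec{\lambda}) \cap W\}_{\sigma, \vec{\lambda}}$. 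On the GIT side, Dolgachev's Hilbert--Mumford criterion \cite[Thm.~11.1]{Dol} for a product of Grassmannians tests (semi)stability against one-parameter subgroups that, after conjugation in $G$, are indexed by exactly the same subspaces $W$, so the two notions of stability share a common indexing set.

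Next I would combine Klyachko's Formula (Proposition \ref{ch. 1, sect. 3, prop. K}) with the Hirzebruch--Riemann--Roch Theorem to obtain a finite expansion
\begin{equation*}
P_{\mathcal{E}}(t) \;=\; \sum_{\sigma \in \Delta} \sum_{\vec{\lambda} \in \mathbb{Z}^{\dim \sigma}} a^{\sigma}_{\vec{\lambda}}(t) \cdot \dim_{k} E^{\sigma}(\vec{\lambda}),
\end{equation*}
where the coefficients $a^{\sigma}_{\vec{\lambda}}(t) \in \mathbb{Q}[t]$ depend only on $\Delta$, $\mathcal{O}_{X}(1)$ and $\vec{\chi}$, the $\vec{\lambda}$-support being bounded in terms of the characteristic function. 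Linearity of both $\mathrm{ch}(-)$ and $\chi(-)$ in the $\Delta$-family data yields the same expansion for $P_{\mathcal{F}_{W}}(t)$ with $E^{\sigma}(\vec{\lambda})$ replaced by $E^{\sigma}(\vec{\lambda}) \cap W$, and Proposition \ref{ch. 1, sect. 2, prop. 5} lets one express the subspaces for non-maximal cones $\sigma$ as limits of subspaces for the maximal cones $\sigma_{i}$ that parameterize the Grassmannian factors of $\mathcal{A}$ in \eqref{ch. 1, eqn3}. By the classical boundedness argument from the construction of moduli of Gieseker stable sheaves \cite[Ch.~4]{HL}, I would then pick a single integer $T = T(\vec{\chi}, \mathcal{O}_{X}(1))$ such that for every $0 \neq W \subsetneq k^{\oplus M}$ one has $p_{\mathcal{F}_{W}}(t) \leq p_{\mathcal{E}}(t)$ for $t \gg 0$ if and only if
\begin{equation*}
\sum_{\sigma, \vec{\lambda}} a^{\sigma}_{\vec{\lambda}}(T) \bigl[\dim(W) \dim E^{\sigma}(\vec{\lambda}) - M \dim (E^{\sigma}(\vec{\lambda}) \cap W) \bigr] \;\geq\; 0,
\end{equation*}
with the corresponding strict inequality characterizing Gieseker stability.

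This is precisely the shape of the Hilbert--Mumford weight produced by Dolgachev's criterion for the $G$-equivariant line bundle on $\mathcal{A}$ whose weight on the Grassmannian factor $\mathrm{Gr}(\chi^{\sigma_{i}}(m), M)$ is a positive integer multiple of the accumulated coefficient $\widetilde{a}^{(i)}_{m}(T)$ obtained by grouping the $a^{\sigma}_{\vec{\lambda}}(T)$ along the limit identifications of Proposition \ref{ch. 1, sect. 2, prop. 5}. Pulling back to $\mathcal{N}^{0}_{\vec{\chi}} \subseteq \mathcal{A}$ gives a candidate equivariant line bundle exactly matching Gieseker and GIT (semi)stability.

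The main obstacle is ampleness: some weights $\widetilde{a}^{(i)}_{m}(T)$ may fail to be strictly positive, whereas an ample line bundle on a product of Grassmannians requires strict positivity on every factor \cite[Sect.~11.1]{Dol}. I would handle this in two steps. First, for the ``filtration'' Grassmannians corresponding to rays $\rho \in \Delta(1)$, tensoring with a sufficiently large positive multiple of the ample $\mathcal{L}^{0,\mu}_{\vec{\chi}}$ of Proposition \ref{ch. 1, sect. 3, prop. match} forces the weight on each ray-factor to be positive without affecting the remaining factors. Second, for the higher Grassmannian factors (indexing intersection subspaces $E^{\sigma_{i}}(\vec{\lambda})$ for larger $\vec{\lambda}$), I would replace $T$ by $T + T_{0}$ with $T_{0} \gg 0$: since the Simpson bound is stable under enlarging $T$ and the $a^{\sigma}_{\vec{\lambda}}(t)$ are polynomials of bounded degree whose leading coefficients come from the positive intersection numbers $\left(\sum_{j} \alpha_{j} D_{j}\right)^{r-\dim\sigma} \prod_{\rho \in \sigma(1)} D_{\rho}$ on $X$ (positive by the Nakai--Moishezon Criterion), the shifted weights $\widetilde{a}^{(i)}_{m}(T + T_{0})$ become positive for $T_{0}$ large. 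Clearing denominators yields the required ample equivariant line bundle $\mathcal{L}^{0}_{\vec{\chi}}$.
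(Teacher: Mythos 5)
Your proposal is correct and follows essentially the same route as the paper's proof: reduce to equivariant subsheaves via Proposition \ref{ch. 1, sect. 3, prop. 8}, expand $P_{\mathcal{E}}(t)$ and $P_{\mathcal{F}_W}(t)$ linearly in the $\dim E^{\sigma}(\vec{\lambda})$ (resp.\ $\dim(E^{\sigma}(\vec{\lambda})\cap W)$) via Klyachko's Formula and Hirzebruch--Riemann--Roch after an Abel-summation regrouping, evaluate at a single large twist $R$, and read off the resulting inequality as Dolgachev's numerical criterion for an equivariant line bundle on the product of Grassmannians whose weights are the (eventually positive) coefficients. The two deviations are harmless but unnecessary: the tensoring with $\mathcal{L}^{0,\mu}_{\vec{\chi}}$ and the appeal to \cite[Ch.~4]{HL}-style boundedness can both be dropped, since enlarging the evaluation point already makes \emph{every} Grassmannian weight positive (the leading coefficients collapse, via the sphere--Euler-characteristic identity of Proposition \ref{ch. 1, sect. 3, prop. C}, to single intersection numbers $H^{r-j'}\cdot V(\nu)>0$ by Nakai--Moishezon), and the finiteness needed to choose one $R$ uniformly is just the elementary fact that the dimension vectors $\{\dim(E^{\sigma}(\vec{\lambda})\cap W)\}$ take only finitely many values for fixed $\vec{\chi}$.
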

\begin{proof}
Let the fan $\Delta$ lie in a lattice $N \cong \mathbb{Z}^{r}$. Fix $\vec{\chi} \in \mathcal{X}^{0}$ and let $\mathcal{E}$ be a torsion free equivariant sheaf on $X$ with characteristic function $\vec{\chi}$. Denote the corresponding framed torsion free $\Delta$-family by $\hat{E}^{\Delta}$. Let $\Delta(1)$ be the collection of rays of $\Delta$, then the corresponding divisors $\{V(\rho)\}_{\rho \in \Delta(1)}$ generate $A^{1}(X)$ (\cite[Sect.~5.2]{Ful}), therefore we can write
\begin{equation*}
\mathcal{O}_{X}(1) \cong \mathcal{O}\left(\sum_{\rho \in \Delta(1)} \alpha_{\rho} V(\rho)\right).
\end{equation*} 
Using the Hirzebruch--Riemann--Roch Theorem \cite[Thm.~A.4.1]{Har1} and Klyachko's Formula (Proposition \ref{ch. 1, sect. 3, prop. K}), we obtain the Hilbert polynomial of $\mathcal{E}$
\begin{align*}
P_{\mathcal{E}}(t) &= \sum_{\sigma \in \Delta, \ \vec{\lambda} \in \mathbb{Z}^{\mathrm{dim}(\sigma)}} \Phi_{\sigma, \vec{\lambda}}(t) \ \mathrm{dim}([E^{\sigma}](\vec{\lambda})), \\
\Phi_{\sigma, \vec{\lambda}}(t) &= (-1)^{\mathrm{codim}(\sigma)} \mathrm{deg} \left\{ e^{-\sum_{\rho \in \sigma(1)} \langle \vec{\lambda}, n(\rho) \rangle V(\rho) + \sum_{\rho \in \Delta(1)} t \alpha_{\rho} V(\rho) } \mathrm{td}(\mathcal{T}_{X})  \right\}_{r},
\end{align*}
where $\mathrm{td}(\mathcal{T}_{X})$ is the Todd class of the tangent bundle $\mathcal{T}_{X}$ of $X$, $\{-\}_{r}$ projects to the degree $r$ component in the Chow ring $A(X) \otimes_{\mathbb{Z}} \mathbb{Q}$ and $\mathrm{deg} : A^{r}(X) \otimes_{\mathbb{Z}} \mathbb{Q} \longrightarrow \mathbb{Q}$ is the degree map. Let $\sigma_{1}, \ldots, \sigma_{l}$ be the cones of dimension $r$ and for each $\sigma_{i}$ denote by $(\rho_{1}^{(i)}, \ldots, \rho_{r}^{(i)})$ the rays of $\sigma_{i}$. For a fixed $j = 0, \ldots, r$, we denote the $\binom{r}{j}$ faces of $\sigma_{i}$ of dimension $j$ by $\sigma_{ijk}$. In other words, we choose a bijection between integers $k \in \{1, \ldots, \binom{r}{j}\}$ and $j$-tuples $i_{1} < \cdots < i_{j} \in \{1, \ldots, r\}$ and the cone $\sigma_{ijk}$ is by definition generated by the rays $(\rho_{i_{1}}^{(i)}, \ldots, \rho_{i_{j}}^{(i)})$. This allows us to rewrite 
\begin{equation*}
P_{\mathcal{E}}(t) = \sum_{i=1}^{l} \sum_{j=0}^{r} \sum_{k=1}^{\binom{r}{j}} \sum_{\vec{\lambda} \in \mathbb{Z}^{\mathrm{dim}(\sigma_{ijk})}} \Phi_{\sigma_{ijk}, \vec{\lambda}}(t) \ \mathrm{dim}([E^{\sigma_{ijk}}](\vec{\lambda})).
\end{equation*}
This sum is wrong as it stands. For fixed $(i,j,k)$, there can be distinct $(i',j',k')$ such that $\sigma_{ijk} = \sigma_{i'j'k'}$. In this case, we call $(i,j,k)$ and $(i',j',k')$ equivalent. This induces an equivalence relation on the set of such triples. We now choose exactly one representative $(i',j',k')$ of each equivalence class $[(i,j,k)]$, for which the sequence of polynomials $\{\Phi_{\sigma_{i'j'k'}, \vec{\lambda}}(t)\}_{\vec{\lambda} \in \mathbb{Z}^{\mathrm{dim}(\sigma_{i'j'k'})}}$ is possibly nonzero. This defines the above sum. Fix a component $\chi^{\sigma_{i}}$ of the characteristic function $\vec{\chi} = (\chi^{\sigma_{1}}, \ldots, \chi^{\sigma_{l}})$. There exists a box 
\begin{equation*}
\mathcal{B}^{\sigma_{i}} = (-\infty,C_{1}^{(i)}] \times \cdots \times (-\infty,C_{r}^{(i)}],
\end{equation*}
with the following properties. Write the box as 
\begin{align*}
\mathcal{B}^{\sigma_{i}} = &\mathcal{B}_{r}^{\sigma_{i}} \sqcup \mathcal{B}^{\sigma_{i}}_{r-1} \sqcup \cdots \sqcup \mathcal{B}_{0}^{\sigma_{i}}, \\
\mathcal{B}_{r}^{\sigma_{i}} = &(-\infty,C_{1}^{(i)}) \times \cdots \times (-\infty,C_{r}^{(i)}), \\
\mathcal{B}^{\sigma_{i}}_{r-1} = &(\{C_{1}^{(i)}\} \times (-\infty,C_{2}^{(i)}) \times \cdots \times (-\infty,C_{r}^{(i)})) \sqcup \cdots \\
&\sqcup ((-\infty,C_{1}^{(i)}) \times \cdots \times (-\infty,C_{r-1}^{(i)}) \times \{C_{r}^{(i)}\}), \\
&\!\!\!\!\! \!\! \cdots \\
\mathcal{B}^{\sigma_{i}}_{0} = &\{C_{1}^{(i)}\} \times \cdots \times \{C_{r}^{(i)}\},
\end{align*}
where $\mathcal{B}^{\sigma_{i}}_{j}$ is the disjoint union of $\binom{r}{j}$ sets $\mathcal{B}^{\sigma_{i}}_{j}(k)$. Here the labeling is such that the $j$ components of $\mathcal{B}^{\sigma_{i}}_{j}(k)$ in which we have an open interval correspond precisely to the $j$ rays $(\rho_{i_{1}}^{(i)}, \ldots, \rho_{i_{j}}^{(i)})$ that generate the cone $\sigma_{ijk}$, where $i_{1}< \cdots < i_{j} \in \{1, \ldots, r\}$ corresponds to $k$. Then for any $j = 0, \ldots, r$, $k=1, \ldots, \binom{r}{j}$ (corresponding to $i_{1}< \cdots < i_{j} \in \{1, \ldots, r\}$) and any polynomial $f(\lambda_{i_{1}}, \ldots, \lambda_{i_{j}}) \in k[\lambda_{i_{1}}, \ldots, \lambda_{i_{j}}]$  
\begin{align} 
\begin{split} \label{ch. 1, eqnrew}
&\sum_{(\lambda_{i_{1}}, \ldots, \lambda_{i_{j}}) \in \mathbb{Z}^{\mathrm{dim}(\sigma_{ijk})}} f(\lambda_{i_{1}}, \ldots, \lambda_{i_{j}}) \ \mathrm{dim}([E^{\sigma_{ijk}}](\lambda_{i_{1}}, \ldots, \lambda_{i_{j}})) \\
&= \sum_{(\lambda_{i_{1}}, \ldots, \lambda_{i_{j}}) \in \overline{\mathcal{B}^{\sigma_{i}}_{j}(k)}} f(\lambda_{i_{1}}, \ldots, \lambda_{i_{j}}) \ \mathrm{dim}([E^{\sigma_{ijk}}](\lambda_{i_{1}}, \ldots, \lambda_{i_{j}})), 
\end{split}
\end{align}
and if we let $a_{1} < \cdots < a_{r-j} \in \{1, \ldots, r\} \setminus \{i_{1}, \ldots, i_{j}\}$, then for all $\lambda_{i_{1}}, \ldots, \lambda_{i_{j}} \in \mathbb{Z}$
\begin{equation*}
\lim_{\lambda_{a_{1}} \rightarrow C_{a_{1}}^{(i)}, \ldots, \lambda_{a_{r-j}} \rightarrow C_{a_{r-j}}^{(i)}} E^{\sigma_{ijk}} \left(\sum_{i=1}^{r} \lambda_{i} m(\rho^{(i)}) \right) = \lim_{\lambda_{a_{1}} \rightarrow \infty, \ldots, \lambda_{a_{r-j}} \rightarrow \infty} E^{\sigma_{ijk}} \left(\sum_{i=1}^{r} \lambda_{i} m(\rho^{(i)}) \right). 
\end{equation*} 
The bar in equation (\ref{ch. 1, eqnrew}) denotes closure in $\mathbb{R}^{r}$. Let $f(\lambda_{i_{1}}, \ldots, \lambda_{i_{j}}) \in k[\lambda_{i_{1}}, \ldots, \lambda_{i_{j}}]$ and assume without loss of generality $(i_{1}, \ldots, i_{j}) = (1, \ldots, j)$. Then equation (\ref{ch. 1, eqnrew}) can be rewritten as 
\begin{align} 
&\sum_{\lambda_{1} = - \infty}^{C_{1}^{(i)}-1} \cdots \sum_{\lambda_{j} = - \infty}^{C_{j}^{(i)}-1} \left[ f(\lambda_{1}, \ldots, \lambda_{j}) - f(\lambda_{1}+1, \lambda_{2}, \ldots, \lambda_{j}) - \cdots - f(\lambda_{1}, \ldots, \lambda_{j-1}, \lambda_{j}+1) \right. \nonumber \\
&\left. + \cdots +(-1)^{j}f(\lambda_{1}+1, \ldots, \lambda_{j}+1) \right] \mathrm{dim}(E^{\sigma_{i}}(\lambda_{1}, \ldots, \lambda_{j}, \infty, \ldots, \infty)) \nonumber \\
&+ \sum_{\lambda_{2} = - \infty}^{C_{2}^{(i)}-1} \cdots \sum_{\lambda_{j} = - \infty}^{C_{j}^{(i)}-1} \left[ f(C_{1}^{(i)}, \lambda_{2}, \ldots, \lambda_{j}) - f(C_{1}^{(i)}, \lambda_{2}+1, \lambda_{3}, \ldots, \lambda_{j}) - \cdots \right. \label{star} \\
&\left. - f(C_{1}^{(i)}, \lambda_{2}, \ldots, \lambda_{j-1}, \lambda_{j}+1) + \cdots +(-1)^{j-1}f(C_{1}^{(i)}, \lambda_{2}+1, \ldots, \lambda_{j}+1) \right] \nonumber \\
&\cdot \mathrm{dim}(E^{\sigma_{i}}(\infty, \lambda_{2}, \ldots, \lambda_{j}, \infty, \ldots, \infty)) \nonumber \\
&+ \cdots \nonumber \displaybreak \\
&+\sum_{\lambda_{1} = - \infty}^{C_{1}^{(i)}-1} \cdots \sum_{\lambda_{j-1} = - \infty}^{C_{j-1}^{(i)}-1} \left[ f(\lambda_{1}, \ldots, \lambda_{j-1}, C_{j}^{(i)}) - f(\lambda_{1}+1, \lambda_{2}, \ldots, \lambda_{j-1}, C_{j}^{(i)}) - \cdots \right. \nonumber \\
&\left. - f(\lambda_{1}, \ldots, \lambda_{j-2}, \lambda_{j-1}+1, C_{j}^{(i)}) + \cdots +(-1)^{j-1}f(\lambda_{1}+1, \ldots, \lambda_{j-1}+1,C_{j}^{(i)}) \right] \nonumber \\
&\cdot \mathrm{dim}(E^{\sigma_{i}}(\lambda_{1}, \ldots, \lambda_{j-1},\infty, \ldots, \infty)) \nonumber \\
&+ \cdots \nonumber \\
&+f(C_{1}^{(i)}, \ldots, C_{j}^{(i)}) \ \mathrm{dim}(E^{\sigma_{i}}(\infty, \ldots, \infty)). \nonumber
\end{align} 
Using these manipulations only, we can write
\begin{equation*} 
P_{\mathcal{E}}(t) = \sum_{i=1}^{l} \sum_{j=0}^{r} \sum_{k=1}^{\binom{r}{j}} \sum_{\vec{\lambda} \in \mathcal{B}^{\sigma_{i}}_{j}(k)} \Psi_{\sigma_{ijk}, \vec{\lambda}}(t) \ \mathrm{dim}(E^{\sigma_{i}}(\vec{\lambda})).
\end{equation*}
We rewrite this expression one more time. For each $i=1, \ldots, l$, $j=0, \ldots, r$, $k = 1, \ldots, \binom{r}{j}$ we define $\Psi_{\sigma_{ijk}, \vec{\lambda}}(t) = 0$ in the case $\vec{\lambda} \notin \mathcal{B}^{\sigma_{i}}_{j}(k)$. Then we can write
\begin{align*} 
P_{\mathcal{E}}(t) = &\sum_{i=1}^{l} \sum_{j=0}^{r} \sum_{k=1}^{\binom{r}{j}} \sum_{\vec{\lambda} \in \mathcal{B}^{\sigma_{i}}_{j}(k)} \Psi_{\sigma_{ijk}, \vec{\lambda}}(t) \ \mathrm{dim}(E^{\sigma_{i}}(\vec{\lambda})) \\
= &\sum_{{\scriptsize{\begin{array}{c} \mathrm{equivalence \ classes} \\ \left[(i',j',k')\right] \end{array}}}} \sum_{\vec{\lambda} \in \mathbb{Z}^{\mathrm{dim}(\sigma_{i'j'k'})}} \underbrace{\left(\sum_{(i,j,k) \in [(i',j',k')]}  \Psi_{\sigma_{ijk}, \vec{\lambda}}(t) \right)}_{\Xi_{\sigma_{i'j'k'},\vec{\lambda}}(t)} \mathrm{dim}(E^{\sigma_{i'j'k'}}(\vec{\lambda})).
\end{align*}
Note that summing over all equivalence classes $[(i',j',k')]$ corresponds to summing over all cones of $\Delta$ and if $(i,j,k) \in [(i',j',k')]$, then $j=j'$. Recall that for fixed $i=1, \ldots, r$, there are integers $A_{1}^{(i)}, \ldots, A_{r}^{(i)}$ such that $E^{\sigma_{i}}(\lambda_{1}, \ldots, \lambda_{r}) = 0$ unless $\lambda_{1} \geq A_{1}^{(i)}$, $\ldots$, $\lambda_{r} \geq A_{r}^{(i)}$ (Proposition \ref{ch. 1, sect. 2, prop. 4}). Therefore, by construction, there are only finitely many $\Xi_{\sigma_{i'j'k'},\vec{\lambda}}(t)$ that are possibly non-zero. Consequently, for a fixed equivalence class $[(i',j',k')]$, there is a finite subset $\mathcal{R}([(i',j',k')]) \subset \mathbb{Z}^{\mathrm{dim}(\sigma_{i'j'k'})}$ such that 
\begin{equation} \label{ch. 1, eqnxi}
P_{\mathcal{E}}(t) = \sum_{{\scriptsize{\begin{array}{c} \mathrm{equivalence \ classes} \\ \left[(i',j',k')\right] \end{array}}}} \sum_{\vec{\lambda} \in \mathcal{R}([(i',j',k')])} \Xi_{\sigma_{i'j'k'},\vec{\lambda}}(t) \ \mathrm{dim}(E^{\sigma_{i'j'k'}}(\vec{\lambda})).
\end{equation}
Note that the polynomials $\Phi_{\sigma, \vec{\lambda}}(t)$, the boxes $\mathcal{B}^{\sigma_{i}}_{j}(k)$ and hence the polynomials $\Psi_{\sigma_{ijk}, \vec{\lambda}}(t)$, $\Xi_{\sigma_{i'j'k'}, \vec{\lambda}}(t)$ and the regions $\mathcal{R}([i',j',k'])$ can be chosen such that they only depend on the characteristic function $\vec{\chi}$. Therefore, equation (\ref{ch. 1, eqnxi}) holds for any torsion free equivariant sheaf $\mathcal{E}$ on $X$ with characteristic function $\vec{\chi}$ and corresponding framed torsion free $\Delta$-family $\hat{E}^{\Delta}$. Now let $\mathcal{E}$ be a torsion free equivariant sheaf on $X$ with characteristic function $\vec{\chi}$ and corresponding framed torsion free $\Delta$-family $\hat{E}^{\Delta}$. Assume the rank of $\mathcal{E}$ is $M$. Let $0 \neq W \subsetneq k^{\oplus M} = E^{\sigma_{i}}(\infty, \ldots, \infty)$ be a linear subspace. Consider the torsion free $\Delta$-family $\hat{F}^{\Delta}_{W} = \hat{E}^{\Delta} \cap W$ and denote the corresponding torsion free equivariant sheaf by $\mathcal{F}_{W}$. It is not difficult to see that 
\begin{equation*} 
P_{\mathcal{F}_{W}}(t) = \sum_{{\scriptsize{\begin{array}{c} \mathrm{equivalence \ classes} \\ \left[(i',j',k')\right] \end{array}}}} \sum_{\vec{\lambda} \in \mathcal{R}([(i',j',k')])} \Xi_{\sigma_{i'j'k'},\vec{\lambda}}(t) \ \mathrm{dim}(E^{\sigma_{i'j'k'}}(\vec{\lambda}) \cap W).
\end{equation*}
Using Proposition \ref{ch. 1, sect. 3, prop. 8}, we see that $\mathcal{E}$ is Gieseker semistable if and only if for any linear subspace $0 \neq W \subsetneq k^{\oplus M}$ and $t \gg 0$
\begin{align*}
&\frac{1}{\mathrm{dim}(W)} \sum_{{\scriptsize{\begin{array}{c} \mathrm{equivalence \ classes} \\ \left[(i',j',k')\right] \end{array}}}} \sum_{\vec{\lambda} \in \mathcal{R}([(i',j',k')])} \Xi_{\sigma_{i'j'k'},\vec{\lambda}}(t) \ \mathrm{dim}(E^{\sigma_{i'j'k'}}(\vec{\lambda}) \cap W) \\
&\leq \frac{1}{M} \sum_{{\scriptsize{\begin{array}{c} \mathrm{equivalence \ classes} \\ \left[(i',j',k')\right] \end{array}}}} \sum_{\vec{\lambda} \in \mathcal{R}([(i',j',k')])} \Xi_{\sigma_{i'j'k'},\vec{\lambda}}(t) \ \mathrm{dim}(E^{\sigma_{i'j'k'}}(\vec{\lambda})).
\end{align*}
Moreover, $\mathcal{E}$ is Gieseker stable if and only if the same holds with strict inequality. 

We now have to study the polynomials $\Xi_{\sigma_{i'j'k'},\vec{\lambda}}(t)$ in more detail. Fix $i'=1, \ldots, l$, $j' = 1, \ldots, r$ and $k' = 1, \ldots, \binom{r}{j'}$. We will now show that for any $\vec{\lambda} \in \mathcal{R}([(i',j',k')])$, we have $\Xi_{\sigma_{i'j'k'}, \vec{\lambda}}(R) \in \mathbb{Z}_{> 0}$ for integers $R \gg 0$. From the fact that the polynomials $\Phi_{\sigma,\vec{\lambda}}(t)$ are integer-valued for $t \in \mathbb{Z}$ it easily follows that the polynomials $\Xi_{\sigma_{i'j'k'}, \vec{\lambda}}(t)$ are integer-valued for $t \in \mathbb{Z}$. Let $\mathcal{E}$ be an arbitrary torsion free equivariant sheaf on $X$ with characteristic function $\vec{\chi}$ and corresponding framed torsion free $\Delta$-family $\hat{E}^{\Delta}$. Consider the face $\sigma_{i'j'k'} \prec \sigma_{i'}$ and assume without loss of generality that it is spanned by the rays $(\rho_{1}^{(i')}, \ldots, \rho_{j'}^{(i')}) \subset (\rho_{1}^{(i')}, \ldots, \rho_{r}^{(i')})$. Consider the expression 
\begin{equation*}
\Xi_{\sigma_{i'j'k'}, \vec{\lambda}}(t) \ \mathrm{dim}(E^{\sigma_{i'}}(\lambda_{1}, \ldots, \lambda_{j'}, \infty, \ldots, \infty)),
\end{equation*}
for fixed $\vec{\lambda} \in \mathcal{R}([(i',j',k')])$. We first claim $\Xi_{\sigma_{i'j'k'}, \vec{\lambda}}(t)$ is a polynomial in $t$ of degree at most $r-j'$. To see this, consider expression (\ref{star}) for $i=i'$ and $j \geq j'$. Then for any monomial $f$ of degree $< j'$, expression (\ref{star}) does not contribute to $\Xi_{\sigma_{i'j'k'}, \vec{\lambda}}(t)$. We now want to show $\Xi_{\sigma_{i'j'k'}, \vec{\lambda}}(t)$ is of degree $r-j'$ in $t$ with positive leading coefficient. Fix $i,j$ as before and consider expression (\ref{star}) for any monomial $f$ of degree $j'$. We only get a contribution to the leading term of $\Xi_{\sigma_{i'j'k'}, \vec{\lambda}}(t)$ for $f(\lambda_{1}, \ldots, \lambda_{j}) = \lambda_{1} \cdots \lambda_{j'}$. From this, it is easy to see that the leading term of $\Xi_{\sigma_{i'j'k'}, \vec{\lambda}}(t)$ is 
\begin{align*}
&\sum_{a=0}^{r-j'} \frac{(-1)^{r-(j'+a)}}{(r-j')!} \ \# \{ \sigma \in \Delta \ | \ \sigma_{i'j'k'} \prec \sigma, \ \mathrm{dim}(\sigma) = a+j'  \} \\
&\cdot \left( H^{r-j'} \cdot V\left( \rho^{(i')}_{1} \right) \cdots V\left( \rho^{(i')}_{j'} \right)\right) t^{r-j'} = \frac{1}{(r-j')!} \left( H^{r-j'} \cdot V\left( \rho^{(i')}_{1} \right) \cdots V\left( \rho^{(i')}_{j'} \right)\right) t^{r-j'},
\end{align*}
where we use Proposition \ref{ch. 1, sect. 3, prop. C}. Let $\nu$ be the cone generated by the rays $\rho^{(i')}_{1}$, $\ldots$, $\rho^{(i')}_{j'}$, then $V\left( \rho^{(i')}_{1} \right) \cap \cdots \cap V\left( \rho^{(i')}_{j'} \right) = V(\nu)$ is a nonsingular closed subvariety of $X$ of dimension $r-j'$ \cite[Sect.~3.1]{Ful}. We deduce that $H^{r-j'} \cdot V\left( \rho^{(i')}_{1} \right) \cdots V\left( \rho^{(i')}_{j'} \right) = H^{r-j'} \cdot V\left( \nu \right) > 0$ by the Nakai--Moishezon Criterion \cite[Thm.~A.5.1]{Har1}. 

Let $[(i',j',k')]$ be an equivalence class and let $k'=1, \ldots, \binom{r}{j'}$ correspond to $i_{1} < \cdots < i_{j'} \in \{1, \ldots, r\}$. Assume $a_{1} < \cdots < a_{r-j'} \in \{1, \ldots, r\} \setminus \{i_{1}, \ldots, i_{j'}\}$. Define $\chi^{\sigma_{i',j',k'}}(\vec{\lambda}) = \lim_{\lambda_{a_{1}} \rightarrow \infty, \ldots, \lambda_{a_{r-j'}} \rightarrow \infty} \chi^{\sigma_{i'}}(\vec{\lambda})$ for all $\vec{\lambda} = (\lambda_{i_{1}}, \ldots, \lambda_{i_{j'}}) \in \mathbb{Z}^{\mathrm{dim}(\sigma_{i'j'k'})}$. Consider the product of Grassmannians
\begin{equation} \label{ch. 1, eqngrass}
\prod_{{\scriptsize{\begin{array}{c} \mathrm{equivalence \ classes} \\ \left[(i',j',k')\right] \end{array}}}} \prod_{\vec{\lambda} \in \mathcal{R}([i',j',k'])} \mathrm{Gr}(\chi^{\sigma_{i'j'k'}}(\vec{\lambda}),M).
\end{equation}
Referring to \cite[Sect.~11.1]{Dol}, the equivariant line bundles (up to equivariant isomorphism) on the product of Grassmannians (\ref{ch. 1, eqngrass}) correspond to arbitrary sequences of integers $\{k_{[(i',j',k')],\vec{\lambda}}\}$, where $([(i',j',k')],\vec{\lambda}) \in \coprod_{{\scriptsize{\begin{array}{c} \mathrm{equivalence \ classes} \\ \left[(i',j',k')\right] \end{array}}}} \mathcal{R}([(i',j',k')])$. Such an equivariant line bundle $\{k_{[(i',j',k')],\vec{\lambda}}\}$ is ample if and only if all $k_{[(i',j',k')],\vec{\lambda}}>0$ \cite[Sect.~11.1]{Dol}. We conclude that by choosing an integer $R \gg 0$, the sequence $\{\Xi_{\sigma_{i'j'k'}, \vec{\lambda}}(R)\}$ forms an ample equivariant line bundle on the product of Grassmannians (\ref{ch. 1, eqngrass}). The notion of GIT stability determined by such an ample equivariant line bundle is made explicit in \cite[Thm.~11.1]{Dol}. By definition, $\mathcal{N}_{\vec{\chi}}^{0}$ is a closed subscheme of the product of Grassmannians (\ref{ch. 1, eqngrass}). Using \cite[Thm.~11.1]{Dol}, we see that a the closed point $\hat{E}^{\Delta}$ of $\mathcal{N}_{\vec{\chi}}^{0}$ is GIT semistable w.r.t.~$\{\Xi_{\sigma_{i'j'k'}, \vec{\lambda}}(R)\}$ if and only if for any linear subspace $0 \neq W \subsetneq k^{\oplus M}$
\begin{align*}
&\frac{1}{\mathrm{dim}(W)} \sum_{{\scriptsize{\begin{array}{c} \mathrm{equivalence \ classes} \\ \left[(i',j',k')\right] \end{array}}}} \sum_{\vec{\lambda} \in \mathcal{R}([(i',j',k')])} \Xi_{\sigma_{i'j'k'},\vec{\lambda}}(R) \ \mathrm{dim}(E^{\sigma_{i'j'k'}}(\vec{\lambda}) \cap W) \\
&\leq \frac{1}{M} \sum_{{\scriptsize{\begin{array}{c} \mathrm{equivalence \ classes} \\ \left[(i',j',k')\right] \end{array}}}} \sum_{\vec{\lambda} \in \mathcal{R}([(i',j',k')])} \Xi_{\sigma_{i'j'k'},\vec{\lambda}}(R) \ \mathrm{dim}(E^{\sigma_{i'j'k'}}(\vec{\lambda})).
\end{align*}
Moreover, $\hat{E}^{\Delta}$ is properly GIT stable w.r.t.~$\{\Xi_{\sigma_{i'j'k'}, \vec{\lambda}}(R)\}$ if and only if the same holds with strict inequality. By choosing $R$ sufficiently large, we conclude any torsion free equivariant sheaf $\mathcal{E}$ on $X$ with characteristic function $\vec{\chi}$ and framed torsion free $\Delta$-family $\hat{E}^{\Delta}$ is Gieseker semistable resp.~Gieseker stable if and only if $\hat{E}^{\Delta}$ is GIT semistable resp.~properly GIT stable w.r.t.~$\{\Xi_{\sigma_{i'j'k'}, \vec{\lambda}}(R)\}$. Pulling back the ample equivariant line bundle $\{\Xi_{\sigma_{i'j'k'}, \vec{\lambda}}(R)\}$ to $\mathcal{N}_{\vec{\chi}}^{0}$ defines the desired ample equivariant line bundle $\mathcal{L}_{\vec{\chi}}^{0}$ and finishes the proof by \cite[Thm.~1.19]{MFK}. 
\end{proof}

\section{Fixed Point Loci of Moduli Spaces of Sheaves on Toric Varieties}
 
In this section, we study how the explicit moduli spaces of pure equivariant sheaves of the previous section relate to fixed point loci of moduli spaces of all Gieseker stable sheaves on nonsingular projective toric varieties. We start by studying the torus action on moduli spaces of Gieseker semistable sheaves on projective toric varieties. Subsequently, we study relations between equivariant and invariant simple sheaves. We prove a theorem expressing fixed point loci of moduli spaces of all Gieseker stable sheaves on an arbitrary nonsingular projective toric variety in terms of the explicit moduli spaces of pure equivariant sheaves of the previous section in the case one can match Gieseker and GIT stability. Since this match can always be achieved for torsion free equivariant sheaves, we obtain the theorem discussed in the introduction (Theorem \ref{ch. 1, sect. 1, thm. 1}). After discussing some examples appearing in a sequel (\cite{Koo}), where we specialise to $X$ a nonsingular complete toric surface over $\mathbb{C}$, we prove how the fixed point locus of any moduli space of $\mu$-stable sheaves on a nonsingular projective toric variety can be expressed combinatorially.

\subsection{Torus Actions on Moduli Spaces of Sheaves on Toric Varieties}

Let us briefly recall some notions concerning moduli spaces of Gieseker (semi)stable sheaves in general as discussed in \cite[Ch.~4]{HL}. Let $X$ be a connected projective $k$-scheme, $\mathcal{O}_{X}(1)$ an ample line bundle and $P$ a choice of Hilbert polynomial. Let $S$ be a $k$-scheme of finite type. Any two $S$-flat families $\mathcal{F}_{1}, \mathcal{F}_{2}$ are said to be equivalent if there exists a line bundle $L \in \mathrm{Pic}(S)$ and an isomorphism $\mathcal{F}_{1} \cong \mathcal{F}_{2} \otimes p_{S}^{*}L$, where $p_{S} : X \times S \longrightarrow S$ is projection. Let $\underline{\mathcal{M}}_{P}^{ss}(S)$ be the collection of equivalence classes of Gieseker semistable $S$-flat families with Hilbert polynomial $P$. Likewise, let $\underline{\mathcal{M}}_{P}^{s}(S)$ be the collection of equivalence classes of geometrically Gieseker stable $S$-flat families with Hilbert polynomial $P$. These give rise to moduli functors $\underline{\mathcal{M}}_{P}^{ss}$, $\underline{\mathcal{M}}_{P}^{s}$. One can prove that there is a projective $k$-scheme of finite type $\mathcal{M}_{P}^{ss}$ corepresenting $\underline{\mathcal{M}}_{P}^{ss}$ \cite[Thm.~4.3.4]{HL}. Moreover, there is an open subset $\mathcal{M}_{P}^{s}$ of $\mathcal{M}_{P}^{ss}$ corepresenting $\underline{\mathcal{M}}_{P}^{s}$. The closed points of $\mathcal{M}_{P}^{ss}$ are in bijection with $S$-equivalence classes of Gieseker semistable sheaves on $X$ with Hilbert polynomial $P$. The closed points of $\mathcal{M}_{P}^{s}$ are in bijection with isomorphism classes of Gieseker stable sheaves on $X$ with Hilbert polynomial $P$ (hence $\mathcal{M}_{P}^{s}$ is a coarse moduli space). If $X$ is a toric variety with torus $T$, then we can define a natural regular action of $T$ on $\mathcal{M}_{P}^{ss}$, $\mathcal{M}_{P}^{s}$ as is expressed by the following proposition. 
\begin{proposition} \label{ch. 1, sect. 4, prop. 1}
Let $X$ be a projective toric variety with torus $T$. Let $\mathcal{O}_{X}(1)$ be an ample line bundle on $X$ and $P$ a choice of Hilbert polynomial. Choose an equivariant structure on $\mathcal{O}_{X}(1)$. Then there is a natural induced regular action $\sigma : T \times \mathcal{M}_{P}^{ss} \longrightarrow \mathcal{M}_{P}^{ss}$ defined using the equivariant structure, which restricts to $\mathcal{M}_{P}^{s}$ and on closed points is given by
\begin{align} 
\begin{split}
\sigma : T_{cl} \times \mathcal{M}^{ss}_{P,cl} &\longrightarrow \mathcal{M}^{ss}_{P,cl}, \\
\sigma(t,[\mathcal{E}]) &\mapsto [t^{*}\mathcal{E}]. \nonumber
\end{split}
\end{align}
\end{proposition}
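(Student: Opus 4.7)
The plan is to construct the action via the standard Quot scheme GIT construction of $\mathcal{M}_P^{ss}$ from \cite[Ch.~4]{HL}, making essential use of the chosen equivariant structure $\Psi$ on $\mathcal{O}_X(1)$. Fix $m \gg 0$ so that every Gieseker semistable sheaf on $X$ with Hilbert polynomial $P$ is $m$-regular, set $N = P(m)$ and $\mathcal{H} = \mathcal{O}_X(-m)^{\oplus N}$, and recall that $\mathcal{M}_P^{ss}$ is realized as a GIT quotient $R^{ss} /\!\!/ \mathrm{SL}(N)$, where $R^{ss}$ is an $\mathrm{SL}(N)$-invariant open subscheme of the Quot scheme $Q = \mathrm{Quot}_X(\mathcal{H}, P)$ cut out by Gieseker semistability together with the requirement that the canonical map $k^{\oplus N} \to H^0(\mathcal{E}(m))$ be an isomorphism.

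The first step is to build a regular $T$-action on $Q$. The structure $\Psi$ induces an equivariant structure on $\mathcal{O}_X(-m)$ and, together with the trivial $T$-action on $k^{\oplus N}$, on $\mathcal{H}$. Using the functor of points of $Q$, I would send, for any $k$-scheme of finite type $S$ and morphism $f : S \to T$, a flat family of quotients $[p_X^*\mathcal{H} \twoheadrightarrow \mathcal{E}]$ on $X \times S$ to the family obtained by first pulling back along the automorphism $\tilde{\alpha}_f : X \times S \to X \times S$, $(x,s) \mapsto (f(s) \cdot x, s)$, and then pre-composing with the equivariant isomorphism $p_X^*\mathcal{H} \cong \tilde{\alpha}_f^* p_X^*\mathcal{H}$ obtained from $\Psi$. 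This defines a natural transformation of the Quot functor to itself, hence a morphism $\sigma_Q : T \times Q \to Q$; the cocycle property of $\Psi$ ensures it is a group action. Crucially, $\sigma_Q$ commutes with the $\mathrm{SL}(N)$-action, since $T$ acts on the $\mathcal{O}_X(-m)$-factor of $\mathcal{H}$ whereas $\mathrm{SL}(N)$ acts on the $k^{\oplus N}$-factor.

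The second step is to descend $\sigma_Q$ to $\mathcal{M}_P^{ss}$. The open subscheme $R^{ss}$ is $T$-invariant: the equivariant isomorphism $t^*\mathcal{O}_X(1) \cong \mathcal{O}_X(1)$ implies that $\mathcal{F} \mapsto t^*\mathcal{F}$ is a bijection on coherent subsheaves preserving Hilbert polynomials, so Gieseker semistability, $m$-regularity, and the condition $k^{\oplus N} \cong H^0(\mathcal{E}(m))$ are all preserved. The ample $\mathrm{SL}(N)$-linearization used in \cite[Ch.~4]{HL} is built from the universal quotient on $X \times Q$ and $\mathcal{O}_X(l)$ for $l \gg 0$; it inherits a natural $T$-equivariant structure from $\Psi$ and the constructed $T$-action on $Q$. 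Consequently, by the standard GIT descent argument \cite[Thm.~1.10]{MFK}, $\sigma_Q$ descends to a regular action of $T$ on $\mathcal{M}_P^{ss}$, which restricts to the $T$-invariant open locus $\mathcal{M}_P^s$. The formula on closed points is then verified by tracing through: a representative $q : \mathcal{H} \twoheadrightarrow \mathcal{E}$ is sent by $t \in T_{cl}$ to $t^*q \circ \Psi_t^{\otimes(-m)} : \mathcal{H} \to t^*\mathcal{E}$, whose image in $\mathcal{M}_P^{ss}$ is $[t^*\mathcal{E}]$.

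The hard part is ensuring that the two actions commute at the level of the polarized GIT problem---equivalently, that the chosen $\mathrm{SL}(N)$-linearization admits a $T$-equivariant lift compatible with $\sigma_Q$. This is essentially tautological from the functorial nature of the universal quotient together with $\Psi$, but needs to be checked carefully to guarantee that the induced action is regular rather than merely set-theoretic on closed points.
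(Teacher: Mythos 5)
Your construction of the $T$-action on the Quot scheme is exactly the paper's: the same choice of $m$-regularity bound, the same equivariant structure on $\mathcal{H}$ induced from the chosen one on $\mathcal{O}_X(1)$ with $T$ acting trivially on the framing space, the same functor-of-points definition of $\sigma_Q$, the same observation that it commutes with the $\mathrm{SL}(N)$- (resp.\ $\mathrm{PGL}(V)$-) action because the two groups act on different tensor factors of $\mathcal{H}$, and the same verification on closed points. The one place you diverge is the descent step, and there your route is heavier than necessary. The paper does not lift the ample linearization to a $T$-equivariant one at all: it simply observes that $\pi\circ\sigma_Q : T\times R^{ss}\to \mathcal{M}_P^{ss}$ is invariant for the reductive group acting trivially on the $T$-factor, and that $1_T\times\pi : T\times R^{ss}\to T\times\mathcal{M}_P^{ss}$ is again a categorical quotient (GIT quotients are universal/uniform categorical quotients, so they survive the base change by $T\to\mathrm{Spec}(k)$), whence the induced morphism $T\times\mathcal{M}_P^{ss}\to\mathcal{M}_P^{ss}$ exists and is automatically a regular action. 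So the step you flag as ``the hard part'' --- producing a $T$-equivariant lift of the $\mathrm{SL}(N)$-linearization compatible with $\sigma_Q$ --- can be skipped entirely; it would be needed if you wanted to quotient by $T\times\mathrm{SL}(N)$ simultaneously or compare semistability notions, but not to descend the action. (If you did want to carry it out, note it is not quite tautological: you would need $\mathrm{Pic}(T)=0$ and a Rosenlicht-type cocycle argument, as in the paper's Proposition \ref{ch. 1, sect. 4, prop. 3}, to rigidify the isomorphism $\sigma_Q^*\mathcal{L}\cong p_2^*\mathcal{L}$.) With that simplification your argument matches the paper's proof.
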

\begin{proof}
Denote the action of the torus by $\sigma : T \times X \longrightarrow X$. Let $m$ be an integer such that any Gieseker semistable sheaf on $X$ with Hilbert polynomial $P$ is $m$-regular \cite[Sect.~4.3]{HL}. Let $V = k^{\oplus P(m)}$ and $\mathcal{H} = V \otimes_{k} \mathcal{O}_{X}(-m)$. We start by noting that any line bundle on $X$ admits a $T$-equivariant structure, since $\mathrm{Pic}(T) = 0$ (e.g.~\cite[Thm.~7.2]{Dol}). Let $\Phi$ be a $T$-equivariant structure on $\mathcal{O}_{X}(1)$. The $T$-equivariant structure $\Phi$ induces a $T$-equivariant structure on $\mathcal{O}_{X}(-m)$ and therefore it induces a $T$-equivariant structure $\Phi_{\mathcal{H}} : \sigma^{*}\mathcal{H} \longrightarrow p_{2}^{*}\mathcal{H}$ (where we let $T$ act trivially on $V$). Let $\mathcal{Q} = \underline{\mathrm{Quot}}_{X/k}(\mathcal{H},P)$ be the Quot functor and let $Q = \mathrm{Quot}_{X/k}(\mathcal{H},P)$ be the Quot scheme. Here $Q$ is a projective $k$-scheme representing $\mathcal{Q}$ \cite[Sect.~2.2]{HL}
\begin{equation} \nonumber
\Xi : \mathcal{Q} \stackrel{\cong}{\Longrightarrow} \underline{Q},
\end{equation}
where for any $k$-scheme $S$ we denote the contravariant functor $\mathrm{Hom}(-,S)$ by $\underline{S}$. Now let $[\mathcal{H}_{Q} \stackrel{u}{\longrightarrow} \mathcal{U}]$ be the universal family. Here $\mathcal{H}_{Q}$ is the pull-back of $\mathcal{H}$ along projection $X \times Q \longrightarrow X$, $\mathcal{U}$ is a $Q$-flat coherent sheaf on $X \times Q$ with Hilbert polynomial $P$ and $u$ is a surjective morphism. Let $p_{12} : T \times X \times Q \longrightarrow T \times X$ be projection, then it is easy to see that precomposing $(\sigma \times 1_{Q})^{*}u$ with $p_{12}^{*}\Phi_{\mathcal{H}}^{-1}$ gives an element of $\mathcal{Q}(T \times Q)$. Applying $\Xi_{T \times Q}$ gives a morphism $\sigma : T \times Q \longrightarrow Q$, our candidate regular action. Note that $\sigma$ depends on the choice of $T$-equivariant structure on $\mathcal{H}$. For any closed point $t \in T$, let $i_{t} : X \hookrightarrow T \times X$ be the inclusion induced by $t \hookrightarrow T$ and consider $\Phi_{\mathcal{H},t} = i_{t}^{*}\Phi_{\mathcal{H}} : t^{*}\mathcal{H} \stackrel{\cong}{\longrightarrow} \mathcal{H}$. Let $p = [\mathcal{H} \stackrel{\rho}{\longrightarrow} \mathcal{F}]$ be a closed point of $Q$. Using the properties of the universal family and the definition of $\sigma$, it is easy to see that the closed point corresponding to $\sigma(t,p) = t \cdot p$ is given by
\begin{equation}
[\mathcal{H} \stackrel{\Phi_{\mathcal{H},t}^{-1}}{\longrightarrow} t^{*}\mathcal{H} \stackrel{t^{*}\rho}{\longrightarrow} t^{*}\mathcal{F}]. \nonumber
\end{equation}
Using the properties of the universal family, a somewhat tedious yet straightforward exercise shows that $\sigma : T \times Q \longrightarrow Q$ satisfies the axioms of an action. Let $R \subset Q$ be the open subscheme with closed points those elements $[\mathcal{H} \stackrel{\rho}{\longrightarrow} \mathcal{F}] \in Q$, where $\mathcal{F}$ is Gieseker semistable and the induced map $V \longrightarrow H^{0}(X,\mathcal{F}(m))$ is an isomorphism. Since the $T$-equivariant structure on $\mathcal{H}$ comes from a $T$-equivariant structure on $\mathcal{O}_{X}(1)$ and a trivial action of $T$ on $V$, $\sigma$ restricts to a regular action on $R$. Let $G = \mathrm{PGL}(V)$, then there is a natural (right) action $\mathcal{Q} \times \underline{G} \Longrightarrow \mathcal{Q}$. This induces a regular (right) action of $G$ on $Q$, which restricts to $R$. The moduli space $M^{ss} = \mathcal{M}_{P}^{ss}$ can be formed as a categorical quotient $\pi : R \longrightarrow M^{ss}$ \cite[Sect.~4.3]{HL}. Consider the diagram
\begin{displaymath}
\xymatrix
{
T \times R \ar[r]^{\sigma} \ar[d]_{1_{T} \times \pi} & R \ar[d]^{\pi} \\
T \times M^{ss} & M^{ss}. 
}
\end{displaymath}
The morphism $\sigma : T \times R \longrightarrow R$ is $G$-equivariant (where we let $G$ act trivially on $T$). Again, this can be seen by using that the $T$-equivariant structure on $\mathcal{H}$ comes from a $T$-equivariant structure on $\mathcal{O}_{X}(1)$ and a trivial action of $T$ on $V$. Consequently, $\pi \circ \sigma$ is $G$-invariant. From the definition of a categorical quotient, we get an induced morphism $\sigma : T \times M^{ss} \longrightarrow M^{ss}$. Again, using the definition of a categorical quotient, we obtain that $\sigma : T \times M^{ss} \longrightarrow M^{ss}$ is a regular action of $T$ on $M^{ss}$ acting on closed points as stated in the proposition. Let $R^{s} \subset R$ be the open subscheme with closed points Gieseker stable sheaves and denote the corresponding geometric quotient by $\varpi : R^{s} \longrightarrow M^{s}$. It is clear the regular action $\sigma : T \times M^{ss} \longrightarrow M^{ss}$ will restrict to $M^{s}$. 
\end{proof}
\begin{proposition} \label{ch. 1, sect. 4, prop. 2}
Let $X$ be a projective toric variety with torus action $\sigma : T \times X \longrightarrow X$. Denote projection to the second factor by $p_{2} : T \times X \longrightarrow X$. Let $\mathcal{O}_{X}(1)$ be an ample line bundle on $X$ and $P$ a choice of Hilbert polynomial. Choose an equivariant structure on $\mathcal{O}_{X}(1)$. Then the closed points of the fixed point locus\footnote{We use the notion of fixed point locus as defined in \cite{Fog}.} of the natural induced regular action of $T$ on $\mathcal{M}_{P}^{s}$ (defined, using the equivariant structure, in Proposition \ref{ch. 1, sect. 4, prop. 1}) are
\begin{equation}
\left(\mathcal{M}_{P}^{s} \right)^{T}_{cl} = \left\{ [\mathcal{E}] \in \mathcal{M}_{P, cl}^{s} \ | \ \sigma^{*}\mathcal{E} \cong p_{2}^{*}\mathcal{E} \right\}. \nonumber
\end{equation}
\end{proposition}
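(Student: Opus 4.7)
The plan is to use Proposition \ref{ch. 1, sect. 4, prop. 1} to reinterpret the closed points of $(\mathcal{M}_P^s)^T$ as Gieseker stable sheaves $[\mathcal{E}]$ satisfying $t^{*}\mathcal{E} \cong \mathcal{E}$ for every closed $t \in T$, and then upgrade this fiberwise invariance to a global isomorphism $\sigma^{*}\mathcal{E} \cong p_2^{*}\mathcal{E}$ on $T \times X$ via a base-change argument. For the first step: since closed points of $\mathcal{M}_P^s$ biject with isomorphism classes of Gieseker stable sheaves on $X$ with Hilbert polynomial $P$, a closed point $[\mathcal{E}]$ lies in the fixed locus in the sense of \cite{Fog} if and only if the orbit map $t \mapsto \sigma(t, [\mathcal{E}]) = [t^{*}\mathcal{E}]$ is constant, i.e.~$t^{*}\mathcal{E} \cong \mathcal{E}$ for every $t \in T_{cl}$. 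Thus the proposition reduces to the equivalence: $\sigma^{*}\mathcal{E} \cong p_2^{*}\mathcal{E}$ if and only if $t^{*}\mathcal{E} \cong \mathcal{E}$ for all $t \in T_{cl}$.

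The forward direction is immediate: pulling back along the closed-point inclusions $i_t : X \hookrightarrow T \times X$ gives $t^{*}\mathcal{E} = i_t^{*}\sigma^{*}\mathcal{E} \cong i_t^{*} p_2^{*}\mathcal{E} = \mathcal{E}$. For the reverse direction, suppose $t^{*}\mathcal{E} \cong \mathcal{E}$ for every closed $t \in T$. Both $\sigma^{*}\mathcal{E}$ and $p_2^{*}\mathcal{E}$ are flat over $T$ via $p_T : T \times X \to T$, since the shearing isomorphism $(t,x) \mapsto (t, t \cdot x)$ of $T \times X$ over $T$ identifies them. Consider the coherent sheaf
\begin{equation*}
\mathcal{F} = (p_T)_{*} \mathcal{H}om_{T \times X}(\sigma^{*}\mathcal{E}, p_2^{*}\mathcal{E})
\end{equation*}
on $T$. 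By cohomology and base change, the fiber of $\mathcal{F}$ at a closed point $t \in T$ is $\mathrm{Hom}_X(t^{*}\mathcal{E}, \mathcal{E})$; by the assumption and the fact that a Gieseker stable sheaf over an algebraically closed field is simple (so $\mathrm{End}(\mathcal{E}) = k$), this fiber is one-dimensional. Constancy of fiber dimension then forces $\mathcal{F}$ to be a line bundle on $T$.

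Since $T$ is an algebraic torus, $\mathrm{Pic}(T) = 0$, so $\mathcal{F} \cong \mathcal{O}_T$. A trivializing global section yields $s \in \mathrm{Hom}_{T \times X}(\sigma^{*}\mathcal{E}, p_2^{*}\mathcal{E})$ whose restriction $s_t$ on every fiber $\{t\} \times X$ is a nonzero element of the one-dimensional space $\mathrm{Hom}(t^{*}\mathcal{E}, \mathcal{E})$, hence a nonzero scalar multiple of an isomorphism $t^{*}\mathcal{E} \stackrel{\sim}{\to} \mathcal{E}$, and therefore itself an isomorphism. A morphism of $T$-flat coherent sheaves on $T \times X$ that is an isomorphism on every fiber is globally an isomorphism (e.g.~by Nakayama and comparison of supports), giving the desired $\sigma^{*}\mathcal{E} \cong p_2^{*}\mathcal{E}$. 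The principal difficulty lies in the reverse direction and is precisely the need to make base change produce a \emph{line bundle} of constant rank one; this hinges on Gieseker stability (for $\mathrm{End}(\mathcal{E}) = k$) together with the fiberwise isomorphism hypothesis, and fails for merely semistable sheaves, explaining why the proposition is stated on $\mathcal{M}_P^s$.
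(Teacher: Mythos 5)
Your proposal is correct and follows essentially the same route as the paper, which isolates the globalization step as a separate statement (Proposition \ref{tom}): reduce to $t^{*}\mathcal{E}\cong\mathcal{E}$ for all closed $t$ via the description of the fixed locus in \cite{Fog}, then use simplicity of a stable sheaf to show $p_{T*}\mathcal{H}{\it om}(\sigma^{*}\mathcal{E},p_{2}^{*}\mathcal{E})$ is a line bundle, trivialize it using $\mathrm{Pic}(T)=0$, and check the resulting section is fiberwise (hence globally) an isomorphism. The one imprecision is your appeal to ``cohomology and base change'' to identify the fibre of $(p_T)_*\mathcal{H}{\it om}(\sigma^{*}\mathcal{E},p_2^{*}\mathcal{E})$ with $\mathrm{Hom}_X(t^{*}\mathcal{E},\mathcal{E})$: since $\mathcal{E}$ need not be locally free, $\mathcal{H}{\it om}$ does not commute with restriction to fibres and the Hom sheaf need not be $T$-flat, so one should instead invoke the coherent sheaf $\mathcal{N}$ representing the relative Hom functor \cite[Cor.~7.7.8, Rem.~7.7.9]{EGA2}, whose formation does commute with base change, exactly as the paper does.
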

\begin{proof}
From the definition of $\sigma : T \times \mathcal{M}_{P}^{s} \longrightarrow \mathcal{M}_{P}^{s}$, it is clear that the fixed point locus can be characterised as \cite[Rmk.~4]{Fog} 
\begin{equation}
\left(\mathcal{M}_{P}^{s} \right)^{T}_{cl} = \left\{ [\mathcal{E}] \in \mathcal{M}_{P,cl}^{s} \ | \ t^{*}\mathcal{E} \cong \mathcal{E} \ \forall t \in T_{cl} \right\}. \nonumber
\end{equation}
However, we claim that moreover
\begin{equation}
\left(\mathcal{M}_{P}^{s} \right)^{T}_{cl} = \left\{ [\mathcal{E}] \in \mathcal{M}_{P,cl}^{s} \ | \ \sigma^{*}\mathcal{E} \cong p_{2}^{*}\mathcal{E} \right\}. \nonumber
\end{equation}
The inclusion ``$\supset$'' is trivial. Conversely, let $\mathcal{E}$ be Gieseker stable sheaf on $X$ with Hilbert polynomial $P$ such that $t^{*}\mathcal{E} \cong \mathcal{E}$ for all closed points $t \in T$. Since $\mathcal{E}$ is simple\footnote{A simple sheaf $\mathcal{E}$ on a $k$-scheme $X$ of finite type is by definition a coherent sheaf $\mathcal{E}$ on $X$ such that $\mathrm{End}(\mathcal{E}) \cong k$.} \cite[Cor.~1.2.8]{HL}, the result follows from the following proposition\footnote{The following proposition and its proof are due to Tom Bridgeland. The author's original proof of Proposition \ref{ch. 1, sect. 4, prop. 2} was much more complicated and involved Luna's \'Etale Slice Theorem and descent for quasi-coherent sheaves.} applied to $\sigma^{*}\mathcal{E}$ and $p_{2}^{*}\mathcal{E}$.
\end{proof}
\begin{proposition} \label{tom}
Let $X$ be a projective $k$-scheme of finite type and $T$ an algebraic torus. Let $\mathcal{E}$, $\mathcal{F}$ be $T$-flat coherent sheaves on $T \times X$ such that and $\mathcal{E}_{t} \cong \mathcal{F}_{t}$ is simple for all closed points $t \in T$. Then $\mathcal{E} \cong \mathcal{F}$.
\end{proposition}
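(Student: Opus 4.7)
The approach will be to construct a global isomorphism $\phi : \mathcal{E} \longrightarrow \mathcal{F}$ directly from a nowhere-vanishing global section of an auxiliary line bundle on $T$. Set
\[
\mathcal{H} := p_{T*}\mathcal{H}om_{\mathcal{O}_{T \times X}}(\mathcal{E}, \mathcal{F}),
\]
a coherent sheaf on $T$, where $p_{T} : T \times X \longrightarrow T$ is the projection (which is proper since $X$ is projective). The heart of the argument will be to prove that $\mathcal{H}$ is a line bundle on $T$.

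For this I would pass to the complex $Rp_{T*}R\mathcal{H}om(\mathcal{E}, \mathcal{F}) \in D^{b}(T)$. Because $T \times X$ is quasi-projective, $\mathcal{E}$ admits a bounded locally free resolution, and combined with $T$-flatness of both $\mathcal{E}$ and $\mathcal{F}$ this makes the complex above perfect, with well-behaved derived base change: at each $t \in T$ its derived fibre is identified with $R\mathrm{Hom}_{X}(\mathcal{E}_{t}, \mathcal{F}_{t})$, whose zeroth cohomology is $\mathrm{Hom}(\mathcal{E}_{t}, \mathcal{F}_{t}) \cong \mathrm{End}(\mathcal{E}_{t}) \cong k$ by the simplicity hypothesis. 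Constancy of this fibrewise dimension, together with upper semicontinuity of $\dim \mathrm{Ext}^{i}(\mathcal{E}_{t}, \mathcal{F}_{t})$ and Grothendieck's cohomology-and-base-change theorem (applied to the truncation $\tau_{\leq 0}$), then forces $\mathcal{H} = H^{0}(Rp_{T*}R\mathcal{H}om(\mathcal{E}, \mathcal{F}))$ to be locally free of rank one.

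Granting the line-bundle claim, the endgame is quick. Since $T$ is an algebraic torus we have $\mathrm{Pic}(T) = 0$ (the fact already invoked in the proof of Proposition \ref{ch. 1, sect. 4, prop. 1}), so $\mathcal{H} \cong \mathcal{O}_{T}$ admits a nowhere-vanishing global section. Under the adjunction $p_{T}^{*} \dashv p_{T*}$ this section is the same datum as a morphism $\phi : \mathcal{E} \longrightarrow \mathcal{F}$ on $T \times X$, and for each closed $t \in T$ the restriction $\phi_{t}$ is the corresponding nonzero element of $\mathrm{Hom}(\mathcal{E}_{t}, \mathcal{F}_{t})$. Fixing an isomorphism $\mathcal{E}_{t} \cong \mathcal{F}_{t}$, simplicity identifies $\mathrm{Hom}(\mathcal{E}_{t}, \mathcal{F}_{t})$ with $k$ acting by scalars, so $\phi_{t}$ is a nonzero scalar and hence an isomorphism. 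Since $\mathcal{E}$ and $\mathcal{F}$ are $T$-flat, applying Nakayama's lemma to the coherent sheaves $\ker \phi$ and $\mathrm{coker}(\phi)$---both of whose restrictions to every closed fibre $\{t\} \times X$ vanish by the previous sentence---forces them to be zero, so $\phi$ is a global isomorphism.

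The main obstacle will be the line-bundle claim for $\mathcal{H}$: because $\mathcal{E}$ is not assumed locally free, base change for $\mathcal{H}om$-sheaves does not follow from the naive flat-base-change theorem, and the argument has to be routed through $R\mathcal{H}om$ and perfect complexes in order for the cohomology-and-base-change machinery to convert constancy of $\dim \mathrm{Hom}(\mathcal{E}_{t}, \mathcal{F}_{t})$ into local freeness of $\mathcal{H}$.
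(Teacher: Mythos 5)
Your overall architecture is exactly the paper's: form $\mathcal{H}=p_{T*}\mathcal{H}om_{\mathcal{O}_{T\times X}}(\mathcal{E},\mathcal{F})$, prove it is a line bundle using constancy of $\dim\mathrm{Hom}(\mathcal{E}_t,\mathcal{F}_t)=1$ plus a base-change statement, trivialise it via $\mathrm{Pic}(T)=0$, and convert a nowhere-vanishing section into a global morphism that is an isomorphism fibrewise and hence globally. The one genuine gap is in how you justify the base-change step. You assert that $\mathcal{E}$ admits a bounded locally free resolution ``because $T\times X$ is quasi-projective''; this is false in general. On a quasi-projective scheme every coherent sheaf has a (possibly infinite) locally free resolution, but a \emph{bounded} one exists only when the sheaf is perfect, which requires, e.g., regularity of $T\times X$. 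The proposition assumes only that $X$ is a projective $k$-scheme of finite type, so $R\mathcal{H}om(\mathcal{E},\mathcal{F})$ need not be bounded and $Rp_{T*}R\mathcal{H}om(\mathcal{E},\mathcal{F})$ need not be perfect; your derived base change and cohomology-and-base-change argument then has nothing to bite on. (In the paper's application $X$ is a nonsingular toric variety, so your route would suffice there, but not for the statement as written.) The paper circumvents this by invoking \cite[Cor.~7.7.8]{EGA2}: for $p_T$ projective and $\mathcal{F}$ flat over $T$ there is a coherent sheaf $\mathcal{N}$ on $T$ with $p_{T*}\mathcal{H}om(\mathcal{E},\mathcal{F}\otimes p_T^*(-))\cong\mathcal{H}om_{\mathcal{O}_T}(\mathcal{N},-)$, whose formation commutes with arbitrary base change and which needs no perfectness of $\mathcal{E}$; constancy of $\dim(i_t^*\mathcal{N})^{\vee}=\dim\mathrm{Hom}(\mathcal{E}_t,\mathcal{F}_t)=1$ over the reduced base $T$ then makes $\mathcal{N}$, hence $\mathcal{L}=\mathcal{N}^{\vee}$, a line bundle. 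Replacing your perfect-complex paragraph by this citation repairs the proof.

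Two smaller remarks. First, your endgame differs from the paper's and is in fact more direct: you observe that a nonzero map between isomorphic simple sheaves is an isomorphism and then kill $\mathrm{coker}(\phi)$ and $\ker(\phi)$; the paper instead produces a second nowhere-vanishing morphism $f':\mathcal{F}\to\mathcal{E}$ and uses that units on $T$ are of the form $c\chi$ with $\chi\in X(T)$ to exhibit a two-sided inverse. Your version is fine, but you should run it in the right order: $i_t^*$ is right exact, so first conclude $\mathrm{coker}(\phi)=0$ from $\mathrm{coker}(\phi)_t=\mathrm{coker}(\phi_t)=0$ and density of closed points, and only then use $T$-flatness of $\mathcal{F}$ to identify $(\ker\phi)_t$ with $\ker(\phi_t)=0$. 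Second, for the section of $\mathcal{H}\cong\mathcal{O}_T$ to restrict to a \emph{nonzero} element of $\mathrm{Hom}(\mathcal{E}_t,\mathcal{F}_t)$ you are implicitly using injectivity of the base-change map $\mathcal{H}\otimes k(t)\to\mathrm{Hom}(\mathcal{E}_t,\mathcal{F}_t)$, which again comes out of the EGA representability statement rather than being automatic.
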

\begin{proof}
Denote projection to the first component by $p_{1} : T \times X \longrightarrow X$. For any closed point $t \in T$, we denote the inclusion by $i_{t} : t  \hookrightarrow T$. We consider the coherent sheaf $\mathcal{L} = p_{1*} \mathcal{H}{\it{om}}_{\mathcal{O}_{T \times X}}(\mathcal{E},\mathcal{F})$ and will prove it is a line bundle on $T$. There exists a coherent sheaf $\mathcal{N}$ on $T$ and an isomorphism  
\begin{equation*}
p_{1*} \mathcal{H}{\it{om}}_{\mathcal{O}_{T \times X}}(\mathcal{E},\mathcal{F} \otimes_{\mathcal{O}_{T \times X}} p_{1}^{*}(-)) \cong \mathcal{H}{\it{om}}_{\mathcal{O}_{T}}(\mathcal{N},-),
\end{equation*}
of functors $\mathrm{Qco}(T) \longrightarrow \mathrm{Qco}(T)$ \cite[Cor.~7.7.8]{EGA2}. We deduce $\mathcal{L} \cong \mathcal{N}^{\vee}$. However, the construction of $\mathcal{N}$ commutes with base change \cite[Rem.~7.7.9]{EGA2}, so 
\begin{equation*}
(i_{t}^{*}\mathcal{N})^{\vee} \cong \mathrm{Hom}_{X}(\mathcal{E}_{t},\mathcal{F}_{t}) \cong k,
\end{equation*}
for all closed points $t \in T$. Here we use that $\mathcal{E}_{t} \cong \mathcal{F}_{t}$ is simple for all closed points $t \in T$. Consequently, $\mathcal{N}$ and $\mathcal{L}$ are line bundles on $T$ \cite[Exc.~II.5.8]{Har1}. Since $\mathrm{Pic}(T)=0$, we deduce $\mathcal{L} \cong \mathcal{O}_{T}$. Now consider 
\begin{equation*}
H^{0}(T,\mathcal{L}) = \mathrm{Hom}_{T \times X}(\mathcal{E},\mathcal{F}).
\end{equation*}
Since $\mathcal{L} \cong \mathcal{O}_{T}$, there exists a nowhere vanishing section $f \in H^{0}(T,\mathcal{L})$. This section corresponds to a morphism $f : \mathcal{E} \longrightarrow \mathcal{F}$ having the property that $f_{t} : \mathcal{E}_{t} \longrightarrow \mathcal{F}_{t}$ is nonzero for any closed point $t \in T$. Similarly, $\mathcal{L}' = p_{1*} \mathcal{H}{\it{om}}_{\mathcal{O}_{T \times X}}(\mathcal{F},\mathcal{E}) \cong \mathcal{O}_{T}$ and we can take a nowhere vanishing section $f' \in H^{0}(T,\mathcal{L}')$ corresponding to a morphism $f' : \mathcal{F} \longrightarrow \mathcal{E}$. Now consider the composition $g = f' \circ f$. There is a canonical map 
\begin{equation*}
H^{0}(T,\mathcal{O}_{T}) \stackrel{\cong}{\longrightarrow} \mathrm{Hom}_{T \times X}(\mathcal{E},\mathcal{E}),
\end{equation*}
which is an isomorphism by the arguments above. It is easy to see that $g_{t} \neq 0$ for any closed point $t \in T$, from which we deduce that $g$ corresponds to $c \chi \in H^{0}(T,\mathcal{O}_{T})$ for some $c \in k^{*}$ and $\chi \in X(T)$ a character. Therefore $(c^{-1} \chi^{-1} f') \circ f = \mathrm{id}_{\mathcal{E}}$. Similarly, we get a right inverse for $f$, showing $f$ is an isomorphism.
\end{proof}

\noindent Note that if we are in the situation of Propositions \ref{ch. 1, sect. 4, prop. 1} and \ref{ch. 1, sect. 4, prop. 2}, the regular action of $T$ on $\mathcal{M}_{P}^{ss}$, $\mathcal{M}_{P}^{s}$ a priori depends on choice of equivariant structure on $\mathcal{O}_{X}(1)$. However, the set $\left(\mathcal{M}_{P}^{s}\right)^{T}_{cl}$ is independent of this choice and our future constructions will not depend on this choice either. Hence, whenever we are in the situation of these propositions, we assume we fix an arbitrary equivariant structure on $\mathcal{O}_{X}(1)$ and induced torus action on $\mathcal{M}_{P}^{ss}$ without further notice.

\subsection{Equivariant versus Invariant}

Let $G$ be an affine algebraic group acting regularly on a $k$-scheme $X$ of finite type. Denote the action by $\sigma : G \times X \longrightarrow X$ and projection by $p_{2} : G \times X \longrightarrow X$. From Proposition \ref{ch. 1, sect. 4, prop. 2}, we see that it is natural to define a $G$-invariant sheaf on $X$ to be a sheaf of $\mathcal{O}_{X}$-modules $\mathcal{E}$ on $X$ for which there is an isomorphism $\sigma^{*}\mathcal{E} \cong p_{2}^{*}\mathcal{E}$. Clearly, any $G$-equivariant sheaf on $X$ is $G$-invariant, but the converse is not true in general (for an example, see \cite[App.~A]{DOPR}). In the situation of Proposition \ref{ch. 1, sect. 4, prop. 2}, the isomorphism classes of Gieseker stable invariant sheaves on $X$ with Hilbert polynomial $P$ are in bijection with the closed points of $\left( \mathcal{M}_{P}^{s} \right)^{T}$. We have the following results. 
\begin{proposition} \label{ch. 1, sect. 4, prop. 3}
Let $G$ be a connected affine algebraic group acting regularly on a scheme $X$ of finite type over $k$. Let $\mathcal{E}$ be a simple sheaf on $X$. Then $\mathcal{E}$ is $G$-invariant if and only if $\mathcal{E}$ admits a $G$-equivariant structure.
\end{proposition}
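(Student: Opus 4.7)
The ``only if'' direction is immediate from the definition, which provides an isomorphism $\sigma^{*}\mathcal{E} \cong p_{2}^{*}\mathcal{E}$ and hence invariance. For the converse, given an isomorphism $\Phi : \sigma^{*}\mathcal{E} \longrightarrow p_{2}^{*}\mathcal{E}$, I plan to show that after a single scalar normalisation at $e \in G$ the cocycle condition is automatic. Since $\mathcal{E}$ is simple, $i_{e}^{*}\Phi \in \mathrm{Aut}(\mathcal{E}) = k^{*}$, and dividing $\Phi$ by this scalar I may assume $i_{e}^{*}\Phi = \mathrm{id}_{\mathcal{E}}$.

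Write $\mathrm{LHS} = (\mu \times 1_{X})^{*}\Phi$ and $\mathrm{RHS} = p_{23}^{*}\Phi \circ (1_{G} \times \sigma)^{*}\Phi$; both are isomorphisms $(\sigma \circ (\mu \times 1_{X}))^{*}\mathcal{E} \longrightarrow p_{X}^{*}\mathcal{E}$ on $G \times G \times X$, where $p_{X}$ denotes projection to the last factor. The cocycle condition is equivalent to the assertion that the automorphism $\psi := \mathrm{LHS} \circ \mathrm{RHS}^{-1}$ of $p_{X}^{*}\mathcal{E}$ is the identity. Using adjunction and the projection formula for the affine morphism $p_{X}$, together with finite presentation of $\mathcal{E}$ and $\mathrm{End}(\mathcal{E}) = k$, one obtains
\begin{equation*}
\mathrm{End}_{G \times G \times X}(p_{X}^{*}\mathcal{E}) \cong \mathrm{End}_{X}(\mathcal{E}) \otimes_{k} \mathcal{O}(G \times G) = \mathcal{O}(G \times G),
\end{equation*}
so $\psi$ corresponds to a regular function $\psi : G \times G \longrightarrow \mathbb{G}_{m}$.

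The conceptual crux is Rosenlicht's theorem: on a connected algebraic group $H$ over the algebraically closed field $k$, every regular unit is a constant times a character, whence $\mathcal{O}^{*}(H) = k^{*} \cdot \mathrm{Hom}(H, \mathbb{G}_{m})$. Applied to the connected affine group $H = G \times G$, whose characters split as pairs of characters of $G$, this forces $\psi(g, h) = c \cdot \chi_{1}(g) \chi_{2}(h)$ for some $c \in k^{*}$ and characters $\chi_{1}, \chi_{2} : G \longrightarrow \mathbb{G}_{m}$. The normalisation $i_{e}^{*}\Phi = \mathrm{id}_{\mathcal{E}}$ yields $\psi(e, h) = \psi(g, e) = 1$ for all $g, h \in G$, forcing $c = 1$ and $\chi_{1} = \chi_{2} = 1$, so $\psi \equiv 1$ and $\Phi$ already satisfies the cocycle condition. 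The main technical hurdle I foresee is verifying the endomorphism identification cleanly for $X$ only assumed of finite type; this reduces on an affine cover to the standard fact that $\mathrm{Hom}(M, M \otimes_{k} V) = \mathrm{End}(M) \otimes_{k} V$ for any finitely presented module $M$ and $k$-vector space $V$.
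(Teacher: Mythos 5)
Your argument is correct and is essentially the paper's own proof: both normalise $\Phi$ at the identity, identify the failure of the cocycle condition with a nowhere-vanishing regular function on $G \times G$ (using simplicity of $\mathcal{E}$), and kill it with Rosenlicht's theorem together with the triviality along $\{e\} \times G$ and $G \times \{e\}$. The only difference is packaging: the paper first restricts to closed points (invoking the rigidity statement of Proposition \ref{lemma} to recover the scheme-level identity) and treats the defect as a function on $G_{cl} \times G_{cl}$, whereas you stay on $G \times G \times X$ and obtain its regularity for free from the identification $\mathrm{End}(p_{X}^{*}\mathcal{E}) \cong \mathcal{O}(G \times G)$ --- a marginally cleaner route to the same place.
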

\begin{proof}
Denote the action by $\sigma : G \times X \longrightarrow X$ and projection to the second component by $p_{2} : G \times X \longrightarrow X$. Assume $\mathcal{E}$ is a simple sheaf on $X$ and we have an isomorphism $\Phi : \sigma^{*}\mathcal{E} \longrightarrow p_{2}^{*}\mathcal{E}$. We would like $\Phi$ to satisfy the cocycle condition (see Definition \ref{ch. 1, sect. 2, def. 1}). In order to achieve this, we use an argument similar to the proof of \cite[Lem.~7.1]{Dol}. For any closed point $g \in G$, let $i_{g} : X \hookrightarrow G \times X$ be the inclusion induced by $g \hookrightarrow G$ and define $\Phi_{g} = i_{g}^{*}\Phi : g^{*}\mathcal{E} \longrightarrow \mathcal{E}$. By Proposition \ref{lemma}, it is enough to prove $\Phi_{hg} = \Phi_{g} \circ g^{*}\Phi_{h}$, for all closed points $g,h \in G$. By redefining $\Phi$, i.e.~replacing $\Phi$ by $p_{2}^{*}(\Phi_{1}^{-1}) \circ \Phi$, we might just as well assume $\Phi_{1} = 1$. Now define the morphism
\begin{align} 
\begin{split}
&F : G_{cl} \times G_{cl} \longrightarrow \mathrm{Aut}(\mathcal{E}) \cong k^{*}, \\
&F(g,h) = \Phi_{g} \circ g^{*}(\Phi_{h}) \circ \Phi_{hg}^{-1}, \nonumber
\end{split}
\end{align}
where $(-)_{cl}$ means taking the closed points. We know $F(g,1) = F(1,h) = 1$ and we have to prove $F(g,h) = 1$, for all closed points $g,h \in G$. Since $G_{cl}$ is an irreducible algebraic variety over an algebraically closed field $k$ and $F \in \mathcal{O}(G_{cl} \times G_{cl})^{*}$, we can use a theorem by Rosenlicht \cite[Rmk.~7.1]{Dol}, to conclude that $F(g,h) = F_{1}(g)F_{2}(h)$, where $F_{1}, F_{2} \in \mathcal{O}(G_{cl})^{*}$, for all closed points $g,h \in G$. The result now follows immediately.
\end{proof}
\begin{proposition} \label{ch. 1, sect. 4, prop. 4}
Let $G$ be an affine algebraic group acting regularly on a scheme $X$ of finite type over $k$. Let $\mathcal{E}$ be a simple $G$-equivariant sheaf on $X$. Then all $G$-equivariant structures on $\mathcal{E}$ are given by $\mathcal{E} \otimes \mathcal{O}_{X}(\chi)$, where $\mathcal{O}_{X}(\chi)$ is the structure sheaf of $X$ endowed with the $G$-equivariant structure induced by the character $\chi \in X(G)$.
\end{proposition}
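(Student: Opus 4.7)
The plan is to fix the given $G$-equivariant structure $\Phi$ on $\mathcal{E}$ and, for any second equivariant structure $\Psi$, study the automorphism
\[
\alpha = \Psi \circ \Phi^{-1} \in \mathrm{Aut}_{\mathcal{O}_{G \times X}}(p_2^{*}\mathcal{E}),
\]
where $p_2 : G \times X \to X$ is the projection. I will show that $\alpha$ is multiplication by the pullback $p_1^{*}\chi$ of a character $\chi \in X(G)$, where $p_1 : G \times X \to G$ is the other projection; this immediately identifies $\Psi$ with the equivariant structure on $\mathcal{E} \otimes \mathcal{O}_{X}(\chi)$.

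First I would extract a scalar on each fibre. For every closed point $g \in G$, the pullback $i_g^{*}\alpha$ is an automorphism of $\mathcal{E}$, hence by simplicity equals $\chi(g) \cdot \mathrm{id}_{\mathcal{E}}$ for a unique $\chi(g) \in k^{*}$. This defines a set-theoretic function $\chi : G_{cl} \to k^{*}$. A direct computation with the two cocycle conditions (Definition \ref{ch. 1, sect. 2, def. 1}) for $\Phi$ and $\Psi$, combined with Proposition \ref{lemma} applied on $G \times G \times X$ to compare morphisms via their pullbacks to closed fibres, then yields the multiplicativity $\chi(gh) = \chi(g)\chi(h)$ for all closed $g,h \in G$.

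The main obstacle is to upgrade $\chi$ from a set-theoretic function to a regular function $\tilde\chi \in \mathcal{O}(G)^{*}$. I plan to argue locally. Over any pair of affine opens $V = \mathrm{Spec}(R) \subset G$ and $U = \mathrm{Spec}(A) \subset X$, the sheaf $p_2^{*}\mathcal{E}|_{V \times U}$ corresponds to the $(A \otimes_k R)$-module $M \otimes_k R$, where $M$ is the finitely presented $A$-module of $\mathcal{E}|_{U}$. Flatness of $R$ over $k$ combined with finite presentation of $M$ gives the standard identification $\mathrm{End}_{A \otimes_k R}(M \otimes_k R) \cong \mathrm{End}_{A}(M) \otimes_k R$, so $\alpha|_{V \times U}$ is a finite sum $\sum_j e_j \otimes f_j$ with $e_j \in \mathrm{End}_{A}(M)$ and $f_j \in R$. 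Pushing out along the quotient $\mathrm{End}_A(M) \to \mathrm{End}_A(M)/k \cdot \mathrm{id}_M$ and applying Proposition \ref{lemma} (with $G$, which is reduced, in the role of the base), the fibrewise scalar condition forces $\alpha|_{V \times U}$ to lie in $\mathrm{id}_M \otimes R$, so it equals $\mathrm{id}_M \otimes \tilde\chi_{V,U}$ for some $\tilde\chi_{V,U} \in R$ with $\tilde\chi_{V,U}(g) = \chi(g)$. Another application of Proposition \ref{lemma} shows these local functions agree on overlaps and hence patch over an affine cover of $G \times X$ to a single $\tilde\chi \in \mathcal{O}(G)$; invertibility of $\alpha$ together with $\chi(g) \neq 0$ at every closed $g$ forces $\tilde\chi \in \mathcal{O}(G)^{*}$.

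With $\tilde\chi$ regular and multiplicative on the Zariski-dense subset $G_{cl}$, it is a character $\chi \in X(G)$. The equality $\alpha = (p_1^{*}\chi) \cdot \mathrm{id}_{p_2^{*}\mathcal{E}}$ then reads $\Psi = (p_1^{*}\chi) \cdot \Phi$, which is precisely the equivariant structure on $\mathcal{E} \otimes \mathcal{O}_{X}(\chi)$. Conversely, different characters visibly yield different equivariant structures on $\mathcal{E}$, so the set of $G$-equivariant structures on $\mathcal{E}$ is canonically a torsor over $X(G)$, as claimed.
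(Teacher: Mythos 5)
Your proposal is correct and follows essentially the same route as the paper: form $\Psi\circ\Phi^{-1}$, use simplicity to extract a fibrewise scalar $\chi(g)$, deduce multiplicativity from the two cocycle conditions, and conclude via Proposition \ref{lemma}. The only difference is that you spell out, via the local identification $\mathrm{End}_{A\otimes_k R}(M\otimes_k R)\cong \mathrm{End}_A(M)\otimes_k R$, why $\chi$ is a regular function on $G$ — a point the paper's proof asserts without detail — which is a welcome addition but not a change of method.
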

\begin{proof}
Let $\Phi, \Psi : \sigma^{*}(\mathcal{E}) \longrightarrow p_{2}^{*}(\mathcal{E})$ be two $G$-equivariant structures on $\mathcal{E}$. Consider the automorphism $\Psi \circ \Phi^{-1} : p_{2}^{*}(\mathcal{E}) \longrightarrow p_{2}^{*}(\mathcal{E})$. For all closed points $g \in G$
\begin{equation} \nonumber
\Psi_{g} \circ \Phi_{g}^{-1} \in \mathrm{Aut}(\mathcal{E}) \cong k^{*}.
\end{equation}
We obtain a morphism of varieties $\chi : G_{cl} \longrightarrow k^{*}$ defined by $\chi(g) = \Psi_{g} \circ \Phi_{g}^{-1}$. In fact, from the fact that $\Phi, \Psi$ satisfy the cocycle condition (see Definition \ref{ch. 1, sect. 2, def. 1}), we see that $\chi$ is a character. The result follows from applying Proposition \ref{lemma}.
\end{proof}

This last proposition suggests we should study the effect of tensoring an equivariant sheaf on a toric variety by an equivariant line bundle. We start with a brief recapitulation of equivariant line bundles and reflexive equivariant sheaves on toric varieties. On a general normal variety $X$, a coherent sheaf $\mathcal{F}$ is said to be reflexive if the natural morphism $\mathcal{F} \longrightarrow \mathcal{F}^{\vee\vee}$ is an isomorphism, where $(-)^{\vee} = \mathcal{H}{\it{om}}(-,\mathcal{O}_{X})$ is the dual. Let $X$ be a nonsingular toric variety defined by a fan $\Delta$ in a lattice $N$ of rank $r$. Take $\tau = 0$ and let $\sigma_{1}, \ldots, \sigma_{l}$ be the cones of dimension $r$. For each $i = 1, \ldots, l$, let $\left(\rho^{(i)}_{1}, \ldots, \rho^{(i)}_{r}\right)$ be the rays of $\sigma_{i}$. The equivariant line bundles on $X$ are precisely the rank 1 reflexive equivariant sheaves on $X$. In general, reflexive equivariant sheaves on $X$ are certain torsion free equivariant sheaves on $X$ and they admit a particularly nice combinatorial description in terms of filtrations associated to the rays of $\Delta$. Denote the collection of rays by $\Delta(1)$. Let $E$ be a nonzero finite-dimensional $k$-vector space. For each ray $\rho \in \Delta(1)$ specify $k$-vector spaces
\begin{equation} \nonumber
\cdots \subset E^{\rho}(\lambda-1) \subset E^{\rho}(\lambda) \subset E^{\rho}(\lambda+1) \subset \cdots,
\end{equation}   
such that there is an integer $A_{\rho}$ with $E^{\rho}(\lambda) = 0$ if $\lambda < A_{\rho}$ and there is an integer $B_{\rho}$ such that $E^{\rho}(\lambda) = E$ if $\lambda \geq B_{\rho}$. There is an obvious notion of morphisms between such collections of filtrations $\{E^{\rho}(\lambda)\}_{\rho \in \Delta(1)}$. Suppose we are given such a collection of filtrations $\{E^{\rho}(\lambda)\}_{\rho \in \Delta(1)}$. From it we obtain a torsion free $\Delta$-family by defining 
\begin{equation} \label{ch. 1, eqn4}
E^{\sigma_{i}}(\lambda_{1}, \ldots, \lambda_{r}) = E^{\rho_{1}^{(i)}}(\lambda_{1}) \cap \cdots \cap E^{\rho_{r}^{(i)}}(\lambda_{r}),
\end{equation}
for all $i = 1, \ldots, l$, $\lambda_{1}, \ldots, \lambda_{r} \in \mathbb{Z}$. Denote the full subcategory of torsion free $\Delta$-families obtained in this way by $\mathcal{R}$. The equivalence of categories of Theorem \ref{ch. 1, sect. 2, thm. 1}, restricts to an equivalence between the the full subcategory of reflexive equivariant sheaves on $X$ and the full subcategory $\mathcal{R}$ (see \cite[Thm.~4.21]{Per1}). This equivalence further restricts to an equivalence between the category of equivariant line bundles on $X$ and the category of filtrations of $E = k$ associated to the rays of $\Delta$ as above. We obtain a canonical isomorphism $\mathrm{Pic}^{T}(X) \cong \mathbb{Z}^{\Delta(1)}$, where $\mathbb{Z}^{\Delta(1)} = \mathbb{Z}^{\# \Delta(1)}$. In particular, if $\Delta(1) = ( \rho_{1}, \ldots, \rho_{N} )$, then the integers $\vec{k} = (k_{1}, \ldots, k_{N}) \in \mathbb{Z}^{\Delta(1)}$ correspond to the filtrations $\{L_{\vec{k}}^{\rho}(\lambda)\}_{\rho \in \Delta(1)}$ defined by\footnote{Do not be confused by the unfortunate clash of notation: $k$ is the ground field and the $k_{j}$ are integers.}
\begin{equation} 
L_{\vec{k}}^{\rho_{j}}(\lambda) = \left\{ \begin{array}{cc} k & \mathrm{if} \ \lambda \geq -k_{j} \\ 0 & \mathrm{if} \ \lambda < - k_{j}, \end{array} \right. \nonumber
\end{equation}
for all $j = 1, \ldots, N$. Denote the corresponding equivariant line bundle by $\mathcal{L}_{\vec{k}}$. Note that the first Chern class of $\mathcal{L}_{\vec{k}}$ is given by $c_{1}(\mathcal{L}_{\vec{k}}) = \sum_{j} k_{j} V(\rho_{j})$ (Corollary \ref{ch. 1, sect. 3, cor. K}), so as a line bundle $\mathcal{L}_{\vec{k}} \cong \mathcal{O}_{X}(\sum_{j} k_{j} V(\rho_{j}))$. Finally, when we consider $\vec{k}$, $\{L_{\vec{k}}^{\rho}(\lambda)\}_{\rho \in \Delta(1)}$, $\mathcal{L}_{\vec{k}}$ as above, then the corresponding torsion free $\Delta$-family is given by (equation (\ref{ch. 1, eqn4}))
\begin{equation} 
L_{\vec{k}}^{\sigma_{i}}(\lambda_{1}, \ldots, \lambda_{r}) = \left\{ \begin{array}{cc} k & \mathrm{if} \ \lambda_{1} \geq -k_{1}^{(i)}, \ldots, \lambda_{r} \geq -k_{r}^{(i)} \\ 0 & \mathrm{otherwise}, \end{array} \right. \nonumber
\end{equation}
for all $i = 1, \ldots, l$, where $\sigma_{i}$ has rays $\left(\rho^{(i)}_{1}, \ldots, \rho^{(i)}_{r}\right)$ and we denote the corresponding integers where the filtrations $L_{\vec{k}}^{\rho^{(i)}_{1}}(\lambda), \ldots, L_{\vec{k}}^{\rho^{(i)}_{r}}(\lambda)$ jump by $-k_{1}^{(i)}, \ldots, -k_{r}^{(i)}$.
\begin{proposition} \label{ch. 1, sect. 4, prop. 5}
Let $X$ be a nonsingular toric variety with fan $\Delta$ in a lattice $N$ of rank $r$. Let $\tau_{1}, \ldots, \tau_{a}$ be some cones of $\Delta$ of dimension $s$. Let $\sigma_{1}, \ldots, \sigma_{l}$ be all cones of $\Delta$ of maximal dimension having a cone $\tau_{\alpha}$ as a face. For each $i = 1, \ldots, l$, let $\left(\rho^{(i)}_{1}, \ldots, \rho^{(i)}_{r}\right)$ be the rays of $\sigma_{i}$. Let $\mathcal{E}$ be a pure equivariant sheaf on $X$ with support $V(\tau_{1}) \cup \cdots \cup V(\tau_{a})$ and corresponding pure $\Delta$-family $\hat{E}^{\Delta}$. Consider the equivariant line bundle $\mathcal{L}_{\vec{k}}$ for some $\vec{k} \in \mathbb{Z}^{\Delta(1)}$. Then $\mathcal{F} = \mathcal{E} \otimes \mathcal{L}_{\vec{k}}$ is a pure equivariant sheaf on $X$ with support $V(\tau_{1}) \cup \cdots \cup V({\tau_{a}})$ and its pure $\Delta$-family $\hat{F}^{\Delta}$ is given by
\begin{align} 
\begin{split}
F^{\sigma_{i}}(\lambda_{1}, \ldots, \lambda_{r}) &= E^{\sigma_{i}}(\lambda_{1}+k_{1}^{(i)}, \ldots, \lambda_{r}+k_{r}^{(i)}), \ \forall i = 1, \ldots, l \\
\chi^{\sigma_{i}}_{F, n}(\lambda_{1}, \ldots, \lambda_{r}) &= \chi^{\sigma_{i}}_{E, n}(\lambda_{1}+k_{1}^{(i)}, \ldots, \lambda_{r}+k_{r}^{(i)}), \ \forall i = 1, \ldots, l, \ \forall n = 1, \ldots, r. \nonumber
\end{split}
\end{align}
\end{proposition}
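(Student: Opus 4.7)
The plan is to reduce everything to a local calculation on each top-dimensional affine chart $U_{\sigma_i}$ and invoke Perling's equivalence (Theorem 2.1, Proposition 2.4) to translate tensor products of equivariant sheaves into tensor products of $M$-graded $k[S_{\sigma_i}]$-modules, where the answer is visibly a shift of grading.

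First I would check the ``soft'' assertions: $\mathcal{F} = \mathcal{E}\otimes \mathcal{L}_{\vec k}$ inherits an equivariant structure (tensor product of equivariant structures) and, since tensoring by a line bundle is an auto-equivalence of coherent sheaves, it preserves support and preserves the property of being pure of a fixed dimension. Purity of $\mathcal{F}$ follows from purity of $\mathcal{E}$ via Proposition \ref{ch. 1, sect. 2, prop. 1} (purity is local) together with the fact that $\mathcal{L}_{\vec k}$ is locally free of rank one. Therefore $\mathcal{F}$ defines an object of $\mathcal{C}^{\tau_1,\ldots,\tau_a}$, and by Theorem \ref{ch. 1, sect. 2, thm. 2} it is determined by the collection of $\sigma_i$-families $\widehat{F}^{\sigma_i}$, $i = 1,\ldots, l$, which is what remains to compute.

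Next, working on a fixed $U_{\sigma_i}\cong \mathbb{A}^r$, I would identify the $\sigma_i$-family of $\mathcal{L}_{\vec k}|_{U_{\sigma_i}}$. By the explicit description recalled just before the statement, $L_{\vec k}^{\sigma_i}(\lambda_1,\ldots,\lambda_r)$ equals $k$ exactly when $\lambda_j \geq -k_j^{(i)}$ for all $j$, and all maps $\chi^{\sigma_i}_{L,n}$ between non-zero pieces are the identity. Under Perling's equivalence this corresponds to the free rank-one $M$-graded $k[S_{\sigma_i}]$-module with generator placed in degree
\[
m^{(i)}_{\vec k} \;=\; -\sum_{j=1}^{r} k_j^{(i)} m\!\left(\rho_j^{(i)}\right),
\]
i.e.\ the ``twisted'' module $k[S_{\sigma_i}](m^{(i)}_{\vec k})$.

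Then I would compute the tensor product module-theoretically: for any $M$-graded $k[S_{\sigma_i}]$-module $E^{\sigma_i}$, the canonical isomorphism
\[
E^{\sigma_i}\otimes_{k[S_{\sigma_i}]} k[S_{\sigma_i}](m^{(i)}_{\vec k}) \;\cong\; E^{\sigma_i}\bigl(m^{(i)}_{\vec k}\bigr)
\]
as $M$-graded $k[S_{\sigma_i}]$-modules is a degree shift. Taking the piece of degree $m=\sum_{n=1}^r \lambda_n m(\rho_n^{(i)})$ gives
\[
F_m^{\sigma_i} \;\cong\; E^{\sigma_i}_{\,m - m^{(i)}_{\vec k}} \;=\; E^{\sigma_i}\!\left(\lambda_1+k_1^{(i)},\ldots,\lambda_r+k_r^{(i)}\right),
\]
and functoriality of the shift sends the structure maps $\chi^{\sigma_i}_{E,n}$ to the identically shifted structure maps $\chi^{\sigma_i}_{F,n}(\vec \lambda)=\chi^{\sigma_i}_{E,n}(\vec\lambda+\vec k^{(i)})$. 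This gives precisely the formulas of the proposition on each chart. The compatibilities required to glue across different $\sigma_i,\sigma_j$ (item (ii) of Theorem \ref{ch. 1, sect. 2, thm. 2}) hold automatically because the isomorphism above is natural and the characters $m^{(i)}_{\vec k}$, $m^{(j)}_{\vec k}$ agree on $S_{\sigma_i\cap\sigma_j}^\perp$, which is exactly the condition guaranteed by $\mathcal{L}_{\vec k}$ being a genuine equivariant line bundle on $X$ rather than only on the charts.

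There is no real obstacle: the only point requiring care is the sign/convention bookkeeping, namely that the filtration jump of $L_{\vec k}$ at the ray $\rho_j^{(i)}$ is at $\lambda = -k_j^{(i)}$, so the generator of the corresponding graded module sits in degree $-\sum_j k_j^{(i)} m(\rho_j^{(i)})$ and the resulting shift on $\widehat{E}^{\sigma_i}$ is by $+\vec k^{(i)}$ rather than by $-\vec k^{(i)}$. Once this is tracked, both the identification of the graded pieces and the compatibility of the structure maps are immediate from the tensor product formula, and the proposition follows.
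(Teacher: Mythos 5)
Your proposal is correct and follows essentially the same route as the paper, which simply notes that $\Gamma(U_{\sigma_{i}}, \mathcal{F}) \cong \Gamma(U_{\sigma_{i}}, \mathcal{E}) \otimes_{k[S_{\sigma_{i}}]} \Gamma(U_{\sigma_{i}}, \mathcal{L}_{\vec{k}})$ and that the $M$-grading of this tensor product is computed as in the proof of Proposition \ref{ch. 1, sect. 2, prop. 5}; you have just made the degree-shift by the generator in degree $-\sum_{j} k_{j}^{(i)} m(\rho_{j}^{(i)})$ explicit. The sign bookkeeping and the preservation of purity and support are handled correctly.
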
 
\begin{proof}
One can compute the $M$-grading of $\Gamma(U_{\sigma_{i}}, \mathcal{F}) \cong \Gamma(U_{\sigma_{i}}, \mathcal{E}) \otimes_{k[S_{\sigma_{i}}]} \Gamma(U_{\sigma_{i}}, \mathcal{L}_{\vec{k}})$ along the same lines as in the proof of Proposition \ref{ch. 1, sect. 2, prop. 5}. The result easily follows.
\end{proof}

\subsection{Combinatorial Description of the Fixed Point Loci $\left( \mathcal{M}_{P}^{s} \right)^{T}$}

We are now ready to prove the theorem stated in the introduction (Theorem \ref{ch. 1, sect. 1, thm. 1}). An analogous result to Theorem \ref{ch. 1, sect. 1, thm. 1} turns out to hold without any assumption on the Hilbert polynomial if we assume the existence of equivariant line bundles matching Gieseker and GIT stability. Therefore, we will first prove a general combinatorial expression for the fixed point locus of any moduli space of Gieseker stable sheaves on a nonsingular projective toric variety making this assumption (Theorem \ref{ch. 1, sect. 4, thm. 1}). Theorem \ref{ch. 1, sect. 1, thm. 1} then follows as a trivial corollary of this result by combining with Theorem \ref{ch. 1, sect. 3, thm. 4}. Let $X$ be a nonsingular projective toric variety defined by a fan $\Delta$ in a lattice $N$ of rank $r$. Let $\mathcal{O}_{X}(1)$ be an ample line bundle on $X$ and let $P$ be a choice of Hilbert polynomial. The degree $d$ of $P$ is the dimension $d$ of any coherent sheaf on $X$ with Hilbert polynomial $P$. Let $s = r - d$ and let $\tau_{1}, \ldots, \tau_{a}$ be all cones of $\Delta$ of dimension $s$. For any $i_{1} < \cdots < i_{\alpha} \in \{1, \ldots, a\}$, we have defined $\mathcal{X}^{\tau_{i_{1}}, \ldots, \tau_{i_{\alpha}}}_{P} \subset \mathcal{X}^{\tau_{i_{1}}, \ldots, \tau_{i_{\alpha}}}$ to be the subset of all characteristic functions with associated Hilbert polynomial $P$ (see Proposition \ref{ch. 1, sect. 3, prop. 7}). Assume that for any $\vec{\chi} \in \mathcal{X}^{\tau_{i_{1}}, \ldots, \tau_{i_{\alpha}}}_{P}$, we can pick an equivariant line bundle matching Gieseker and GIT stability (e.g.~for $P$ of degree $\mathrm{dim}(X)$ this can always be done by Theorem \ref{ch. 1, sect. 3, thm. 4}). For any $\vec{\chi} \in \mathcal{X}^{\tau_{i_{1}}, \ldots, \tau_{i_{\alpha}}}_{P}$ the obvious forgetful natural transformations $\underline{\mathcal{M}}_{\vec{\chi}}^{\tau_{i_{1}}, \ldots, \tau_{i_{\alpha}},ss} \Longrightarrow \underline{\mathcal{M}}_{P}^{ss}$, $\underline{\mathcal{M}}_{\vec{\chi}}^{\tau_{i_{1}}, \ldots, \tau_{i_{\alpha}},s} \Longrightarrow \underline{\mathcal{M}}_{P}^{s}$ induce morphisms $\mathcal{M}_{\vec{\chi}}^{\tau_{i_{1}}, \ldots, \tau_{i_{\alpha}},ss} \longrightarrow \mathcal{M}_{P}^{ss}$, $\mathcal{M}_{\vec{\chi}}^{\tau_{i_{1}}, \ldots, \tau_{i_{\alpha}},s} \longrightarrow \mathcal{M}_{P}^{s}$ (by Theorem \ref{ch. 1, sect. 3, thm. 3}). We obtain morphisms
\begin{align}
\coprod_{\alpha=1}^{a} \coprod_{i_{1} < \cdots < i_{\alpha} \in \{1, \ldots, a\}} \coprod_{\vec{\chi} \in \mathcal{X}_{P}^{\tau_{i_{1}}, \ldots, \tau_{i_{\alpha}}}} \mathcal{M}_{\vec{\chi}}^{\tau_{i_{1}}, \ldots, \tau_{i_{\alpha}},ss} &\longrightarrow \mathcal{M}_{P}^{ss}, \nonumber \\
\coprod_{\alpha=1}^{a} \coprod_{i_{1} < \cdots < i_{\alpha} \in \{1, \ldots, a\}} \coprod_{\vec{\chi} \in \mathcal{X}_{P}^{\tau_{i_{1}}, \ldots, \tau_{i_{\alpha}}}} \mathcal{M}_{\vec{\chi}}^{\tau_{i_{1}}, \ldots, \tau_{i_{\alpha}},s} &\longrightarrow \mathcal{M}_{P}^{s}, \nonumber
\end{align}
where the second morphism, on closed points, is just the map forgetting the equivariant structure. Consequently, we could expect the second morphism to factor through an isomorphism onto the fixed point locus $\left(\mathcal{M}_{P}^{s}\right)^{T}$. Indeed it maps to the fixed point locus on closed points 
\begin{equation}
\coprod_{\alpha=1}^{a} \coprod_{i_{1} < \cdots < i_{\alpha} \in \{1, \ldots, a\}} \coprod_{\vec{\chi} \in \mathcal{X}_{P}^{\tau_{i_{1}}, \ldots, \tau_{i_{\alpha}}}} \mathcal{M}_{\vec{\chi},cl}^{\tau_{i_{1}}, \ldots, \tau_{i_{\alpha}},s} \longrightarrow \left(\mathcal{M}_{P}^{s}\right)_{cl}^{T}, \nonumber 
\end{equation}
and this map is surjective (Propositions \ref{ch. 1, sect. 4, prop. 2} and \ref{ch. 1, sect. 4, prop. 3}). However, it is not injective (Proposition \ref{ch. 1, sect. 4, prop. 4}). Indeed, if $\mathcal{E}$ is an invariant simple sheaf on $X$, then it admits an equivariant structure (fix one) and all equivariant structures are given by $\mathcal{E} \otimes \mathcal{O}_{X}(\chi)$, $\chi \in M$. 
So before we can expect the morphism to factor through an isomorphism onto the fixed point locus $\left(\mathcal{M}_{P}^{s}\right)^{T}$, we first need to make a choice of equivariant structure for each $\mathcal{E}$. In view of Proposition \ref{ch. 1, sect. 4, prop. 5}, this might be achieved as follows. Let $\sigma_{1}, \ldots, \sigma_{l}$ be all cones of maximal dimension. Let $\alpha = 1, \ldots, a$ and $i_{1} < \cdots < i_{\alpha} \in \{1, \ldots, a\}$. Let $\sigma_{n}$ be a cone among $\sigma_{1}, \ldots, \sigma_{l}$ having at least one of $\tau_{i_{1}}, \ldots, \tau_{i_{\alpha}}$ as a face. For definiteness, we choose $\sigma_{n}$ the cone among $\sigma_{1}, \ldots, \sigma_{l}$ with this property and smallest index $n$ (though any cone will do). Let $\vec{\chi} \in \mathcal{X}_{P}^{\tau_{i_{1}}, \ldots, \tau_{i_{\alpha}}}$, then there are integers $A_{1}^{(n)}, \ldots, A_{r}^{(n)}$ such that $\chi^{\sigma_{n}}(\lambda_{1}, \ldots, \lambda_{r}) = 0$ unless $\lambda_{1} \geq A_{1}^{(n)}$, $\ldots$, $\lambda_{r} \geq A_{r}^{(n)}$ (see section 2). Assume $A_{1}^{(n)}, \ldots, A_{r}^{(n)}$ are chosen maximally with this property. We define $\vec{\chi}$ to be framed if $A_{1}^{(n)} = \cdots = A_{r}^{(n)} = 0$ (any other choice of integers will do too). We denote the subset of framed characteristic functions of $\mathcal{X}_{P}^{\tau_{i_{1}}, \ldots, \tau_{i_{\alpha}}}$ by $\left( \mathcal{X}_{P}^{\tau_{i_{1}}, \ldots, \tau_{i_{\alpha}}} \right)^{fr}$. We get a morphism 
\begin{equation}
F : \coprod_{\alpha=1}^{a} \coprod_{i_{1} < \cdots < i_{\alpha} \in \{1, \ldots, a\}} \coprod_{\vec{\chi} \in \left( \mathcal{X}_{P}^{\tau_{i_{1}}, \ldots, \tau_{i_{\alpha}}} \right)^{fr}} \mathcal{M}_{\vec{\chi}}^{\tau_{i_{1}}, \ldots, \tau_{i_{\alpha}},s} \longrightarrow \mathcal{M}_{P}^{s}. \label{ch. 1, eqnF}
\end{equation}

\noindent \emph{Claim.} The map induced by $F$ on closed points maps bijectively onto $\left(\mathcal{M}_{P}^{s}\right)_{cl}^{T}$. 

\noindent \emph{Proof of Claim.} This can be seen as follows. We need to characterise all equivariant line bundles $\mathcal{O}_{X}(\chi)$, $\chi \in X(T)$ (introduced in Proposition \ref{ch. 1, sect. 4, prop. 4}). By \cite[Cor.~7.1, Thm.~7.2]{Dol}, they are precisely the elements of the kernel of the forgetful map in the short exact sequence
\begin{equation} \nonumber
0 \longrightarrow M \longrightarrow \mathrm{Pic}^{T}(X) \longrightarrow \mathrm{Pic}(X) \longrightarrow 0.
\end{equation} 
Suppose $\mathcal{L}_{\vec{k}}$, $\vec{k} \in \mathbb{Z}^{\Delta(1)}$ is an equivariant line bundle (notation as in subsection 4.2). Then its underlying line bundle is trivial if and only if $\sum_{j=1}^{N} k_{j} V(\rho_{j}) = 0$ in the Chow group $A_{r-1}(X)$. The Chow group $A_{r-1}(X)$ is the free abelian group on $V(\rho_{1}), \ldots, V(\rho_{N})$ modulo the following relations \cite[Sect.~5.2]{Ful}
\begin{equation} \nonumber
\sum_{j=1}^{N} \langle u, n(\rho_{j}) \rangle V(\rho_{j}) = 0, \ \forall u \in M.
\end{equation}
Let $\sigma_{n}$ be any cone of maximal dimension and take $m\left(\rho_{1}^{(n)}\right), \ldots, m\left(\rho_{r}^{(n)}\right)$ as a basis for $M$. From this we see that for arbitrary $k_{1}^{(n)}, \ldots, k_{r}^{(n)} \in \mathbb{Z}$, there are unique other $k_{1}^{(i)}, \ldots, k_{r}^{(i)}$, for all $i =1, \ldots, l$, $i \neq n$ such that $\sum_{j=1}^{N} k_{j} V(\rho_{j}) = 0$. In particular, if $\mathcal{L}_{\vec{k}}$, $\vec{k} \in \mathbb{Z}^{\Delta(1)}$ is an equivariant line bundle with underlying line bundle trivial and $k_{1}^{(n)} = \cdots = k_{r}^{(n)} = 0$, then also $k_{1}^{(i)} = \cdots = k_{r}^{(i)} = 0$ for all $i = 1, \ldots, l$, $i \neq n$. Now note that for any two distinct sequences $i_{1} < \cdots < i_{\alpha}$, $j_{1} < \cdots < j_{\beta} \in \{1, \ldots, a\}$, we have $\mathcal{X}^{\tau_{i_{1}}, \ldots, \tau_{i_{\alpha}}} \cap \mathcal{X}^{\tau_{j_{1}}, \ldots, \tau_{j_{\beta}}} = \varnothing$. Using Propositions \ref{ch. 1, sect. 4, prop. 2}, \ref{ch. 1, sect. 4, prop. 3}, \ref{ch. 1, sect. 4, prop. 4} and \ref{ch. 1, sect. 4, prop. 5}, the claim follows. $\hfill \Box$ 

We note that the above claim crucially depends on Propositions \ref{ch. 1, sect. 4, prop. 2}, \ref{ch. 1, sect. 4, prop. 3}, \ref{ch. 1, sect. 4, prop. 4}, which are about \emph{simple} sheaves. This is one of the main reasons we have to focus attention on \emph{Gieseker stable} sheaves only in this section. The above claim provides good evidence that the morphism $F$ of equation (\ref{ch. 1, eqnF}) indeed factors through an isomorphism onto the fixed point locus $\left(\mathcal{M}_{P}^{s}\right)^{T}$. We will prove this using the following two technical results. 
\begin{proposition} \label{ch. 1, sect. 4, prop. 6}
Let $X,Y$ be schemes of finite type and separated over $k$. Let $f : X \longrightarrow Y$ be a morphism and $\iota : Y' \hookrightarrow Y$ a closed immersion. Assume for any local artinian $k$-algebra $A$ with residue field $k$, the map $f \circ -$ factors bijectively 
\begin{displaymath}
\xymatrix
{
\mathrm{Hom}(A,X) \ar@{-->}_{\cong}[dr] \ar[r]^{f \circ -} & \mathrm{Hom}(A,Y) \\
 & \mathrm{Hom}(A,Y'). \ar@^{(->}[u]_{\iota \circ -} 
}
\end{displaymath}
Then $f$ factors isomorphically and uniquely onto $Y'$
\begin{displaymath}
\xymatrix
{
X \ar[r]^{f} \ar@{-->}[dr]_{\cong} & Y \\
& Y'. \ar@^{(->}[u]_{\iota}
}
\end{displaymath}
\end{proposition}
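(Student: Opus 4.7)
The plan is to first upgrade the hypothesis from a statement about $A$-valued points into a scheme-theoretic factorization, and then to identify the factored morphism $g : X \to Y'$ as an isomorphism by showing it is \'etale and radicial.

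To produce the factorization, uniqueness is automatic because a closed immersion is a monomorphism. For existence, let $\mathcal{I} \subset \mathcal{O}_{Y}$ denote the ideal sheaf of $Y'$; I must show the image of $f^{-1}\mathcal{I} \to \mathcal{O}_{X}$ vanishes, which is a local question on affine opens. Reducing to a ring map $\phi : B \to A$ with $A$ a finitely generated $k$-algebra and $I \subset B$ the ideal defining $Y'$ locally, suppose there is $a \in I$ with $\phi(a) \neq 0$. Any nonzero element of $A$ stays nonzero in the localization at some maximal ideal, so the Nullstellensatz together with Krull's intersection theorem produces a maximal ideal $\mathfrak{m} \subset A$ and an integer $n \geq 1$ such that the image of $\phi(a)$ in $A/\mathfrak{m}^{n}$ is nonzero; this quotient is a local artinian $k$-algebra with residue field $k$, because $k$ is algebraically closed. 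Applying the hypothesis to the composite $\mathrm{Spec}(A/\mathfrak{m}^{n}) \to \mathrm{Spec}(A) \hookrightarrow X$ followed by $f$ forces this composite to factor through $Y'$, so $\phi(a)$ must vanish in $A/\mathfrak{m}^{n}$, a contradiction. Hence $f = \iota \circ g$ for a unique $g : X \to Y'$.

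Transferring the hypothesis through $f = \iota \circ g$ yields a bijection $\mathrm{Hom}(\mathrm{Spec}(A), X) \cong \mathrm{Hom}(\mathrm{Spec}(A), Y')$ via $g \circ -$ for every local artinian $k$-algebra $A$ with residue field $k$. Taking $A = k$, $g$ is bijective on closed points. Fixing a closed point $x \in X$ with image $y = g(x)$, and letting $A$ range over the artinian quotients of $\widehat{\mathcal{O}}_{Y', y}$, Yoneda in the category of complete local noetherian $k$-algebras with residue field $k$ shows that the induced map $\widehat{\mathcal{O}}_{Y', y} \to \widehat{\mathcal{O}}_{X, x}$ is an isomorphism. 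Thus $g$ is formally \'etale at every closed point, and since $g$ is locally of finite type, $g$ is \'etale at every closed point. Because \'etaleness is an open property and closed points are dense in the Jacobson scheme $X$, $g$ is \'etale everywhere.

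To conclude, consider the diagonal $\Delta_{g} : X \to X \times_{Y'} X$. It is an open immersion because $g$ is \'etale, and its image contains every closed point of $X \times_{Y'} X$, because $g$ is bijective on closed points and $k$ is algebraically closed. The Jacobson property of $X \times_{Y'} X$ then forces $\Delta_{g}$ to be surjective, hence an isomorphism, so $g$ is a monomorphism; being \'etale and a monomorphism, $g$ is an open immersion. Its image is an open subset of $Y'$ containing every closed point, so by Jacobson again the image equals $Y'$, and $g$ is an isomorphism. The main obstacle is the passage from the bijection on $A$-points to the isomorphism on formal neighborhoods: one must carefully track the decomposition of $\mathrm{Hom}(\mathrm{Spec}(A), X)$ by which closed point is hit, in order to present the infinitesimal data as pro-representable functors pinned down by both $\widehat{\mathcal{O}}_{X, x}$ and $\widehat{\mathcal{O}}_{Y', g(x)}$.
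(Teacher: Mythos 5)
Your first step (producing the factorization $f=\iota\circ g$ by showing the ideal of $Y'$ pulls back to zero, via localization at a maximal ideal, Krull's intersection theorem and the resulting local artinian quotient) is exactly the paper's first step. Your second step is genuinely different and also correct. The paper reduces to the case $Y'=Y$ and argues directly on coordinate rings: it first notes $f$ is everywhere locally a closed immersion, hence finite and affine, and then applies the same Krull-type argument to the kernel and cokernel of $f^{\#}$ (this part is only sketched in the paper). You instead decompose $\mathrm{Hom}(\mathrm{Spec}(A),X)$ and $\mathrm{Hom}(\mathrm{Spec}(A),Y')$ according to which closed point is hit, use pro-representability (Yoneda for complete local noetherian $k$-algebras tested on artinian quotients) to conclude $\widehat{\mathcal{O}}_{Y',g(x)}\to\widehat{\mathcal{O}}_{X,x}$ is an isomorphism, deduce that $g$ is \'etale (openness of the \'etale locus plus the Jacobson property), and then upgrade ``\'etale and bijective on closed points'' to ``isomorphism'' via the diagonal being a surjective open immersion. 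Your route leans on more machinery (\'etale morphisms, the completion criterion for \'etaleness, open diagonals of unramified maps) but has the advantage of being fully written out and conceptually transparent -- it makes explicit that the content of the statement is that finite-type $k$-schemes are determined by their artinian points -- whereas the paper's second half is more elementary but compressed to a ``can be proved similarly.'' One small remark: you never use the separatedness hypothesis, which is harmless since the diagonal of an unramified morphism is an open immersion regardless; the paper appears to need separatedness only for its own finiteness argument.
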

\begin{proof}
First we prove the proposition while assuming $f \circ -$ factors (not necessarily as a bijection) and conclude $f$ factors through $Y'$ (not necessarily as an isomorphism). It is clear that if $f$ factors, then it factors uniquely, because $\iota$ is a closed immersion. By taking an appropriate open affine cover we get the following diagram of finitely generated $k$-algebras
\begin{displaymath}
\xymatrix
{
R &  \ar[l]_{f^{\#}} S \ar[d]^{\iota^{\#}} \\
& S/I, 
}
\end{displaymath}
where $I \subset S$ is some ideal. It is enough to prove $f^{\#}(I) = 0$. Suppose this is not the case. Then there is some $0 \neq s \in I$ such that $f^{\#}(s) = r \neq 0$. There exists a maximal ideal $\mathfrak{m} \subset R$ such that $r$ is not mapped to zero by the localisation map $R \longrightarrow R_{\mathfrak{m}}$ (use \cite[Exc.~4.10]{AM}). Moreover, there is an integer $n > 0$ such that the canonical map $R_{\mathfrak{m}} \longrightarrow R_{\mathfrak{m}} / (\mathfrak{m}R_{\mathfrak{m}})^{n}$ maps $r/1 \in R_{\mathfrak{m}}$ to a nonzero element (this follows from a corollary of Krull's Theorem \cite[Cor.~10.19]{AM}). Now $R_{\mathfrak{m}} / (\mathfrak{m}R_{\mathfrak{m}})^{n}$ is a local artinian $k$-algebra with residue field $k$ \cite[Prop.~8.6]{AM}. We obtain a $k$-algebra homomorphism $R \longrightarrow R_{\mathfrak{m}} / (\mathfrak{m}R_{\mathfrak{m}})^{n}$ such that precomposition with $f^{\#}$ maps $s$ to a nonzero element. But by assumption, this composition has to factor through $S/I$ and since $s \in I$ this is a contradiction. 

The second part of the proof consists of proving the statement of the proposition in the case $Y' = Y$. This is established in \cite[Lem.~3.2]{Fog}. 
\end{proof}
\begin{proposition} \label{ch. 1, sect. 4, prop. 7}
Let $X$ be a nonsingular projective toric variety defined by a fan $\Delta$ in a lattice of rank $r$. Let $\mathcal{O}_{X}(1)$ be an ample line bundle on $X$, let $P$ be a choice of Hilbert polynomial of degree $d$ and let $\tau_{1}, \ldots, \tau_{a}$ be all cones of $\Delta$ of dimension $s = r - d$. Let $i_{1} < \cdots < i_{\alpha} \in \{1, \ldots, a\}$ and assume for any $\vec{\chi} \in \mathcal{X}^{\tau_{i_{1}}, \ldots, \tau_{i_{\alpha}}}_{P}$, we can pick an equivariant line bundle matching Gieseker and GIT stability. Then for any local artinian $k$-algebra $A$ with residue field $k$ the moduli functors and their moduli spaces induce bijections
\begin{align}
\begin{split}
\underline{\mathcal{M}}_{\vec{\chi}}^{\tau_{i_{1}}, \ldots, \tau_{i_{\alpha}}, s}(A) &\stackrel{\cong}{\longrightarrow} \mathrm{Hom}(A, \mathcal{M}_{\vec{\chi}}^{\tau_{i_{1}}, \ldots, \tau_{i_{\alpha}}, s}), \\
\underline{\mathcal{M}}_{P}^{s}(A) &\stackrel{\cong}{\longrightarrow} \mathrm{Hom}(A, \mathcal{M}_{P}^{s}). \nonumber
\end{split}
\end{align}
\end{proposition}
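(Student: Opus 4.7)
The plan is to exploit two features of local artinian bases. First, $\mathrm{Pic}(\mathrm{Spec}(A)) = 0$, so the equivalence relation $\mathcal{F}_1 \sim \mathcal{F}_2 \otimes p_S^*L$ defining both moduli functors collapses to sheaf isomorphism of $A$-flat families. Second, both moduli spaces are constructed as geometric quotients $F/G$ of schemes $F$ representing ``framed'' variants of the moduli functors under a reductive group $G$, and such quotients are principal bundles \'etale-locally (for the effective action) by Luna's \'etale slice theorem. Combined with the henselianness of $A$---its residue field is the algebraically closed $k$---this lets one lift any $\varphi : \mathrm{Spec}(A) \to F/G$ to an $A$-point of $F$ modulo $G(A)$, producing a framed family whose class in the moduli functor maps back to $\varphi$.

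For the second bijection, I would use the construction $\mathcal{M}_P^s = R^s/G$ with $G = \mathrm{PGL}(V)$ from the proof of Proposition \ref{ch. 1, sect. 4, prop. 1}, where $R^s$ represents the framed moduli functor of $m$-regular Gieseker stable families. The action of $G$ is set-theoretically free at every stable point because Gieseker stability implies simplicity and scalar automorphisms have been quotiented out in passing from $\mathrm{GL}(V)$ to $\mathrm{PGL}(V)$. Luna's theorem then gives that $R^s \to \mathcal{M}_P^s$ is an \'etale-locally trivial principal $G$-bundle; an \'etale trivializing neighbourhood of the image of the closed point of $\mathrm{Spec}(A)$ admits a section over $\mathrm{Spec}(A)$ by Hensel's lemma, so any $\varphi$ lifts to an $A$-point of $R^s$. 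Pulling back the universal framed family and forgetting the framing yields the required preimage in $\underline{\mathcal{M}}_P^s(A)$. Injectivity is automatic: two classes inducing the same $\varphi$ lift to $G(A)$-equivalent framings, which produce an $\mathcal{O}_X$-linear isomorphism of the underlying families---coinciding with $\sim$-equivalence since $\mathrm{Pic}(A) = 0$.

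The first bijection follows the same template with $F = \mathcal{N}_{\vec{\chi}}^{\tau_{i_1},\ldots,\tau_{i_\alpha},s}$ representing $\mathfrak{C}_{\vec{\chi}}^{\tau_{i_1},\ldots,\tau_{i_\alpha},s,fr}$ (Proposition \ref{ch. 1, sect. 3, prop. 6}) and $G$ the group of equation (\ref{ch. 1, eqnG}). A simplification appears: framing is automatic over $\mathrm{Spec}(A)$, because the limiting locally free sheaves in any object of $\mathcal{C}_{\vec{\chi}}^{\tau_{i_1},\ldots,\tau_{i_\alpha},s}(\mathrm{Spec}(A))$ are free on the local base, so the forgetful map $\mathfrak{C}_{\vec{\chi}}^{\tau_{i_1},\ldots,\tau_{i_\alpha},s,fr}(A) \to \underline{\mathcal{M}}_{\vec{\chi}}^{\tau_{i_1},\ldots,\tau_{i_\alpha},s}(A)$ is already surjective on isomorphism classes. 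A slight complication also appears: the stabilizer in $G$ of a framed Gieseker stable equivariant sheaf is the central subgroup $\mu_N = \{\zeta \in k^\times : \zeta^N = 1\}$ with $N = \sum_\Lambda n(\Lambda)$, which acts trivially on all of $\mathcal{N}_{\vec{\chi}}^{\tau_{i_1},\ldots,\tau_{i_\alpha},s}$. The effective action is thus by the free quotient $\bar G = G/\mu_N$, and Luna again yields an \'etale $\bar G$-torsor to which the same Hensel-lifting argument applies.

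The main obstacle is justifying the \'etale-local triviality of the quotient maps at stable points, which is the core geometric input. For $\mathcal{M}_P^s$ this is the classical statement for free reductive actions in characteristic $0$. For $\mathcal{M}_{\vec{\chi}}^{\tau_{i_1},\ldots,\tau_{i_\alpha},s}$ it rests on the observation that the only $G$-stabilizers at stable points are the central $\mu_N$---itself a consequence of Proposition \ref{ch. 1, sect. 3, prop. 8} (Gieseker stability implies simplicity, hence scalar endomorphisms, hence scalar equivariant endomorphisms). Once this freeness (up to a center that acts trivially) is in hand, the bijection on $A$-points is a matter of combining Luna's theorem with Hensel's lemma.
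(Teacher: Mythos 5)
Your proposal is correct and takes essentially the same route as the paper's proof: both present the moduli space as a geometric quotient, pass to the effective group $G/\mu_{N}$ to kill the central stabiliser coming from scalar (equivariant) automorphisms of a stable sheaf, invoke Luna's \'Etale Slice Theorem to realise the quotient map as an \'etale-locally trivial principal bundle, and lift $A$-points along the \'etale trivialisation using that $A$ is artinian local with residue field $k$ (your appeal to henselianness is the paper's induction on length via formal \'etaleness). Your explicit remarks that $\mathrm{Pic}(\mathrm{Spec}(A))=0$ collapses the equivalence relation and that framing is automatic over a local base are exactly the (implicit) reductions the paper uses to identify $\underline{\mathcal{M}}(A)$ with $\mathrm{Hom}(A,N)/\mathrm{Hom}(A,G)$.
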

\begin{proof}
Let us prove the first bijection first. Denote the moduli functor $\underline{\mathcal{M}}_{\vec{\chi}}^{\tau_{i_{1}}, \ldots, \tau_{i_{\alpha}}, s}$ by $\mathcal{M}$. Recall that $M = \mathcal{M}_{\vec{\chi}}^{\tau_{i_{1}}, \ldots, \tau_{i_{\alpha}}, s}$ was formed by considering the regular action of the reductive algebraic group $G$ on $N = \mathcal{N}_{\vec{\chi}}^{\tau_{i_{1}}, \ldots, \tau_{i_{\alpha}}, s}$ (where GIT stability is defined by the $G$-equivariant line bundle $L = \mathcal{L}_{\vec{\chi}}^{\tau_{i_{1}}, \ldots, \tau_{i_{\alpha}}}$) and the induced geometric quotient $\varpi : N \longrightarrow M = N / G$ (see subsection 3.3). Here $G$ is the closed subgroup of elements of determinant 1 of an algebraic group of the form
\begin{equation}
H = \prod_{i=1}^{n} \mathrm{GL}(n_{i},k). \nonumber
\end{equation}
We would like to use a corollary of Luna's \'Etale Slice Theorem to conclude that $\varpi$ is a principal $G$-bundle \cite[Cor.~4.2.13]{HL}. Unfortunately, the stabiliser of a closed point of $N$ is the group $\mu_{p}$ of $p$th roots of unity, where $p = \sum_{i=1}^{n} n_{i}$, hence not trivial. Consider the diagonal closed subgroup $\mathbb{G}_{m} \lhd H$ and define $\tilde{G} = H / \mathbb{G}_{m}$. There is a natural regular action of the reductive algebraic group $\tilde{G}$ on $\mathcal{N}_{\vec{\chi}}^{\tau_{i_{1}}, \ldots, \tau_{i_{\alpha}}}$ giving rise to the same orbits as $G$. The natural morphism $G \longrightarrow \tilde{G}$ of algebraic groups gives rise to an isomorphism $G / \mu_{p} \cong \tilde{G}$ of algebraic groups. If we fix a $G$-equivariant line bundle $L$, then it is easy to see that $L^{\otimes p}$ admits a $\tilde{G}$-equivariant structure. For both choices, the sets of GIT semistable respectively stable points will be the same and the categorical and geometric quotients will be the same. In particular $M = N / G = N / \tilde{G}$. The stabiliser in $\tilde{G}$ of any GIT stable closed point of $N$ is trivial. Consequently, $\varpi : N \longrightarrow M$ is a principal $\tilde{G}$-bundle, i.e.~there is an \'etale surjective morphism $\pi : Y \longrightarrow M$ and a $\tilde{G}$-equivariant isomorphism  $\psi : \tilde{G} \times Y \longrightarrow N \times_{M} Y$, such that the following diagram commutes
\begin{displaymath}
\xymatrix
{
\tilde{G} \times Y \ar[r]^{\psi} \ar[d] & N \times_{M} Y \ar[dl] \\
Y. &
}
\end{displaymath}
Let $P = [\mathcal{E}] \in M$ be a closed point and let $Q \in Y$ be a closed point such that $\pi(Q) = P$. Let $A$ be any local artinian $k$-algebra with residue field $k$, let $\mathrm{Hom}(A,M)_{P}$ be the set of morphisms $A \longrightarrow M$ where the point is mapped to $P$ and let $\mathrm{Hom}(A,Y)_{Q}$ be the set of morphisms $A \longrightarrow Y$ where the point is mapped to $Q$. Using induction on the length\footnote{Note that for any local artinian $k$-algebra $(A',\mathfrak{m}')$ of length $l \geq 2$ with residue field $k$ there is a surjective local $k$-algebra homomorphism $\sigma : A' \longrightarrow A$, where $A$ is a local artinian $k$-algebra of length $<l$ with residue field $k$ and kernel $J$ a principal ideal satisfying $\mathfrak{m}'J = 0$. Such surjective morphisms are called small extensions \cite{Sch}.} of $A$ and using the definition of formally \'etale \cite[Def.~17.1.1]{EGA3}, it is easy to see that composition with $\pi$ gives a bijection
\begin{equation}
\mathrm{Hom}(A,Y)_{Q} \stackrel{\cong}{\longrightarrow} \mathrm{Hom}(A,M)_{P}. \nonumber
\end{equation}
As an aside, we note that this implies in particular that the Zariski tangent spaces at $P$ and $Q$ are isomorphic $T_{Q} Y \cong T_{P} M$, by taking $A$ the ring of dual numbers. We have $\mathcal{M}(A) \cong \mathrm{Hom}(A,N) / \mathrm{Hom}(A,G) = \mathrm{Hom}(A,N) / \mathrm{Hom}(A,\tilde{G})$. The first isomorphism follows from the definition of $\mathcal{M}$ (see proof of Theorem \ref{ch. 1, sect. 3, thm. 2}). The second equality can be deduced from the fact that the morphism $G \longrightarrow \tilde{G}$ is \'etale and surjective on closed points. Using these facts, together with the isomorphism $\psi$, we obtain a natural injection $\mathrm{Hom}(A,M)_{P} \hookrightarrow \mathcal{M}(A)$ such that the following diagram commutes
\begin{displaymath}
\xymatrix
{
\mathcal{M}(A) \ar[r] & \mathrm{Hom}(A,M) \\
& \mathrm{Hom}(A,M)_{P}. \ar@^{(-->}[ul] \ar@^{(->}[u]_{\subset}
}
\end{displaymath}
Let $\mathcal{M}(A)_{P}$ be the image of the injection $\mathrm{Hom}(A,M)_{P} \hookrightarrow \mathcal{M}(A)$. Consider the natural morphism $\iota : \mathrm{Spec}(k) \hookrightarrow \mathrm{Spec}(A)$. It is easy to see that $\mathcal{M}(A)_{P}$ is the set of equivariant isomorphism classes of $A$-flat equivariant coherent sheaves $\mathcal{F}$ on $X \times A$ such that there exists an equivariant isomorphism $(1_{X} \times \iota)^{*} \mathcal{F} \cong \mathcal{E}$. We obtain a natural bijection
\begin{equation}
\mathcal{M}(A)_{P} = \{ [\mathcal{F}] \in \mathcal{M}(A) \ | \ (1_{X} \times \iota)^{*} \mathcal{F} \cong \mathcal{E} \} \stackrel{\cong}{\longrightarrow} \mathrm{Hom}(A, \mathcal{M})_{P}. \nonumber
\end{equation}
Taking a union over all closed points $P$ gives the required bijection. The second bijection of the proposition can be proved entirely analogously. For the definition of the moduli functor and moduli space in this case, we refer to \cite[Ch.~4]{HL}. 
\end{proof}

We can now formulate and prove the following theorem.
\begin{theorem} \label{ch. 1, sect. 4, thm. 1}
Let $X$ be a nonsingular projective toric variety defined by a fan $\Delta$ in a lattice $N$ of rank $r$. Let $\mathcal{O}_{X}(1)$ be an ample line bundle on $X$, let $P$ be a choice of Hilbert polynomial of degree $d$ and let $\tau_{1}, \ldots, \tau_{a}$ be all cones of $\Delta$ of dimension $s = r - d$. Assume for any $i_{1} < \cdots < i_{\alpha} \in \{1, \ldots, a\}$ and $\vec{\chi} \in \mathcal{X}^{\tau_{i_{1}}, \ldots, \tau_{i_{\alpha}}}_{P}$, we can pick an equivariant line bundle matching Gieseker and GIT stability. Then there is a natural isomorphism of quasi-projective schemes of finite type over $k$ 
\begin{equation}
\coprod_{\alpha=1}^{a} \coprod_{i_{1} < \cdots < i_{\alpha} \in \{1, \ldots, a\}} \coprod_{\vec{\chi} \in \left( \mathcal{X}_{P}^{\tau_{i_{1}}, \ldots, \tau_{i_{\alpha}}} \right)^{fr}} \mathcal{M}_{\vec{\chi}}^{\tau_{i_{1}}, \ldots, \tau_{i_{\alpha}},s} \cong \left( \mathcal{M}_{P}^{s} \right)^{T}. \label{ch. 1, eqniso}
\end{equation}
\end{theorem}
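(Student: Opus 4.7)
The plan is to verify the hypotheses of Proposition \ref{ch. 1, sect. 4, prop. 6}, applied to the morphism $F$ of equation (\ref{ch. 1, eqnF}) together with the closed immersion $\iota : (\mathcal{M}_P^s)^T \hookrightarrow \mathcal{M}_P^s$. By Proposition \ref{ch. 1, sect. 4, prop. 7}, for any local artinian $k$-algebra $A$ with residue field $k$, the Hom-sets featuring in Proposition \ref{ch. 1, sect. 4, prop. 6} are identified with the corresponding moduli functors evaluated at $A$. The task therefore reduces to showing that the natural map from the disjoint union of moduli functors appearing on the left-hand side of (\ref{ch. 1, eqniso}), evaluated at $A$, into $\underline{\mathcal{M}}_P^s(A)$ is a bijection onto the subset of equivalence classes $[\mathcal{F}]$ whose associated morphism $A \to \mathcal{M}_P^s$ factors through $(\mathcal{M}_P^s)^T$.

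First I would check that $F$ indeed factors set-theoretically through the fixed-point locus. If $\mathcal{F}$ is an equivariant $A$-flat family with equivariant structure $\Phi : \sigma^\ast \mathcal{F} \to p_2^\ast \mathcal{F}$, then $\Phi$ exhibits $\sigma^\ast \mathcal{F}$ and $p_2^\ast \mathcal{F}$ as equivalent $T \times A$-flat families on $X \times T \times A$, so by Proposition \ref{ch. 1, sect. 4, prop. 1} the two compositions $T \times A \rightrightarrows \mathcal{M}_P^s$ obtained from the induced morphism $A \to \mathcal{M}_P^s$ coincide, which is the condition for that morphism to factor through $(\mathcal{M}_P^s)^T$.

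Second, for surjectivity, let $\mathcal{F}$ be an $A$-flat Gieseker stable family whose morphism to $\mathcal{M}_P^s$ factors through $(\mathcal{M}_P^s)^T$. Then $\sigma^\ast \mathcal{F}$ and $p_2^\ast \mathcal{F}$ define the same element of $\underline{\mathcal{M}}_P^s(T \times A)$, hence are equivalent. A family version of Proposition \ref{tom}, with $X$ replaced by $X \times A$ and $T$ by $T \times A$, then produces an isomorphism $\Phi : \sigma^\ast \mathcal{F} \cong p_2^\ast \mathcal{F}$; the input is that the closed fibre is Gieseker stable, hence simple by \cite[Cor.~1.2.8]{HL}, so $p_{1\ast} \mathcal{H}\!\mathit{om}(\sigma^\ast \mathcal{F}, p_2^\ast \mathcal{F})$ is an invertible sheaf on $T \times A$, and that $\mathrm{Pic}(T \times A) = 0$ because the closed immersion $T \hookrightarrow T \times A$ is defined by a nilpotent ideal and $\mathrm{Pic}(T) = 0$. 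Next, I would upgrade $\Phi$ to satisfy the cocycle condition by a family analogue of Proposition \ref{ch. 1, sect. 4, prop. 3}: the Rosenlicht argument there applies once one checks that the function $F \in \mathcal{O}(T \times A \times T \times A)^\ast$ still splits as $F_1(g) F_2(h)$, which follows from the fact that the units of $\mathcal{O}(T \times A)$ are generated by those of $\mathcal{O}(T)$ together with nilpotent units lifting $1$ from the maximal ideal of $A$. This produces a genuine $T$-equivariant structure on $\mathcal{F}$.

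Third, the equivariant structure is unique up to tensoring by an equivariant line bundle of the form $\mathcal{O}_X(\chi)$, $\chi \in M$, by a family version of Proposition \ref{ch. 1, sect. 4, prop. 4}. By Proposition \ref{ch. 1, sect. 4, prop. 5}, such tensoring shifts the characteristic function of the closed fibre in a controlled way, and as in the Claim preceding the theorem, the analysis of $A_{r-1}(X)$ via \cite[Sect.~5.2]{Ful} shows that for the chosen minimal-index cone $\sigma_n$ there is a unique $\chi \in M$ producing any prescribed shift $(k_1^{(n)}, \ldots, k_r^{(n)})$. Hence the gauge-fixing condition picks a unique representative, identifying $[\mathcal{F}]$ with a unique element of the disjoint union on the left of (\ref{ch. 1, eqniso}). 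Injectivity follows from the same argument: two gauge-fixed equivariant families equivalent as plain $A$-flat families (equivalence reducing to isomorphism since $\mathrm{Pic}(A) = 0$) differ by an equivariant line bundle $\mathcal{O}_X(\chi)$ with trivial underlying bundle, and both being gauge-fixed forces $\chi = 0$, while matching of closed-fibre characteristic functions ensures they sit in the same summand.

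The main obstacle is the family-over-$A$ versions of Propositions \ref{tom}, \ref{ch. 1, sect. 4, prop. 3}, and \ref{ch. 1, sect. 4, prop. 4}. Once one controls $\mathrm{Pic}(T \times A)$ and $\mathcal{O}(T \times A)^\ast$ for a local artinian $A$ with residue field $k$, the arguments given over $k$ transcribe directly; no new combinatorial input is needed beyond what is already supplied by the Claim preceding the theorem.
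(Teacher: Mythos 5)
Your reduction to artinian test rings via Propositions \ref{ch. 1, sect. 4, prop. 6} and \ref{ch. 1, sect. 4, prop. 7} matches the paper's setup, and the length-one case is exactly the Claim preceding the theorem. The gap is in the inductive content: your ``family versions'' of Propositions \ref{tom}, \ref{ch. 1, sect. 4, prop. 3} and \ref{ch. 1, sect. 4, prop. 4} over the non-reduced base $T \times \mathrm{Spec}(A)$ do not transcribe from the reduced case. Concretely, the step ``the closed fibre is simple, so $p_{1*}\mathcal{H}{\it{om}}(\sigma^{*}\mathcal{F},p_{2}^{*}\mathcal{F})$ is invertible on $T\times A$'' fails: the conclusion that a coherent sheaf with one-dimensional fibres at closed points is locally free (\cite[Exc.~II.5.8]{Har1}, used in the proof of Proposition \ref{tom}) requires the base to be reduced. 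Already for $T$ trivial and $A=k[\epsilon]$, two non-isomorphic first-order deformations $\mathcal{F}_{1},\mathcal{F}_{2}$ of a simple sheaf $\mathcal{F}_{0}$ with $\mathrm{Ext}^{1}(\mathcal{F}_{0},\mathcal{F}_{0})\neq 0$ have simple isomorphic closed fibres, yet $\mathrm{Hom}_{X\times A}(\mathcal{F}_{1},\mathcal{F}_{2})$ is killed by $\epsilon$ and contains no isomorphism. So closed-fibre data alone cannot produce the isomorphism $\sigma^{*}\mathcal{F}\cong p_{2}^{*}\mathcal{F}$; you would need the full strength of the hypothesis that the two classifying morphisms $T\times A \to \mathcal{M}_{P}^{s}$ agree, together with an injectivity statement for $\underline{\mathcal{M}}_{P}^{s}(T\times A)\to \mathrm{Hom}(T\times A,\mathcal{M}_{P}^{s})$ over this non-artinian, non-reduced base, which you do not supply. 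The cocycle-fixing step has the same defect: a unit on $(T\times A)^{2}$ congruent to $1$ modulo nilpotents and trivial on both axes need not split as $F_{1}(g)F_{2}(h)$ (take $1+\epsilon(\chi(g)-1)(\chi(h)-1)$), so Rosenlicht's theorem does not apply directly; one must exploit the $2$-cocycle identity and a vanishing of the relevant group cohomology of $T$.

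These two failures occur exactly where the actual difficulty of the theorem sits, and where the paper's proof does its real work: after the same reduction to small extensions $0\to J\to A'\to A\to 0$, the paper compares the deformation functor $\mathcal{D}_{\mathcal{E}_{0}}$ with its equivariant analogue $\mathcal{D}_{\mathcal{F}_{0}}^{eq}$ via obstruction maps valued in $\mathrm{Ext}^{2}\otimes J$ and $T\textrm{-}\mathrm{Ext}^{2}\otimes J$ and torsor structures under $\mathrm{Ext}^{1}\otimes J$ and $T\textrm{-}\mathrm{Ext}^{1}\otimes J$, and then identifies $T\textrm{-}\mathrm{Ext}^{i}(\mathcal{A},\mathcal{B})\cong \mathrm{Ext}^{i}(\mathcal{A},\mathcal{B})^{T}$ using the spectral sequence $H^{p}(T,\mathrm{Ext}^{q})\Rightarrow T\textrm{-}\mathrm{Ext}^{n}$ and the vanishing $H^{p}(T,-)=0$ for $p>0$ (linear reductivity of the torus), together with \cite[Thm.~2.3]{Fog} to identify $A'$-points of the fixed-point scheme. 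That cohomological vanishing is precisely the input your transcribed arguments are silently assuming; without it, the surjectivity and cocycle steps of your proposal do not go through.
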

\begin{proof}
Consider the morphism $F$ of equation (\ref{ch. 1, eqnF}). We start by noting that there are only finitely many characteristic functions $\vec{\chi}$ in the disjoint union of the left hand side of equation (\ref{ch. 1, eqniso}) for which $\mathcal{M}_{\vec{\chi}}^{\tau_{i_{1}}, \ldots, \tau_{i_{\alpha}},s} \neq \varnothing$. This follows from the fact that the morphism $F_{cl}$ on closed points is bijective and the disjoint union is over a countable set. As a consequence, the left hand side of (\ref{ch. 1, eqniso}) is a quasi-projective $k$-scheme of finite type over $k$ (see Theorem \ref{ch. 1, sect. 3, thm. 3}). We now want to apply Proposition \ref{ch. 1, sect. 4, prop. 6} to the morphism $F$ of equation (\ref{ch. 1, eqnF}) and the closed subscheme $\iota : \left( \mathcal{M}_{P}^{s} \right)^{T} \hookrightarrow \mathcal{M}_{P}^{s}$. We proceed by induction on the length of local artinian $k$-algebras with residue field $k$. For length 1 (i.e.~$A \cong k$), the hypothesis of Proposition \ref{ch. 1, sect. 4, prop. 6} is satisfied. This is the content of the claim at the beginning of this subsection. Assume we have proved the hypothesis of Proposition \ref{ch. 1, sect. 4, prop. 6} for all lengths $1, \ldots, l$ and let $A'$ be a local artinian $k$-algebra of length $l + 1$ with residue field $k$. Then it fits in a small extension $0 \longrightarrow J \longrightarrow A' \stackrel{\sigma}{\longrightarrow} A \longrightarrow 0$, where $A$ is a local artinian $k$-algebra of length $\leq l$ with residue field $k$. Using \cite[Thm.~2.3]{Fog}, one can show the image of $\mathrm{Hom}(A',\left( \mathcal{M}_{P}^{s} \right)^{T})$ in $\mathrm{Hom}(A',\mathcal{M}_{P}^{s})$ is $\mathrm{Hom}(A',\mathcal{M}_{P}^{s})^{T_{cl}}$. Define abbreviations 
\begin{align}
\begin{split}
&\underline{\mathcal{M}} = \coprod_{\alpha=1}^{a} \coprod_{i_{1} < \cdots < i_{\alpha} \in \{1, \ldots, a\}} \coprod_{\vec{\chi} \in \left( \mathcal{X}_{P}^{\tau_{i_{1}}, \ldots, \tau_{i_{\alpha}}} \right)^{fr}} \underline{\mathcal{M}}_{\vec{\chi}}^{\tau_{i_{1}}, \ldots, \tau_{i_{\alpha}},s}, \\
&\underline{\mathcal{N}} = \underline{\mathcal{M}}_{P}^{s}. \nonumber
\end{split}
\end{align}
Using Proposition \ref{ch. 1, sect. 4, prop. 7}, it is enough to prove that the map $\underline{\mathcal{M}}(A') \longrightarrow \underline{\mathcal{N}}(A')$ maps bijectively onto the fixed point locus $\underline{\mathcal{N}}(A')^{T_{cl}}$. (Note that $T_{cl}$ act naturally on the set $\underline{\mathcal{N}}(A')$. We will drop the subscript $cl$ referring to closed points from now on.) By the induction hypothesis, we know $\underline{\mathcal{M}}(A) \longrightarrow \underline{\mathcal{N}}(A)$ maps bijectively onto $\underline{\mathcal{N}}(A)^{T}$. Before we continue, we need to study the deformations and obstructions associated to the moduli functors $\underline{\mathcal{M}}$, $\underline{\mathcal{N}}$. 

In general, let $\mathcal{E}_{0}$ be a simple coherent sheaf on $X$ and $\mathcal{F}_{0}$ a simple equivariant coherent sheaf on $X$. Let $Artin/k$ be the category of local artinian $k$-algebras with residue field $k$. Consider the deformation functor $\mathcal{D}_{\mathcal{E}_{0}} : Artin/k \longrightarrow Sets$, where $\mathcal{D}_{\mathcal{E}_{0}}(A)$ is defined to be the set of isomorphism classes of $A$-flat coherent sheaves on $X \times A$ such that $\mathcal{F} \otimes_{k} A \cong \mathcal{E}_{0}$. Similarly, we define the deformation functor $\mathcal{D}_{\mathcal{F}_{0}}^{eq} : Artin/k \longrightarrow Sets$, where $\mathcal{D}_{\mathcal{F}_{0}}^{eq}(A)$ is defined to be the set of equivariant isomorphism classes of $A$-flat equivariant coherent sheaves on $X \times A$ such that $\mathcal{F} \otimes_{k} A \cong \mathcal{F}_{0}$ (equivariant isomorphism). In our setting, we have a small extension $0 \longrightarrow J \longrightarrow A' \stackrel{\sigma}{\longrightarrow} A \longrightarrow 0$. We now consider the maps $\mathcal{D}_{\mathcal{E}_{0}}(\sigma) : \mathcal{D}_{\mathcal{E}_{0}}(A') \longrightarrow \mathcal{D}_{\mathcal{E}_{0}}(A)$, $\mathcal{D}_{\mathcal{F}_{0}}^{eq}(\sigma) : \mathcal{D}_{\mathcal{F}_{0}}^{eq}(A') \longrightarrow \mathcal{D}_{\mathcal{F}_{0}}^{eq}(A)$. There is a natural map 
\begin{equation}
\mathfrak{o}(\sigma) : \mathcal{D}_{\mathcal{E}_{0}}(A) \longrightarrow \mathrm{Ext}^{2}(\mathcal{E}_{0}, \mathcal{E}_{0}) \otimes_{k} J, \nonumber
\end{equation}
called the obstruction map, such that $\mathfrak{o}(\sigma)^{-1}(0) = \mathrm{im}(\mathcal{D}_{\mathcal{E}_{0}}(\sigma))$. The construction of this map can be found in \cite[Sect.~2]{Art}. Moreover, for any $[\mathcal{F}] \in \mathrm{im}(\mathcal{D}_{\mathcal{E}_{0}}(\sigma))$, the fibre $\mathcal{D}_{\mathcal{E}_{0}}(\sigma)^{-1}([\mathcal{F}])$ is naturally an $\mathrm{Ext}^{1}(\mathcal{E}_{0}, \mathcal{E}_{0}) \otimes_{k} J$-torsor. This can be seen by noting that Proposition \ref{ch. 1, sect. 4, prop. 7} implies $\mathcal{D}_{\mathcal{E}_{0}}$ is pro-representable by the completion $\widehat{\mathcal{O}_{\mathcal{M}_{P}^{s}, [\mathcal{E}_{0}]}}$ of the noetherian local $k$-algebra $\mathcal{O}_{\mathcal{M}_{P}^{s}, [\mathcal{E}_{0}]}$ and using Schlessinger's Criterion \cite[Thm.~2.11]{Sch}. Entirely analogously, one can construct an obstruction map\footnote{By $\mathrm{Ext}^{\cdot}(-,-)$ we denote the Ext groups formed in the category $\mathrm{Qco}(X)$ of quasi-coherent sheaves on $X$. By $T\textrm{-}\mathrm{Ext}^{\cdot}(-,-)$ we denote the Ext groups formed in the category $\mathrm{Qco}^{T}(X)$ of $T$-equivariant quasi-coherent sheaves on $X$.}
\begin{equation}
\mathfrak{o}^{eq}(\sigma) : \mathcal{D}_{\mathcal{F}_{0}}(A) \longrightarrow T\textrm{-}\mathrm{Ext}^{2}(\mathcal{F}_{0}, \mathcal{F}_{0}) \otimes_{k} J, \nonumber
\end{equation}
also called the obstruction map, such that $\mathfrak{o}^{eq}(\sigma)^{-1}(0) = \mathrm{im}(\mathcal{D}_{\mathcal{F}_{0}}^{eq}(\sigma))$. Moreover, for any $[\mathcal{F}] \in \mathrm{im}(\mathcal{D}_{\mathcal{F}_{0}}^{eq}(\sigma))$, the fibre $\mathcal{D}_{\mathcal{F}_{0}}^{eq}(\sigma)^{-1}([\mathcal{F}])$ is naturally a $T\textrm{-}\mathrm{Ext}^{1}(\mathcal{F}_{0}, \mathcal{F}_{0}) \otimes_{k} J$-torsor. 

Rewriting the moduli functors in terms of deformation functors, we obtain
\begin{equation}
\underline{\mathcal{M}}(A') = \coprod_{[\mathcal{F}] \in \underline{\mathcal{M}}(A)} \mathcal{D}_{\mathcal{F} \otimes_{A} k}(\sigma)^{-1}([\mathcal{F}]), \ \underline{\mathcal{N}}(A') = \coprod_{[\mathcal{F}] \in \underline{\mathcal{N}}(A)} \mathcal{D}_{\mathcal{F} \otimes_{A} k}^{eq}(\sigma)^{-1}([\mathcal{F}]). \nonumber
\end{equation}
The remarks on obstructions and deformations together with the induction hypothesis, show that it is enough to relate the $T$-equivariant Ext groups to the invariant part of the ordinary Ext groups. It is enough to prove that for any equivariant coherent sheaves $\mathcal{A}, \mathcal{B}$ on $X$ and for any $i \in \mathbb{Z}$ there is a canonical bijection
\begin{equation}
T\textrm{-}\mathrm{Ext}^{i}(\mathcal{A},\mathcal{B}) \stackrel{\cong}{\longrightarrow} \mathrm{Ext}^{i}(\mathcal{A},\mathcal{B})^{T} \subset \mathrm{Ext}^{i}(\mathcal{A},\mathcal{B}). \nonumber
\end{equation} 
This can be seen by using the following spectral sequence \cite[Thm.~5.6.3]{Toh}
\begin{equation}
II_{2}^{p,q}(\mathcal{B}) = H^{p}(T, \mathrm{Ext}^{q}(\mathcal{A}, \mathcal{B})) \Longrightarrow T\textrm{-}\mathrm{Ext}^{n}(\mathcal{A},\mathcal{B}), \nonumber 
\end{equation}
together with $H^{p}(T, \mathrm{Ext}^{q}(\mathcal{A}, \mathcal{B})) = 0$ for any $p >0$, $q \in \mathbb{Z}$ \cite[Lem.~4.3]{Jan}. Note that $H^{p}(T,-)$ denotes rational cohomology.
\end{proof}
\begin{corollary}[Theorem \ref{ch. 1, sect. 1, thm. 1}] \label{ch. 1, sect. 4, cor. 1}
Let $X$ be a nonsingular projective toric variety. Let $\mathcal{O}_{X}(1)$ be an ample line bundle on $X$ and let $P$ be a choice of Hilbert polynomial of degree $\mathrm{dim}(X)$. Then there is a canonical isomorphism\footnote{Note that in the context of Theorem \ref{ch. 1, sect. 3, thm. 4}, ``Gieseker stable'' is equivalent to ``\emph{properly} GIT stable''. Therefore, one should take \emph{properly} GIT stable points on the right hand side.}
\begin{equation} \nonumber
\left( \mathcal{M}_{P}^{s} \right)^{T} \cong \coprod_{\vec{\chi} \in \left( \mathcal{X}_{P}^{0} \right)^{fr}} \mathcal{M}_{\vec{\chi}}^{0,s}. 
\end{equation}
\end{corollary}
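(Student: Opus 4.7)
The plan is to deduce the corollary as an immediate specialisation of Theorem \ref{ch. 1, sect. 4, thm. 1}, combined with the matching-stability result of Theorem \ref{ch. 1, sect. 3, thm. 4}. The key observation is that since $P$ has degree $d = \mathrm{dim}(X) = r$, the relevant dimension of cones in Theorem \ref{ch. 1, sect. 4, thm. 1} is $s = r - d = 0$. The unique cone of $\Delta$ of dimension $0$ is the apex $\tau = 0$, so in the notation of Theorem \ref{ch. 1, sect. 4, thm. 1} we have $a = 1$ and $\tau_1 = 0$. Hence the outer disjoint unions over $\alpha \in \{1, \ldots, a\}$ and over tuples $i_1 < \cdots < i_\alpha \in \{1, \ldots, a\}$ collapse to a single term indexed by $\tau = 0$, and the left hand side of the isomorphism (\ref{ch. 1, eqniso}) reduces to the coproduct appearing in the corollary.

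Next, I will match the sheaf-theoretic content. By Definition \ref{pure}, a pure sheaf on the integral variety $X$ of dimension $\mathrm{dim}(X)$ is precisely a torsion free sheaf on $X$. Consequently, in the specialised setting $\tau = 0$, the pure equivariant sheaves parametrised by $\mathcal{M}_{\vec{\chi}}^{0,s}$ are exactly Gieseker stable torsion free equivariant sheaves with characteristic function $\vec{\chi} \in \left( \mathcal{X}_{P}^{0} \right)^{gf}$, so the moduli spaces on the right hand side of the corollary coincide with those constructed in Theorem \ref{ch. 1, sect. 3, thm. 3}.

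Finally, the hypothesis of Theorem \ref{ch. 1, sect. 4, thm. 1} requires that for every $\vec{\chi} \in \mathcal{X}_{P}^{0}$ there exists an equivariant line bundle on $\mathcal{N}_{\vec{\chi}}^{0}$ matching Gieseker and GIT stability (in the sense of Assumption \ref{ch. 1, sect. 3, ass. 1}). This is precisely the content of Theorem \ref{ch. 1, sect. 3, thm. 4}, which produces an ample equivariant line bundle $\mathcal{L}_{\vec{\chi}}^{0} \in \mathrm{Pic}^{G}(\mathcal{N}_{\vec{\chi}}^{0})$ with this property in full generality for torsion free equivariant sheaves. Applying Theorem \ref{ch. 1, sect. 4, thm. 1} in the specialised case therefore yields the canonical isomorphism of the corollary.

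Since the statement is a corollary, there is no substantial new obstacle: all essential difficulty (the combinatorics of gauge-fixing, the deformation-theoretic reduction to small extensions via Proposition \ref{ch. 1, sect. 4, prop. 6} and Proposition \ref{ch. 1, sect. 4, prop. 7}, and the comparison of $T$-equivariant and ordinary $\mathrm{Ext}$ groups) sits inside the proof of Theorem \ref{ch. 1, sect. 4, thm. 1}, while the construction of the matching line bundle is handled by Theorem \ref{ch. 1, sect. 3, thm. 4}. The only minor bookkeeping point, already flagged in the footnote to the corollary, is the identification between ``properly GIT stable'' (in the sense used in Theorem \ref{ch. 1, sect. 3, thm. 4}) and ``GIT stable'' (in the sense used throughout sections~3 and~4); this is automatic because the stabiliser of a Gieseker stable equivariant sheaf modulo scalars is trivial, so the two notions coincide on the locus of interest.
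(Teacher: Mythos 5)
Your proposal is correct and follows exactly the paper's route: the paper's own proof is the one-line ``Immediate from Theorems \ref{ch. 1, sect. 4, thm. 1} and \ref{ch. 1, sect. 3, thm. 4}'', and your write-up simply makes explicit the bookkeeping (the collapse of the disjoint union to the single cone $\tau=0$ since $s=r-d=0$, the identification of pure sheaves of dimension $\mathrm{dim}(X)$ with torsion free sheaves, and the verification of the matching-stability hypothesis via Theorem \ref{ch. 1, sect. 3, thm. 4}). Nothing is missing.
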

\begin{proof}
Immediate from Theorems \ref{ch. 1, sect. 4, thm. 1} and \ref{ch. 1, sect. 3, thm. 4}.
\end{proof}

The advantage of this result is that for any nonsingular projective toric variety $X$ with ample line bundle $\mathcal{O}_{X}(1)$ and Hilbert polynomial $P$ of degree $\mathrm{dim}(X)$, we now have a combinatorial description of $\left( \mathcal{M}_{P}^{s} \right)^{T}$ in terms of the explicit moduli spaces of torsion free equivariant sheaves of section 2. Explicit knowledge of $\left( \mathcal{M}_{P}^{s} \right)^{T}$ is useful for computing invariants associated to $\mathcal{M}_{P}^{s}$, e.g.~the Euler characteristic of $\mathcal{M}_{P}^{s}$, using localisation techniques. This will be exploited in a sequel (\cite{Koo}), where we take $X$ to be a nonsingular complete toric surface over $\mathbb{C}$. We will derive expressions for generating functions of Euler characteristics of moduli spaces of $\mu$-stable torsion free sheaves on $X$, keeping track of the dependence on choice of ample line bundle $\mathcal{O}_{X}(1)$. This will give rise to wall-crossing formulae. We will give the easiest two examples occurring in \cite{Koo}, without further discussion. In these examples, wall-crossing phenomena are absent.  

\begin{example} Let $X$ be a nonsingular complete toric surface over $\mathbb{C}$ and let $H$ be an ample divisor on $X$. Let $e(X)$ be the Euler characteristic of $X$. Denote by $M_{X}^{H}(r,c_{1},c_{2})$ the moduli space of $\mu$-stable torsion free sheaves on $X$ of rank $r \in H^{0}(X,\mathbb{Z}) \cong \mathbb{Z}$, first Chern class $c_{1} \in H^{2}(X,\mathbb{Z})$ and second Chern class $c_{2} \in H^{4}(X,\mathbb{Z}) \cong \mathbb{Z}$. Then for rank $r=1$ 
\begin{equation} 
\sum_{c_{2} \in \mathbb{Z}} e(M_{X}^{H}(1,c_{1},c_{2})) q^{c_{2}} = \frac{1}{\prod_{k=1}^{\infty}(1-q^{k})^{e(X)}}. \nonumber
\end{equation}
This result is known for general nonsingular projective surfaces $X$ over $\mathbb{C}$ by work of G\"ottsche, using very different techniques, i.e.~using his expression for the Poincar\'e polynomial of Hilbert schemes of points computed using the Weil Conjectures. \hfill $\oslash$
\end{example}

\begin{example} Using the notation of the previous example, let $X = \mathbb{P}^{2}$ and rank $r=2$. Then
\begin{align}
&\sum_{c_{2} \in \mathbb{Z}} e(M_{X}^{H}(2,1,c_{2})) q^{c_{2}} = \frac{1}{\prod_{k=1}^{\infty}(1-q^{k})^{6}} \sum_{m = 1}^{\infty} \sum_{n = 1}^{\infty} \frac{q^{mn}}{1-q^{m+n-1}} \nonumber \\ 
&= q + 9 q^{2} + 48 q^{3} + 203 q^{4} + 729 q^{5} + 2346 q^{6} + 6918 q^{7} + 19062 q^{8} + 49620 q^{9} + O(q^{10}). \nonumber
\end{align}
Another expression for the same generating function has been obtained by Yoshioka, who obtains an expression for the Poincar\'e polynomial using the Weil Conjectures. In \cite{Kly4}, Klyachko also computes this generating function expressing it in terms of Hurwitz class numbers. In fact, the current paper lays the foundations for many ideas appearing in \cite{Kly4} and generalises them to pure equivariant sheaves of any dimension on any nonsingular toric variety. The sequel \cite{Koo} can be seen as a systematic application to torsion free sheaves on nonsingular complete toric surfaces. \hfill $\oslash$
\end{example}

\subsection{Fixed Point Loci of Moduli Spaces of Reflexive Sheaves on Toric Varieties}

We end this section by discussing how our theory so far can be adapted to give combinatorial descriptions of fixed point loci of moduli spaces of $\mu$-stable reflexive sheaves on a nonsingular projective toric variety $X$ with ample line bundle $\mathcal{O}_{X}(1)$. We will start by describing how sections 2 and 3 analogously hold in the setting of reflexive equivariant sheaves on nonsingular toric varieties. In fact, we will construct a particularly simple ample equivariant line bundle in the GIT problem, which precisely recovers $\mu$-stability. Subsequently, we will quickly construct a general theory of moduli spaces of $\mu$-stable reflexive sheaves on any normal projective variety $X$ with ample line bundle $\mathcal{O}_{X}(1)$ in a form useful for our purposes. Combining the results, gives the desired combinatorial description of the fixed point loci. 

Let $X$ be a nonsingular toric variety defined by a fan $\Delta$ in a lattice $N$ of rank $r$. In subsection 4.2, we mentioned the combinatorial description of reflexive equivariant sheaves on $X$ originally due to Klyachko (see for instance \cite{Kly4}). As we discussed, the category of reflexive equivariant sheaves on $X$ is equivalent to the category $\mathcal{R}$ of filtrations $\{E^{\rho}(\lambda)\}_{\rho \in \Delta(1)}$ of finite-dimensional nonzero $k$-vector spaces. Let $\mathcal{X}^{r}$ be the collection of characteristic functions of reflexive equivariant sheaves on $X$, which is a subset of the collection $\mathcal{X}^{0}$ of characteristic functions of torsion free equivariant sheaves on $X$. Note that the characteristic function of a reflexive equivariant sheaf can also occur as the characteristic function of a torsion free equivariant sheaf that is not reflexive. Now assume $X$ is projective and $\mathcal{O}_{X}(1)$ is an ample line bundle on $X$. Let $\vec{\chi} \in \mathcal{X}^{r}$, then we can introduce natural moduli functors
\begin{align*} 
\underline{\mathcal{N}}_{\vec{\chi}}^{\mu ss} &: (Sch/k)^{o} \longrightarrow Sets, \\
\underline{\mathcal{N}}_{\vec{\chi}}^{\mu s} &: (Sch/k)^{o} \longrightarrow Sets, 
\end{align*}  
where $\underline{\mathcal{N}}_{\vec{\chi}}^{\mu ss}(S)$ consists of equivariant $S$-flat families $\mathcal{F}$ on $X \times S$ such that the fibres $\mathcal{F}_{x}$ are $\mu$-semistable reflexive equivariant sheaves on $X \times k(x)$ with characteristic function $\vec{\chi}$, where we identify two such families $\mathcal{F}_{1}, \mathcal{F}_{2}$ if there is a line bundle $L$ on $S$ (with trivial equivariant structure) and an equivariant isomorphism $\mathcal{F}_{1} \cong \mathcal{F}_{2} \otimes p_{S}^{*}L$. The definition of $\underline{\mathcal{N}}_{\vec{\chi}}^{\mu s}$ is analogous using geometric $\mu$-stability\footnote{On a projective $k$-scheme $X$ (for $k$ \emph{any} field, not necessarily algebraically closed of characteristic zero) with ample line bundle $\mathcal{O}_{X}(1)$, a torsion free sheaf $\mathcal{E}$ is called geometrically $\mu$-stable if $\mathcal{E} \otimes_{k} K$ is torsion free and $\mu$-stable on $X \times_{k} K$ for any field extension $K/k$. If $k$ is algebraically closed, a torsion free sheaf $\mathcal{E}$ on $X$ is $\mu$-stable if and only if geometrically $\mu$-stable \cite[Exm.~1.6.5, Thm.~1.6.6, Cor.~1.5.11]{HL}.}. Taking $\tau = 0$, Theorem \ref{ch. 1, sect. 3, thm. 1} tells us how to describe equivariant $S$-flat families with fibres torsion free equivariant sheaves with fixed characteristic function $\vec{\chi}$. Let $\mathcal{F}$ be such a family with corresponding object $\hat{\mathcal{F}}^{\Delta} \in \mathcal{C}_{\vec{\chi}}^{0}(S)$. We see that $\mathcal{F}$ has reflexive fibres if and only if for all $\sigma \in \Delta$ a cone of maximal dimension and $x \in S$ we have
\begin{equation*}
\mathcal{F}^{\sigma}(\lambda_{1}, \ldots, \lambda_{r})_{x} = \mathcal{F}^{\sigma}(\lambda_{1}, \infty, \ldots, \infty)_{x} \cap \cdots \cap \mathcal{F}^{\sigma}(\infty, \ldots, \infty, \lambda_{r})_{x},
\end{equation*}
for all $\lambda_{1}, \ldots, \lambda_{r} \in \mathbb{Z}$, or equivalently for all $\sigma \in \Delta$ a cone of maximal dimension and $x \in S$ we have
\begin{equation} \label{ch. 1, eqnrefl}
\mathrm{dim}_{k(x)} \left( \mathcal{F}^{\sigma}(\lambda_{1}, \infty, \ldots, \infty)_{x} \cap \cdots \cap \mathcal{F}^{\sigma}(\infty, \ldots, \infty, \lambda_{r})_{x} \right) = \chi^{\sigma}(\lambda_{1}, \ldots, \lambda_{r}),
\end{equation}
for all $\lambda_{1}, \ldots, \lambda_{r} \in \mathbb{Z}$. This gives rise to a subcategory $\mathcal{C}_{\vec{\chi}}^{r}(S) \subset \mathcal{C}_{\vec{\chi}}^{0}(S)$ and the category of equivariant $S$-flat families with fibres reflexive equivariant sheaves with characteristic function $\vec{\chi}$ is equivalent to $\mathcal{C}_{\vec{\chi}}^{r}(S)$. Using a framing, we get a moduli functor $\mathfrak{C}_{\vec{\chi}}^{r,fr}$ which is a subfunctor of the functor $\mathfrak{C}_{\vec{\chi}}^{0,fr}$ introduced in subsection 3.2. Now let $N$ be the number of rays of $\Delta$ and $M = \chi^{\sigma}(\infty, \ldots, \infty)$ the rank, where $\sigma$ can be chosen to be any cone of maximal dimension. Referring to Proposition \ref{ch. 1, sect. 3, prop. 6} and using the notation occurring in the proof of Proposition \ref{ch. 1, sect. 3, prop. match}, we recall $\mathfrak{C}_{\vec{\chi}}^{0,fr}$ is represented by a closed subscheme
\begin{equation*}
\mathcal{N}_{\vec{\chi}}^{0} \subset \mathcal{A}' = \prod_{j=1}^{N} \prod_{k=1}^{M-1} \mathrm{Gr}(k,M) \times \prod_{\alpha=1}^{a} \mathrm{Gr}(n_{\alpha}, M). 
\end{equation*}
The new conditions on the fibres (\ref{ch. 1, eqnrefl}) determine an open subset $\mathcal{N}_{\vec{\chi}}^{r} \subset \mathcal{N}_{\vec{\chi}}^{0}$ which represents $\mathfrak{C}_{\vec{\chi}}^{r,fr}$. This can be proved by noting that for any finite product of Grassmannians $\prod_{i} \mathrm{Gr}(n_{i},N)$ the map $\{p_{i}\} \mapsto \mathrm{dim}_{k}\left( \bigcap_{i} p_{i}\right)$ is upper semicontinuous. In fact, $\mathcal{N}_{\vec{\chi}}^{r}$ is naturally a locally closed subscheme of just $\prod_{j=1}^{N} \prod_{k=1}^{M-1} \mathrm{Gr}(k,M)$. This subscheme is invariant under the natural regular action of $G = \mathrm{SL}(M,k)$ on $\prod_{j=1}^{N} \prod_{k=1}^{M-1} \mathrm{Gr}(k,M)$. We need the following variation on Proposition \ref{ch. 1, sect. 3, prop. 8}.
\begin{proposition} \label{ch. 1, sect. 4, prop. 8}
Let $X$ be a normal projective variety with ample line bundle $\mathcal{O}_{X}(1)$. Let $G$ be an affine algebraic group acting regularly on $X$. Let $\mathcal{E}$ be a torsion free $G$-equivariant sheaf on $X$. Then $\mathcal{E}$ is $\mu$-semistable if and only if $\mu_{\mathcal{F}} \leq \mu_{\mathcal{E}}$ for any $G$-equivariant coherent subsheaf $\mathcal{F}$ with $0 < \mathrm{rk}(\mathcal{F}) < \mathrm{rk}(\mathcal{E})$. Now assume in addition $\mathcal{E}$ is reflexive and $G = T$ is an algebraic torus. Then $\mathcal{E}$ is $\mu$-stable if and only if $\mu_{\mathcal{F}} < \mu_{\mathcal{E}}$ for any equivariant coherent subsheaf $\mathcal{F}$ with $0 < \mathrm{rk}(\mathcal{F}) < \mathrm{rk}(\mathcal{E})$. 
\end{proposition}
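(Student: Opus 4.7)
The plan for Part 1 is to exploit the uniqueness of the Harder--Narasimhan filtration of $\mathcal{E}$ with respect to slope. By Proposition \ref{lemma} applied to each inclusion of this filtration, the equivariant structure on $\mathcal{E}$ preserves every HN term, so the filtration is $G$-equivariant. If $\mathcal{E}$ were not $\mu$-semistable, its maximal destabilising subsheaf would be an equivariant proper subsheaf with slope strictly exceeding $\mu_{\mathcal{E}}$, contradicting the hypothesis; the converse is immediate.

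For Part 2, the strategy is to adapt the socle argument of Proposition \ref{ch. 1, sect. 3, prop. 8} to the reflexive $\mu$-stable setting, using reflexivity in a crucial way. By Part 1, $\mathcal{E}$ is $\mu$-semistable, so I will argue by contradiction, assuming $\mathcal{E}$ is not $\mu$-stable. The key preparatory result is a Schur-type lemma: any nonzero morphism $f : \mathcal{E}_{1} \to \mathcal{E}_{2}$ between two $\mu$-stable reflexive sheaves of the same slope is an isomorphism. First $f$ must be injective, since otherwise $\mathrm{ker}(f)$ would destabilise $\mathcal{E}_{1}$. Comparing slopes of $\mathrm{im}(f)$ with its saturation in $\mathcal{E}_{2}$ and invoking $\mu$-stability of $\mathcal{E}_{2}$ forces $\mathrm{rk}(\mathcal{E}_{1}) = \mathrm{rk}(\mathcal{E}_{2})$ together with $\mathrm{coker}(f)$ being supported in codimension $\geq 2$ (codimension one support would strictly raise $\mu_{\mathcal{E}_{2}}$). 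Since both sheaves are reflexive on the nonsingular variety $X$ and agree off a codimension two subset, $f$ is then an isomorphism; in particular such $\mu$-stable reflexive sheaves are simple and $\mathrm{Hom}(\mathcal{E}_{1},\mathcal{E}_{2})$ is zero or one-dimensional.

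Armed with this Schur lemma, I define the $\mu$-reflexive socle $\mathcal{S} \subset \mathcal{E}$ to be the sum of all $\mu$-stable reflexive subsheaves of $\mathcal{E}$ of slope $\mu_{\mathcal{E}}$. Coherence of $\mathcal{E}$ together with the Schur lemma ensure that $\mathcal{S}$ is a finite internal direct sum of pairwise non-isomorphic simple pieces with finite multiplicities, and is therefore reflexive and $\mu$-polystable; moreover $\mathcal{S}$ is canonically determined by $\mathcal{E}$. By the functoriality of this construction under each fibrewise isomorphism $\Phi_{t} : t^{*}\mathcal{E} \stackrel{\cong}{\to} \mathcal{E}$, combined with Proposition \ref{lemma}, the equivariant structure $\Phi$ restricts to one on $\mathcal{S}$. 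The strict inequality hypothesis then forces $\mathcal{S} = \mathcal{E}$, yielding a decomposition $\mathcal{E} \cong \bigoplus_{i=1}^{n} \mathcal{E}_{i}^{\oplus m_{i}}$ with the $\mathcal{E}_{i}$ pairwise non-isomorphic $\mu$-stable reflexive sheaves, each of slope $\mu_{\mathcal{E}}$.

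From here the final part of the argument transfers essentially verbatim from Proposition \ref{ch. 1, sect. 3, prop. 8}: the Krull--Schmidt property \cite[Thm.~2]{Ati} together with the Schur lemma, connectedness of $T_{cl}$ and the semicontinuity of $\mathrm{dim} \, \mathrm{Hom}(t^{*}\mathcal{E}_{i}, \mathcal{E}_{j})$ in $t$ show that each $\mathcal{E}_{i}$ is $T$-invariant, hence equivariant by Proposition \ref{ch. 1, sect. 4, prop. 3}. Complete reducibility then decomposes each $\mathrm{Hom}(\mathcal{E}_{i}, \mathcal{E})$ into one-dimensional $T$-weight spaces, each producing an equivariant embedding of a twist $\mathcal{E}_{i} \otimes \mathcal{O}_{X}(\chi)$ into $\mathcal{E}$ of slope $\mu_{\mathcal{E}}$. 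Since $\mathcal{E}$ is assumed not $\mu$-stable, we cannot have $n = m_{1} = 1$, so at least one $\mathcal{E}_{i}$ has rank strictly between $0$ and $\mathrm{rk}(\mathcal{E})$, producing the required equivariant proper subsheaf of maximal slope and contradicting the strict inequality hypothesis. The main obstacle will be the Schur lemma and the resulting well-definedness of $\mathcal{S}$; this step is precisely what fails without reflexivity, as flagged at the start of the proof of Proposition \ref{ch. 1, sect. 3, prop. match}, and is what necessitates the reflexive hypothesis.
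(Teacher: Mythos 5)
Your proposal is correct and follows essentially the same route as the paper, which likewise reduces to the socle argument of Proposition \ref{ch. 1, sect. 3, prop. 8} via three reflexive-sheaf facts (the Schur lemma for $\mu$-stable reflexive sheaves of equal slope, the existence and uniqueness of the reflexive $\mu$-socle, and codimension-two rigidity of reflexive sheaves), the only cosmetic difference being that you prove the Schur lemma via saturation of the image rather than the paper's direct case analysis on kernel and image. The one step you should make explicit is the passage from ``$\mathcal{S}$ is an equivariant subsheaf of slope $\mu_{\mathcal{E}}$'' to ``$\mathcal{S} = \mathcal{E}$'': the strict-inequality hypothesis only excludes $0 < \mathrm{rk}(\mathcal{S}) < \mathrm{rk}(\mathcal{E})$, and in the remaining full-rank case you must rerun the codimension-two argument already used in your Schur lemma (the quotient $\mathcal{E}/\mathcal{S}$ has degree zero, hence support of codimension $\geq 2$, hence vanishes by reflexivity of $\mathcal{S}$ and $\mathcal{E}$), which is exactly the second half of the paper's Claim 1.
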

\begin{proof}
We can copy the proof of Proposition \ref{ch. 1, sect. 3, prop. 8}, but need $\mathcal{E}$ to be reflexive and $G = T$ an algebraic torus for the second part. The reason is that for reflexive sheaves we have the following three claims (see also the discussion at the start of the proof of Proposition \ref{ch. 1, sect. 3, prop. match}). Let $X$ be any projective normal variety with ample line bundle $\mathcal{O}_{X}(1)$. 

\noindent \emph{Claim 1.} Let $\mathcal{E}$ be a reflexive sheaf on $X$. Then $\mathcal{E}$ is $\mu$-semistable if and only if $\mu_{\mathcal{F}} \leq \mu_{\mathcal{E}}$ for any proper reflexive subsheaf $\mathcal{F} \subset \mathcal{E}$. Moreover, $\mathcal{E}$ is $\mu$-stable if and only if $\mu_{\mathcal{F}} < \mu_{\mathcal{E}}$ for any proper reflexive subsheaf $\mathcal{F} \subset \mathcal{E}$.  

\noindent \emph{Claim 2.} A reflexive $\mu$-polystable sheaf on $X$ is a $\mu$-semistable sheaf on $X$ isomorphic to a (finite, nontrivial) direct sum of reflexive $\mu$-stable sheaves. Let $\mathcal{E}$ be a $\mu$-semistable reflexive sheaf on $X$. Then $\mathcal{E}$ contains a unique maximal reflexive $\mu$-polystable subsheaf of the same slope as $\mathcal{E}$. This subsheaf we refer to as the reflexive $\mu$-socle of $\mathcal{E}$. 

\noindent \emph{Claim 3.} Let $\mathcal{E}$, $\mathcal{F}$ be reflexive $\mu$-stable sheaves on $X$ with the same slope. Then 
\begin{equation*}
\mathrm{Hom}(\mathcal{E},\mathcal{F}) = \left \{ \begin{array}{cc} k & \mathrm{if \ } \mathcal{E} \cong \mathcal{F} \\ 0 & \mathrm{otherwise}. \end{array} \right.
\end{equation*}

\noindent \emph{Proof of Claim 1.} We first observe that the saturation of a nonzero coherent subsheaf of a reflexive sheaf is reflexive \cite[Lem.~II.1.1.16]{OSS}. Hence, one only has to test $\mu$-semistability and $\mu$-stability of $\mathcal{E}$ for reflexive subsheaves $\mathcal{F} \subset \mathcal{E}$ with $0 < \mathrm{rk}(\mathcal{F}) < \mathrm{rk}(\mathcal{E})$ \cite[Prop.~1.2.6]{HL}. The claim follows from the statement that for any reflexive sheaf $\mathcal{E}$ on $X$ and reflexive subsheaf $\mathcal{F} \subset \mathcal{E}$ with $\mathrm{rk}(\mathcal{F}) = \mathrm{rk}(\mathcal{E})$ and $\mu_{\mathcal{F}} = \mu_{\mathcal{E}}$ one has $\mathcal{F} = \mathcal{E}$. This can be seen as follows. Suppose $\varnothing \neq Y = \mathrm{Supp}(\mathcal{E}/\mathcal{F})$, then $Y$ is a closed subset with $\mathrm{codim}(Y) \geq 2$. Since $\mathcal{E} |_{X \setminus Y} = \mathcal{F} |_{X \setminus Y}$ and for any open subset $U \subset X$, we have a commutative diagram \cite[Prop.~1.6]{Har2},
\begin{displaymath}
\xymatrix
{
\mathcal{F}(U) \ar[r] \ar[d]_{\cong} & \mathcal{E}(U) \ar[d]^{\cong} \\
\mathcal{F}(U \setminus Y) \ar[r]_{\cong} & \mathcal{E}(U \setminus Y),
}
\end{displaymath}     
we reach a contradiction. 

\noindent \emph{Proof of Claim 2.} Note that the collection $\{\mathcal{F}_{i} \ | \ i \in I\}$ of $\mu$-stable reflexive subsheaves of $\mathcal{E}$ with the same slope as $\mathcal{E}$ is nonempty (first remark in the proof of Claim 1). Consider the subsheaf $\mathcal{S} = \sum_{i \in I} \mathcal{F}_{i}$, which can be written as $\mathcal{S} = \sum_{i \in J} \mathcal{F}_{i}$ for some finite subset $\varnothing \neq J \subset I$. Assume $J = \{1, \ldots, m\}$ and $\mathcal{F}_{i+1} \nsubseteq \mathcal{F}_{1} + \cdots + \mathcal{F}_{i}$ for all $i = 1, \ldots, m-1$. It is enough to prove $(\mathcal{F}_{1} + \cdots + \mathcal{F}_{i}) \cap \mathcal{F}_{i+1} = 0$ for all $i = 1, \ldots, m-1$. Suppose we know the statement for $1, \ldots, k-1$, but $(\mathcal{F}_{1} + \cdots + \mathcal{F}_{k}) \cap \mathcal{F}_{k+1} \neq 0$. By the short exact sequence
\begin{equation*}
0 \longrightarrow (\mathcal{F}_{1} + \cdots + \mathcal{F}_{k}) \cap \mathcal{F}_{k+1} \longrightarrow (\mathcal{F}_{1} + \cdots + \mathcal{F}_{k}) \oplus \mathcal{F}_{k+1} \longrightarrow \mathcal{F}_{1} + \cdots + \mathcal{F}_{k+1} \longrightarrow 0,
\end{equation*} 
and $\mathcal{F}_{1} + \cdots + \mathcal{F}_{k} = \mathcal{F}_{1} \oplus \cdots \oplus \mathcal{F}_{k}$, we see that if $\mu_{(\mathcal{F}_{1} + \cdots + \mathcal{F}_{k}) \cap \mathcal{F}_{k+1}} < \mu_{\mathcal{F}_{1} \oplus \cdots \oplus \mathcal{F}_{k+1}}$, then 
\begin{equation*}
\mu_{\mathcal{F}_{1} + \cdots + \mathcal{F}_{k+1}} > \mu_{\mathcal{F}_{1} \oplus \cdots \oplus \mathcal{F}_{k+1}} = \mu_{\mathcal{F}_{1}} = \cdots = \mu_{\mathcal{F}_{k+1}} = \mu_{\mathcal{E}},
\end{equation*}
which contradicts $\mathcal{E}$ being $\mu$-semistable. Therefore
\begin{equation*}
\mu_{\mathcal{E}} = \mu_{\mathcal{F}_{1}} = \cdots = \mu_{\mathcal{F}_{k+1}} = \mu_{\mathcal{F}_{1} \oplus \cdots \oplus \mathcal{F}_{k+1}} \leq \mu_{(\mathcal{F}_{1} + \cdots + \mathcal{F}_{k}) \cap \mathcal{F}_{k+1}}.
\end{equation*}
On the other hand, $(\mathcal{F}_{1} + \cdots + \mathcal{F}_{k}) \cap \mathcal{F}_{k+1} = (\mathcal{F}_{1} \oplus \cdots \oplus \mathcal{F}_{k}) \cap \mathcal{F}_{k+1}$ is reflexive by \cite[Prop.~1.6]{Har2}, so Claim 1 implies
\begin{equation*}
\mu_{(\mathcal{F}_{1} + \cdots + \mathcal{F}_{k}) \cap \mathcal{F}_{k+1}} < \mu_{\mathcal{F}_{k+1}},
\end{equation*}
which yields a contradiction. 

\noindent \emph{Proof of Claim 3.} Let $\phi : \mathcal{E} \longrightarrow \mathcal{F}$ be a morphism. It suffices to prove $\phi$ is zero or an isomorphism, because $\mathcal{E}$, $\mathcal{F}$ are simple. Let $\mathcal{K}$ be the kernel and $\mathcal{I}$ the image of $\phi$. In the case $\mathcal{K} = \mathcal{E}$ we are done. In the case $\mathcal{K} = 0$, the possibility $0 \neq \mathcal{I} \subsetneq \mathcal{F}$ is ruled out by Claim 1 and we are done in the other cases. Suppose $0 \neq \mathcal{K} \subsetneq \mathcal{E}$, then $\mathcal{K}$ can easily seen to be reflexive by \cite[Prop.~1.6]{Har2}. Consequently, $\mu_{\mathcal{K}} < \mu_{\mathcal{E}}$ by Claim 1. In the case $\mathcal{I} = 0$, we are done. In the case $\mathcal{I} = \mathcal{F}$, we reach a contradiction since $\mu_{\mathcal{E}} = \mu_{\mathcal{K}}$. In the case $0 \neq \mathcal{I} \subsetneq \mathcal{F}$ we reach a contradiction since $\mu_{\mathcal{E}} \leq \mu_{\mathcal{K}}$. 
\end{proof}

\noindent Using Proposition \ref{ch. 1, sect. 4, prop. 8} and the proof of Proposition \ref{ch. 1, sect. 3, prop. match}, it easy to see we can choose an ample equivariant line bundle $\mathcal{L}_{\vec{\chi}}^{r}$ on $\mathcal{N}_{\vec{\chi}}^{r}$ such that the GIT semistable points of $\mathcal{N}_{\vec{\chi}}^{r}$ are precisely the $\mu$-semistable elements and the properly GIT stable points of $\mathcal{N}_{\vec{\chi}}^{r}$ are precisely the $\mu$-stable elements. This ample equivariant line bundle can be deduced from the $a=0$ case of the proof of Proposition \ref{ch. 1, sect. 3, prop. match} and is particularly simple. We choose such an ample equivariant line bundle and denote the GIT quotients by $\mathcal{N}^{\mu ss}_{\vec{\chi}}$, $\mathcal{N}^{\mu s}_{\vec{\chi}}$. We obtain the following theorem. 
\begin{theorem} \label{ch. 1, sect. 4, thm. 2}
Let $X$ be a nonsingular projective toric variety. Let $\mathcal{O}_{X}(1)$ be an ample line bundle on $X$ and $\vec{\chi} \in \mathcal{X}^{r}$ a characteristic function of a reflexive equivariant sheaf on $X$. Then $\underline{\mathcal{N}}_{\vec{\chi}}^{\mu ss}$ is corepresented by the quasi-projective $k$-scheme of finite type $\mathcal{N}^{\mu ss}_{\vec{\chi}}$. Moreover, there is an open subset $\mathcal{N}^{\mu s}_{\vec{\chi}} \subset \mathcal{N}^{\mu ss}_{\vec{\chi}}$ such that $\underline{\mathcal{N}}_{\vec{\chi}}^{\mu s}$ is corepresented by $\mathcal{N}_{\vec{\chi}}^{\mu s}$ and $\mathcal{N}^{\mu s}_{\vec{\chi}}$ is a coarse moduli space. 
\end{theorem}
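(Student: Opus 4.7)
The plan is to mirror the proof of Theorem \ref{ch. 1, sect. 3, thm. 2} almost verbatim, replacing the moduli functor $\mathfrak{C}^{0,fr}_{\vec{\chi}}$ of framed torsion free $\Delta$-families by its reflexive analogue $\mathfrak{C}^{r,fr}_{\vec{\chi}}$ and replacing Gieseker (semi)stability by $\mu$-(semi)stability throughout. The key point is that all the geometric input we need is already in place: the discussion immediately preceding the theorem shows that $\mathfrak{C}^{r,fr}_{\vec{\chi}}$ is represented by a locally closed subscheme $\mathcal{N}_{\vec{\chi}}^{r}\subset \prod_{j,k}\mathrm{Gr}(k,M)$, invariant under the natural regular action of the reductive group $G=\mathrm{SL}(M,k)$, and Proposition \ref{ch. 1, sect. 4, prop. 8} together with the $a=0$ specialisation of the proof of Proposition \ref{ch. 1, sect. 3, prop. match} furnishes a particularly simple ample $G$-equivariant line bundle $\mathcal{L}_{\vec{\chi}}^{r}$ for which the GIT semistable (resp.~properly GIT stable) closed points of $\mathcal{N}_{\vec{\chi}}^{r}$ are precisely the $\mu$-semistable (resp.~$\mu$-stable) reflexive equivariant sheaves on $X$ with characteristic function $\vec{\chi}$.

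First, I would apply \cite[Thm.~1.10]{MFK} to the action of $G$ on $\mathcal{N}_{\vec{\chi}}^{r}$ with respect to $\mathcal{L}_{\vec{\chi}}^{r}$ to obtain a categorical quotient $\pi:\mathcal{N}_{\vec{\chi}}^{r,\mu ss}\longrightarrow \mathcal{N}_{\vec{\chi}}^{\mu ss}$ and an open subscheme $\mathcal{N}_{\vec{\chi}}^{\mu s}\subset \mathcal{N}_{\vec{\chi}}^{\mu ss}$ such that $\varpi=\pi|_{\mathcal{N}_{\vec{\chi}}^{r,\mu s}}:\mathcal{N}_{\vec{\chi}}^{r,\mu s}\longrightarrow\mathcal{N}_{\vec{\chi}}^{\mu s}$ is a geometric quotient, both being quasi-projective $k$-schemes of finite type. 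Next, I would define the group-valued functor $\mathcal{G}$ of units of determinant $1$ in the product of automorphism sheaves $\prod\mathrm{Aut}(\mathcal{O}_{S}^{\oplus M})$ (in the reflexive setting only one factor appears) and check, exactly as in the proof of Theorem \ref{ch. 1, sect. 3, thm. 2}, that $\mathcal{G}\cong\mathrm{Hom}(-,G)$ acts naturally on $\mathfrak{C}^{r,\mu ss,fr}_{\vec{\chi}}\cong\mathrm{Hom}(-,\mathcal{N}_{\vec{\chi}}^{r,\mu ss})$, giving an isomorphism of functors
\begin{equation*}
\mathfrak{C}^{r,\mu ss,fr}_{\vec{\chi}}/\mathcal{G}\cong \mathrm{Hom}(-,\mathcal{N}_{\vec{\chi}}^{r,\mu ss})/\mathrm{Hom}(-,G).
\end{equation*}
Since $\mathcal{N}_{\vec{\chi}}^{\mu ss}$ is a categorical quotient, it corepresents the right hand side, hence also the left hand side.

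The second step is to pass from the framed moduli functor to the unframed moduli functor $\underline{\mathcal{N}}_{\vec{\chi}}^{\mu ss}$. As in the proof of Theorem \ref{ch. 1, sect. 3, thm. 2}, one has a canonical chain of natural transformations
\begin{equation*}
\mathfrak{C}^{r,\mu ss,fr}_{\vec{\chi}}/\mathcal{G}=\bigl(\mathfrak{C}^{r,\mu ss,fr}_{\vec{\chi}}/\!\cong\bigr)\Longrightarrow \bigl(\mathfrak{C}^{r,\mu ss}_{\vec{\chi}}/\!\cong\bigr)\stackrel{\cong}{\Longrightarrow}\underline{\mathcal{N}}_{\vec{\chi}}^{\prime\mu ss}\Longrightarrow\underline{\mathcal{N}}_{\vec{\chi}}^{\mu ss},
\end{equation*}
where $\underline{\mathcal{N}}_{\vec{\chi}}^{\prime\mu ss}$ denotes the variant in which one mods out merely by equivariant isomorphism. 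The first arrow is injective on every $S\in Sch/k$ and becomes surjective Zariski-locally after trivialising the limiting locally free sheaves on open affine covers; the last arrow is controlled in the same way by twisting with line bundles pulled back from $S$. An easy gluing argument, identical to the one carried out for Theorem \ref{ch. 1, sect. 3, thm. 2}, then upgrades corepresentability from the framed quotient to the unframed functor $\underline{\mathcal{N}}_{\vec{\chi}}^{\mu ss}$.

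The same argument, restricted to the geometrically $\mu$-stable locus, shows that $\mathcal{N}_{\vec{\chi}}^{\mu s}$ corepresents $\underline{\mathcal{N}}_{\vec{\chi}}^{\mu s}$. To conclude that $\mathcal{N}_{\vec{\chi}}^{\mu s}$ is a coarse moduli space it suffices to show that the natural map $\underline{\mathcal{N}}_{\vec{\chi}}^{\mu s}(k)\longrightarrow\mathrm{Hom}(k,\mathcal{N}_{\vec{\chi}}^{\mu s})$ is bijective; this is immediate from the fact that $\varpi$ is a geometric quotient, so its closed points are in bijection with $G$-orbits of closed points of $\mathcal{N}_{\vec{\chi}}^{r,\mu s}$, i.e.~with equivariant isomorphism classes of $\mu$-stable reflexive equivariant sheaves on $X$ with characteristic function $\vec{\chi}$. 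The only conceptual subtlety, and the main place where the reflexive case deviates from the torsion free one of Theorem \ref{ch. 1, sect. 3, thm. 2}, is the uniqueness part of the socle argument used to characterise $\mu$-stability in equivariant terms; this obstacle is resolved by Proposition \ref{ch. 1, sect. 4, prop. 8}, which is precisely why reflexivity of the fibres is assumed.
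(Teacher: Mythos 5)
Your proposal is correct and follows essentially the same route as the paper, which states this theorem as an immediate consequence of the preceding construction of $\mathcal{N}_{\vec{\chi}}^{r}$, the matching ample equivariant line bundle $\mathcal{L}_{\vec{\chi}}^{r}$ obtained from Proposition \ref{ch. 1, sect. 4, prop. 8} and the $a=0$ case of Proposition \ref{ch. 1, sect. 3, prop. match}, and the argument of Theorem \ref{ch. 1, sect. 3, thm. 2}. You have also correctly identified the one genuine point of divergence from the torsion free case, namely that reflexivity is needed to replace the socle argument of Proposition \ref{ch. 1, sect. 3, prop. 8} in characterising $\mu$-stability by equivariant subsheaves.
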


We now discuss how to define moduli spaces of $\mu$-stable reflexive sheaves on normal projective varieties in general in a way useful for our purposes. Let $X$ be a normal projective variety with ample line bundle $\mathcal{O}_{X}(1)$ (not necessarily nonsingular or toric). Let $P$ be a choice of Hilbert polynomial of a reflexive sheaf on $X$. Then there are natural moduli functors $\underline{\mathcal{M}}_{P}^{ss}$, $\underline{\mathcal{M}}_{P}^{s}$ of flat families with fibres Gieseker semistable resp.~geometrically Gieseker stable sheaves with Hilbert polynomial $P$ as we discussed in subsection 4.1 referring to \cite{HL}. The moduli functor $\underline{\mathcal{M}}_{P}^{ss}$ is corepresented by a projective $k$-scheme $\mathcal{M}_{P}^{ss}$ of finite type and $\mathcal{M}_{P}^{ss}$ contains an open subset $\mathcal{M}_{P}^{s}$, which corepresents $\underline{\mathcal{M}}_{P}^{s}$ and is in fact a coarse moduli space. Let $\mathcal{P}$ be a property of coherent sheaves on $k$-schemes of finite type. We say $\mathcal{P}$ is an open property if for any projective morphism $f : Z \longrightarrow S$ of $k$-schemes of finite type and $\mathcal{F}$ an $S$-flat coherent sheaf on $Z$, the collection of points $x \in S$ such that the fibre $\mathcal{F}_{x}$ satisfies property $\mathcal{P}$ is open (see \cite[Def.~2.1.9]{HL}). We claim that if $\mathcal{P}$ is an open property, then the moduli functor 
\begin{equation*}
\underline{\mathcal{M}}_{P,\mathcal{P}}^{s} \subset \underline{\mathcal{M}}_{P}^{s}, 
\end{equation*}
defined as the subfunctor of all families with every fibre satisfying $\mathcal{P}$, is corepresented by an open subset $\mathcal{M}_{P, \mathcal{P}}^{s} \subset \mathcal{M}_{P}^{s}$ and $\mathcal{M}_{P, \mathcal{P}}^{s}$ is a coarse moduli space. This is immediate in the case we have a universal family for $\underline{\mathcal{M}}_{P}^{s}$ and on the level of Quot schemes we can always define obvious subfunctors represented by obvious open subsets. In the general case, one can prove the claim using arguments involving Luna's \'Etale Slice Theorem and local artinian $k$-algebras with residue field $k$ as in Propositions \ref{ch. 1, sect. 4, prop. 6} and \ref{ch. 1, sect. 4, prop. 7}. We now would like to take $\mathcal{P}$ to be the property ``geometrically $\mu$-stable and reflexive''. Using an argument analogous to the proof of \cite[Prop.~2.3.1]{HL} (which uses a boundedness result by Grothendieck), it is easy to see that geometrically $\mu$-stable is an open property. Using a result by Koll\'ar \cite[Prop.~28]{Kol} and a semicontinuity argument, we see that reflexive is also an open property. Therefore, it makes sense to define a moduli functor
\begin{equation*}
\underline{\mathcal{N}}_{P}^{\mu s} \subset \underline{\mathcal{M}}_{P}^{s}, 
\end{equation*}  
consisting of those families where all fibres are geometrically $\mu$-stable and reflexive. The moduli functor $\underline{\mathcal{N}}_{P}^{\mu s}$ is corepresented by an open subset $\mathcal{N}_{P}^{\mu s} \subset \mathcal{M}_{P}^{s}$ and $\mathcal{N}_{P}^{\mu s}$ is a coarse moduli space coming from a geometric quotient of an open subset of the Quot scheme. In the case $X$ is a nonsingular projective toric variety, we get a regular action $\sigma : T \times \mathcal{N}_{P}^{\mu s} \longrightarrow \mathcal{N}_{P}^{\mu s}$ and the fixed point locus is a closed subscheme $\left( \mathcal{N}_{P}^{\mu s} \right)^{T} \subset \mathcal{N}_{P}^{\mu s}$. We define $\left( \mathcal{X}_{P}^{r} \right)^{fr} = \left( \mathcal{X}_{P}^{0} \right)^{fr} \cap \mathcal{X}^{r}$ to be the collection of framed characteristic functions of reflexive equivariant sheaves on $X$ giving rise to Hilbert polynomial $P$. Completely analogous to subsections 4.1, 4.2, 4.3, we obtain the following theorem.
\begin{theorem} \label{ch. 1, sect. 4, thm. 3}
Let $X$ be a nonsingular projective toric variety. Let $\mathcal{O}_{X}(1)$ be an ample line bundle on $X$ and let $P$ be a choice of Hilbert polynomial of a reflexive sheaf on $X$. Then there is a canonical isomorphism
\begin{equation} \nonumber
\left( \mathcal{N}_{P}^{\mu s} \right)^{T} \cong \coprod_{\vec{\chi} \in \left( \mathcal{X}_{P}^{r} \right)^{fr}} \mathcal{N}_{\vec{\chi}}^{\mu s}. 
\end{equation}
\end{theorem}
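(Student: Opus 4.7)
The plan is to mirror the proof strategy developed for Theorem \ref{ch. 1, sect. 4, thm. 1} and Corollary \ref{ch. 1, sect. 4, cor. 1}, replacing Gieseker stability by $\mu$-stability and the class of torsion free equivariant sheaves by its subclass of reflexive equivariant sheaves. For each $\vec{\chi} \in (\mathcal{X}_P^r)^{gf}$, the obvious forgetful natural transformation $\underline{\mathcal{N}}_{\vec{\chi}}^{\mu s} \Longrightarrow \underline{\mathcal{N}}_P^{\mu s}$ (an equivariant $S$-flat family of $\mu$-stable reflexive equivariant sheaves is in particular an $S$-flat family of $\mu$-stable reflexive sheaves, since reflexivity is open by Koll\'ar's criterion and $\mu$-stability is open by the argument of \cite[Prop.~2.3.1]{HL}) induces, via Theorem \ref{ch. 1, sect. 4, thm. 2} and the corepresenting property of $\mathcal{N}_P^{\mu s}$ discussed before the statement, a morphism $\mathcal{N}_{\vec{\chi}}^{\mu s} \longrightarrow \mathcal{N}_P^{\mu s}$. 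Assembling these gives a morphism
\begin{equation*}
F : \coprod_{\vec{\chi} \in (\mathcal{X}_P^r)^{gf}} \mathcal{N}_{\vec{\chi}}^{\mu s} \longrightarrow \mathcal{N}_P^{\mu s},
\end{equation*}
whose image on closed points lies in $(\mathcal{N}_P^{\mu s})^T$ by Proposition \ref{ch. 1, sect. 4, prop. 1}.

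First I would verify that $F$ induces a bijection on closed points onto $(\mathcal{N}_P^{\mu s})_{cl}^T$. Surjectivity follows from Propositions \ref{ch. 1, sect. 4, prop. 2} and \ref{ch. 1, sect. 4, prop. 3} (a $\mu$-stable reflexive sheaf is simple by the standard argument, being an integral domain endomorphism ring of finite type over $k$; cf.~also Claim 3 in the proof of Proposition \ref{ch. 1, sect. 4, prop. 8}), together with Proposition \ref{ch. 1, sect. 4, prop. 5}, which shows that after tensoring with a suitable equivariant line bundle $\mathcal{L}_{\vec{k}}$ we can arrange the corresponding framed torsion free $\Delta$-family to be gauge-fixed. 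Injectivity is an immediate consequence of Proposition \ref{ch. 1, sect. 4, prop. 4} (the only ambiguity in the equivariant structure on a simple sheaf is tensoring by $\mathcal{O}_X(\chi)$, $\chi \in M$) combined with the structure analysis of equivariant line bundles with trivial underlying line bundle spelled out in the discussion preceding Theorem \ref{ch. 1, sect. 4, thm. 1}: once the gauge is fixed on one chosen cone $\sigma_n$ of maximal dimension, all other integers $k_j$ are forced to vanish.

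To upgrade this bijection on closed points to a scheme-theoretic isomorphism, I would apply Proposition \ref{ch. 1, sect. 4, prop. 6} to $F$ and the closed immersion $(\mathcal{N}_P^{\mu s})^T \hookrightarrow \mathcal{N}_P^{\mu s}$, by induction on the length of a local artinian $k$-algebra $A$ with residue field $k$ exactly as in the proof of Theorem \ref{ch. 1, sect. 4, thm. 1}. The required analogue of Proposition \ref{ch. 1, sect. 4, prop. 7}, that the moduli functors $\underline{\mathcal{N}}_{\vec{\chi}}^{\mu s}$ and $\underline{\mathcal{N}}_P^{\mu s}$ induce bijections on $\mathrm{Hom}(A,-)$, holds because both moduli spaces arise as geometric quotients of open subschemes of a Quot scheme (respectively of the locally closed subscheme $\mathcal{N}_{\vec{\chi}}^r \subset \mathcal{N}_{\vec{\chi}}^0$) by the free action of $\mathrm{PGL}(M,k)$ on the stable locus, so Luna's \'etale slice theorem applies verbatim. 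The inductive step then reduces to comparing deformation and obstruction theory: $T$-equivariant deformations and obstructions of a $\mu$-stable reflexive equivariant sheaf $\mathcal{F}_0$ live in $T\textrm{-}\mathrm{Ext}^i(\mathcal{F}_0,\mathcal{F}_0)$, while those of the underlying sheaf $\mathcal{E}_0$ live in $\mathrm{Ext}^i(\mathcal{E}_0,\mathcal{E}_0)$, and the spectral sequence $H^p(T,\mathrm{Ext}^q(\mathcal{A},\mathcal{B})) \Rightarrow T\textrm{-}\mathrm{Ext}^n(\mathcal{A},\mathcal{B})$ together with the vanishing of rational cohomology $H^p(T,-)=0$ for $p>0$ gives a canonical identification $T\textrm{-}\mathrm{Ext}^i(\mathcal{F}_0,\mathcal{F}_0) \cong \mathrm{Ext}^i(\mathcal{E}_0,\mathcal{E}_0)^T$, matching the Zariski tangent and obstruction spaces on the two sides at each fixed closed point.

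The main obstacle is setting up the Luna-type argument cleanly in the reflexive, $\mu$-stable setting, since $\mathcal{N}_{\vec{\chi}}^r$ is only locally closed in $\mathcal{N}_{\vec{\chi}}^0$ and GIT stability here is with respect to the simpler ample equivariant line bundle from (the $a=0$ case of) the proof of Proposition \ref{ch. 1, sect. 3, prop. match}. Proposition \ref{ch. 1, sect. 4, prop. 8} guarantees that $\mu$-stability of a torsion free equivariant sheaf can be tested against \emph{equivariant} reflexive subsheaves only, which is precisely what is needed to match GIT stability with $\mu$-stability on $\mathcal{N}_{\vec{\chi}}^r$. Once this foundational matching and the free-quotient property are established, the rest of the proof is a direct translation of the arguments carried out for the Gieseker stable torsion free case, yielding the claimed canonical isomorphism.
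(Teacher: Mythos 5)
Your proposal is correct and follows essentially the same route as the paper, which proves Theorem \ref{ch. 1, sect. 4, thm. 3} simply by declaring it ``completely analogous'' to the Gieseker/torsion-free case of Theorem \ref{ch. 1, sect. 4, thm. 1}; you have unpacked exactly the intended analogy (forgetful morphism, gauge-fixing and bijectivity on closed points via Propositions \ref{ch. 1, sect. 4, prop. 2}--\ref{ch. 1, sect. 4, prop. 5}, the artinian criterion of Proposition \ref{ch. 1, sect. 4, prop. 6}, the Luna-slice analogue of Proposition \ref{ch. 1, sect. 4, prop. 7}, and the $T$-equivariant Ext comparison). Your added observations --- that Proposition \ref{ch. 1, sect. 4, prop. 8} is what makes the GIT/$\mu$-stability match work on $\mathcal{N}_{\vec{\chi}}^{r}$, and that the relevant quotient group is $\mathrm{PGL}(M,k)$ acting freely on the stable locus --- are consistent with the paper's setup.
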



\begin{thebibliography}{00000}
\addcontentsline{toc}{chapter}{Bibliography}
\bibitem[AM]{AM} M.~F.~Atiyah and I.~G.~MacDonald, \textit{Introduction to Commutative Algebra}, Addison-Wesley (1969).
\bibitem[Art]{Art} I.~V.~Artamkin, \textit{On Deformations of Sheaves}, Izv.~Akad.~Nauk SSSR 52 (1988) 660--665, Transl.~: Math.~USSR Izv.~32 (1989) 663--668.
\bibitem[Ati]{Ati} M.~F.~Atiyah, \textit{On the Krull--Schmidt Theorem with Application to Sheaves}, Bull.~Soc.~Math.~Fr. 84 (1956) 307--317.
\bibitem[CG]{CG} N.~Chriss and V.~Ginzburg, \textit{Representation Theory and Complex Geometry}, Birkh\"auser (1997). 
\bibitem[Dol]{Dol} I.~Dolgachev, \textit{Lectures on Invariant Theory}, Cambridge University Press (2003).
\bibitem[DOPR]{DOPR} R.~Donagi, B.~A.~Ovrut, T.~Pantev, R.~Reinbacher, \textit{$SU(4)$ Instantons on Calabi--Yau Threefolds with $\mathbb{Z}_{2} \times \mathbb{Z}_{2}$ Fundamental Group}, JHEP 0401 (2004) 022.
\bibitem[EGA1]{EGA1} A.~Grothendieck, \textit{\'El\'ements de G\'eom\'etrie Alg\'ebrique: III.~\'Etude Cohomologique des Faisceaux Coh\'erents, Premi\`ere Partie}, publications math\'ematiques de l'I.H.\'E.S.~tome 11 (1961) 5--167.
\bibitem[EGA2]{EGA2} A.~Grothendieck, \textit{\'El\'ements de G\'eom\'etrie Alg\'ebrique: III.~\'Etude Cohomologique des Faisceaux Coh\'erents, Seconde Partie}, publications math\'ematiques de l'I.H.\'E.S.~tome 17 (1963) 5--91.
\bibitem[EGA3]{EGA3} A.~Grothendieck, \textit{\'El\'ements de G\'eom\'etrie Alg\'ebrique: IV.~\'Etude Locale des Sch\'emas et des Morphismes de Sch\'emas, Quatri\`eme Partie}, publications math\'ematiques de l'I.H.\'E.S.~tome 32 (1967) 5--361.
\bibitem[Eis]{Eis} D.~Eisenbud, \textit{Commutative Algebra with a View Toward Algebraic Geometry}, Springer-Verlag (1995).
\bibitem[Fog]{Fog} J.~Fogarty, \textit{Fixed Point Schemes}, Amer.~J.~Math.~Vol.~95 (1973) 35--51.
\bibitem[Ful]{Ful} W.~Fulton, \textit{Introduction to Toric Varieties}, Princeton University Press (1993).
\bibitem[Har1]{Har1} R.~Hartshorne, \textit{Algebraic Geometry}, Springer-Verlag (1977).
\bibitem[Har2]{Har2} R.~Hartshorne, \textit{Stable Reflexive Sheaves}, Math.~Ann.~254 (1980) 121--176.
\bibitem[HL]{HL} D.~Huybrechts and M.~Lehn, \textit{The Geometry of Moduli Spaces of Sheaves}, Vieweg (1997).
\bibitem[Jan]{Jan} J.~C.~Jantzen, \textit{Representations of Algebraic Groups}, Math.~Surveys and Monogr.~107 Amer.~Math.~Soc.~(2003).
\bibitem[Kly1]{Kly1} A.~A.~Klyachko, \textit{Toric Bundles and Problems of Linear Algebra}, Funct.~Anal.~Appl.~23 (1989) 135--137.
\bibitem[Kly2]{Kly2} A.~A.~Klyachko, \textit{Equivariant Bundles on Toral Varieties}, Math.~USSR Izvestiya 35 (1990) 337--375.
\bibitem[Kly3]{Kly3} A.~A.~Klyachko, \textit{Stable Bundles, Representation Theory and Hermitian Operators}, Selecta Math.~4 (1998) 419--445.
\bibitem[Kly4]{Kly4} A.~A.~Klyachko, \textit{Vector Bundles and Torsion Free Sheaves on the Projective Plane}, preprint Max Planck Institut f\"ur Mathematik (1991).
\bibitem[Kol]{Kol} J.~Koll\'ar, \textit{Hulls and Husks}, arXiv:0805.0576 (2008).
\bibitem[Koo]{Koo} M.~Kool, \textit{Euler Characteristics of Moduli Spaces of Torsion Free Sheaves on Toric Surfaces}, arXiv:0906.3393 (2009).
\bibitem[KS1]{KS1} A.~Knutson and E.~Sharpe, \textit{Sheaves on Toric Varieties for Physics}, Adv.~Theor.~Math.~Phys.~2 (1998) 865--948.
\bibitem[KS2]{KS2} A.~Knutson and E.~Sharpe, \textit{Equivariant Sheaves}, Chaos, Solitons \& Fractals 10 (1999) 399--412.
\bibitem[MFK]{MFK} D.~Mumford, J.~Fogarty, F.~Kirwan, \textit{Geometric Invariant Theory}, Springer-Verlag (1994).
\bibitem[Mun]{Mun} J.~R.~Munkres, \textit{Elements of Algebraic Topology}, Addison-Wesley (1984).
\bibitem[OSS]{OSS} C.~Okonek, M.~Schneider and H.~Spindler, \textit{Vector Bundles on Complex Projective Space}, Birkh\"auser (1980).
\bibitem[Pay]{Pay} S.~Payne, \textit{Moduli of Toric Vector Bundles}, arXiv:0705.0410 to appear in Compositio Math (2007).
\bibitem[Per1]{Per1} M.~Perling, \textit{Resolutions and Moduli for Equivariant Sheaves over Toric Varieties}, Ph.D.~Dissertation University of Kaiserslautern (2003).
\bibitem[Per2]{Per2} M.~Perling, \textit{Graded Rings and Equivariant Sheaves on Toric Varieties}, Math.~Nachr.~263--264 (2004) 181--197.
\bibitem[Per3]{Per3} M.~Perling, \textit{Moduli for Equivariant Vector Bundles of Rank Two on Smooth Toric Surfaces}, Math.~Nachr.~265 (2004) 87--99.
\bibitem[Sch]{Sch} M.~Schlessinger, \textit{Functors of Artin Rings}, Trans.~AMS 130 (1968) 208--222.
\bibitem[Toh]{Toh} A.~Grothendieck, \textit{Sur Quelques Points d'Alg\`ebre Homologique}, T\^ohoku Math.~J.~9 (1957) 119--221.
\bibitem[Vak]{Vak} R.~Vakil, \textit{Murphy's Law in Algebraic Geometry: Badly-Behaved Deformation Spaces}, Invent.~Math.~164 (2006) 569--590.
\end{thebibliography}
\end{document}